\DeclareSymbolFont{Shuffle}{U}{shuffle}{m}{n}
\DeclareFontFamily{U}{shuffle}{}
\DeclareFontShape{U}{shuffle}{m}{n}{%
	<-8>shuffle7%
	<8->shuffle10%
}{}
\DeclareMathSymbol\shuffle{\mathbin}{Shuffle}{"001}
\DeclareMathSymbol\cshuffle{\mathbin}{Shuffle}{"002}
\definecolor{Chocolat}{rgb}{0.36, 0.2, 0.09}
\definecolor{BleuTresFonce}{rgb}{0.215, 0.215, 0.36}
\theoremstyle{plain}
\newtheorem{thm}{Theorem}[section]
\newtheorem*{thm*}{Theorem}
\newtheorem{lem}[thm]{Lemma}
\newtheorem{prop}[thm]{Proposition}
\newtheorem*{prop*}{Proposition}
\newtheorem{cor}[thm]{Corollary}
\theoremstyle{definition}
\newtheorem{defi}[thm]{Definition}
\newtheorem*{defi*}{Definition}
\newtheorem{defiprop}[thm]{Definition/Proposition}
\newtheorem{nota}[thm]{Notation}
\newtheorem*{nota*}{Notation}
\newtheorem{rem}[thm]{Remark}
\newcommand{\Z}{\mathbb{Z}}
\newcommand{\N}{\mathbb{N}}
\newcommand{\K}{\mathcal{K}}
\newcommand{\rmT}{\mathrm{T}}
\newcommand{\calC}{\mathcal{C}}
\newcommand{\calP}{\mathcal{P}}
\newcommand{\scrA}{\mathscr{A}}
\newcommand{\id}{\mathrm{id}}
\newcommand{\scrF}{\mathscr{F}}
\renewcommand{\phi}{\varphi}
\newcommand{\antish}{{\mbox{\footnotesize{!`}}}}
\newcommand{\As}{\mathcal{A}s}
\newcommand{\Com}{\mathcal{C}om}
\newcommand{\Lie}{\mathcal{L}ie}
\newcommand{\DLie}{{\mathcal{DL}ie}}
\newcommand{\DCom}{\mathcal{DC}om}
\newcommand{\DPois}{\mathcal{DP}ois}
\renewcommand{\Bar}{\mathsf{B}}
\newcommand{\Cobar}{\mathsf{\Omega}}
\newcommand{\Ob}{\mathrm{Ob}\,}
\newcommand{\op}{^{\mathrm{op}}}
\newcommand{\sh}{^{\mathrm{sh}}}
\newcommand{\Val}{^{\mathrm{Val}}}
\newcommand{\aug}{^{\mathrm{aug}}}
\newcommand{\coaug}{^{\mathrm{coaug}}}
\newcommand{\Set}{\mathsf{Set}}
\newcommand{\Ho}{\mathsf{Ho}}
\newcommand{\Ch}{\mathsf{Ch}}
\newcommand{\Alg}{\mathsf{Alg}}
\newcommand{\DGA}{\mathsf{Dga}}
\newcommand{\CDGA}{\mathsf{Cdga}}
\newcommand{\Smod}{\mathfrak{S}\mbox{-}\mathsf{mod}}
\newcommand{\Smodred}{\mathfrak{S}\mbox{-}\mathsf{mod}^{\mathrm{red}}}
\newcommand{\Sbimodred}{\mathfrak{S}\mbox{-}\mathsf{bimod}^{\mathrm{red}}}
\newcommand{\Fin}{\mathsf{Fin}}
\newcommand{\Ord}{\mathsf{Ord}}
\newcommand{\Hom}{\mathrm{Hom}}
\newcommand{\sHom}{\mathrm{\underline{hom}}}
\newcommand{\sDer}{\mathrm{\underline{Der}}}
\newcommand{\Func}{\mathrm{Func}}
\newcommand{\Wall}{\mathcal{W}^{\mathrm{conn}}}
\newcommand{\Colo}{\mathcal{C}\mathrm{ol}}
\newcommand{\Ind}{\mathrm{Ind}}
\newcommand{\Res}{\mathrm{Res}}
\newcommand{\sgn}{\widetilde{\mathrm{sgn}}}
\newcommand{\End}{\mathrm{End}}
\newcommand{\sEnd}{\mathrm{\underline{End}}}
\newcommand{\GL}{\mathrm{GL}}
\newcommand{\commu}{\circlearrowleft}
\newcommand{\DB}[2]{\{\hspace{-3pt}\{#1,#2\}\hspace{-3pt}\}}
\newcommand{\TB}[3]{\{\hspace{-3pt}\{#1,#2,#3\}\hspace{-3pt}\}}
\newcommand{\Ibox}{I_{\boxtimes}}
\title{Protoperads II: Koszul duality}
\address{LAGA, Universit\'e Paris 13, 99 Avenue Jean Baptiste Cl\'ement 93430, Villetaneuse, France}
\email{leray@math.univ-paris13.fr}
\author{Johan \textsc{Leray}} 
\date{\today}
\keywords{properad, protoperad, Koszul duality, double Poisson}
\subjclass[2010]{18D50,18G55,17B63,14A22}
\thanks{	This article is the homotopical part of the  PhD thesis of the author, supported by the project "Nouvelle \'Equipe", convention n$^\circ$2013-10203/10204 between La R\'egion des Pays de Loire and the University of Angers.  The author thanks the Centre Henri Lebesgue ANR-11-LABX-0020-01 for its stimulating mathematical research programs. This paper was finished at the University Paris 13, where the author was financed by a postdoctoral allocation given by DIM Math Innov. The author is indebted to G. Powell who has carefully read and corrected the first version of this paper. The author also thanks E. Hoffbeck and B. Vallette for our useful discussions.}
\begin{document}
	
\begin{abstract}
	In this paper, we construct a bar-cobar adjunction and a Koszul duality theory for protoperads, which are an operadic type notion encoding faithfully some categories of bialgebras with diagonal symmetries, like double Lie algebras ($\DLie$). We give a criterion to show that a binary quadratic protoperad is Koszul and we apply it successfully to the protoperad $\DLie$.  As a corollary, we deduce that the properad $\DPois$ which encodes double Poisson algebras is Koszul. This allows us to describe the homotopy properties of double Poisson algebras which play a key role in non commutative geometry.
\end{abstract}

\maketitle
	
\section*{Introduction}
This paper develops the  Koszul duality theory for \emph{protoperads}, defined in \cite{Ler18i}, which are an analog of properads (see \cite{Val03,Val07}) with more less symmetries. The main application of this theory is the proof of the Koszulness of the properad which encodes double Lie algebras, from which it follows that the properad encoding double Poisson algebras is Koszul. 

The motivation for this work is to determine what is a double Poisson bracket up to homotopy. A double Poisson structure, as defined by Van den Bergh in  \cite{VdB08}, gives a Poisson structure in noncommutative algebraic geometry (see \cite{Gin05,VdB08-2}) under the \emph{Kontsevich-Rosenberg principle}, i.e. if $A$ is a double Poisson algebra, then the associated affine representation schemes $\mathrm{Rep}_n(A)$ have (classical) Poisson structures.

In order to determine the homotopical properties of a family of algebras, we use the classical strategy, which was already used to understand, for example, the homotopical properties of Gerstenhaber algebras (and also the homotopic properties of associative, commutative, Lie, Poisson, etc, algebras). The idea is to go to the upper level and understand the homological properties of the algebraic object that encodes the structure, such as the  operad $\mathcal{G}\mathrm{erst}$ for Gerstenhaber algebras. In the good case where the operad (or the properad) satisfies good properties, we can use Koszul duality in order to have a minimal cofibrant replacement of our operad. We can then go down to the level of algebras. Thanks to this cofibrant replacement (in the case of Gerstenhaber algebras, the operad $\mathcal{G}_\infty$), we obtain the associated notion of algebra up to homotopy: for example, Gerstenhaber algebras up to homotopy are encoded by  $\mathcal{G}_\infty$ (see \cite{Gin04} or \cite[Sect. 2.1]{GTV12}). This structure has a good homotopical behaviour at the  algebras' level, the homotopy transfer theorem (see  \cite[Sect. 10.3]{LV12} for algebras over an operad), etc.

Double Poisson structures are \emph{properadic} in nature as they are made up of operations with multiple inputs and multiples outputs. They are encoded by the properad $\DPois$, which is constructed with the properads $\As$ and $\widetilde{\DLie}$  (see \Cref{lem::loi_remplacement_compatible}), where the properad $\widetilde{\DLie}$ encodes double Lie structure and the properad  $\As$ encodes associative algebra structure. The properad $\widetilde{\DLie}$ is a quadratic properad defined by generators and relations, with the generator $V_\DLie$ concentrated in arity $(2,2)$:
\[
V_\DLie:
\begin{tikzpicture}[scale=0.2,baseline=-3]
\draw (0,0.5) node[above] {$\scriptscriptstyle{1}$};
\draw (2,0.5) node[above] {$\scriptscriptstyle{2}$};
\draw[fill=black] (-0.3,-0.5) rectangle (2.3,0);
\draw[thin] (0,-1) -- (0,0.5);
\draw[thin] (2,-1) -- (2,0.5);
\draw (0,-1) node[below] {$\scriptscriptstyle{1}$};
\draw (2,-1) node[below] {$\scriptscriptstyle{2}$};
\end{tikzpicture}
\ = - \
\begin{tikzpicture}[scale=0.2,baseline=-3]
\draw (0,0.5) node[above] {$\scriptscriptstyle{2}$};
\draw (2,0.5) node[above] {$\scriptscriptstyle{1}$};
\draw[fill=black] (-0.3,-0.5) rectangle (2.3,0);
\draw[thin] (0,-1) -- (0,0.5);
\draw[thin] (2,-1) -- (2,0.5);
\draw (0,-1) node[below] {$\scriptscriptstyle{2}$};
\draw (2,-1) node[below] {$\scriptscriptstyle{1}$};
\end{tikzpicture}
\]
and the relation in arity $(3,3)$
\[
R_{\mathcal{DJ}}:
\begin{tikzpicture}[scale=0.2,baseline=-3]
\draw (0,3) node[below] {$\scriptscriptstyle{1}$};
\draw (2,3) node[below] {$\scriptscriptstyle{2}$};
\draw (4,3) node[below] {$\scriptscriptstyle{3}$};
\draw[fill=black] (1.7,0.5) rectangle (4.3,1);
\draw[fill=black] (-0.3,-0.5) rectangle (2.3,0);
\draw[thin] (0,-1) -- (0,1.5);
\draw[thin] (2,-1) -- (2,1.5);
\draw[thin] (4,-1) -- (4,1.5);
\draw (0,-1) node[below] {$\scriptscriptstyle{1}$};
\draw (2,-1) node[below] {$\scriptscriptstyle{2}$};
\draw (4,-1) node[below] {$\scriptscriptstyle{3}$};
\end{tikzpicture}
~~+~~
\begin{tikzpicture}[scale=0.2,baseline=-3]
\draw (0,3) node[below] {$\scriptscriptstyle{2}$};
\draw (2,3) node[below] {$\scriptscriptstyle{3}$};
\draw (4,3) node[below] {$\scriptscriptstyle{1}$};
\draw[fill=black] (1.7,0.5) rectangle (4.3,1);
\draw[fill=black] (-0.3,-0.5) rectangle (2.3,0);
\draw[thin] (0,-1) -- (0,1.5);
\draw[thin] (2,-1) -- (2,1.5);
\draw[thin] (4,-1) -- (4,1.5);
\draw (0,-1) node[below] {$\scriptscriptstyle{2}$};
\draw (2,-1) node[below] {$\scriptscriptstyle{3}$};
\draw (4,-1) node[below] {$\scriptscriptstyle{1}$};
\end{tikzpicture}
~~+~~
\begin{tikzpicture}[scale=0.2,baseline=-3]
\draw (0,3) node[below] {$\scriptscriptstyle{3}$};
\draw (2,3) node[below] {$\scriptscriptstyle{1}$};
\draw (4,3) node[below] {$\scriptscriptstyle{2}$};
\draw[fill=black] (1.7,0.5) rectangle (4.3,1);
\draw[fill=black] (-0.3,-0.5) rectangle (2.3,0);
\draw[thin] (0,-1) -- (0,1.5);
\draw[thin] (2,-1) -- (2,1.5);
\draw[thin] (4,-1) -- (4,1.5);
\draw (0,-1) node[below] {$\scriptscriptstyle{3}$};
\draw (2,-1) node[below] {$\scriptscriptstyle{1}$};
\draw (4,-1) node[below] {$\scriptscriptstyle{2}$};
\end{tikzpicture}
\ .
\]
Thus double Lie bracket on a chain complex $A$ is given by a morphism of properads $\widetilde{\DLie} \rightarrow \End_A$ where $\End_A$ is the properad of endomorphisms of $A$ (see \cite{Val07} for the definition).

The theory of properads is the good general algebraic framework to encode operations with several inputs and outputs. In certain cases, this framework can be simplified. For example, algebraic structures with several inputs and \emph{one} output, like associative, commutative or Lie algebras, are encoded by \emph{operads} (see \cite{LV12}). In a certain sense, the operadic framework is the minimal one to study such structures. In this smaller framerwork, homotopical properties are much easier to study.

Similarly, protoperads form a special class of properads, which provide the appropriate framework for studying the double Lie properads. In the first article \cite{Ler18i}, we have developed  this minimal framework, such that there exists a protoperad $\DLie$ which encodes the double Lie structure.
In \cite{Ler18i}, we proved the existence of the free protoperad functor and gave an explicit combinatorial description of this, in terms of \emph{bricks and walls}. An important property of protoperads is their compatibility with properads via the induction functor (see \Cref{def::functor induction}). 

In this paper, we develop the homological algebra for protoperads. With the monoidal exact functor of induction, we prove the existence of a bar-cobar adjunction in the case of protoperads:
\[
	\Cobar  : \mathsf{coprotoperads}\coaug_k \leftrightarrows \mathsf{protoperads}\aug_k : \Bar .
\]
We obtain also the following theorem, the protoperadic analogue of the criterion of Koszul of the properads \cite[Th. 149]{Val03},\cite{Val07}.

\begin{thm*}[Koszul criterion -- \textup{(cf. \Cref{thm::critere_Koszul})}]
Let $\calP$ be a connected weight-graded protoperad. The following are equivalent:
\begin{enumerate}
	\item the protoperad $\calP$ is Koszul;
	\item the inclusion $\calP^\antish \hookrightarrow \Bar \calP$ is a quasi-isomorphism;
	\item the morphism of protoperads $\Cobar \calP^\antish \rightarrow \calP$ is a quasi-isomorphism, where $\calP^\antish$ is the Koszul dual of $\calP$ (see \Cref{thm::duale_de_koszul})
\end{enumerate}
\end{thm*}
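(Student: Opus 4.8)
The plan is to follow the classical pattern of Koszul duality \`a la Vallette, transported to protoperads, the essential new input being the monoidal exact induction functor, which lets us import the needed homological control from the properadic case \cite{Val03,Val07}.

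First I would set up the two gradings on the bar construction. Since $\calP$ is connected and weight-graded, $\Bar\calP$ is a connected weight-graded dg coprotoperad carrying, besides its internal degree, a \emph{syzygy} grading counting the number of bar factors; the bar differential preserves the weight $w$ and lowers the syzygy degree $s$ by one, and because $\bar\calP = \calP_{\geqslant 1}$ the complex $(\Bar\calP)^{(w)}$ is concentrated in syzygy degrees $1 \leqslant s \leqslant w$. By the construction of the Koszul dual (\Cref{thm::duale_de_koszul}), $\calP^\antish$ is identified, in weight $w$, with the top syzygy part $s = w$ of $\Bar\calP$, that is with the kernel of the bar differential there, and $\calP^\antish \hookrightarrow \Bar\calP$ is then a morphism of dg coprotoperads ($\calP^\antish$ carrying the trivial differential).

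The equivalence of $(1)$ and $(2)$ is then elementary and weight-local. Fixing $w$, there are no boundaries in the top syzygy degree, so $H^{(w)}_w(\Bar\calP) = (\calP^\antish)^{(w)}$ and the inclusion is an isomorphism on homology in that degree; hence $\calP^\antish \hookrightarrow \Bar\calP$ is a quasi-isomorphism if and only if $H^{(w)}_s(\Bar\calP) = 0$ for all $s < w$ and all $w$, which is exactly the statement that the homology of the bar construction is concentrated on the diagonal, i.e. that $\calP$ is Koszul. For the equivalence of $(2)$ and $(3)$ I would use the bar--cobar adjunction: the inclusion $\calP^\antish \hookrightarrow \Bar\calP$ and the morphism $\Cobar\calP^\antish \to \calP$ are the two adjuncts of one and the same twisting morphism $\kappa \colon \calP^\antish \to \calP$ (the restriction of the universal twisting morphism $\Bar\calP \to \calP$), so it suffices to prove a comparison lemma: for a twisting morphism $\alpha \colon \calC \to \calP$ with $\calC$ a connected weight-graded coprotoperad, the adjunct $\calC \to \Bar\calP$ is a quasi-isomorphism if and only if the twisted Koszul complex $\calC \boxtimes_\alpha \calP$ is acyclic if and only if the adjunct $\Cobar\calC \to \calP$ is a quasi-isomorphism. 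I would prove this by filtering these complexes by the weight and running the associated spectral sequences, which converge because, $\calC$ and $\calP$ being connected, everything is bounded in each weight; rather than redo the bookkeeping over the wall combinatorics of $\boxtimes$, I would apply the induction functor $\Ind$ of \Cref{def::functor induction}, which being monoidal and exact commutes with $\Bar$, $\Cobar$ and $-\boxtimes_\alpha -$, preserves quasi-isomorphisms, and reflects them on connected weight-graded protoperads, thereby reducing the comparison lemma to its properadic counterpart \cite[Th.~149]{Val03}. Applying it to $\kappa$ gives $(2)\Leftrightarrow(3)$.

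The main obstacle is precisely this comparison lemma, i.e. the step $(2)\Leftrightarrow(3)$: one must check that the weight filtration on $\Cobar\calP^\antish$ and on the Koszul complex is exhaustive and bounded in each weight so that the spectral sequence converges, and — should one want to avoid reducing to properads — that its $E^1$-page is correctly computed through the combinatorics of the product $\boxtimes$ on connected walls. The cleanest safeguard is the exact monoidal induction functor: granting that $\Ind$ genuinely commutes with the bar and cobar constructions and is conservative on connected weight-graded protoperads, the whole homological heart of the argument is inherited from \cite{Val03,Val07}, and only the elementary degree count $(1)\Leftrightarrow(2)$ is intrinsically protoperad-specific.
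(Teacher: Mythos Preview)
Your proposal is correct and follows essentially the same route as the paper: the heart of the argument is to transport the problem to properads via the exact monoidal induction functor $\Ind$ (together with $\Res\circ\Ind=\id$, which gives the conservativity you invoke) and then cite Vallette's theorems \cite[Th.~144, Th.~149]{Val03}. One minor mismatch: in this paper the equivalence $(1)\Leftrightarrow(2)$ is \emph{definitional} (see the definition of a Koszul protoperad just above \Cref{thm::critere_Koszul}), so your syzygy-degree count, while correct, is superfluous here---only $(2)\Leftrightarrow(3)$ requires the reduction to the properadic case.
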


We give a useful criterion to show that a binary quadratic protoperad (i.e. a quadratic protoperad generated by a $\mathfrak{S}$-module concentrated in arity $2$)  is Koszul. Take a binary quadratic protoperad $\calP$ given by generators and relations.
We associate to $\calP$ a family of associative algebras $\mathscr{A}(\calP,n)$, for $2\geqslant n$ in $\N$. The algebra $\mathscr{A}(\calP,n)$ is constructed so that its bar construction 
splits and such that one of these factors is the $n$-th arity of the normalized simplicial bar construction of the protoperad $\calP $.
	
\begin{thm*}[Koszul criterion (see \Cref{thm::critere_koszulite})]
	Let $\calP $ be a binary quadratic protoperad. If, for all integers $n\geqslant 2$, the quadratic algebra $\scrA(\calP,n)$ is Koszul, then the protoperad $\calP $ is Koszul.
\end{thm*}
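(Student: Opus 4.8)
The plan is to apply the Koszul criterion for protoperads (\Cref{thm::critere_Koszul}). Since a binary quadratic protoperad is automatically connected and weight-graded (the weight being the number of generators), it suffices to prove that the inclusion $\calP^\antish \hookrightarrow \Bar\calP$ is a quasi-isomorphism, equivalently that the homology of the bar construction $\Bar\calP$ is concentrated in syzygy degree $0$ (its homology lying on the diagonal). The bar construction is weight-graded and splits over arities, so this can be checked one arity at a time.

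First I would use the description of the free protoperad in terms of bricks and walls from \cite{Ler18i} to present, for each $n\geqslant 2$, the arity-$n$ component of $\Bar\calP$ as the normalized chain complex of an explicit simplicial module, the \emph{normalized simplicial bar construction} of $\calP$ in arity $n$. The crucial point is that a protoperad carries fewer symmetries than a properad: the connected composites of the binary generators that contribute in arity $n$ are laid out along a wall of width $n$ in an essentially one-dimensional, ``associative'' fashion, so that the bar differential --- de-concatenation of a wall into two sub-walls --- takes the form of an alternating sum of face maps. Both the homological degree and the syzygy grading are visible on this simplicial module, and Koszulness of $\calP$ amounts to the homology of these normalized complexes being concentrated in syzygy degree $0$ for every $n$.

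The heart of the argument is the definition of the quadratic associative algebra $\scrA(\calP,n)$ and the analysis of its bar construction. Its generators encode the binary generators of $\calP$ together with the combinatorial data describing how a width-$n$ wall is cut by a horizontal line into two sub-walls, and its quadratic relations are those induced by the relations of $\calP$. The key structural statement is that the reduced bar construction $\Bar\scrA(\calP,n)$ splits as a direct sum of subcomplexes, indexed by the shapes of the underlying wall configurations, and that precisely one of these summands is isomorphic, as a bigraded complex, to the normalized simplicial bar construction of $\calP$ in arity $n$ constructed above; the other summands correspond to degenerate or disconnected configurations. Since $\scrA(\calP,n)$ is quadratic, it is Koszul if and only if $H_\bullet\big(\Bar\scrA(\calP,n)\big)$ is concentrated in syzygy degree $0$ (see \cite[Ch.~3]{LV12}); as homology commutes with direct sums, the hypothesis forces every summand --- in particular the one isomorphic to the arity-$n$ part of $\Bar\calP$ --- to have homology concentrated in syzygy degree $0$.

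Finally I would assemble the pieces: the previous step gives, for every $n\geqslant 2$, that $H_\bullet(\Bar\calP)$ is concentrated in syzygy degree $0$ in arity $n$, and since $\calP$ is generated in arity $2$ its augmentation ideal --- hence all of $\Bar\calP$ beyond the coaugmentation --- lives in arities $\geqslant 2$; thus $H_\bullet(\Bar\calP)$ is concentrated in syzygy degree $0$ and $\calP$ is Koszul by \Cref{thm::critere_Koszul}. The main obstacle is the middle step: constructing $\scrA(\calP,n)$ and proving the decomposition of its bar construction with the simplicial bar construction of $\calP$ as a distinguished direct summand. This demands a careful identification of two a priori different differentials --- the de-concatenation differential on decorated walls and the bar differential of the associative algebra --- together with precise bookkeeping of the two gradings (so that syzygy degrees match under the identification) and of the symmetric group actions that survive in the protoperadic framework.
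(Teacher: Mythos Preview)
Your strategy is the paper's strategy, and the key structural step you single out --- the splitting of $\Bar^{\Alg}\scrA(\calP,n)$ over partitions of $[\![1,n]\!]$, with the block indexed by the trivial partition identified with the normalized simplicial bar construction of $\calP$ in arity $n$ --- is exactly what the paper proves. But there is a genuine conflation in your outline that you should fix.

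You write that you will ``present \dots the arity-$n$ component of $\Bar\calP$ as the normalized chain complex of an explicit simplicial module'' and that ``the bar differential --- de-concatenation of a wall into two sub-walls --- takes the form of an alternating sum of face maps.'' This is not correct. The cofree coprotoperad $\Bar\calP=\scrF^c(\Sigma\overline{\calP})$ is indexed by \emph{unleveled} connected walls, and its differential contracts an arbitrary internal edge; it is \emph{not} the normalized complex of a simplicial object. The object that is simplicial is the \emph{normalized simplicial bar construction} $\mathbf{N}(\calP)$, indexed by \emph{leveled} connected walls, whose differential composes adjacent levels. These two complexes are related by the \emph{levelization morphism} $e:\Bar\calP\hookrightarrow\mathbf{N}(\calP)$, which is injective but not an isomorphism; that $e$ is a quasi-isomorphism is a separate theorem (\Cref{thm::levelization}) that you must invoke. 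So the logic runs: Koszulness of $\scrA(\calP,n)$ $\Rightarrow$ homology of the trivial-partition summand, hence of $\mathbf{N}(\calP)$ in arity $n$, is diagonal $\Rightarrow$ (via levelization) homology of $\Bar\calP$ is diagonal $\Rightarrow$ $\calP$ is Koszul.

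A second technical point: the paper does the identification of the summand not with $\mathbf{N}(\calP)$ directly but with $\mathbf{N}(\calP\sh)$ for the \emph{shuffle} protoperad $\calP\sh$, precisely to get canonical representatives of walls over the ordered set $[\![1,n]\!]$ and to make the isomorphism $\scrA(\calP,n)\cong\mathbb{S}(\calP\sh)([\![1,n]\!])$ literal. One then transfers back using that $\Bar\calP$ and $\Bar_{\shuffle}\calP\sh$ are isomorphic as chain complexes (\Cref{cor::shuffle}). This is the precise form of the ``bookkeeping of symmetric group actions'' you allude to, and it is what lets you match monomials in $\scrA(\calP,n)$ with leveled walls without ambiguity.
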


This is a useful criterion because the study of the Koszulness of algebras is easier than for pro(to)perads. Many tools are available, such as PBW or Gr\"obner bases, or rewriting methods (see \cite[Chap. 4]{LV12}). 

We use this criterion to show that the protoperad $\mathcal{DL}ie$ and therefore the properad $\widetilde{\DLie}$, which encodes double Lie algebras, are Koszul.
\begin{thm*}[see \Cref{thm::proto_DLie_Koszul} and \Cref{thm::prop_DLie_Koszul}]
	The protoperad $\DLie$ is Koszul. So, the properad $\widetilde{\DLie}$ is Koszul.
\end{thm*}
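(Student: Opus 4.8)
The plan is to apply the combinatorial Koszul criterion of \Cref{thm::critere_koszulite}: since the protoperad $\DLie$ is binary quadratic --- generated by a $\mathfrak{S}$-module concentrated in arity $2$ (the antisymmetric double bracket) and subject to the double Jacobi relation, concentrated in arity $3$ --- it suffices to prove that the quadratic associative algebra $\scrA(\DLie, n)$ is Koszul for every integer $n \geq 2$. The first step is therefore to make each $\scrA(\DLie,n)$ fully explicit: from the construction preceding \Cref{thm::critere_koszulite} one reads off a finite set of generators (indexed by the graftings of the double bracket along pairs among the $n$ strands, with the sign bookkeeping imposed by antisymmetry) together with a set of quadratic relations obtained by specializing the double Jacobi relation to triples of strands. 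I would fix a basis of generators once and for all and write out the relations in that basis.

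The core of the argument is then to prove that each $\scrA(\DLie,n)$ is Koszul, uniformly in $n$. For this I would use the rewriting / Gr\"obner basis machinery for quadratic algebras (see \cite[Chap. 4]{LV12}): choose a suitable admissible order on the monomials in the chosen generators, orient the quadratic relations into rewriting rules, and check that the resulting rewriting system is confluent, i.e. that every weight-$3$ critical pair reduces to a common normal form (the Diamond Lemma). Confluence yields a quadratic Gr\"obner basis, hence a PBW basis of $\scrA(\DLie,n)$, and therefore its Koszulness. I expect this confluence verification to be the main obstacle: the number of weight-$3$ monomials grows with $n$ and the signs coming from antisymmetry must be tracked with care, so the real work is to organise the computation so that it reduces to a bounded number of configurations involving only a fixed small number of strands, independently of $n$ --- or, alternatively, to exhibit an explicit PBW basis of $\scrA(\DLie,n)$ directly.

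The passage from the protoperad $\DLie$ to the properad $\widetilde{\DLie}$ is then formal. I would transport the result along the induction functor $\Ind$ of \Cref{def::functor induction}, which is monoidal and exact and which sends $\DLie$ to $\widetilde{\DLie}$. Since $\Ind$ is exact and monoidal, it commutes with the bar construction and with the formation of the Koszul dual, so applying it to the quasi-isomorphism $\Cobar \DLie^\antish \rightarrow \DLie$ furnished by the protoperadic Koszul criterion (\Cref{thm::critere_Koszul}) produces a quasi-isomorphism $\Cobar \widetilde{\DLie}^\antish \rightarrow \widetilde{\DLie}$; equivalently, the homology of $\Bar \widetilde{\DLie}$ is the image under $\Ind$ of the homology of $\Bar \DLie$, hence concentrated in the diagonal weight. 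By the properadic Koszul criterion (\cite[Th. 149]{Val03}, \cite{Val07}) this is precisely the statement that $\widetilde{\DLie}$ is Koszul. This last step relies only on the compatibility of $\Ind$ with the bar-cobar adjunction established in the first part of this paper.
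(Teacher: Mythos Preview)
Your plan is correct and matches the paper's approach: reduce to the Koszulness of $\scrA(\DLie,n)$ via \Cref{thm::critere_koszulite}, establish that by the rewriting method of \cite[Chap.~4]{LV12}, and transfer to $\widetilde{\DLie}=\Ind(\DLie)$ using exactness and monoidality of $\Ind$.

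One technical refinement in the paper is worth flagging. The confluence check is not carried out on $\scrA(\DLie,n)$ itself but on its Koszul dual $W^n$: there every relation is a two-term binomial (for instance $x_{ij}x_{jk}+x_{jk}x_{ik}$, $x_{ij}^2$, and the anticommutators $x_{ab}x_{uv}+x_{uv}x_{ab}$), so each rewriting rule sends a monomial to a single signed monomial or to zero. By contrast, the double-Jacobi relations in $\scrA(\DLie,n)$ have three terms, and rewriting rules with two monomials on the right make the critical-pair analysis substantially heavier. Your expectation that the verification reduces to finitely many index configurations independent of $n$ is exactly right and is how the paper organises the appendix: the critical monomials are classified by the relative order and coincidences among at most four indices, giving a finite case analysis.
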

This theorem is very important: it is the first example of a Koszul properad with a generator not in arity $(1,2)$ or $(2,1)$.

And so, with an argument of distributive law, we deduce the main theorem of this paper.
\begin{thm*}[see \Cref{thm::DPois_Koszul}]
	The properad $\DPois$ is Koszul.
\end{thm*}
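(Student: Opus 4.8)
The plan is to deduce the Koszulness of $\DPois$ from the Koszulness of $\widetilde{\DLie}$ (established in the previous theorems) by exhibiting $\DPois$ as obtained from the two pieces $\As$ and $\widetilde{\DLie}$ via a distributive law, and then invoking the general principle that a properad built by a distributive law from two Koszul properads is again Koszul, provided the distributive law is compatible in the appropriate sense. Concretely, I would first recall from \Cref{lem::loi_remplacement_compatible} that $\DPois$ admits a presentation $\DPois \cong \As \vee \widetilde{\DLie} / (\text{mixed relations})$, and that the mixed relations encode exactly the double Leibniz rule relating the associative product and the double bracket; the content of that lemma is that this rewriting rule is \emph{confluent}, i.e. defines a genuine distributive law $\lambda \colon \widetilde{\DLie} \circ \As \to \As \circ \widetilde{\DLie}$ (in the properadic sense of Vallette), so that as an $\mathfrak S$-bimodule $\DPois \cong \As \circ \widetilde{\DLie}$.

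The second step is the Koszul-duality input. Both $\As$ and $\widetilde{\DLie}$ are quadratic properads, and $\DPois$ is then quadratic with quadratic data the "sum plus mixed" data. One checks that the Koszul dual cooperad/coproperad of $\DPois$ is, dually, built by the transpose distributive law from $\As^{\antish}$ and $\widetilde{\DLie}^{\antish}$, so that $\DPois^{\antish} \cong \widetilde{\DLie}^{\antish} \circ \As^{\antish}$ as $\mathfrak S$-bimodules. The heart of the argument is then a comparison of the Koszul complex of $\DPois$ with the (tensor/composite of the) Koszul complexes of $\As$ and $\widetilde{\DLie}$: the distributive law lets one filter the Koszul complex of $\DPois$ so that the associated graded is the composite of the Koszul complexes of the two factors, and a spectral sequence (or a direct acyclicity argument on the bicomplex) shows that since $\As$ is Koszul and $\widetilde{\DLie}$ is Koszul (by \Cref{thm::prop_DLie_Koszul}), the Koszul complex of $\DPois$ is acyclic, which by the Koszul criterion (the properadic analogue \cite[Th.~149]{Val03}, \cite{Val07}) means $\DPois$ is Koszul.

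I would therefore structure the proof as: (i) cite \Cref{lem::loi_remplacement_compatible} to get the distributive law $\lambda$ and the iso $\DPois \cong \As \circ \widetilde{\DLie}$; (ii) observe $\As$ is Koszul (classical) and $\widetilde{\DLie}$ is Koszul by \Cref{thm::prop_DLie_Koszul}; (iii) invoke (or prove, if not already available in the properadic literature) the distributive-law lemma for properads: if $\calP$ and $\calQ$ are Koszul and $\lambda$ is a distributive law, then $\calP \circ_\lambda \calQ$ is Koszul, with Koszul dual $\calQ^{\antish} \circ_{\lambda^\vee} \calP^{\antish}$. The distributive-law machinery for operads is \cite[Sect.~8.6]{LV12}; the properadic version should follow from the same Koszul-complex filtration argument, and $\DPois$ falls directly under it.

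The main obstacle is step (iii): unlike the operadic case, there is no off-the-shelf "distributive law implies Koszul" theorem for properads in the literature I can cite verbatim, so I expect to have to adapt Vallette's Koszul criterion to the filtered Koszul complex $\As^{\antish} \circ \DPois \circ \widetilde{\DLie}^{\antish}$ (or the relevant one-sided version), checking that the filtration by the $\As$-weight is exhaustive and bounded in each arity $(m,n)$ — this finiteness is where the \emph{connected} weight-graded hypothesis and the arity-wise finite-dimensionality of these particular properads are used — and that the $E^1$-page is precisely the composite of the two (acyclic-in-positive-weight) Koszul complexes. The confluence/diamond-lemma check underlying \Cref{lem::loi_remplacement_compatible} is the other place where something could go wrong, but since that lemma is already proved, here it only needs to be quoted. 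Everything else is formal.
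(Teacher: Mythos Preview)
Your proposal is correct and follows essentially the same route as the paper: establish the compatible distributive law via \Cref{lem::loi_remplacement_compatible}, use the Koszulness of $\As$ and of $\widetilde{\DLie}=\Ind(\DLie)$, and conclude by the distributive-law criterion.

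The one point where you spend unnecessary effort is step (iii). You write that ``there is no off-the-shelf `distributive law implies Koszul' theorem for properads in the literature'' and plan to adapt the filtration/spectral-sequence argument yourself. In fact this result \emph{is} already in Vallette's thesis: see \cite[lem.~155, prop.~156 and 158]{Val03}, which the paper records as \Cref{prop::loi_remplacement}. That proposition states precisely that for $\calP=\mathscr{P}(V\oplus W,R\oplus D_\lambda\oplus S)$ with $\lambda$ a compatible replacement law, $\calP\cong\mathcal{A}\boxtimes_c^{\mathrm{Val}}\mathcal{B}$, the Koszul dual splits as $\calP^{\antish}\cong\mathcal{B}^{\antish}\boxtimes_c^{\mathrm{Val}}\mathcal{A}^{\antish}$ (under the finite-dimensionality and degree hypotheses you correctly identify), and if $\mathcal{A}$ and $\mathcal{B}$ are Koszul then so is $\calP$. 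So the paper's proof is literally one line: ``Directly by \Cref{prop::loi_remplacement}.'' Your filtration argument is the right idea behind Vallette's proof, but you do not need to redo it.
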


In an future article, we will explain the homotopy transfert theorem for properadic algebras and we will use this in an other future work, where we will study the implications of \Cref{thm::DPois_Koszul} in derived noncommutative algebraic geometry \`a la Berest et al. (see \cite{BCER12,BFPRW14,BFR12,CEEY15}). In particular, we will link it to pre-Calabi Yau structures as in \cite{Yeu18,IK18}.
We will also look at the cohomological theory of double Poison algebras. Indeed, the work of Merkulov and Vallette gives the notion of deformation theory of $\calP$-algebras, for $\calP$ a properad. We want to link the deformation complex defined in \cite{MV09II} with the work of Pichereau et al. who defined the cohomology of \emph{differential} double Poisson algebra (see \cite{PV08}).

\subsection*{Organization of the paper}
After a review of definitions and some properties of protoperads (see \cite{Ler18i}) in  \Cref{sect::review_protoperads}, following the results on properads (see \cite{Val03,Val07,MV09I}), we introduce the notion of shuffle protoperads in  \Cref{sect::shuffle protoperad}. In \Cref{sect::Koszul duality}, we define the Koszul duality of protoperads. We transpose a part of the results on properads obtained by Vallette in \cite{Val03,Val07} to the protoperatic framework thanks to the exactness of the induction functor $\Ind$ (see \cite[\Cref{ProtoI-prop::Ind_exact}]{Ler18i}). In \Cref{sect::simplicial}, we define the simplicial bar construction and the normalized one for protoperads and we described the levelization morphism (see \Cref{def::levelizationmorphism}). In \Cref{sect::criterion}, we give a criterion to prove that a binary quadratic protoperad is Koszul and we use it to prove that the protoperad $\DLie$ is Koszul. Finally, in \Cref{sect::DPoisKoszul}, we use results of Vallette on distributive laws to prove that the properad $\DPois$ is Koszul.


\setcounter{tocdepth}{1} 
\tableofcontents

\section*{Notations}
	We write $\N^*$ for the set $\N\backslash \{0\}$. In all this paper, $k$ is a field with characteristic different to $2$. We denote by $\Fin$, the category with finite sets as objects and bijections as morphisms and $\Set$, the category of all sets and all maps. For two integers $a$ and $b$, we note by $[\![a,b]\!]$ the set $[a,b]\cap \mathbb{Z}$, and, for $n\in \N^*$, $\mathfrak{S}_n$ is the automorphism group of $[\![1,n]\!]$, i.e. $\mathfrak{S}_n=\mathrm{Aut}_{\Fin}\left( [\![1,n]\!] \right)$. We denote by $\Ch_k$ the category of $\Z$-graded chain complexes over the field $k$.
	
\section{Recollections on pro(to)perads}\label{sect::review_protoperads}
We briefly recall the definition of protoperads and some results of \cite{Ler18i}.  We denote by $\Smodred_k$, the category of contravariant functors from $\Fin$ to the category of chain complexes $\Ch_k$ such that $P(\varnothing)=0$.

\subsection{Combinatorial functors}\label{subsect::combinatorial_functors}

We recall two important functorial combinatorial constructions which are described in \cite[\Cref{ProtoI-sect::bricks_and_walls}]{Ler18i}: the functors $\Wall$ and $\mathcal{X}^{\mathrm{conn}}$.
\begin{nota*}
	For a poset $(P,\leqslant_P)$, we denote by $\mathrm{Succ}(P)$, the set of pairs $(r,s) \in P\times P$ such that $r<_P s$ and there does not exist $t\in P$ such that $r<_P t <_P t$.
\end{nota*}

The functor $\mathcal{W}_n: \Fin\op  \longrightarrow \Fin\op $ 
is given, for all finite sets $S$, by 
\[
\mathcal{W}_n(S):=
	\left\{ \big( W=\{W_\alpha\}_{\alpha\in A}\big)~~\left|~~ 
	\begin{array}{l}
		|A|=n; \ \cup_\alpha W_\alpha=S; \ \forall \alpha\in A, W_\alpha \ne \varnothing\\
		\forall s\in S,~\Gamma^W_s:=\{W_\alpha|s\in W_\alpha\} \\
		\quad \mbox{is totally ordered (by }\leqslant_s) \\
		\forall s,t\in S, ~ \forall a,b\in\Gamma_s\cap \Gamma_t,~~a\leqslant_s b 
	\end{array} \ 
	\right. \right\} \ ;
\]
for $W$ in $\mathcal{W}(S)$, the collection of partial orders $\{\leqslant_s\}_{s\in S}$ defines a canonical partial order on $W$ (see \cite[\Cref{ProtoI-lem::ordre_partiel_canonique}]{Ler18i}). The action of $\sigma \in \mathrm{Aut}(S)$ on $\big(\{W_\alpha\}_{\alpha\in A},\leqslant\big)$ in $\mathcal{W}_n$ is induced by the  action on $S$, i.e.
\[
	\big(\{W_\alpha\}_{\alpha\in A},\leqslant\big)\cdot\sigma =\big(W=\{W_\alpha\cdot \sigma\}_{\alpha\in A},\leqslant^\sigma\big)
\]
where $\leqslant^\sigma$ is induced by the total orders of $\Gamma^{W\cdot \sigma}_{s\cdot \sigma}:=\{W_\alpha\cdot \sigma|s\cdot \sigma\in W_\alpha\cdot \sigma\}$. We also define the functor 
\[
\begin{array}{rccc}
\mathcal{W} : &\Fin\op  & \longrightarrow &\Set\op  \\
& S & \longmapsto & \coprod_{n\in\N^*} \mathcal{W}_n(S)
\end{array}~~.
\]
Let  $(W=\{W_\alpha\}_{\alpha\in A},\leqslant)$ be a wall in $\mathcal{W}(S)$. We define on $W$  \emph{the equivalence relation of connectedness} $\overset{conn.}{\sim}$: for two elements $a$ and $b$ of $A$, we say $W_a\overset{conn.}{\sim} W_b$ if there exist an integer $n\geqslant 2$  and a sequence $W_0,W_1,\ldots,W_{n-1},W_n$ of elements of $W$ with $W_0=W_a$ and $W_n=W_b$ such that, for all $i$ in $[\![0,n-1]\!]$, 
\[
W_i\cap W_{i+1}\ne \varnothing \ \mbox{and} \ (W_i,W_{i+1})\in \mathrm{Succ}(W) \ \mbox{or} \ (W_{i+1},W_{i})\in \mathrm{Succ}(W).
\]
\begin{defi}[Projection $\mathcal{K}$] \label{def::projection_K}
	We define the natural projection $\mathcal{K}$ as follows: for a finite set $S$, we have
	\[
	\begin{array}{cccc}
	\mathcal{K}_S : & \mathcal{W}(S) & \longrightarrow & \mathcal{Y}(S) \subset \mathcal{W}(-) \\
	& W & \longmapsto & \left\{ \left. \bigcup_{B_\alpha\in\pi^{-1}([B])} B_\alpha \  \right| \ [B]\in \pi(W) \right\}
	\end{array} \ ,
	\]
	where $\pi$ is the projection of $W$ to its quotient by $\overset{conn.}{\sim}$.
\end{defi} 

We also have the subfunctor $\Wall_n \hookrightarrow \mathcal{W}_n$ of connected walls 
$\Wall_n: \Fin\op  \longrightarrow \Fin\op $ which is given, for all finite sets $S$, by 
\[
\Wall_n(S):=
\left\{ 
	\big( W=\{W_\alpha\}_{\alpha\in A},\leqslant\big) 
	\in \mathcal{W}_n(S) 
 \left| 
\mathcal{K}_S(W)=\{S\} 
\right. \right\} \ .
\]
We also define the functor 
\[
\begin{array}{rccc}
\Wall : &\Fin\op  & \longrightarrow &\Set\op  \\
& S & \longmapsto & \coprod_{n\in\N^*} \Wall_n(S)
\end{array}~~.
\]
An element $W$ of $\Wall(S)$ is called a connected \emph{wall} over $S$, and an element of a wall $W$ is called a \emph{brick} of $W$. Hence, we have other important subfunctors of $\Wall$: for all finite sets $S$, we have
\begin{itemize} 
\item The functor $\mathcal{Y}:\Fin\op\rightarrow\Set\op$ given by
\[
	\mathcal{Y}(S):=
		\left\{
		\big( \{K_\alpha\}_{\alpha\in A},\leqslant\big) \in \mathcal{W}(S) 
		\ \left| \
		\forall s\in S, \ |\{K_\alpha \ | \ s\in K_\alpha \}| =1		
	\right. \right\} \ ;	
\]
an element of $\mathcal{Y}(S)$ is a non-ordered partition of $S$;
\item The functor $\mathcal{X}=\mathcal{Y}\times \mathcal{Y}$, given by
\[
	\mathcal{X}(S)\cong
	\left\{
		\big( \{K_\alpha\}_{\alpha\in A},\leqslant\big) \in \mathcal{W}(S) 
		\ \left| \
		\forall s\in S, \ |\{K_\alpha \ | \ s\in K_\alpha \}| =2		
	\right. \right\} \ ;
\]
an element $K$ of $\mathcal{X}(S)$ is an ordered pair of unordered partitions of the finite set $S$, so we also denote by $(I,J)$ such a $K$;
\item the functor $\mathcal{X}^{\mathrm{conn}}$, which is a subfunctor of $\mathcal{X}$, given by
\[
	\mathcal{X}^{\mathrm{conn}}(S):=
	\left\{
		\big( \{K_\alpha\}_{\alpha\in A},\leqslant\big) \in \Wall(S) 
		\ \left| \
		 \forall s\in S, \ |\{K_\alpha \ | \ s\in K_\alpha \}| =2		
		\right. 
	\right\} .
\]
\end{itemize}
The functor $\mathcal{X}^{\mathrm{conn}}$ encodes a new monoidal structure on the category of reduced $\mathfrak{S}$-modules, the connected composition product, as we will see in \Cref{def::prod_connexe_Smod}.
\subsection{Monoidal structures and the induction functor}
We have three monoidal structures on $\Smodred_k$.
\begin{defiprop}[Composition product]
The \emph{composition product} is the bifunctor
\[
	- \square - : \Smodred_k \times \Smodred_k \longrightarrow \Smodred_k
\]
defined, for $P$, $Q$ two reduced $\mathfrak{S}$-modules and $S$ a finite set, by
\[
	\big(P\:\square\: Q\big) (S) :=  P(S) \otimes Q(S).
\]
This bi-additive bifunctor gives $\Smodred$ a symmetric monoidal structure, with identity $I_\square$, defined, for all non empty sets $S$, by $I_\square(S):=k$ concentrated in degree $0$.
\end{defiprop}

\begin{defiprop}[Concatenation product]
The \emph{concatenation product} is the bifunctor 
\[
-\otimes^{conc}- : \Smodred_k \times \Smodred_k \longrightarrow \Smodred_k
\]
defined, for all finite sets $S$ and all reduced $\mathfrak{S}$-modules $P$ and $Q$, by:
\[
\big(P\otimes^{conc} Q\big) (S) := 
\underset{{\substack{
			S',S''\in \Ob\Fin \\ 
			S'\amalg S''=S
}}}{\bigoplus} P(S') \otimes Q(S'').
\]
This product is symmetric monoidal without unit (since we are working with reduced $\mathfrak{S}$-modules).
\end{defiprop}
\begin{nota}\label{nota::S}
	We denote by $\mathbb{S}$, the functor which sends a reduced $\mathfrak{S}$-module $V$ to the free symmetric monoid \emph{without unit} $\mathbb{S}(V)$ for the concatenation product (see \cite[\Cref{ProtoI-sect::free_monoid}]{Ler18i}).
\end{nota}
\begin{rem}
We can extend the concatenation product:
\begin{equation}\label{eq::extension_concatenation} 
-\otimes^{\mathrm{conc}}-:\Smod_k \times \Smodred_k \longrightarrow \Smodred_k.
\end{equation}
This extension is induced by the equivalence of categories 
\[
\Smod_k\cong \Ch_k\times\Smodred_k,
\]
by the injection $(-)^{\mathfrak{S}}:\Ch_k  \hookrightarrow  \Smod_k $ defined, for all chain complexes $C$ and all finite sets $S$, by
\[
(C)^{\mathfrak{S}}(S):=\left\{
\begin{array}{cc}
C & \mbox{if } S=\varnothing, \\
0 & \mbox{otherwise;}
\end{array}\right.
\]
and by the action of the category $\Ch_k $ on $\Smodred_k$ defined, for all chain complexes $C$ and all finite sets $S$, by 
\[
\big(C\otimes^{\mathrm{conc}} V\big)(S):= C \otimes V(S).
\]
This extension allows us to define the suspension of a $\mathfrak{S}$-module.
\end{rem}
\begin{defi}[Suspension of a $\mathfrak{S}$-module (see {\cite[\Cref{ProtoI-def::suspension_Smod}]{Ler18i}}]\label{def::suspension_Smod}
	Let $\Sigma$ (respectively $\Sigma^{-1}$) be the chain complex $k$ concentrated in degree $1$ (resp. in degree $-1$). For $V$ a reduced $\mathfrak{S}$-module, the \emph{suspension of $V$} (resp. \emph{desuspension of $V$}) is the reduced $\mathfrak{S}$-module  $\Sigma V \overset{not.}{:=} \Sigma\otimes^{\mathrm{conc}} V$ (resp. $\Sigma^{-1} V \overset{not.}{:=} \Sigma^{-1}\otimes^{\mathrm{conc}} V$).
\end{defi}

\begin{defiprop}[Connected composition product of $\mathfrak{S}$-modules (see {\cite[\Cref{ProtoI-def::prod_connexe_Smod}]{Ler18i}})] \label{def::prod_connexe_Smod}
	The \emph{connected composition product} of reduced $\mathfrak{S}$-modules is the bifunctor 
	\[
	- \boxtimes_c - : \Smodred_k \times \Smodred_k \longrightarrow \Smodred_k
	\]
	defined, for all reduced $\mathfrak{S}$-modules $P$, $Q$ and for all non empty finite sets $S$, by:	  
	\[
	~P \boxtimes_c Q (S):= \underset{(I,J)\in \mathcal{X}^{\mathrm{conn}}(S)}{\bigoplus}~\underset{\alpha}{\bigotimes}~P(I_\alpha) \otimes \underset{\beta}{\bigotimes}~Q(J_\beta),
	\]
	where $\bigotimes_\alpha~P(I_\alpha) \otimes \bigotimes_\beta~Q(J_\beta)$ with $(I,J)$ in $\mathcal{X}^{\mathrm{conn}}(S)$ is the notation for  
	\[
	\bigotimes_{i=1}^r P(I_i) \otimes \bigotimes_{j=1}^s Q(J_j)\bigg/_\sim
	\] where the relation $\sim$ identifies $(p_1\otimes\ldots\otimes p_r)\otimes(q_1\otimes\ldots\otimes q_s)$ with
	\[
	(-1)^{|\sigma(p)|+|\tau(q)|}(p_{\sigma^{-1}(1)}\otimes\ldots\otimes p_{\sigma^{-1}(r)})\otimes(q_{\tau^{-1}(1)}\otimes\ldots\otimes q_{\tau^{-1}(s)})
	\]
	for all $\sigma$ in $\mathfrak{S}_r$, $\tau$ in $\mathfrak{S}_s$ with $(-1)^{|\sigma(p)|}, (-1)^{|\tau(q)|}$, the Koszul signs induced by permutations. We also denote by $\Ibox$, the $\mathfrak{S}$-module given by
	\[
	\Ibox(S) \overset{\mathrm{def}}{:=} 
	\left\{\begin{array}{cl}
	k & \mbox{if } |S|=1, \\
	0 & \mbox{otherwise},
	\end{array}\right. 
	\]
	which is the unit of the product $\boxtimes_c$. The category $(\Smodred_k,\boxtimes_c,\Ibox)$ is a (symmetric) monoidal category. The monoids for this product are called \textbf{protoperads}. 
\end{defiprop}
We have a compatibility between these monoidal structures.
\begin{prop}[Compatibility between monoidal structures (see {\cite[\Cref{ProtoI-prop::S_permute_prod_connexe}]{Ler18i}})]
	\label{prop::S_permute_prod_connexe}
	Let $P$ and $Q$ be two reduced $\mathfrak{S}$-modules. There is a  natural isomorphism of $\mathfrak{S}$-modules: 
	\[
	\mathbb{S}(P \boxtimes_c Q) \cong \mathbb{S}P \:\square\: \mathbb{S}Q.
	\]
	In particular, for  a protoperad $\calP $, the $\mathfrak{S}$-module $\mathbb{S}\calP $ is a monoid for the product $\square$.
\end{prop}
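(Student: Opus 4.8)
The plan is to construct the natural isomorphism $\mathbb{S}(P\boxtimes_c Q)\cong \mathbb{S}P\:\square\:\mathbb{S}Q$ evaluated at a fixed non-empty finite set $S$, by matching the indexing combinatorics on both sides. First I would unwind the right-hand side: by definition of $\mathbb{S}$ as the free symmetric monoid without unit for the concatenation product $\otimes^{\mathrm{conc}}$, the $\mathfrak{S}$-module $\mathbb{S}(V)$ evaluated at $S$ is a direct sum, indexed by unordered partitions of $S$ (an element of $\mathcal{Y}(S)$), of tensor products $\bigotimes_\alpha V(Y_\alpha)$, with the appropriate Koszul-sign identification of reorderings of the tensor factors. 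Hence, using the composition product $\big(\mathbb{S}P\:\square\:\mathbb{S}Q\big)(S)=\mathbb{S}P(S)\otimes\mathbb{S}Q(S)$, the right-hand side is a direct sum over pairs $(I,J)$ of unordered partitions of $S$ — that is, over $\mathcal{X}(S)\cong\mathcal{Y}(S)\times\mathcal{Y}(S)$ — of $\bigotimes_\alpha P(I_\alpha)\otimes\bigotimes_\beta Q(J_\beta)$.

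Next I would unwind the left-hand side. A summand of $\mathbb{S}(P\boxtimes_c Q)$ at $S$ is indexed by an unordered partition $\{S_\gamma\}$ of $S$ and, for each block $S_\gamma$, an element of $\mathcal{X}^{\mathrm{conn}}(S_\gamma)$, i.e.\ a \emph{connected} pair of partitions of the block; the corresponding term is the tensor product over $\gamma$ of $\bigotimes P(I^\gamma_\alpha)\otimes\bigotimes Q(J^\gamma_\beta)$. So the key combinatorial fact to establish is: giving an arbitrary pair $(I,J)\in\mathcal{X}(S)$ is the same as giving a partition of $S$ into blocks together with a connected pair of partitions on each block. This is exactly the statement that every element of $\mathcal{X}(S)$ decomposes canonically and uniquely into its connected components, which is precisely the content of the projection $\mathcal{K}$ (\Cref{def::projection_K}): the connectedness equivalence relation on a wall $W\in\mathcal{X}(S)$ partitions its bricks, $\mathcal{K}_S(W)$ records the resulting partition of $S$, and restricting $W$ to each part yields an element of $\mathcal{X}^{\mathrm{conn}}$ of that part. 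Conversely, the disjoint union of connected walls over a partition of $S$ reassembles an element of $\mathcal{X}(S)$; these two operations are mutually inverse. This bijection of indexing sets is $\mathrm{Aut}(S)$-equivariant, since the $\mathfrak{S}$-action permutes blocks and connected components compatibly.

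Having matched the indexing sets, I would check that on each matched summand the two sides agree as chain complexes, including the degree/differential and the Koszul signs: both the $\sim$ relation appearing in the definition of $\boxtimes_c$ and the sign rule implicit in the free monoid $\mathbb{S}$ are generated by permutations of tensor factors with the standard Koszul sign, and the decomposition into connected components is merely a regrouping of these factors, so the signs are consistent (this uses $\mathrm{char}\,k\neq 2$ only insofar as the ambient conventions do). Finally, naturality in $P$ and $Q$ is immediate since the identification is built blockwise from the identity maps on the $P(I_\alpha)$ and $Q(J_\beta)$. The last sentence — that $\mathbb{S}\calP$ is a $\square$-monoid when $\calP$ is a $\boxtimes_c$-monoid — then follows formally: $\mathbb{S}$ is a (op)lax-to-strong monoidal comparison, so it carries the structure maps $\calP\boxtimes_c\calP\to\calP$ and $\Ibox\to\calP$ to maps $\mathbb{S}\calP\:\square\:\mathbb{S}\calP\cong\mathbb{S}(\calP\boxtimes_c\calP)\to\mathbb{S}\calP$ and $I_\square\to\mathbb{S}\calP$ satisfying the monoid axioms. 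The main obstacle I anticipate is bookkeeping the Koszul signs through the connected-component regrouping carefully enough to see they introduce no discrepancy; the combinatorial bijection via $\mathcal{K}$ is conceptually the heart of the argument but is essentially forced once the definitions are laid side by side.
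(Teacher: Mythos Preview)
Your approach is correct and is precisely the argument one expects: the heart of the isomorphism is the combinatorial bijection between $\mathcal{X}(S)$ and the set of pairs consisting of a partition $\{S_\gamma\}\in\mathcal{Y}(S)$ together with a connected element of $\mathcal{X}^{\mathrm{conn}}(S_\gamma)$ on each block, furnished by the connected-component projection $\mathcal{K}$ of \Cref{def::projection_K}. Note, however, that the present paper does not itself supply a proof of this proposition --- it is stated as a recollection from the companion paper \cite{Ler18i}, where the argument is carried out; your plan matches what that proof must do. One small remark on your last paragraph: $\mathbb{S}$ is the free \emph{non-unital} commutative monoid for $\otimes^{\mathrm{conc}}$, so the unit for $\mathbb{S}\calP$ as a $\square$-monoid is not quite ``$\mathbb{S}$ applied to $\eta$'' in the naive sense; rather one checks directly that $\mathbb{S}(\Ibox)\cong I_\square$ (the only partition contributing is the partition into singletons), and then $\mathbb{S}(\eta)$ does give the required unit map. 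This is a bookkeeping point and does not affect the substance of your argument.
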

We have a notion of free protoperad. The combinatorics of the free protoperad is described by the functor of connected walls $\Wall$.

\begin{prop}[Free protoperad {(see \cite[\Cref{ProtoI-prop::proto_libre}]{Ler18i})}]\label{prop::freeproto}
	Let $V$ be a reduced $\mathfrak{S}$-module and $\rho$ be a positive integer. There exists  a free protoperad on $V$, denoted by $\scrF(V)$. For a finite set $S$, there is an isomorphism of weight-graded right $\mathrm{Aut}(S)$-modules, given on each weight $\rho$, by
	\[
	\scrF^\rho(V)(S) \cong \bigoplus_{{\substack{(\{K_\alpha\}_{\alpha\in A},\leqslant)\\ \in \mathcal{W}^{\mathrm{conn}}_\rho(S)}}} \bigotimes_{\alpha\in A} V(K_\alpha),
	\]
	where $\Wall : \Fin\op  \rightarrow \Set\op $ is the weight-graded functor of connected walls. The functor $\scrF$ is the left adjoint to the forgetful functor
	\[
	\scrF: \Smodred_k \rightleftarrows \mathsf{protoperads}_k : \mathrm{For}.
	\]
\end{prop}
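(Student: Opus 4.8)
The plan is to build $\scrF(V)$ by hand from the stated formula, equip it with a protoperad (i.e.\ $\boxtimes_c$-monoid) structure given by concatenation of walls, and then check the universal property directly; the adjunction $\scrF\dashv\For$ is then formal. One should note at the outset that $\bigoplus_{n\geqslant 0}V^{\boxtimes_c n}$ will \emph{not} work: exactly as for properads, an $n$-fold iterate $V^{\boxtimes_c n}$ records a connected wall \emph{together with a choice of levelisation}, and distinct levelisations of the same wall have to be identified — the summation over $\Wall_\rho(S)$ in the statement is precisely the outcome of this identification. Concretely, I would define, for each finite set $S$,
\[
	\scrF(V)(S):=\Ibox(S)\ \oplus\ \bigoplus_{\rho\geqslant 1}\ \bigoplus_{(\{K_\alpha\}_{\alpha\in A},\leqslant)\in\Wall_\rho(S)}\ \bigotimes_{\alpha\in A}V(K_\alpha),
\]
with weight-$0$ part $\Ibox$ and right $\mathrm{Aut}(S)$-action induced by the action on $\Wall_\rho(S)$ recalled in \Cref{subsect::combinatorial_functors} (permute bricks, twist each $V(K_\alpha)$ by the induced bijection, insert Koszul signs). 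A connected wall of weight $1$ over $S$ has a single brick, necessarily equal to $S$, so $\scrF^1(V)(S)\cong V(S)$, and this gives the candidate unit $\eta\colon V\hookrightarrow\For\scrF(V)$.

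Next I would put the protoperad structure on $\scrF(V)$. The unit $\Ibox\to\scrF(V)$ is the inclusion of the weight-$0$ summand. For the product $\gamma\colon\scrF(V)\boxtimes_c\scrF(V)\to\scrF(V)$, note that by \Cref{def::prod_connexe_Smod} an element of the source over $S$ is indexed by a pair $(I,J)\in\mathcal{X}^{\mathrm{conn}}(S)$ together with a ($V$-decorated) connected wall on each part $I_\alpha$ and on each part $J_\beta$; superposing all these sub-walls along the $(I,J)$-structure yields a single connected wall over $S$ whose bricks are exactly the sub-bricks, and $\gamma$ maps the given tensor to the corresponding summand of $\scrF(V)(S)$. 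Associativity and equivariance of $\gamma$ then reduce to combinatorial statements about $\Wall$ — compatibility of the canonical partial orders under superposition, already isolated in \cite{Ler18i} — and can alternatively be verified after applying $\mathbb{S}(-)$, where \Cref{prop::S_permute_prod_connexe} turns the question into the easy associativity of a $\square$-monoid, $\square$ being levelwise tensor product.

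Finally I would verify the universal property. Given a protoperad $\calP$ and a morphism $f\colon V\to\For\calP$ of reduced $\mathfrak{S}$-modules, define $\widetilde f\colon\scrF(V)\to\calP$ on the summand indexed by a connected wall $W$ over $S$ with bricks $\{K_\alpha\}$ by sending $\bigotimes v_\alpha$ to the iterated composition in $\calP$ of the $f(v_\alpha)$ prescribed by $W$. To make this precise, choose a linear extension of the canonical partial order on $W$: it exhibits the summand inside a specific iterate $V^{\boxtimes_c n}$ with a definite bracketing, which maps to $\calP$ through $f$ and the structure maps of $\calP$. Independence of the linear extension — hence well-definedness of $\widetilde f$ — follows from the associativity and equivariance axioms of $\calP$, since any two linear extensions are linked by transpositions of $\leqslant$-incomparable elements. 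One then checks that $\widetilde f$ is a morphism of protoperads (it intertwines superposition of walls with composition in $\calP$), that it restricts to $f$ in weight $1$, and that it is the unique such morphism because $\eta(V)$ generates $\scrF(V)$ as a protoperad; this yields the natural bijection $\Hom_{\mathsf{protoperads}_k}(\scrF(V),\calP)\cong\Hom_{\Smodred_k}(V,\For\calP)$.

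The main obstacle is the well-definedness of $\widetilde f$ — equivalently, the coherence of $\gamma$: one must show that a connected wall encodes an unambiguous composition scheme in an \emph{arbitrary} protoperad, i.e.\ that the canonical partial order of \cite{Ler18i} records exactly the data needed and forces no spurious relations. This is precisely where the detailed analysis of $\Wall$, $\mathcal{X}^{\mathrm{conn}}$ and the attendant combinatorics from \cite{Ler18i} does the real work; granting that input, everything else is a formal unwinding of the definitions of monoid and adjunction, entirely parallel to Vallette's construction of the free properad in \cite{Val03,Val07}.
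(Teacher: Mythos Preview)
This proposition is stated in the paper only as a recollection from \cite{Ler18i}; the present paper gives no proof of it. There is therefore nothing in this paper to compare your argument against. Your outline is a sound direct construction: define $\scrF(V)$ by the wall formula, give the multiplication by superposition of connected walls, and verify the universal property by choosing a linear extension of the canonical partial order on a wall to produce an iterated $\boxtimes_c$-composite in the target protoperad. You have correctly located the one genuinely nontrivial point, namely that two linear extensions of the same wall yield the same element of $\calP$ (equivalently, that $\gamma$ is well defined and associative), and you are right that this is exactly the combinatorial content packaged into $\Wall$ and $\mathcal{X}^{\mathrm{conn}}$ in \cite{Ler18i}.

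One remark on strategy: the companion paper almost certainly does \emph{not} build $\scrF(V)$ bare-handed as you do, but rather invokes Vallette's general free-monoid construction in an abelian monoidal category whose product preserves reflexive coequalizers and sequential colimits (this is the content of \Cref{lem::Smodsh_ana_scinde} and its symmetric analogue), and then identifies the output combinatorially with the wall description. That route trades your direct coherence check for a verification that $(\Smodred_k,\boxtimes_c)$ satisfies the hypotheses of the general machine, which is arguably cleaner because associativity and the universal property come for free once the categorical hypotheses are in place. Your approach is more explicit and has the advantage of making the wall combinatorics do visible work, but you would still owe a careful proof that ``any two linear extensions differ by transpositions of incomparable bricks, and each such transposition is absorbed by equivariance/associativity in $\calP$'' --- this is true but not entirely formal, since the relevant symmetry in $\boxtimes_c$ is not a plain swap but the Koszul-signed identification built into the quotient in \Cref{def::prod_connexe_Smod}.
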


The notion of protoperad is compatible with the notion of properad, defined by Vallette in  \cite{Val03,Val07} and \cite{MV09I}, via the induction functor.

\begin{defiprop}[Properad -- Free properad (see \cite{Val03,Val07})]
	The category of reduced $\mathfrak{S}$-bimodules, i.e. the category of functors $P:\Fin\times \Fin\op \rightarrow \Ch_k$ such that, for all finite set $S$, $P(S,\varnothing)=0=P(\varnothing,S)$, is monoidal for the connected composition product denoted by $\boxtimes_c^{\mathrm{Val}}$. The monoids for this product are called \emph{properads}. We have the \emph{free properad functor}, which is denoted by $\scrF\Val$ which is the left adjoint to the forgetful functor:
	\[
		\scrF\Val:\Sbimodred_k\rightleftarrows \mathsf{properads}_k : \mathrm{For}.
	\]
%
\end{defiprop}
We define a monoidal adjunction between these categories.
\begin{defiprop}[Induction functor (see {\cite[\Cref{ProtoI-defi::foncteur Ind}]{Ler18i}})]\label{def::functor induction}
	We define the \emph{induction functor} $\Ind : \Smodred_k \longrightarrow \Sbimodred_k$ which is given, for all reduced $\mathfrak{S}$-modules $V$ and, for all finite sets $S$ and $E$, by:
	\begin{align*}
		\big(\Ind \;V \big)(S,E)\cong & 
		\left\{ \begin{array}{cl}
		0 & \mbox{ if } S\not\cong E \\ 
		k[\mathrm{Aut}(S)]\otimes V(S)  & \mbox{ otherwise.}
		\end{array} \right.
		~~\\
	\end{align*}
	This functor is exact, has a right adjoint which is the \emph{functor of restriction} $\Res$, and is monoidal. 
	Hence, we have the functor
	\[
		\Ind : \mathsf{protoperads} \longrightarrow \mathsf{properads}.
	\]
	Moreover, the induction functor commutes with the free monoid constructions, formally by adjunction, i.e. we have the natural isomorphism of reduced $\mathfrak{S}$-bimodules:
	\[
		\Ind(\scrF(-)) \cong \scrF^{\mathrm{Val}}(\Ind(-)).
	\]
	Then, for a protoperad $\calP $ defined by generators and relations, i.e. $\calP =\scrF(V)/\langle R \rangle$, the properad $\Ind(\calP )$ is given by
	\[
		\Ind(\calP )\cong \scrF^{\mathrm{Val}}(\Ind V) /\langle \Ind R \rangle.
	\]
\end{defiprop}

\subsection{Shuffle protoperad}\label{sect::shuffle protoperad}
We denote $\Ord$, the category of totally ordered finite sets, with bijections. As in \cite[\Cref{ProtoI-sect::functors of walls}]{Ler18i}, we define  the combinatorial functors, $\mathcal{Y}\sh:\Ord\op  \rightarrow \Set\op $ and $\mathcal{X}\sh:\Set\op  \rightarrow \Fin\op $ which encode \emph{shuffle protoperads}. The shuffle framework corresponds to choosing a representative for each wall. We define the functor $\mathcal{Y}\sh$ as follows: for all finite, totally ordered  sets $S$, we set
\[
	\mathcal{Y}\sh_R(S) := \left\{ I=(I_j)_{j\in [\![1,R]\!]} ~\left|~
	\begin{array}{c}
		\forall r\in [\![1,R]\!], \ I_r \ne \varnothing ; \
		\cup_{r\in [\![1,R]\!]} I_r =S \\
		\forall r\ne s \in [\![1,R]\!], I_r \cap I_s = \varnothing \\
		\mathrm{min}(I_1)<\mathrm{min}(I_2)<\ldots<\mathrm{min}(I_R)
	\end{array} 
	\right.\right\}.
\]
and $\mathcal{Y}\sh(S) := \coprod_{r\in\N^*}\mathcal{Y}\sh_r(S)$; we also have
\[
	\mathcal{X}\sh_r(S):= \coprod_{i+j=r} \mathcal{Y}\sh_i(S) \times \mathcal{Y}\sh_j(S)~~\mbox{ and }~~\mathcal{X}\sh(S):=\coprod_{r\in\N^*} \mathcal{X}\sh_r(S).
\]
We have the following natural isomorphism of functors $\mathcal{X}\sh \cong \mathcal{Y}\sh\times\mathcal{Y}\sh$.
\begin{lem}\label{lem::combinatoire_oubli}
	We have the following commutative diagrams of functors up to natural isomorphims
	\[
	\begin{tikzcd}[column sep=small]
		\Ord\op \ar[rr,"(-)^f"] \ar[rd, bend right=15, "\mathcal{Y}\sh"'] & \ar[d,phantom,"\commu" description] & \Fin\op \ar[ld, bend left=15, "\mathcal{Y}"]\\
		& \Set\op &
	\end{tikzcd}
	\ \mathrm{and} \
	\begin{tikzcd}[column sep=small]
		\Ord\op \ar[rr,"(-)^f"] \ar[rd, bend right=15, "\mathcal{X}\sh"'] & \ar[d,phantom,"\commu" description] & \Fin\op \ar[ld, bend left=15, "\mathcal{X}"]\\
		& \Set\op &
	\end{tikzcd}
	\]
	where $\mathcal{X}$ and $\mathcal{Y}$ are the functors defined in \Cref{subsect::combinatorial_functors} (see also \cite[\Cref{ProtoI-sect::functors of walls}]{Ler18i}).
\end{lem}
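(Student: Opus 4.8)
The plan is to produce, for each totally ordered finite set $S$, an explicit bijection $\eta_S\colon \mathcal{Y}\sh(S)\to\mathcal{Y}(S^f)$, to check that it is natural with respect to the morphisms of $\Ord$ (which are \emph{all} bijections, not only the order-preserving ones), and then to deduce the statement for $\mathcal{X}$ by passing to Cartesian products. Throughout, $(-)^f$ is the functor forgetting the total order, so $S^f$ denotes $S$ regarded as a plain finite set.

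For the left-hand triangle, the elementary input is that distinct blocks of a partition of $S$ have distinct minima: if $\min(K)=\min(K')$ then this common element lies in $K\cap K'$, so $K=K'$ since the blocks of a partition are pairwise disjoint. Hence every unordered partition $\{K_\alpha\}_{\alpha\in A}\in\mathcal{Y}(S^f)$ can be written in exactly one way as a sequence $(I_1,\dots,I_R)$, $R=|A|$, with $\min(I_1)<\dots<\min(I_R)$: existence by listing the blocks in increasing order of their minimum, uniqueness because a finite subset of a totally ordered set has a unique increasing enumeration. In other words the map $\eta_S\colon\mathcal{Y}\sh(S)\to\mathcal{Y}(S^f)$ forgetting the order of the sequence, $(I_1,\dots,I_R)\mapsto\{I_1,\dots,I_R\}$, is a bijection, with inverse the ``sort the blocks by minimum'' operation.

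Naturality is then essentially formal, once we recall from \cite{Ler18i} how $\mathcal{Y}\sh$ acts on morphisms: for a bijection $\phi$ of totally ordered sets, $\mathcal{Y}\sh(\phi)$ transports each block of a sequence along $\phi$ and then re-sorts the resulting sequence by increasing minimum --- the re-sorting being needed precisely because $\phi$ need not preserve order --- whereas $\mathcal{Y}(\phi^f)$ only transports each block. Since reordering a sequence does not change its underlying set, the two composites $\eta_S\circ\mathcal{Y}\sh(\phi)$ and $\mathcal{Y}(\phi^f)\circ\eta_T$ agree, so the square commutes and $\eta\colon\mathcal{Y}\sh\Rightarrow\mathcal{Y}\circ(-)^f$ is a natural isomorphism.

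For the right-hand triangle I would not repeat the argument but reduce it to the first one. Distributing coproducts over products in $\Set$ gives, for every $S$, $\mathcal{X}\sh(S)=\coprod_{r}\coprod_{i+j=r}\mathcal{Y}\sh_i(S)\times\mathcal{Y}\sh_j(S)\cong\bigl(\coprod_i\mathcal{Y}\sh_i(S)\bigr)\times\bigl(\coprod_j\mathcal{Y}\sh_j(S)\bigr)=\mathcal{Y}\sh(S)\times\mathcal{Y}\sh(S)$, which is the natural isomorphism $\mathcal{X}\sh\cong\mathcal{Y}\sh\times\mathcal{Y}\sh$ recorded just before the lemma; likewise $\mathcal{X}=\mathcal{Y}\times\mathcal{Y}$. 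Since precomposition with $(-)^f$ commutes with pointwise Cartesian products of functors, applying $\eta$ in each factor yields $\mathcal{X}\sh\cong\mathcal{Y}\sh\times\mathcal{Y}\sh\cong\bigl(\mathcal{Y}\circ(-)^f\bigr)\times\bigl(\mathcal{Y}\circ(-)^f\bigr)=(\mathcal{Y}\times\mathcal{Y})\circ(-)^f=\mathcal{X}\circ(-)^f$, which is the second commutative diagram. The only point requiring real care --- hence the main (mild) obstacle --- is bookkeeping the contravariance conventions (all four functors take values in $\Set\op$, or $\Fin\op$ for $(-)^f$) and quoting from \cite{Ler18i} the exact action of $\mathcal{Y}\sh$ on non-order-preserving bijections, so that the naturality square becomes the tautology above rather than something needing its own verification.
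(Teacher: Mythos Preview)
Your proof is correct. The paper states this lemma without proof, treating it as immediate from the definitions; your argument supplies exactly the details one would write down if asked, namely the explicit bijection ``forget the ordering of the blocks'' with inverse ``sort by minimum,'' the naturality check (where the only subtlety, as you note, is that morphisms in $\Ord$ are arbitrary bijections so $\mathcal{Y}\sh$ must re-sort), and the reduction of the $\mathcal{X}$ triangle to the $\mathcal{Y}$ triangle via $\mathcal{X}\cong\mathcal{Y}\times\mathcal{Y}$.
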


\begin{defi}[Projection $\mathcal{K}\sh$] 
	We define the projection $\mathcal{K}\sh$ as follows: for a totally ordered finite set $(S,<_S)$, we have
	\[
	\begin{array}{cccc}
	\mathcal{K}\sh_S : & \mathcal{X}\sh(S) & \longrightarrow & \mathcal{Y}\sh(S) \\
	& W & \longmapsto & \left\{ \left. \bigcup_{B_\alpha\in\pi^{-1}([B])} B_\alpha \  \right| \ [B]\in \pi(W) \right\}
	\end{array} \ ,
	\]
	where $\pi$ is the projection of $W$ to its quotient by $\overset{conn.}{\sim}$ (cf. \cite[\Cref{ProtoI-subsect::connected wall}]{Ler18i}), and the set $\mathcal{K}\sh_S(W)$ is totally ordered by the order $<$ defined as follows: for $B_\alpha$ and $B_\beta$ in $\mathcal{K}\sh_S(W)$, $B_\alpha<B_\beta$ if $\mathrm{min}(B_\alpha) <_S \mathrm{min}(B_\beta)$.  
\end{defi} 

As for $\mathcal{K}$ (cf.  \cite[\Cref{ProtoI-lem::K_associative}]{Ler18i}), the product $\mathcal{K}\sh$ on $\mathcal{Y}\sh$ is associative. Let $M$ and  $S$ be two totally ordered finite sets. Every monotone injection $j : M \hookrightarrow S$ induces a morphism
\[
	\iota_j : \mathcal{Y}\sh(M) \longrightarrow \mathcal{Y}\sh(S)
\]
such that, for all $W=\{W_a\}_{a\in[\![1,r]\!]}$ in $\mathcal{Y}_r\sh(M)$, 
\[
	\iota_j(W) = \big\{ j(W_a) \big\}_{a\in[\![1,r]\!]}\amalg \coprod_{s\in S\backslash j(M)} \{s\}.
\]
Let $M,N$ and $S$ be three totally ordered finite sets and $\phi$ be the diagram of monotone injections $\phi :=\big(i:M\hookrightarrow S \hookleftarrow N:j\big)$ such that
\[
	\left\{
	\begin{array}{rl}
		\mathrm{im}(i)\cup \mathrm{im}(j) & = S \\
		\mathrm{im}(i)\cap\mathrm{im}(j) & \ne \emptyset
	\end{array}
	\right.~
\]
then, we have the product
\[
\mu_\phi : \mathcal{Y}\sh(M)\times \mathcal{Y}\sh(N) \longrightarrow \mathcal{Y}\sh(S),
\]
given by the union of the images by $i$ and $j$ of the partitions of $M$ and $N$, extended by singletons, i.e. defined by the following composition
\[
\begin{tikzcd}
	\mathcal{Y}\sh(M)\times \mathcal{Y}\sh(N) 
	\ar[r, "\iota_i\times \iota_j"]
	\ar[rd, dotted, bend right=10, "\mu_\phi:="']
	&  \mathcal{Y}\sh(S)\times \mathcal{Y}\sh(S) 
	\ar[d, "\mathcal{K}\sh_S"'] \\
	& \mathcal{Y}\sh(S)
\end{tikzcd} \ .
\]
We have the following commutative diagram:
\[
\begin{tikzcd}
	\mathcal{X}\sh(M)\times \mathcal{X}\sh(N) 
	\ar[r,"\K\sh_{M}\times \K\sh_{N}"] 
	\ar[d,"\mu_\phi^{\times 2}"'] & 
	\mathcal{Y}\sh(M)\times \mathcal{Y}\sh(N) \ar[d,"\mu_\phi"] \\
	\mathcal{X}\sh(S) \ar[r,"\K\sh_{S}"'] & \mathcal{Y}\sh(S)
\end{tikzcd}\ .
\]	
Finally, we define the functor $\mathcal{X}^{\mathrm{conn,sh}} : \Ord\op  \rightarrow \Fin\op $, for all totally ordered finite sets $S$, by 
\[
\mathcal{X}^{\mathrm{conn,sh}}(S):=\Big\{ (I,J)\in \mathcal{X}\sh(S)~\big|~ \mathcal{K}_{S}\sh(I,J)=\big\{S\big\} \Big\}.
\]

\begin{defiprop}[Connected shuffle product]
	The \textit{connected shuffle product} is the bifunctor
	\[
		-\boxtimes_c^\shuffle - : \Func(\Ord\op,\Ch_k) \times \Func(\Ord\op ,\Ch_k) \longrightarrow \Func(\Ord\op ,\Ch_k)
	\]
	defined, for two objects $P$ and $Q$ of $\Func(\Ord\op,\Ch_k)$ and a finite totally ordered $S$, by
	\[
		\big(P \boxtimes_c^\shuffle Q\big) (S):= \bigotimes_{(K,L)\in \mathcal{X}^{\mathrm{conn,sh}}(S)} \bigotimes_{i=1}^{m} P(K_i) \otimes \bigotimes_{j=1}^n Q(L_j).
	\]
\end{defiprop}
\begin{prop}\label{lem::Smodsh_ana_scinde}
	The product $\boxtimes_c^\shuffle$ is associative. Also, for all objects $A$ and $B$ in the category $\Func(\Ord\op ,\Ch_k)$, the endofuncteur
	\[	
	\begin{array}{rccc}
	\Phi_{A,B} : &\Func(\Ord\op ,\Ch_k) & \longrightarrow & \Func(\Ord\op ,\Ch_k) \\
	& X & \longmapsto & A \boxtimes_c^\shuffle X \boxtimes_c^\shuffle B
	\end{array}
	\]
	is split analytic in the sense of \cite{Val07,Val09}. The category  
	\[
	\big(\Func(\Ord\op ,\Ch_k), \boxtimes_c^\shuffle, \Ibox\big)
	\]
	is an abelian (symmetric) monoidal category and the monoidal product preserves  reflexive coequalizors and sequential colimits. 
\end{prop}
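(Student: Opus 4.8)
The plan is to mirror, in the shuffle setting, the structural results Vallette established for the connected composition product of $\mathfrak{S}$-bimodules in \cite{Val07,Val09}, exploiting the fact that the shuffle framework has \emph{chosen a representative for each wall} and hence carries no residual symmetric group actions to track. First I would establish associativity of $\boxtimes_c^\shuffle$. The combinatorial core of this is the associativity of the product $\mathcal{K}\sh$ on $\mathcal{Y}\sh$ (stated in the excerpt, analogous to \cite[\Cref{ProtoI-lem::K_associative}]{Ler18i}) together with the commutative square relating $\mu_\phi^{\times 2}$, $\mathcal{K}\sh$ on $\mathcal{X}\sh$, and $\mu_\phi$. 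Concretely, both $(A\boxtimes_c^\shuffle B)\boxtimes_c^\shuffle C$ and $A\boxtimes_c^\shuffle(B\boxtimes_c^\shuffle C)$ evaluated on a totally ordered finite set $S$ decompose, via iterated use of $\mathcal{X}^{\mathrm{conn,sh}}$, as a direct sum indexed by ``two-storey'' connected shuffle configurations over $S$, and the two indexing sets are identified by the associativity of $\mathcal{K}\sh$; the tensor factors match on the nose and the Koszul signs agree because in the shuffle world the orderings $\mathrm{min}(I_1)<\cdots<\mathrm{min}(I_R)$ rigidify every choice, so there is no permutation ambiguity. The unit axioms for $\Ibox$ are immediate from its definition.

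Next I would treat the analyticity of $\Phi_{A,B}$. The idea is to filter $\Phi_{A,B}(X) = A\boxtimes_c^\shuffle X\boxtimes_c^\shuffle B$ by the number of tensor factors of $X$ appearing, i.e. by the ``$X$-weight'' of a connected shuffle configuration: writing $\Phi_{A,B} = \bigoplus_{n\geqslant 0}\Phi_{A,B}^{(n)}$ where $\Phi_{A,B}^{(n)}(X)$ collects the summands using exactly $n$ bricks labelled by $X$, each $\Phi_{A,B}^{(n)}$ is, by construction, a sub-quotient-free piece built from $A$, $B$ and $n$ copies of $X$ glued along a fixed finite set of connected shuffle patterns; on each fixed pattern the dependence on $X$ is just an iterated tensor product, hence polynomial of homogeneous degree $n$, and the whole of $\Phi_{A,B}^{(n)}$ is a finite direct sum of such, so it is a homogeneous polynomial functor of degree $n$. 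That $\Phi_{A,B}=\bigoplus_n \Phi_{A,B}^{(n)}$ as a functor is exactly the statement that $\Phi_{A,B}$ is \emph{split analytic} in the sense of \cite{Val07,Val09}. Here the finiteness needed for each $\Phi_{A,B}^{(n)}(X)(S)$ to be a genuine (small) direct sum uses that $S$ is finite, so only finitely many elements of $\mathcal{X}^{\mathrm{conn,sh}}$-type patterns over $S$ with $n$ $X$-bricks occur.

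Finally, the abelian monoidal statement: $\Func(\Ord\op,\Ch_k)$ is abelian because $\Ch_k$ is and (co)limits are computed objectwise; symmetry of $\boxtimes_c^\shuffle$ comes from the symmetry built into $\mathcal{X}^{\mathrm{conn,sh}}\cong\mathcal{Y}\sh\times\mathcal{Y}\sh$ swapping the two partitions together with the symmetry of $\otimes$ in $\Ch_k$ (with the appropriate Koszul sign). Preservation of reflexive coequalizers and sequential colimits then follows from the split analytic structure: each homogeneous piece $\Phi_{A,B}^{(n)}$ is built from finite tensor products and direct sums, and tensoring over $k$ commutes with filtered colimits and with reflexive coequalizers in $\Ch_k$, so the same holds after assembling the (locally finite, in each arity) direct sum over $n$ — this is the standard argument, identical to the properadic case in \cite{Val07}.

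The main obstacle I expect is not conceptual but bookkeeping: verifying that the Koszul signs introduced by the relation $\sim$ in the definition of the tensor-over-a-configuration are compatible under the associativity identification of indexing sets, and that no spurious signs appear when one collapses a two-storey configuration via $\mathcal{K}\sh$. In the symmetric ($\boxtimes_c$) setting this is genuinely delicate, but in the \emph{shuffle} setting the rigid orderings eliminate the permutations that would otherwise carry signs, so the verification should reduce to checking that the canonical ordering on $\mathcal{K}\sh_S(W)$ (by minima) is the one induced storey-by-storey — which is exactly the content of the commuting square displayed just before the definition of $\mathcal{X}^{\mathrm{conn,sh}}$.
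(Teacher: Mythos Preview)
Your proposal is correct and takes essentially the same approach as the paper: the paper's own proof is simply ``Similar to the proof of \cite[\Cref{ProtoI-lem::Smod_ana_scinde}]{Ler18i} and \cite[\Cref{ProtoI-prop::prop_fond_du_produit_connexe}]{Ler18i}'', i.e.\ it defers entirely to the analogous non-shuffle results in the companion paper, which in turn adapt Vallette's properadic arguments. What you have written is precisely an outline of how those arguments run in the shuffle setting, with the correct observation that the rigid orderings eliminate the sign-tracking difficulties present in the symmetric case.
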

\begin{proof}
	Similar to the proof of \cite[\Cref{ProtoI-lem::Smod_ana_scinde}]{Ler18i} and   \cite[\Cref{ProtoI-prop::prop_fond_du_produit_connexe}]{Ler18i}.
\end{proof}

\begin{defi}[Shuffle protoperad]
	 The monoids of $\big(\Func(\Ord\op ,\Ch_k),\boxtimes_c^\shuffle,\Ibox\big)$ are called \emph{shuffle protoperads}, and we denote $\mathsf{protoperads}^{\mathrm{sh}}$, the category of shuffle protoperads.
\end{defi}

The forgetful functor $\underline{(-)}: \Ord \rightarrow \Fin$ induces the functor $(-)\sh$ from  $\Smodred_k$ to $\Func(\Ord\op ,\Ch_k)$.

\begin{prop}\label{prop::oubli_monoidal_sym}
	The functor 
	\[
	(-)\sh:\big(\Smodred_k,\boxtimes_c,\Ibox\big) \longrightarrow \big(\Func(\Ord\op ,\Ch_k),\boxtimes_c^\shuffle\Ibox\big)
	\]
	is (strongly) monoidal and is exact. In particular, it preserves quasi-isomorphisms.
\end{prop}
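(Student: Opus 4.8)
The plan is as follows. The functor $(-)\sh$ is, by construction, restriction along the forgetful functor $\underline{(-)}\op\colon\Ord\op\to\Fin\op$, and both exactness and preservation of quasi-isomorphisms are immediate from this: (co)limits and homology in $\Func(\Fin\op,\Ch_k)$ and $\Func(\Ord\op,\Ch_k)$ are computed objectwise in $\Ch_k$, so restriction along any functor is exact, and a morphism of reduced $\mathfrak{S}$-modules is a quasi-isomorphism exactly when it is one objectwise. Hence the only substance of the statement is the strong monoidal structure, which I would build by hand.

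For the unit this is trivial: $\Ibox\sh(S)=\Ibox(\underline S)$ equals $k$ when $|S|=1$ and $0$ otherwise, which is precisely the unit of $\boxtimes_c^\shuffle$, so $(-)\sh$ is strictly unital. For the tensor product I would fix reduced $\mathfrak{S}$-modules $P,Q$ and a totally ordered finite set $S$, and invoke \Cref{lem::combinatoire_oubli}: the natural bijection $\mathcal{X}\sh(S)\cong\mathcal{X}(\underline S)$ sends a shuffle pair of partitions to its underlying pair of unordered partitions, and since the connectedness relation $\overset{conn.}{\sim}$ and the projections $\mathcal{K},\mathcal{K}\sh$ depend only on that underlying data, this restricts to a bijection $\mathcal{X}^{\mathrm{conn,sh}}(S)\cong\mathcal{X}^{\mathrm{conn}}(\underline S)$. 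Under it, the summand of $(P\boxtimes_c Q)\sh(S)=(P\boxtimes_c Q)(\underline S)$ indexed by $(I,J)\in\mathcal{X}^{\mathrm{conn}}(\underline S)$, namely $\bigotimes_\alpha P(I_\alpha)\otimes\bigotimes_\beta Q(J_\beta)$ modulo the Koszul-sign identification $\sim$ of \Cref{def::prod_connexe_Smod}, matches the factor of $(P\sh\boxtimes_c^\shuffle Q\sh)(S)$ indexed by the corresponding shuffle pair $(K,L)$, namely the ordered tensor product $\bigotimes_{i=1}^m P(\underline{K_i})\otimes\bigotimes_{j=1}^n Q(\underline{L_j})$, because ordering $K$ and $L$ by increasing minima is a set-theoretic section of the quotient by $\sim$. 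Assembling over all indices would yield the natural isomorphism
\[
	(P\boxtimes_c Q)\sh\overset{\sim}{\longrightarrow}P\sh\boxtimes_c^\shuffle Q\sh .
\]

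After that I would discharge the remaining formalities: naturality in $P$ and $Q$ and compatibility with the right $\mathrm{Aut}(S)$-actions induced by monotone bijections (both hold summand by summand), the chain-map property (likewise), and the coherence axioms for a strong monoidal functor. The coherence would follow from the compatibility of $\mathcal{K}\sh$ with the products $\mu_\phi$ recorded in the commutative squares preceding \Cref{lem::Smodsh_ana_scinde}, which are the shuffle analogues of the identities used in \cite{Ler18i} to establish associativity of $\boxtimes_c$. The one step where I expect to need care is the \emph{sign bookkeeping}: one must verify that the ``increasing minima'' ordering really is a well-defined section of the quotient defining $\boxtimes_c$, that the resulting identification is independent of the auxiliary choices of ordering, and that it commutes with the differentials; once this is pinned down, the remainder of the argument is purely formal.
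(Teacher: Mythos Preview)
Your proposal is correct and follows essentially the same approach as the paper's proof: both reduce monoidality to the combinatorial bijection $\mathcal{X}^{\mathrm{conn,sh}}(S)\cong\mathcal{X}^{\mathrm{conn}}(\underline S)$ (implicitly using \Cref{lem::combinatoire_oubli}) and observe that ordering by increasing minima picks out a canonical representative in the Koszul-sign quotient defining $\boxtimes_c$, while exactness is immediate from the objectwise nature of the functor. The paper's argument is terser and does not spell out the coherence or sign checks you flag, but these are indeed routine and your caution about them is appropriate rather than indicative of a gap.
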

 
\begin{proof}
	Let $S$ be a totally ordered finite set and $P$ and $Q$ be two reduced $\mathfrak{S}$-modules. We have the following isomorphisms: 
	\begin{eqnarray*}
		\big(P\boxtimes_c Q\big)\sh(S) &= & \ \big(P\boxtimes_c Q\big)(\underline{S}) \\
		&\cong & \ \bigoplus_{{\substack{(K,L) \\ \in \mathcal{X}^{\mathrm{conn}}(\underline{S})}}} \bigotimes_{\alpha\in A} P(K_\alpha) \otimes \bigotimes_{\beta\in B} Q(L_\beta) \\
		& \cong & \ \bigoplus_{{\substack{(\widetilde{K},\widetilde{L})\\ \in \mathcal{X}^{\mathrm{conn,sh}}(S)}}}
		P(\widetilde{K}_1) \otimes \ldots \otimes P(\widetilde{K}_m) \otimes Q(\widetilde{L}_1)\otimes \ldots \otimes Q(\widetilde{L}_n)\\
		&\cong &~\big(P\sh\boxtimes_c^\shuffle Q\sh \big)(S).
	\end{eqnarray*}
	Consider a short exact sequence of reduced $\mathfrak{S}$-modules 
	\[
		0 \rightarrow P \rightarrow Q \rightarrow R \rightarrow 0,
	\]
	then, for all finite ordered sets $S$, the  following
	\[
		0 \rightarrow P(\underline{S}) \rightarrow Q(\underline{S}) \rightarrow R(\underline{S}) \rightarrow 0
	\]
	is a short exact sequence. So the functor $(-)\sh$ is exact.
\end{proof}

As for the case of protoperads, we have a combinatorial description of the free shuffle protoperad.
\begin{defiprop}
	Let $V$ be a functor in $\Func(\Ord\op ,\Ch_k)$. The \emph{free shuffle protoperad} $\scrF_{\shuffle}(V)$ is given, for all totally ordered sets $S$, by: 
	\[
	\scrF_{\shuffle}(V)(S) := \bigoplus_{W\in \Wall(\underline{S})} \bigotimes_{\alpha\in A} V(W_\alpha).
	\]
	The functor $\scrF_\shuffle$ is the left adjoint to the forgetful functor
	\[
	\scrF_{\shuffle}: \Func(\Ord\op ,\Ch_k) \rightleftarrows \mathsf{shuffle-protoperads}_k : \mathrm{For}.
	\]
\end{defiprop}
By \Cref{prop::oubli_monoidal_sym}, we have the following.
\begin{cor}\label{cor::shuffle}
	Let $V$ be a reduced $\mathfrak{S}$-module. There is a natural isomorphism of shuffle protoperads
	\[
	\mathscr{F}_\shuffle(V\sh) \cong \big(\mathscr{F}(V) \big)\sh.
	\]
	Also, for $\langle R \rangle \subset \mathscr{F}(V)$ an ideal, $\langle R \rangle\sh$ is an ideal of $\mathscr{F}_\shuffle(V\sh)$, and there is a natural isomorphism of shuffle protoperads
	\[
	\mathscr{F}_\shuffle(V\sh)/ \langle R \rangle\sh \cong \big(\mathscr{F}(V)/ \langle R \rangle \big)\sh.
	\]
\end{cor}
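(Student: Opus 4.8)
The plan is to exploit that $(-)\sh$, being strongly monoidal and exact by \Cref{prop::oubli_monoidal_sym}, transforms the free-monoid and quotient constructions on the symmetric side into the corresponding ones on the shuffle side. First I would establish the first isomorphism: by \Cref{prop::freeproto} the free protoperad $\scrF(V)$ has underlying $\mathfrak{S}$-module $\bigoplus_{W\in\Wall(S)}\bigotimes_{\alpha}V(W_\alpha)$, and applying $(-)\sh$ amounts to restricting $S$ to a totally ordered set $\underline{S}$, which gives exactly the defining formula for $\scrF_\shuffle(V\sh)(S)$. But rather than comparing the underlying objects by hand, the cleaner route is the formal one: $(-)\sh$ is strongly monoidal, so it sends the free monoid on $V$ in $(\Smodred_k,\boxtimes_c,\Ibox)$ to a monoid on $V\sh$ in $(\Func(\Ord\op,\Ch_k),\boxtimes_c^\shuffle,\Ibox)$; one then checks this monoid satisfies the universal property of the free shuffle protoperad on $V\sh$, using that $(-)\sh$ has the expected behaviour on generators ($\scrF(V)$ contains $V$ as weight-$1$ part, and $V\sh$ sits inside $\scrF_\shuffle(V\sh)$ likewise). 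Uniqueness of free objects then yields the natural isomorphism. Alternatively, and most transparently, one simply matches the two explicit combinatorial formulas via \Cref{lem::combinatoire_oubli}, noting that $\Wall(\underline{S})$ is precisely the indexing set appearing in both descriptions.

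Next I would treat the ideal statement. Recall $\langle R\rangle$ is the smallest sub-$\mathfrak{S}$-module of $\scrF(V)$ stable under the two-sided action of $\scrF(V)$, i.e. closed under $\scrF(V)\boxtimes_c(-\oplus-)\boxtimes_c\scrF(V)$-type operations built from the analytic endofunctor $\Phi$. Applying the exact strong monoidal functor $(-)\sh$ sends this to a sub-object $\langle R\rangle\sh\subset\scrF(V)\sh\cong\scrF_\shuffle(V\sh)$ which is stable under the shuffle analog $\Phi^\shuffle_{A,B}$ (using that $(-)\sh$ commutes with $\boxtimes_c$ and with direct sums, and that $\Phi^\shuffle$ is split analytic by \Cref{lem::Smodsh_ana_scinde}); hence $\langle R\rangle\sh$ is an ideal. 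For the last isomorphism I would use exactness: from the short exact sequence $0\to\langle R\rangle\to\scrF(V)\to\scrF(V)/\langle R\rangle\to 0$ in $\Smodred_k$, apply $(-)\sh$, which preserves this exact sequence evaluation-wise on each totally ordered $S$, to get $0\to\langle R\rangle\sh\to\scrF_\shuffle(V\sh)\to\big(\scrF(V)/\langle R\rangle\big)\sh\to 0$. This identifies $\big(\scrF(V)/\langle R\rangle\big)\sh$ with the cokernel $\scrF_\shuffle(V\sh)/\langle R\rangle\sh$, and one checks the resulting map is a morphism of shuffle protoperads (the monoid structure being transported along the strong monoidal structure, so $(-)\sh$ of a quotient of monoids is the quotient of monoids), giving the natural isomorphism of shuffle protoperads.

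The main obstacle, and the only genuinely non-formal point, is verifying that $\langle R\rangle\sh$ really is the \emph{ideal generated by $R\sh$} in $\scrF_\shuffle(V\sh)$ — that is, that exactness plus monoidality force the image of the generated ideal to again be the generated ideal, rather than merely \emph{an} ideal containing $R\sh$. This amounts to checking that the expression of $\langle R\rangle$ as a (reflexive) colimit of iterated applications of $\Phi_{A,B}$ to $R$ is preserved by $(-)\sh$; this follows because $\boxtimes_c^\shuffle$ preserves reflexive coequalizers and sequential colimits (last sentence of \Cref{lem::Smodsh_ana_scinde}), $(-)\sh$ is exact hence preserves all these colimits, and $\Phi^\shuffle_{A\sh,B\sh}\circ(-)\sh\cong(-)\sh\circ\Phi_{A,B}$ by strong monoidality. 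I expect the remaining bookkeeping — naturality in $V$ and compatibility of the transported monoid structures — to be routine, parallel to the symmetric case in \cite{Ler18i}, so the proof reduces to invoking \Cref{prop::oubli_monoidal_sym} together with \Cref{lem::Smodsh_ana_scinde}.
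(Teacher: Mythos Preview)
Your approach is correct and is essentially the same as the paper's, which simply says the corollary follows from \Cref{prop::oubli_monoidal_sym}; you have expanded the formal argument (strong monoidality gives the free-monoid comparison, exactness gives the quotient) that the paper leaves implicit. One remark: your ``main obstacle'' paragraph is unnecessary here, since the statement only asserts that $\langle R\rangle\sh$ is \emph{an} ideal of $\scrF_\shuffle(V\sh)$, not that it equals the ideal generated by $R\sh$; the quotient isomorphism then follows directly from exactness applied to $0\to\langle R\rangle\to\scrF(V)\to\scrF(V)/\langle R\rangle\to 0$ without needing that finer identification.
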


\section{Koszul duality of protoperads}\label{sect::Koszul duality}

In this section, we adapt the  constructions of \cite[Sect. 3]{MV09I} and \cite{Val03,Val07} for properads to the protoperadic framework.

\subsection{(Co)Augmentation, infinitesimal (co)bimodule and (co)derivation}
\begin{defi}[Augmented protoperad]
	An \emph{augmentation} of a protoperad $\calP$ is a morphism of protoperads $\epsilon : \calP \rightarrow \Ibox$ , where $\Ibox$ is the unit of the product $\boxtimes_c$. A protoperad with an augmentation is called \emph{augmented}. We denote by $\mathsf{protoperads}_k^{\mathrm{aug}}$, the category of augmented protoperads. To an augmented protoperad $(\calP,\epsilon)$, we associate its \emph{augmentation ideal} $\overline{\calP}$, defined as the kernel of the augmentation $\epsilon$,  i.e. $\overline{\calP}:=\mathrm{Ker}(\epsilon)$.
\end{defi}
For two reduces $\mathfrak{S}$-modules $M$ and $P$, the $\mathfrak{S}$-module $P\boxtimes_c (P\oplus M)$ has a weight-grading, which we denote
\[
P\boxtimes_c (P\oplus M)= \bigoplus_{r\in\N} \left( P\boxtimes_c (P\oplus M) \right) ^{(r)_M}.
\]
Let $(\calP,\epsilon)$ be an augmented protoperad. Then, we have the isomorphism of reduced $\mathfrak{S}$-modules $\calP\cong \Ibox\oplus \overline{\calP}$. Moreover, by the bigrading given by \cite[\Cref{ProtoI-lem::bigrading_proto}]{Ler18i}, we can decompose the connected composition product  

	\[  
		\mu=\bigoplus_{(r,s)\in(\N^*)^2} \mu^{(r,s)}  \ \mathrm{by} \ 
		\mu^{(r,s)} : \big((\Ibox \oplus \overline{\calP})\boxtimes_c (\Ibox\oplus \overline{\calP})\big)^{(r,s)_{\overline{\calP}}} \longrightarrow \overline{\calP}.
	\]

\begin{defi}[Partial composition product]\label{def::prod_compo_partiel}
	Let $(\calP,\epsilon)$ be an augmented protoperad. The \emph{partial composition product} is the restriction of the product $\mu:\calP\boxtimes_c \calP\rightarrow \calP$ to
	\[
		\mu^{(1,1)} :  \big((\Ibox \oplus \overline{\calP})\boxtimes_c (\Ibox \oplus \overline{\calP})\big)^{(1,1)_{\overline{\calP}}} \longrightarrow \overline{\calP}.
	\]
\end{defi}

Using the partial composition, we introduce the notion of an infinitesimal bimodule over a protoperad.

\begin{defi}[Infinitesimal bimodule]
	Let $(\calP,\mu)$ be a protoperad. A $\mathfrak{S}$-module $M$ is  a $\calP$-\emph{infinitesimal bimodule} if $M$ has two morphisms of  $\mathfrak{S}$-modules, respectively called the \emph{left and right actions}:
	 \[
	 	\lambda : \big(\calP\boxtimes_c (\calP\oplus M)\big)^{(1)_M} \rightarrow M ~~\mbox{ and }~~ \rho : \big((\calP\oplus M)\boxtimes_c \calP\big)^{(1)_M} \rightarrow M
	 \] 
	 such that the following compatibility diagrams commute: 
	 \begin{enumerate}
	 	\item associativity of the left action $\lambda$:
	 		\[
	 			\begin{tikzcd}
	 				\big(\calP  \boxtimes_c \calP  \boxtimes_c (\calP \oplus M)\big)^{(1)_M} 
	 				\ar[r, "\calP \boxtimes_c (\lambda+ \mu)"] 
	 				\ar[d, "\mu\boxtimes_c (\calP \oplus M)"'] 
	 				\ar[rd,phantom, "\commu" description]
	 				&  \big(\calP  \boxtimes_c (\calP \oplus M)\big)^{(1)_M} 
	 				\ar[d, "\lambda"]\\
	 				\big( \calP  \boxtimes_c (\calP \oplus M)\big)^{(1)_M} 
	 				\ar[r, "\lambda"']
	 				& M 
	 			\end{tikzcd} \ ;
	 		\]
	 	\item associativity of the right action $\rho$:
			\[
	 			\begin{tikzcd}
	 				\big((\calP \oplus M) \boxtimes_c \calP  \boxtimes_c \calP \big) ^{(1)_M} 
	 				\ar[r, "(\rho+\mu)\boxtimes_c \calP"] 
	 				\ar[d,"(\calP  \oplus M)\boxtimes_c \mu"']
	 				\ar[rd, phantom, "\commu" description]
	 					&\big((\calP \oplus M)\boxtimes_c \calP \big) ^{(1)_M} \ar[d,"\rho"]\\
	 				\big((\calP \oplus M)  \boxtimes_c \calP \big) ^{(1)_M} 
	 				\ar[r,"\rho"'] 
	 				& M
	 			\end{tikzcd} \ ;
	 		\]
	 	\item the left and right actions commute:
	 		\[
	 			\begin{tikzcd}
	 				\big(\calP  \boxtimes_c(\calP \oplus M)\boxtimes_c \calP \big) ^{(1)_M} 
	 				\ar[r,"(\lambda+\mu)\boxtimes_c \calP"]
	 				\ar[d, "\calP \boxtimes_c(\rho+\mu)"']
	 				\ar[rd,phantom, "\commu" description]
	 					& \big((\calP  \oplus M)\boxtimes_c \calP \big)^{(1)_M} \ar[d,"\rho"] \\
	 				\big(\calP  \boxtimes_c (\calP \oplus M)\big)^{(1)_M} \ar[r,"\lambda"']& M
	 			\end{tikzcd} \ .
	 		\]
	 \end{enumerate}
\end{defi}
\begin{rem}
	We also have the dual definitions of co-augmented coprotoperad, partial coproduct and infinitesimal cobimodule.(see \cite{MV09I} for properadic definition). 
\end{rem}

\begin{rem}
	For an augmented protoperad $(\calP, \mu ,\epsilon:\calP \rightarrow \Ibox)$, the following definition is equivalent to the data of two actions $\lambda' : (\calP \boxtimes_c M_+)^{(1)_{\overline{\calP }},(1)_M}\rightarrow M$ and $\rho' :(M_+\boxtimes_c \calP )^{(1)_M,(1)_{\overline{\calP }}}\rightarrow M$ where $M_+=M\oplus \Ibox$, compatible with the product of the protoperad $\calP $. In fact, if we consider the left action on $M$,  then the injection $\Ibox \hookrightarrow \calP $ induces the following morphism of $\mathfrak{S}$-modules
	\[
		\begin{tikzcd}
		\big(\calP \boxtimes_c M_+\big)^{(1)_{\overline{\calP }}(1)_M}  \ar[dr, dotted, bend left=15, "\lambda'"] \ar[d, hook]&	\\
		\big(\calP \boxtimes_c (\calP \oplus M)\big)^{(1)_M} \ar[r,"\lambda"']  & M 
		\end{tikzcd} \ ,
	\]
	compatible with the product $\mu$ of $\calP $. Conversely, if we consider a $\mathfrak{S}$-module $M$ with a morphism 
	\[
		\lambda' : \big(\calP \boxtimes_c (M_+)\big)^{(1)_{\overline{\calP }}(1)_M} \longrightarrow M
	\]
	compatible with the product $\mu$ of $\calP $, i.e. the following diagram commutes:
	\[
		\begin{tikzcd}[column sep=2.5cm]
			\big(\calP  \boxtimes_c \calP  \boxtimes_c M_+\big)^{(1)_{\overline{\calP }},(1)_{\overline{\calP }},(1)_M} 
			\ar[r,"\calP \boxtimes_c (\lambda'+ \mu^{(1,1)})"]
			\ar[d, "\mu^{(1,1)}\boxtimes_c M_+"']
			\ar[rd, phantom, "\commu" description]
			&  \big(\calP  \boxtimes_c M_+ \big)^{(1)_{\overline{\calP }},(1)_M} \ar[d,"\lambda'"]\\
			\big( \calP  \boxtimes_c M_+\big)^{(1)_{\overline{\calP }},(1)_M} \ar[r,"\lambda'"']& M
	 	\end{tikzcd} \ .
	\]
	This compatibility and associativity of the product $\mu$ allows the extension of the morphism $\lambda'$ to a morphism $ \lambda : \big(\calP \boxtimes_c (\calP \oplus M)\big)^{(1)_M} \rightarrow M$, which is the expected morphism. We have a similar equivalence for $\rho$ and $\rho'$.
\end{rem}
For the definition of infinitesimal bimodule in the properadic case, which is similar to the protoperadic case, the reader can refer to \cite{MV09I}.
\begin{lem}
	Let $\calP $ be a protoperad and $M$ be an infinitesimal $\calP $-bimodule. The $\mathfrak{S}$-bimodule $\Ind (M)$ is an infinitesimal $\Ind (\calP )$-bimodule.
\end{lem}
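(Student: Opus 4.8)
The statement to prove is: if $\calP$ is a protoperad and $M$ an infinitesimal $\calP$-bimodule, then $\Ind(M)$ is an infinitesimal $\Ind(\calP)$-bimodule. The strategy is to transport the structure maps $\lambda, \rho$ through the induction functor, using two facts established earlier in the excerpt: the functor $\Ind : \Smodred_k \to \Sbimodred_k$ is (strong) monoidal for the pair of connected composition products $(\boxtimes_c, \boxtimes_c^{\mathrm{Val}})$, and it is exact (see \Cref{def::functor induction}); in particular it commutes with direct sums. Since $\Ind$ is monoidal and additive, there are natural isomorphisms $\Ind(\calP \boxtimes_c (\calP \oplus M)) \cong \Ind(\calP) \boxtimes_c^{\mathrm{Val}} (\Ind(\calP) \oplus \Ind(M))$, and these isomorphisms respect the weight-grading by $M$ (resp.\ by $\Ind M$) used to cut out the superscript $(1)_M$ pieces, because that grading is defined by counting factors from the $M$-summand and $\Ind$ preserves the summand decomposition. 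Hence $\Ind$ restricts to an isomorphism $\Ind\big((\calP \boxtimes_c (\calP \oplus M))^{(1)_M}\big) \cong \big(\Ind(\calP) \boxtimes_c^{\mathrm{Val}} (\Ind(\calP) \oplus \Ind(M))\big)^{(1)_{\Ind M}}$.

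First I would define the candidate actions. Set
\[
\widetilde{\lambda} : \big(\Ind(\calP) \boxtimes_c^{\mathrm{Val}} (\Ind(\calP) \oplus \Ind(M))\big)^{(1)_{\Ind M}} \xrightarrow{\ \sim\ } \Ind\big((\calP \boxtimes_c (\calP \oplus M))^{(1)_M}\big) \xrightarrow{\Ind(\lambda)} \Ind(M),
\]
and dually $\widetilde{\rho}$ from $\Ind(\rho)$. Then I would check the three compatibility diagrams (associativity of $\widetilde{\lambda}$, associativity of $\widetilde{\rho}$, commutation of $\widetilde{\lambda}$ and $\widetilde{\rho}$). Each of these is obtained by applying the functor $\Ind$ to the corresponding commuting diagram for $(\lambda,\rho,\mu)$ and then transporting along the monoidality isomorphisms; the point is that the structural maps appearing in those diagrams — the protoperad product $\mu$, the left/right actions, and the associativity/unit coherence isomorphisms of $\boxtimes_c$ — are all sent by the monoidal functor $\Ind$ to the analogous maps for $\boxtimes_c^{\mathrm{Val}}$ (the product $\Ind(\mu)$ being precisely the monoid structure making $\Ind(\calP)$ a properad, by \Cref{def::functor induction}). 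Since $\Ind$ preserves composition of morphisms and the coherence isomorphisms are natural, applying $\Ind$ to a commuting square yields a commuting square, and pre/post-composing with the (natural, hence compatible) monoidality isomorphisms preserves commutativity.

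**Main obstacle.** The only genuinely delicate point is bookkeeping with the weight-gradings: one must verify that the monoidal isomorphism $\Ind(X \boxtimes_c Y) \cong \Ind X \boxtimes_c^{\mathrm{Val}} \Ind Y$ is compatible with the bigrading of \cite[\Cref{ProtoI-lem::bigrading_proto}]{Ler18i} and with the $M$-weight filtration that defines the superscripts $(1)_M$, $(1,1)_M$, etc., so that it restricts correctly to the graded pieces appearing as sources of $\lambda$, $\rho$ and of the maps in the three diagrams. This follows because the $M$-grading on $\calP \boxtimes_c (\calP \oplus M)$ counts the number of tensor factors coming from $M$ rather than from $\calP$, this decomposition is a direct-sum decomposition, and $\Ind$ is additive and sends the summand $\calP$ (resp.\ $M$) to $\Ind\calP$ (resp.\ $\Ind M$); I would state this as a short preliminary observation and then the three diagram checks are purely formal. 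Everything else is routine diagram-chasing of the kind already carried out for properads in \cite{MV09I}, to which I would refer for the parallel computations.
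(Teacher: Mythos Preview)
Your proposal is correct and follows essentially the same approach as the paper: both rely on the fact that $\Ind$ is monoidal for $\boxtimes_c$ and additive, hence preserves the weight grading, yielding the key isomorphism $\Ind\big((\calP\boxtimes_c (\calP\oplus M))^{(1)_M}\big) \cong \big(\Ind\calP\boxtimes_c^{\mathrm{Val}} (\Ind\calP\oplus \Ind M)\big)^{(1)_{\Ind M}}$. The paper's proof is a one-line sketch recording exactly this isomorphism, while you spell out the transport of $\lambda,\rho$ and the diagram checks that the paper leaves implicit.
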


\begin{proof}
 	The functor $\Ind $ is monoidal for the products $\boxtimes_c$ and $\otimes^{conc}$ (see \cite[\Cref{ProtoI-prop_ind_mon_sym_comp}, \Cref{ProtoI-thm::Ind_monoidal_connexe}]{Ler18i}) and is additive, i.e. $ \Ind (V\oplus W) \cong \Ind (V)\oplus \Ind (W)$, so preserves the weight grading:
 	\[
 		\Ind \Big(\big(\calP \boxtimes_c (\calP \oplus M)\big)^{(1)_M}\Big) \cong \big(\Ind \calP \boxtimes_c (\Ind \calP \oplus \Ind M)\big)^{(1)_{\Ind M}}.
 	\]
\end{proof}

\begin{defi}[Derivation, coderivation]
	Let $(\calP ,\epsilon)$ be an augmented protoperad and $(M,\lambda,\rho)$ be an infinitesimal $\calP $-bimodule. A morphism of $\mathfrak{S}$-modules $d: \calP  \rightarrow M$ of homological degree $n$ is called a \emph{homogemeous derivation} if the following diagram commutes:
	\[
		\xymatrix@C=4pc{
			\big( \calP \boxtimes_c \calP  \big)^{(1,1)}\ar[r]^{\mu^{(1,1)}} \ar[d]_{d\boxtimes_c \calP  + \calP \boxtimes_c d}
				& \calP  \ar[d]^d \\
			M\boxtimes_c \calP  \oplus \calP \boxtimes_c M \ar[r]_(0.7){\rho + \lambda} & M
		}
	\]
	i.e., for all $p$ and $q$ in $\calP $:
	\[
		d\circ \mu^{(1,1)}(p,q) = \rho(d(p),q) + (-1)^{n|p|}\lambda(p,d(q)).
	\]
	We denote $\sDer_n(\calP ,M)$, the $k$-module of derivations from $\calP $ to $M$ of homological degree $n$ and the derivation complex is denoted by $\sDer_\bullet(\calP ,M)$, with the differential $[\partial,-]$ defined, for $\delta$ in $\sDer_n(\calP ,N)$, by $[\partial,\delta]:=\partial_N\circ\delta -(-1)^{|n|} \delta\circ \partial_P$.

	Let $(\calC ,\nu)$ be a coaugmented coprotoperad and $(N,\lambda,\rho)$, an infinitesimal $\calC $-cobimodule. A morphism  of $\mathfrak{S}$-modules $d: N \rightarrow \calC $ of homological degree $n$ is a \emph{homogeneous coderivation} if the following diagram commutes:
	\[
		\begin{tikzcd}[column sep= 2cm]
			N\ar[r,"d"] \ar[d,"\lambda+\rho"']& \calC   \ar[d,"\Delta^{(1,1)}"] \\
			 (N\boxtimes_c \calC )\oplus (\calC \boxtimes_c N) \ar[r, "d\boxtimes_c \calC + \calC \boxtimes_c d"'] & \calC \boxtimes_c \calC 
		\end{tikzcd} \ .
	\]
	We denote $\underline{\mathrm{Coder}}_n(\calC ,N)$, the $k$-module of homogeneous coderivations from $\calC $ to $N$ of degree $n$ and $\underline{\mathrm{Coder}}_\bullet(\calC ,N)$, the coderivation complex.
\end{defi}

\begin{prop}\label{prop::Ind_deriv_coderiv}
	Let $(\calP ,\epsilon)$ be an augmented protoperad, $(\calC ,\nu)$ be a coaugmented coprotoperad, $M$ be an infinitesimal $\calP $-bimodule and $N$ be an infinitesimal $\calC $-cobimodule. We have the following natural isomorphisms:
	\[
		\Ind \Big( \sDer^\bullet(\calP ,M) \Big) \cong \sDer^{\bullet}\big(\Ind (\calP ),\Ind (M)\big);
	\]
	\[
		\Ind \Big(\underline{\mathrm{Coder}}^\bullet(\calC ,N)\Big)\cong \underline{\mathrm{Coder}}^\bullet\big(\Ind (\calC ),\Ind (N)\big).
	\]
\end{prop}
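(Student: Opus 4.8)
The statement to prove is \Cref{prop::Ind_deriv_coderiv}: for an augmented protoperad $\calP$ with infinitesimal bimodule $M$ (and dually a coaugmented coprotoperad $\calC$ with infinitesimal cobimodule $N$), there are natural isomorphisms
\[
\Ind\big(\sDer^\bullet(\calP,M)\big)\cong \sDer^\bullet\big(\Ind(\calP),\Ind(M)\big),\qquad
\Ind\big(\underline{\mathrm{Coder}}^\bullet(\calC,N)\big)\cong \underline{\mathrm{Coder}}^\bullet\big(\Ind(\calC),\Ind(N)\big).
\]

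Let me sketch how I would prove it.

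---

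The plan is to reduce everything to two already-established facts about the induction functor $\Ind$: that it is *monoidal* for the relevant products ($\boxtimes_c$ and $\otimes^{\mathrm{conc}}$, cited from \cite{Ler18i}), and that it is *exact and additive*, hence commutes with (finite) limits and colimits and preserves the weight grading. The previous lemma in the excerpt already exploits exactly this to show $\Ind\big((\calP\boxtimes_c(\calP\oplus M))^{(1)_M}\big)\cong(\Ind\calP\boxtimes_c(\Ind\calP\oplus\Ind M))^{(1)_{\Ind M}}$; the point is to upgrade that isomorphism from "objects" to "spaces of morphisms satisfying a commuting diagram."

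**First step: identify $\sDer^\bullet(\calP,M)$ as a kernel.** By definition, a degree-$n$ derivation $d:\calP\to M$ is an element of $\Hom^n_{\Smodred}(\calP,M)$ (morphisms of $\mathfrak S$-modules of degree $n$) satisfying the Leibniz diagram built from $\mu^{(1,1)}$, $\lambda$, $\rho$. So $\sDer^\bullet(\calP,M)$ is the equalizer (equivalently, the kernel of the difference) of the two composites
\[
\Hom^\bullet(\calP,M)\;\rightrightarrows\;\Hom^\bullet\big((\calP\boxtimes_c\calP)^{(1,1)},\,M\big),
\]
one being $d\mapsto d\circ\mu^{(1,1)}$, the other $d\mapsto (\rho+\lambda)\circ(d\boxtimes_c\calP+\calP\boxtimes_c d)$. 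Since $\calP$ is a protoperad built arity-by-arity from a $\mathfrak S$-module concentrated in finitely many arities at each weight, these $\Hom$-complexes are computed arity-by-arity as finite-dimensional hom-complexes, and in particular the $\Hom$-functor here commutes with $\Ind$ in the first variable: for $\mathfrak S$-modules $V$, $W$ one checks directly from the formula $(\Ind V)(S,E)\cong k[\mathrm{Aut}(S)]\otimes V(S)$ (supported on $S\cong E$) that $\Ind\big(\sHom(V,W)\big)\cong\sHom(\Ind V,\Ind W)$ as $\mathfrak S$-bimodules, compatibly with composition. This is the one genuinely computational verification; it is routine because $\Ind$ is just "induce the $\mathrm{Aut}(S)$-action and put it on the diagonal," and $\sHom$ in the reduced $\mathfrak S$-module category is taken arity-wise.

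**Second step: transport the defining diagram through $\Ind$.** Applying the exact functor $\Ind$ to the equalizer diagram above, and using both the $\sHom$-compatibility of the first step and the monoidality $\Ind\big((\calP\boxtimes_c\calP)^{(1,1)}\big)\cong(\Ind\calP\boxtimes_c\Ind\calP)^{(1,1)}$ (the previous lemma, with $M$ replaced by $\calP$), we get that $\Ind(\sDer^\bullet(\calP,M))$ is the equalizer of
\[
\sHom^\bullet(\Ind\calP,\Ind M)\;\rightrightarrows\;\sHom^\bullet\big((\Ind\calP\boxtimes_c\Ind\calP)^{(1,1)},\,\Ind M\big).
\]
Here one must check that $\Ind$ applied to $d\boxtimes_c\calP+\calP\boxtimes_c d$ and to $\rho+\lambda$ gives the corresponding structure maps for the infinitesimal $\Ind\calP$-bimodule $\Ind M$ — but this is exactly the content of the preceding lemma in the paper (that $\Ind M$ is an infinitesimal $\Ind\calP$-bimodule via $\Ind\lambda$, $\Ind\rho$), plus the naturality of the monoidal structure constraint of $\Ind$. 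So the equalizer is precisely $\sDer^\bullet(\Ind\calP,\Ind M)$, giving the first isomorphism; its naturality in $\calP$ and $M$ follows since every arrow in sight is natural. Finally one checks the differential $[\partial,-]$ is carried to $[\partial,-]$, which is immediate since $\Ind$ is a functor on chain complexes and $[\partial,-]$ is built from pre/post-composition with the (induced) differentials.

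**Third step: dualize.** The coderivation statement is formally dual: $\underline{\mathrm{Coder}}^\bullet(\calC,N)$ is the equalizer of $\sHom^\bullet(N,\calC)\rightrightarrows\sHom^\bullet(N,\calC\boxtimes_c\calC)$ using $\Delta^{(1,1)}$ and $d\boxtimes_c\calC+\calC\boxtimes_c d$ against $\lambda+\rho$. Exactness of $\Ind$ (it preserves kernels) together with the same $\sHom$-compatibility — now in the *second* variable, where one needs $\Ind$ to commute with the relevant (finite) products, which again follows from additivity and the explicit formula — and the monoidality isomorphism $\Ind(\calC\boxtimes_c\calC)\cong\Ind\calC\boxtimes_c\Ind\calC$ yield the second isomorphism by the identical argument. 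One mild point of care: because coderivations go $N\to\calC$ the relevant colimit/limit is the kernel of a map *out of* a $\sHom$ whose source is fixed, so one uses that $\Ind$ preserves kernels (exactness) rather than anything about infinite products.

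**Main obstacle.** The only non-formal ingredient is the first step: proving $\Ind\big(\sHom(V,W)\big)\cong\sHom(\Ind V,\Ind W)$ as a natural isomorphism of $\mathfrak S$-bimodules, compatible with composition and with the $\boxtimes_c$-structure maps. Everything else — transporting a kernel/equalizer through an exact functor, carrying the differential along, naturality — is automatic. So the bulk of the work is this bookkeeping lemma about how $\Ind$ interacts with internal hom, which I would either prove directly from the arity formula for $\Ind$ or, better, extract from the monoidal-adjunction structure $(\Ind\dashv\Res)$ already recorded in \Cref{def::functor induction}: since $\Ind$ is strong monoidal with right adjoint $\Res$, and the relevant $\sHom$ is the internal hom adjoint to $\boxtimes_c$ in a suitable sense, a standard argument gives the compatibility for free. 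I would present it via this adjunction route to keep the proof short, falling back on the explicit formula only if the internal-hom adjointness is not cleanly available in this setting.
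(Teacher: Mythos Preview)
Your approach is essentially the paper's, only much more explicit: the paper's entire proof is the single sentence ``The functor $\Ind$ is additive monoidal and respects the grading on $M$,'' i.e.\ it simply cites the structural properties of $\Ind$ and leaves the equalizer picture implicit.

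One point to tighten. The claim ``$\Ind\big(\sHom(V,W)\big)\cong\sHom(\Ind V,\Ind W)$ as $\mathfrak S$-bimodules'' is not the right formulation: in this paper $\sDer^\bullet(\calP,M)$ and the relevant $\Hom$-complexes are ordinary chain complexes of $k$-modules, not $\mathfrak S$-modules, so applying $\Ind$ to them literally is meaningless (and an arity-wise internal-hom version of the statement is false in general). What you actually need---and what you already point to with the $(\Ind\dashv\Res)$ remark---is simply that $\Ind$ is \emph{fully faithful}, which follows from $\Res\circ\Ind=\id$ (recorded in \Cref{def::functor induction} and used elsewhere in the paper). Full faithfulness gives $\Hom_{\Sbimodred}(\Ind V,\Ind W)\cong\Hom_{\Smodred}(V,W)$, and then your equalizer argument runs cleanly: $\Ind$ carries the defining diagram for protoperadic (co)derivations to the properadic one by monoidality, additivity, and preservation of the weight grading, and full faithfulness identifies the ambient $\Hom$-complexes. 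Replace the internal-hom lemma by full faithfulness and your proof is correct and complete.
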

\begin{proof}
	The functor $\Ind $ is additive monoidal and respects the grading on $M$ (see \Cref{def::functor induction}, or \cite[\Cref{ProtoI-thm::Ind_monoidal_connexe}]{Ler18i}). 
\end{proof}

\begin{lem}
	Let $\scrF(V)$ be the free protoperad on the $\mathfrak{S}$-module $V$. For a homogeneous morphism $\theta : V \rightarrow \scrF(V)$ of degree $|\theta|$, there exists a unique homogeneous derivation $d_\theta : \scrF(V) \rightarrow \scrF(V)$ of degree $|\theta|$,  such that its restriction to $V$ is $\theta$: we have
	\[
		\sDer_{|\theta|}(\scrF(V),\scrF(V))\cong \Hom_{|\theta|}(V,\scrF(V)).
	\]
	Moreover, if $\theta(V)\subset \scrF^{(\rho)}(V)$ then we have $d_\theta(\scrF^{(s)}(V))\subset \scrF^{(s+\rho-1)}(V)$.
\end{lem}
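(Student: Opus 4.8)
The plan is to exploit the combinatorial model of the free protoperad given by \Cref{prop::freeproto}: an element of $\scrF^{(\rho)}(V)(S)$ is a sum of \emph{decorated connected walls}, i.e. of connected walls $W=(\{W_\alpha\}_{\alpha\in A},\leqslant)\in\mathcal{W}^{\mathrm{conn}}_\rho(S)$ together with a label $v_\alpha\in V(W_\alpha)$ on each brick. We use moreover that $\scrF(V)$ is an infinitesimal bimodule over itself via the partial composition product $\mu^{(1,1)}$ of \Cref{def::prod_compo_partiel}, so that $\sDer_{|\theta|}(\scrF(V),\scrF(V))$ makes sense and the restriction morphism $\sDer_{|\theta|}(\scrF(V),\scrF(V))\to\Hom_{|\theta|}(V,\scrF(V))$, $d\mapsto d|_V$, is defined. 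The isomorphism follows once this restriction is shown to be bijective.

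\emph{Injectivity (uniqueness).} Let $d,d'$ be homogeneous derivations of degree $|\theta|$ with $d|_V=d'|_V$, and set $\delta:=d-d'$. Then $\delta$ is a derivation vanishing on $V=\scrF^{(1)}(V)$, and the Leibniz identity reads $\delta\big(\mu^{(1,1)}(p,q)\big)=\rho\big(\delta(p),q\big)+(-1)^{|\theta|\,|p|}\lambda\big(p,\delta(q)\big)$. Since, by \Cref{prop::freeproto} and the associativity of $\boxtimes_c$, the $\mathfrak{S}$-module $\scrF(V)$ is generated by $\scrF^{(1)}(V)=V$ under iterated partial composites $\mu^{(1,1)}$, an induction on the weight shows that $\delta$ vanishes identically, hence $d=d'$.

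\emph{Surjectivity (existence).} Conversely, given $\theta$, define $d_\theta$ on a decorated connected wall over $S$ by the signed Leibniz formula
\[
d_\theta\Big(\bigotimes_{\alpha\in A}v_\alpha\Big)\ :=\ \sum_{\beta\in A}(-1)^{|\theta|\cdot\big(\sum_{\alpha<\beta}|v_\alpha|\big)}\ \iota_\beta\big(\theta(v_\beta)\big),
\]
where $\iota_\beta$ denotes the operation of substituting, in place of the brick $W_\beta$, the element $\theta(v_\beta)\in\scrF(V)(W_\beta)$ — a sum of decorated connected walls over $W_\beta$ — using that a connected wall over $S$ with one brick replaced by a connected sub-wall over that brick is again a connected wall over $S$ (this substitution is nothing but the restriction of the monad multiplication $\scrF\scrF(V)\to\scrF(V)$ to a single brick). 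One checks that $d_\theta$ is well defined, i.e. that the substitution respects the canonical partial orders on walls, which is part of the associativity of $\boxtimes_c$ established in \cite{Ler18i}; that $d_\theta$ has degree $|\theta|$; and that $d_\theta$ satisfies the signed Leibniz rule, by splitting the sum over the bricks of a partial composite $\mu^{(1,1)}(p,q)$ into those coming from $p$ and those coming from $q$, the Koszul signs matching by construction. Since $d_\theta|_V=\theta$, the map $\theta\mapsto d_\theta$ is a two-sided inverse of the restriction, proving $\sDer_{|\theta|}(\scrF(V),\scrF(V))\cong\Hom_{|\theta|}(V,\scrF(V))$.

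\emph{Weight estimate and main difficulty.} If $\theta(V)\subset\scrF^{(\rho)}(V)$, then in the formula above each $\iota_\beta(\theta(v_\beta))$ replaces a single brick (of weight $1$) of a weight-$s$ wall by a connected sub-wall of weight $\rho$, producing a wall of weight $s-1+\rho$; hence $d_\theta(\scrF^{(s)}(V))\subset\scrF^{(s+\rho-1)}(V)$. The main work is concentrated in the existence step: making the brick-substitution $\iota_\beta$ rigorous and checking that it respects the order data defining a wall (so that $d_\theta$ genuinely takes values in $\scrF(V)$ and not in some unstructured sum of tensors), and carefully tracking the Koszul signs so that the stated signed Leibniz identity holds on the nose. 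Both points are handled by appealing to the combinatorics of connected walls and to the associativity of the connected composition product from \cite{Ler18i}, rather than re-establishing them here.
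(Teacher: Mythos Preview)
Your proposal is correct and follows essentially the same strategy as the paper: define $d_\theta$ by a signed Leibniz sum over the bricks of a decorated wall, and deduce uniqueness from the fact that $\scrF(V)$ is generated by $V$ under iterated partial compositions. The only presentational difference is that the paper works with explicit representatives in $(V\oplus\Ibox)^{\boxtimes_c n}$ and checks by hand that the formula is invariant under transpositions of tensor factors (and passes to the quotient $\widetilde V_n$), whereas you phrase everything directly in the wall model; correspondingly, your sign $(-1)^{|\theta|\sum_{\alpha<\beta}|v_\alpha|}$ tacitly requires a choice of linear order on the index set $A$, and the well-definedness check you allude to must include independence of that choice, which is precisely the transposition computation the paper carries out.
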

\begin{proof}
	Let $\bigotimes_{j=1}^{n} (v_1^j \otimes \ldots \otimes v_{r_j}^j)$ be a representative  of a class of $V_n=(V\oplus \Ibox)^{\boxtimes_c  n}$ with each $v_\lambda^j$ in  $V\oplus \Ibox$. We define the application $d_\theta$ by $	d_\theta\Big( \bigotimes_{j=1}^{n} (v_1^j \otimes \ldots \otimes v_{r_j}^j)\Big):=$ 
	\[
		\sum_{{\substack{
					s\in[\![1,n]\!] \\ 
					i\in[\![1,r_s]\!]
		}}}\!(-1)^{\lambda_{s,i}} 
	\bigotimes_{j=1}^{s-1} (v_1^j \otimes \ldots \otimes v_{r_j}^j)
	\otimes (v_1^s \otimes \ldots \otimes \theta( v_{i}^s)\otimes \ldots\otimes v_{r_s}^s) \otimes  \bigotimes_{j=s+1}^{n} (v_1^j \otimes \ldots \otimes v_{r_j}^j)
	\]
	where $\lambda_{s,i}=\big(\sum_{j=1}^{s-1}\sum_{l=1}^{r_j} |v_l^j| + |v_1^s|+\ldots+|v_{i-1}^s|\big) |\theta|$ and where we extend $\theta$ to $V\oplus \Ibox$  by $\theta(\Ibox)=0$. The morphism $d_\theta$ is constant on the equivalence class  of $\bigotimes_{j=1}^{n} (v_1^j \otimes \ldots \otimes v_{r_j}^j)$. We just need to verify that for $n=1$ and for the transposition which sends $v_j$ to $v_{j+1}$: 
	\begin{align*}
		d_\theta \big( v_1 \otimes & \ldots\otimes v_{j+1}\otimes v_j\otimes \ldots \otimes v_{r} \big) = \\
		= &~ \sum_{ i\in[\![1,j-1]\!]} (-1)^{|\theta|\sum_{l=1}^{i-1}|v_l|} v_1 \otimes \ldots \otimes \theta( v_{i})\otimes \ldots \otimes  v_{j+1}\otimes v_j\otimes \ldots \otimes v_{r}  \\
		&~+(-1)^{|\theta|\sum_{l=1}^{j-1}|v_l|}v_1 \otimes \ldots \otimes \theta(v_{j+1})\otimes v_j\otimes \ldots \otimes v_{r}  \\
		&~+(-1)^{|\theta|\sum_{l=1}^{j-1}|v_l|+|\theta||v_{j+1}|} v_1 \otimes \ldots  \otimes  v_{j+1}\otimes \theta( v_j)\otimes \ldots \otimes v_{r} \\
		&~+\sum_{ i\in[\![j+2,r]\!]} (-1)^{|\theta|\sum_{l=1}^{i-1}|v_l|} v_1 \otimes \ldots \otimes  v_{j+1}\otimes v_j\otimes \ldots\otimes \theta( v_{i})\ldots\otimes v_{r} \\
		\sim (-1)^{|v_j||v_{j+1}|}&\bigg( \sum_{ i\in[\![1,j-1]\!]} (-1)^{|\theta|\sum_{l=1}^{i-1}|v_l|} v_1 \otimes \ldots \otimes \theta( v_{i})\otimes \ldots \otimes  v_{j}\otimes v_{j+1}\otimes \ldots \otimes v_{r}  \\
		&~+(-1)^{|\theta|\sum_{l=1}^{j-1}|v_l|+|\theta||v_j|}v_1 \otimes \ldots \otimes v_j \otimes \theta(v_{j+1})\otimes \ldots \otimes v_{r}  \\
		&~+(-1)^{|\theta|\sum_{l=1}^{j-1}|v_l|} v_1 \otimes \ldots  \otimes \theta( v_j) \otimes v_{j+1} \otimes \ldots \otimes v_{r} \\
		&~+\sum_{ i\in[\![j+2,r]\!]} (-1)^{|\theta|\sum_{l=1}^{i-1}|v_l|} v_1 \otimes \ldots \otimes  v_{j}\otimes v_{j+1}\otimes \ldots\otimes \theta( v_{i})\ldots\otimes v_{r} \bigg) \\
		= &~(-1)^{|v_j||v_{j+1}|}d_\theta \big( v_1 \otimes \ldots\otimes v_{j}\otimes v_{j+1}\otimes \ldots \otimes v_{r} \big).
	\end{align*}

Moreover, $d_\theta$ factorizes through $\widetilde{V}_n$ (see \cite[\Cref{ProtoI-subsect::monoide_libre}, \cref{ProtoI-eq::def_VnTilde}]{Ler18i} for the definition); similarly, we show that, on the elements of the form $\bar{v}:=(v_1\otimes v_2 \otimes v_3)\otimes (v_1'\otimes 1 \otimes v_3') - (-1)^{|v_2|(|v_3|+|v_1'|)} (v_1\otimes 1 \otimes v_3)\otimes (v_1'\otimes v_2 \otimes v_3') $, we have $d_\theta(\bar{v})=0$. Hence, we use the same arguments that the properadic case (cf. \cite[Lem. 87]{Val03}): the surjectivity of the product of the free protoperad $\scrF(V)$ gives us the uniqueness of the derivation $d_\theta$ and that all derivation are as above.
\end{proof}

Dually we have the following lemma (which is the protoperadic analogue of \cite[Lem. 88]{Val03}).

\begin{lem}\label{lem::coderivation}
	Let $\scrF^c(V)$ be the connected cofree coprotoperad on the $\mathfrak{S}$-module $V$. For all homogeneous morphisms of $\mathfrak{S}$-module $\theta : \scrF^c(V) \rightarrow V$ of homological degree $|\theta|$, there exists a unique homogeneous coderivation with the same degree $d_\theta : \scrF^c(V) \rightarrow \scrF^c(V)$ such that the composition
	\[
		\scrF^c(V) \overset{d_\theta}{\longrightarrow} \scrF^c(V) \overset{\mathrm{proj}}{\longrightarrow} V
	\]
is equal to $\theta$. This correspondance is bijective; moreover, if $\theta$ is null on each weight component $\scrF^{c,(s)}(V)$ for $s\ne r$, then $d_\theta(\scrF^{c,(s+r-1)}(V) ) \subset \scrF^{c,(s)}(V) $, for $s>0$.
\end{lem}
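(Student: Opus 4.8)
The plan is to dualise the preceding lemma on derivations of free protoperads. First recall that the connected cofree coprotoperad $\scrF^c(V)$ has the same underlying weight-graded $\mathfrak{S}$-module as the free protoperad of \Cref{prop::freeproto}, i.e. $\scrF^{c,(\rho)}(V)(S)\cong\bigoplus_{(\{K_\alpha\}_\alpha,\leqslant)\in\Wall_\rho(S)}\bigotimes_\alpha V(K_\alpha)$; the coproduct $\Delta$ cuts a connected wall $W$, along its canonical partial order, into a connected sub-wall below and a connected sub-wall above (decorated accordingly, with the induced Koszul sign), and $\Delta^{(1,1)}$ is the infinitesimal part of $\Delta$. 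In particular $\Delta^{(1,1)}$ strictly lowers the weight on each tensor factor (so it vanishes on the weight-$1$ part), and $\scrF^c(V)$ is conilpotent with space of primitives exactly $V=\scrF^{c,(1)}(V)$.

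Given $\theta:\scrF^c(V)\to V$ homogeneous of degree $|\theta|$, I would define $d_\theta$ by the de-contraction formula dual to the Leibniz rule of the previous lemma: on a representative $(W,\bigotimes_\alpha v_\alpha)$,
\[
  d_\theta\big(W,\textstyle\bigotimes_\alpha v_\alpha\big)\ :=\ \sum_{W'}(-1)^{\star(W')}\,\Big(W/W'\,,\ \theta\big(W',\textstyle\bigotimes_{\alpha\in W'}v_\alpha\big)\otimes\textstyle\bigotimes_{\alpha\notin W'}v_\alpha\Big),
\]
the sum running over the connected sub-walls $W'$ of $W$ forming a single layer for the canonical partial order (equivalently, over the decompositions of $W$, via the iterated coproduct, as a bottom layer $\boxtimes_c W'\boxtimes_c$ a top layer), where $W/W'$ is the wall obtained by collapsing $W'$ to one brick carrying $\theta(W',-)\in V$ and $(-1)^{\star(W')}$ is the Koszul sign obtained by moving $\theta$ past the decorations lying strictly below $W'$. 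As in the proof of the preceding lemma — checking on the transpositions generating the symmetric groups and on the brick-sliding relations of \cite[\Cref{ProtoI-subsect::monoide_libre}]{Ler18i} — one verifies that $d_\theta$ does not depend on the chosen representative. If $\theta$ vanishes off weight $r$, only the layers $W'$ of weight $r$ contribute, and collapsing such a $W'$ sends weight $s+r-1$ to weight $s$; when $s=0$ this is empty, which yields $d_\theta(\scrF^{c,(s+r-1)}(V))\subset\scrF^{c,(s)}(V)$ for $s>0$.

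That $d_\theta$ is a coderivation amounts to the identity $\Delta^{(1,1)}\circ d_\theta=(d_\theta\boxtimes_c\id+\id\boxtimes_c d_\theta)\circ\Delta^{(1,1)}$; evaluated on $(W,v)$ both sides are sums over a horizontal cut of $W$ together with a layer $W'$ to be collapsed, and the identity is the bijection ``(collapse $W'$, then cut) $\leftrightarrow$ (cut, then collapse $W'$ on the side containing it)'', which makes sense because after collapse $W'$ is a single brick and hence never straddles the cut. Uniqueness of $d_\theta$ (hence bijectivity of $\theta\mapsto d_\theta$) follows by induction on the weight $\rho$ of the source: if $e$ is a coderivation with $\mathrm{proj}\circ e=0$, then for $\rho=1$ the coderivation relation and $\Delta^{(1,1)}|_V=0$ show $e(V)$ is primitive, hence contained in $V$, hence zero; for $\rho\geqslant 2$ the coderivation relation gives $\Delta^{(1,1)}(e(x))=(e\boxtimes_c\id+\id\boxtimes_c e)(\Delta^{(1,1)}(x))=0$, since $\Delta^{(1,1)}(x)$ has tensor factors of weight $<\rho$ where $e$ vanishes by induction, so $e(x)$ is again primitive and zero. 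This gives the bijection $\Hom_{|\theta|}(\scrF^c(V),V)\xrightarrow{\ \sim\ }\underline{\mathrm{Coder}}_{|\theta|}(\scrF^c(V),\scrF^c(V))$. (Alternatively, both existence and uniqueness follow abstractly from the universal property of the cofree coprotoperad applied to a square-zero coextension of $\scrF^c(V)$, exactly as for \cite[Lem. 88]{Val03}.)

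The main obstacle is the sign bookkeeping in the coderivation identity together with making precise the wall combinatorics of $\scrF^c(V)$ and of $\Delta^{(1,1)}$ used above. Both points are dual to, and no harder than, the corresponding ones in \cite[Lem. 88]{Val03}, and the sign verification is the exact mirror of the one already carried out in the proof of the preceding lemma.
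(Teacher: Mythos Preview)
Your proposal is correct and takes essentially the same approach as the paper: the paper gives no explicit proof at all, merely introducing the lemma with ``Dually we have the following lemma (which is the protoperadic analogue of \cite[Lem.~88]{Val03})'', so your detailed dualisation of the preceding derivation lemma is exactly what is intended. Your explicit formula for $d_\theta$, the weight-shift computation, and the conilpotency-based uniqueness argument are all standard and sound; the only places to tighten are the phrasing ``single layer'' (what you need is that $W'$ be an \emph{admissible} connected sub-wall, i.e.\ one whose contraction $W/W'$ is again a wall) and the precise shape of the coderivation identity, which in the paper's conventions is stated via the infinitesimal coactions $\lambda,\rho$ rather than $\Delta^{(1,1)}$ on both sides --- but for $N=\calC=\scrF^c(V)$ these coincide and your bijection-of-cuts argument goes through unchanged.
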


\begin{defi}[Quasi-free protoperad/coprotoperad] 
	A protoperad $(\scrF(V),\partial=\partial_V+d_\theta)$ (resp. coprotoperad $(\scrF^c(V),\partial=\partial_V+d_\theta)$) is called \emph{quasi-free}. 
\end{defi}
\begin{prop}
	The projection $\scrF(V) \rightarrow V$ of a quasi-free (co)protoperad on to its indecomposables is a morphism of $\mathfrak{S}$-modules if and only if $\theta(V)\subset \bigoplus_{r\geqslant 2} \scrF^{(r)}(V)$.
\end{prop}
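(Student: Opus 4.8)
The plan is to observe that there is nothing to check on the underlying graded $\mathfrak{S}$-modules: the projection $\pi\colon\scrF(V)\to V$ onto the weight-$1$ summand $\scrF^{(1)}(V)=V$ (the indecomposables) is evidently $\mathfrak{S}$-linear and of degree $0$. So the assertion is precisely that $\pi$ is compatible with the differentials, i.e.\ that $\pi\circ\partial=\partial_V\circ\pi$, where on the source $\partial=\partial_V+d_\theta$ and on the target $\partial_V$ is the internal differential of $V$.

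First I would dispose of the $\partial_V$-part. The derivation $\partial_V$ of $\scrF(V)$ is the one induced by the internal differential of $V\cong\scrF^{(1)}(V)$, so by the derivation lemma above it sends each $\scrF^{(s)}(V)$ into itself; in particular it preserves the weight grading $\scrF(V)=\bigoplus_{r\geqslant 1}\scrF^{(r)}(V)$ and therefore $\pi\circ\partial_V=\partial_V\circ\pi$ always. Hence $\pi$ is a morphism of differential graded $\mathfrak{S}$-modules if and only if $\pi\circ d_\theta=0$. Now I would run the weight bookkeeping for $d_\theta$. Writing $\theta=\sum_{r\geqslant 1}\theta^{(r)}$ with $\theta^{(r)}\colon V\to\scrF^{(r)}(V)$, the derivation lemma gives $d_{\theta^{(r)}}\bigl(\scrF^{(s)}(V)\bigr)\subseteq\scrF^{(s+r-1)}(V)$, which for $r\geqslant 1$ has weight $\geqslant s$; so on $\scrF^{(s)}(V)$ with $s\geqslant 2$ the image lies in $\ker\pi=\bigoplus_{t\geqslant 2}\scrF^{(t)}(V)$, and $\pi\circ d_\theta$ vanishes there automatically. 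On the remaining summand $\scrF^{(1)}(V)=V$ the derivation $d_\theta$ restricts to $\theta$ by construction, so $\pi\circ d_\theta|_V=\pi\circ\theta=\theta^{(1)}$ after the identification $\scrF^{(1)}(V)=V$. Thus $\pi\circ d_\theta=0$ if and only if $\theta^{(1)}=0$, i.e.\ if and only if $\theta(V)\subseteq\bigoplus_{r\geqslant 2}\scrF^{(r)}(V)$, which is the claim.

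The coprotoperadic case is entirely analogous: one replaces the derivation lemma by \Cref{lem::coderivation}, uses that $\partial_V$ on $\scrF^c(V)$ preserves the weight grading and that the composite of $d_\theta$ with the projection $\scrF^c(V)\twoheadrightarrow V$ equals $\theta$, and reads off the dual condition on the weight-$1$ part of $\theta$ by the same bookkeeping. I do not foresee a real difficulty here: once the weight behaviour of $\partial_V$ and of $d_\theta$ is extracted from the two (co)derivation lemmas, the proof is just a short verification. The only points requiring a little care are confirming that $\partial_V$ genuinely preserves the weight grading and, in the dual case, keeping track of the direction of $\theta$.
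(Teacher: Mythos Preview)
Your argument is correct and follows exactly the standard weight-bookkeeping approach that the paper defers to, via its reference ``cf.\ \cite[Prop.~89]{Val03}''. In fact your write-up is more explicit than the paper's, which gives no details beyond that citation; the key steps you isolate---that $\partial_V$ preserves the weight grading, that $d_\theta$ restricted to $\scrF^{(1)}(V)=V$ is $\theta$, and that the image of $d_\theta$ on higher weights lands in $\ker\pi$---are precisely the ingredients of Vallette's properadic proof.
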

\begin{proof}
	cf. \cite[Prop. 89]{Val03}.
\end{proof}


\subsection{Bar-cobar adjunction}\label{subsect::adjonction_barcobar}
We introduce the bar construction of a protoperad. 
We denote by $s$ the generator of the $\mathfrak{S}$-module $\Sigma$, the suspension (see \Cref{def::suspension_Smod}). Let $(\calP ,\mu, \epsilon)$ be an augmented protoperad. The partial product $\mu^{(1,1)}$ of $\calP $ induces a homogeneous morphism of $\mathfrak{S}$-modules of homological degree $-1$:
	\[
		s\mu_2 : \scrF^{c,(2)}(\Sigma \overline{\calP })\longrightarrow \Sigma \overline{\calP }
	\]
given by
	\[
		s\mu_2\big( s.p_1 \otimes  s.p_2 \big):=~ (-1)^{|p_1|}s.\mu^{(1,1)}\big( p_1 \otimes  p_2 \big).
	\]

By  \Cref{lem::coderivation}, we can associate to $s\mu_2$, a homogeneous  coderivation $d_{s\mu_2} : \scrF^c(\Sigma \overline{\calP }) \rightarrow \scrF^c(\Sigma \overline{\calP })$, of homological degree $-1$. We consider the coderivation $ \partial:= \partial_P+d_{s\mu_2} : \scrF^{c}(\Sigma \overline{\calP }) \rightarrow \scrF^{c}(\Sigma \overline{\calP })$ with $\partial_P$ the coderivation induced by the internal differential of $\calP $. We show that $\partial^2=0$, which is equivalent to showing that $\partial_P  d_{s\mu_2}+d_{s\mu_2} \partial_P +d_{s\mu_2}^2=0$, because $\partial_P$ is a differential. By \Cref{prop::Ind_deriv_coderiv}, $\Ind (d_{s\mu_2})$ is a coderivation of homological degree $-1$ (in the properadic sense). As the functor $\Ind$ commutes with the free monoid functor $\scrF(-)$, with the suspension and $\Ind$ is exact (so commutes with the functor $\overline{(-)}$), we have the following isomorphism
\[
	\Ind \Big(\big( \scrF^c(\Sigma\overline{\calP }), \partial_P+d_{s\mu_2}\big)\Big)\cong \big( \scrF^c(\Sigma\overline{\Ind (\calP )}), \partial_{\Ind (\calP )}+\Ind (d_{s\mu_2})\big).
\]
As the coderivation $d_{s\mu_2}$ is the suspension of the partial product $\mu^{(1,1)}$, and the functor $\Ind$ is compatible with the weight-bigrading in $\calP $  of $\calP \boxtimes_c \calP $ and commutes with the suspension, we have directly that $\Ind (d_{s\mu_2^\calP })$ is equal to $d_{s\mu_2^{\Ind (\calP )}}$, the coderivation induced by the partial product of the properad $\Ind (\calP )$.

This lends to the definition of the bar construction of a protoperad.
\begin{defiprop}[Bar construction]
	Let $(\calP ,\mu,\partial_\calP, \epsilon)$ be an augmented protoperad. The \emph{bar construction of $\calP $} is the following quasi-cofree coaugmented coprotoperad: 
	\[
		\big(\Bar \calP,\partial\big):=\big( \scrF^c(\Sigma\overline{\calP }), \partial_P+d_{s\mu_2}\big),
	\]
	which gives the functor
	\[
		\Bar  : \mathsf{protoperads}\aug_k \longrightarrow \mathsf{coprotoperads}\coaug_k \ .
	\]
	Moreover, the respective bar constructions commute with the induction functor:
	\[
		\Ind \big( \Bar (-)\big) \cong \Bar \Val\big( \Ind (-)\big),
	\]
	where the functor $\Bar \Val$ is the bar construction for properads defined in \cite{Val03,Val07}.
\end{defiprop}

\begin{prop}
	Let $V$ be a reduced $\mathfrak{S}$-module concentrated in homological degree $0$. Then the homology of the chain complex given by the bar construction of the free protoperad over $V$ is acyclic, i.e.
	\[
	\mathrm{H}_\bullet\big(\Bar \scrF(V), \partial_{\mathrm{bar}} \big) \cong  \Sigma V,
	\]
	where $\Sigma$ is the shifting of homological degree one.
\end{prop}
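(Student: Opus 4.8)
The statement is the protoperadic analogue of the classical fact that the bar construction of a free (co)operad/properad on a degree-zero $\mathfrak{S}$-module is acyclic, and the natural strategy is to reduce to the properadic case via the induction functor, which we already know to be exact, monoidal, and compatible with all the relevant constructions ($\scrF$, $\Sigma$, $\overline{(-)}$, $\Bar$). So first I would invoke the established isomorphism $\Ind(\Bar\scrF(V)) \cong \Bar^{\mathrm{Val}}(\Ind\scrF(V)) \cong \Bar^{\mathrm{Val}}(\scrF^{\mathrm{Val}}(\Ind V))$, together with the fact that $\Ind V$ is concentrated in homological degree $0$ whenever $V$ is. Then the acyclicity of $\Bar^{\mathrm{Val}}\scrF^{\mathrm{Val}}(\Ind V)$ — with homology $\Sigma\,\Ind V$ — is the corresponding result of Vallette for properads (this is exactly the content establishing that the bar-cobar adjunction is a resolution; see \cite{Val03,Val07}). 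Since $\Ind$ is exact it commutes with taking homology, so $\Ind(\mathrm{H}_\bullet(\Bar\scrF(V))) \cong \Sigma\,\Ind V \cong \Ind(\Sigma V)$. The last step is to descend: because $\Ind$ is faithful (it is the left adjoint in a monoidal adjunction whose unit is an isomorphism — concretely, $\Res\circ\Ind \cong \mathrm{id}$ on reduced $\mathfrak{S}$-modules, so $\Ind$ reflects isomorphisms), the isomorphism $\Ind(\mathrm{H}_\bullet(\Bar\scrF(V))) \cong \Ind(\Sigma V)$ of $\mathfrak{S}$-bimodules forces $\mathrm{H}_\bullet(\Bar\scrF(V)) \cong \Sigma V$ as $\mathfrak{S}$-modules.

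Alternatively, if one wants a self-contained argument not routed through the properadic result, I would construct an explicit contracting homotopy at the level of reduced $\mathfrak{S}$-modules. The bar construction $\Bar\scrF(V)$ has underlying object $\scrF^c(\Sigma\,\overline{\scrF(V)})$, and the combinatorics of both $\scrF$ (connected walls, \Cref{prop::freeproto}) and of the cofree coprotoperad are controlled by the wall functor $\Wall$; the differential is purely the coderivation $d_{s\mu_2}$ since $V$, hence $\scrF(V)$, has zero internal differential. One then sets up the filtration by the total "number of bricks" and runs the standard argument: the associated graded splits the bar differential into a part that "splits one generator off the top of a vertical composite" and its contribution is killed by contracting each maximal vertical stack of bricks. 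This is morally the same homotopy used for the acyclicity of the bar construction of a free associative algebra, transported through the bricks-and-walls description; the point is that a tree/wall with at least two bricks in some connected column admits a canonical "last composition" to undo. The surviving homology sits in weight one, i.e. is $\Sigma V$.

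**Main obstacle.** The genuine work — in either approach — is bookkeeping the signs and the equivalence relations defining $\scrF$ and $\scrF^c$ (the $\sim$ identifications of \Cref{def::prod_connexe_Smod} and the $\widetilde{V}_n$ quotient) so that the homotopy, or the induction isomorphism, is well-defined on the quotients and strictly respects the $\mathfrak{S}$-action. In the reduction-via-$\Ind$ route the only real subtlety is justifying that $\Ind$ reflects isomorphisms and that the cited properadic acyclicity statement applies verbatim to $\scrF^{\mathrm{Val}}(\Ind V)$ with $\Ind V$ in degree zero; in the direct route the obstacle is exhibiting the contracting homotopy compatibly with the wall combinatorics and checking $[\partial, h] = \mathrm{id} - \text{(projection onto }\Sigma V)$ weight by weight. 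I expect the paper takes the first route, as it matches the "transpose Vallette's results through the exact monoidal functor $\Ind$" philosophy stated at the start of \Cref{sect::Koszul duality}; accordingly I would keep the proof short, citing \cite{Val03,Val07} for the properadic input and the earlier lemmas of this section for the compatibility of $\Ind$ with $\Bar$, $\scrF$, $\Sigma$ and exactness.
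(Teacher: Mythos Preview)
Your first route via $\Ind$ is valid and would give a short proof, but contrary to your expectation the paper does \emph{not} take it. Instead the paper argues directly and combinatorially, using the \emph{colouring complex} developed in the companion paper \cite{Ler18i}. For a finite set $S$ one has
\[
(\Bar\scrF(V))(S)\;\cong\;\bigoplus_{K\in\Wall(S)}\ \bigoplus_{\phi\in\Colo(K)}\Sigma^{\#\phi(K)}\Big(\bigotimes_{\alpha\in A}V(K_\alpha),\,\phi\Big),
\]
where a colouring $\phi$ of a connected wall $K$ records how its bricks are grouped into the $\overline{\scrF(V)}$-vertices inside $\scrF^c(\Sigma\overline{\scrF(V)})$, and the bar differential acts purely by merging colours. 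The one-brick walls contribute exactly $\Sigma V$; for each wall $K$ with $\#K\geq 2$ the remaining contribution is a direct sum of copies of the colouring complex $C^{\Colo}_\bullet(K)$, shown to be acyclic in \cite[\Cref{ProtoI-prop::cpx_colo_acyclique}]{Ler18i}.

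Your second sketch (an explicit contracting homotopy built from the wall combinatorics) is in fact what the colouring complex packages: colourings are the combinatorial avatars of the filtration you describe, and the acyclicity of $C^{\Colo}_\bullet(K)$ is precisely the abstract ``undo the last merge'' homotopy, proved once and for all at the level of posets of colourings. So the paper's proof is a formalisation of your alternative rather than your primary proposal. The trade-off: your $\Ind$ route is slicker and reuses Vallette's properadic result wholesale, whereas the paper's route stays intrinsically protoperadic and isolates a reusable purely combinatorial lemma that does not depend on the properadic theory.
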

\begin{proof}
	For this proof, we use the notion of \emph{colouring} of a wall $W$ and the  \emph{colouring complex} associated to $W$, defined in \cite[\Cref{ProtoI-sect::coloring}]{Ler18i}. Let $S$ be a totally ordered finite set. We have the following isomorphisms of chain complexes:
	\[
	\big(\Bar \scrF(V)\big)(S)  \cong \bigoplus_{K\in\Wall(S)}~\bigoplus_{\phi\in Col(K)}~\Sigma^{\#\phi(K)}\Big(\bigotimes_{\alpha\in A} V(K_\alpha),\phi \Big)
	\]
	where the differential on the right side acts on the colouring as in the colouring complex. So we have:
	\begin{align*}
	\big(\Bar \scrF(V)\big)(S)  \cong &~\bigoplus_{K\in\Wall(S)}~\bigoplus_{\phi\in Col(K)}~\Sigma^{\#\phi(K)}\Big(\bigotimes_{\alpha\in A} V(K_\alpha),\phi \Big) \\
	\cong &~\Sigma V \oplus \bigoplus_{{\substack{K\in\Wall(S)\\ \#K\geqslant 2}}}~\bigoplus_{\phi\in Col(K)}~\Sigma^{\#\phi(K)}\Big(\bigotimes_{\alpha\in A} V(K_\alpha),\phi \Big) \\
	\cong &~\Sigma V \oplus \bigoplus_{{\substack{K\in\Wall(S)\\ \#K\geqslant 2}}} \big( C^{\Colo}_\bullet(K)\big)^{\oplus (\#K)\cdot \mathrm{dim}_kV}
	\end{align*}	
	then, by \cite[\Cref{ProtoI-prop::cpx_colo_acyclique}]{Ler18i}, $\Bar \scrF(V) \simeq \Sigma V$.
\end{proof}

We also have the cobar construction.
\begin{defiprop}[Cobar construction]
	Let $(\calC ,\Delta,\partial_\calC,\nu)$ be a coaugmented coprotoperad. The \emph{cobar construction of $\calC $} is the following quasi-free augmented protoperad:
	\[
		\big(\Cobar \calC ,\partial\big):=\big( \scrF(\Sigma^{-1}\overline{\calC }), \partial_C+d_{s^{-1}\Delta_2}\big),
	\]
	which gives the functor:
	\[
		\Cobar  : \mathsf{coprotoperads}\coaug_k \longrightarrow \mathsf{protoperads}\aug_k \ .
	\]
	Moreover, the respective cobar constructions commute with the induction functor:
	\[
		\Ind \big( \Cobar (-)\big) \cong \Cobar \Val\big( \Ind (-)\big),
	\]
	where the functor $\Cobar \Val$ is the cobar construction for properads defined in \cite{Val03,Val07}.
\end{defiprop} 
By the exactness of the functor $\Ind $, we directly have the adjunction between bar and cobar construction.
\begin{prop}
	The functors $\Bar $ and $\Cobar $ form a pair of adjoint functors:
	\[
		\xymatrix{
			\Cobar  : \mathsf{coprotoperads}\coaug_k \ar@<0.5ex>[r] 
				& \mathsf{protoperads}\aug_k : \Bar  \ar@<0.5ex>[l]
		} .
	\]
\end{prop}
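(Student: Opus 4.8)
The statement to prove is that $\Cobar$ and $\Bar$ form an adjoint pair between coaugmented coprotoperads and augmented protoperads. The strategy I would follow is the classical one for bar-cobar adjunctions (as in \cite{Val03,Val07} for properads), namely to identify both $\Hom_{\mathsf{protoperads}\aug_k}(\Cobar \calC, \calP)$ and $\Hom_{\mathsf{coprotoperads}\coaug_k}(\calC, \Bar \calP)$ with a common third set, the set of \emph{twisting morphisms} $\mathrm{Tw}(\calC, \calP)$. Recall that a twisting morphism is a degree $-1$ morphism of $\mathfrak{S}$-modules $\tau : \overline{\calC} \to \overline{\calP}$ satisfying the Maurer--Cartan equation $\partial(\tau) + \tau \star \tau = 0$, where $\star$ is the convolution product induced by the partial coproduct $\Delta^{(1,1)}$ of $\calC$ and the partial product $\mu^{(1,1)}$ of $\calP$. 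First I would set up the convolution dg-Lie (or at least dg-pre-Lie) structure on $\sHom_\bullet(\overline{\calC}, \overline{\calP})$, using the infinitesimal (co)bimodule structures introduced above, and check that the Maurer--Cartan elements are exactly the twisting morphisms.

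Next, I would establish the two bijections. For the first, given a morphism of quasi-free protoperads $f : (\scrF(\Sigma^{-1}\overline{\calC}), \partial) \to \calP$, it is determined by its restriction to the generators $\Sigma^{-1}\overline{\calC}$, i.e. by a degree $0$ map $\Sigma^{-1}\overline{\calC} \to \calP$, equivalently a degree $-1$ map $\tau : \overline{\calC} \to \overline{\calP}$ (landing in $\overline{\calP}$ because $f$ respects augmentations); the condition that $f$ commutes with the differentials — where on the source the differential has the extra term $d_{s^{-1}\Delta_2}$ coming from the partial coproduct — translates precisely into the Maurer--Cartan equation for $\tau$. Dually, a morphism of quasi-cofree coprotoperads $g : \calC \to (\scrF^c(\Sigma\overline{\calP}), \partial)$ is determined by its corestriction to the cogenerators $\Sigma\overline{\calP}$ (here one uses \Cref{lem::coderivation} and its analogue for coprotoperad morphisms into cofree coprotoperads), again yielding $\tau$, and the compatibility with differentials again becomes the same Maurer--Cartan equation. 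Composing the two bijections gives the adjunction isomorphism, and I would check naturality in $\calC$ and $\calP$, which is routine once the bijections are described on the level of $\tau$.

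There is, however, a shortcut available in this paper that I would actually use, exploiting the exactness and monoidality of the induction functor $\Ind$, as the sentence preceding the statement signals. We already know (from the previous Defiprop's) that $\Ind(\Bar -) \cong \Bar\Val(\Ind -)$ and $\Ind(\Cobar -) \cong \Cobar\Val(\Ind -)$, and that $\Ind$ is strong monoidal with right adjoint $\Res$. Moreover, a morphism of protoperads $\scrF(W) \to \calP$ out of a (quasi-)free protoperad is the same data as a morphism of $\mathfrak{S}$-modules $W \to \calP$, and after applying $\Ind$ this matches the corresponding statement for (quasi-)free properads because $\Ind\scrF \cong \scrF\Val\Ind$; combined with the faithfulness of $\Ind$ on morphism sets (which follows from exactness, since $\Ind$ is injective on objects and one can recover a protoperad morphism from its induction together with the underlying $\mathfrak{S}$-module map), the bijections above can be deduced from the already-established properadic bar-cobar adjunction of Vallette rather than reproven from scratch. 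Concretely, I would show $\Hom_{\mathsf{protoperads}}(\Cobar\calC, \calP) \cong \Hom_{\mathsf{properads}}(\Ind\Cobar\calC, \Ind\calP)$ restricted to the image of $\Ind$, use $\Ind\Cobar\calC \cong \Cobar\Val\Ind\calC$ and the properadic adjunction to rewrite this as $\Hom(\Ind\calC, \Bar\Val\Ind\calP) \cong \Hom(\Ind\calC, \Ind\Bar\calP)$, and descend back along $\Ind$ to $\Hom_{\mathsf{coprotoperads}}(\calC, \Bar\calP)$.

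**Main obstacle.** The delicate point in the shortcut is justifying the descent along $\Ind$: one needs that $\Ind$ is \emph{full and faithful} on the relevant hom-sets, or at least that a morphism $\Ind\calC \to \Ind\calP$ in the image of the construction comes from a unique protoperad morphism $\calC \to \calP$. Faithfulness is immediate from exactness (in particular injectivity of $\Ind$ on underlying $\mathfrak{S}$-modules), but fullness requires checking that a properad morphism between induced objects respects the "diagonal" structure and hence restricts to a protoperad morphism — this uses the explicit form of $\Ind$ in \Cref{def::functor induction}, where $(\Ind V)(S,E)$ vanishes unless $S \cong E$. If one does not want to rely on this, the alternative is to carry out the direct twisting-morphism argument; there the main technical work is verifying that the differential-compatibility of a coderivation into $\scrF^c(\Sigma\overline{\calP})$ unwinds to the Maurer--Cartan equation, which requires a careful sign bookkeeping with the suspension $s$ and the Koszul signs in the connected composition product $\boxtimes_c$ — exactly the kind of computation that \Cref{lem::coderivation} is designed to support. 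Either way, the conceptual content is light; the real care is in signs and in the functoriality bookkeeping, so I would present the $\Ind$-descent argument as the main line and relegate the sign verifications to a reference to \cite{Val03,Val07}.
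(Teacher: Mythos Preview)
Your shortcut via $\Ind$ is exactly the paper's proof: the paper's argument consists of the single sentence ``By the properties of the functor $\Ind$ \ldots\ and \cite[Prop.~17]{MV09I}'', i.e.\ transport the properadic adjunction along the induction functor using that $\Ind$ commutes with $\Bar$ and $\Cobar$.

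Your identification of the delicate point (descent along $\Ind$, i.e.\ fullness on the relevant hom-sets) is apt and is precisely what the paper's one-line proof glosses over. The cleanest way to close this gap is not to argue fullness abstractly but to use $\Res\circ\Ind=\id$ together with the fact that the adjunction bijection on the properadic side is given concretely by restriction/corestriction to (co)generators --- so the properadic bijection, applied to $\Ind\calC$ and $\Ind\calP$, visibly lands in maps of the form $\Ind(\tau)$ and hence restricts to the protoperadic hom-sets. Alternatively, your direct twisting-morphism argument is entirely self-contained (it only needs the protoperadic \Cref{lem::coderivation} and its derivation analogue, both already established) and avoids the descent issue altogether; this is in fact what the cited \cite[Prop.~17]{MV09I} does for properads, so either route is fine.
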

\begin{proof}
By the properties of the functor $\Ind $ (see \cite[\Cref{ProtoI-prop::Ind_exact}, \Cref{ProtoI-prop::Ind_commute_F}]{Ler18i}) and \cite[Prop. 17]{MV09I}.
\end{proof}

\subsection{Koszul duality}\label{subsect::dualite_de_koszul}

The result of this section  are inspired by \cite[Chap. 7]{Val03}: as the results are very similar, we try to use the same notation as in \cite{Val03}.
\subsubsection{Definition of the Koszul dual}
Let $(\calP ,\mu,\epsilon)$ be an augmented protoperad, with a weight grading, $\calP =\bigoplus_{n\in\N}\calP ^{[n]}$. This grading induced a new one on the bar construction of $\calP $:
\[
	\Bar _{(r)}\calP  = \underset{\rho\in\N}{\bigoplus} \Bar _{(r)}(\calP) ^{[\rho]},
\]
where $\Bar _{(r)}\calP = \scrF^{(r)}(\Sigma \overline{\calP })$ is the grading described in \cite[\Cref{ProtoI-prop::proto_libre}]{Ler18i}. We interpret $r$ as the number of elements of $\calP $ and $\rho$ as the total weight induced by the weight of each element of $\calP $. As the product $\mu$ of $\calP $ respects the weight grading, $d_{s\mu_2}$ respects the induced grading on $\Bar \calP$; so we have
\[
	d_{s\mu_2}\Big(\Bar _{(r)}(\calP) ^{[\rho]}\Big) \subset \Bar _{(r-1)}(\calP )^{[\rho]}.
\]
Thus we have the following lemma.
\begin{lem}\label{lem::description_bar}
	Let $\calP $ (respectively $\calC$) be a weight-graded, connected, protoperad (resp. coprotoperad), i.e. $\calP ^{[0]}=\Ibox$ (resp. $\calC^{[0]})=\Ibox$). Then we have:
	\[
		\Bar _{(\rho)}(\calP )^{[\rho]}= \scrF^{c,(\rho)}(\Sigma \overline{\calP }^{[1]})
		~~\mbox{ and }~~ \Bar _{(r)}(\calP )^{[\rho]}= 0 \mbox{ for } r>\rho
	\]
	\[
		\left(\mbox{resp. }\Cobar ^{(\rho)}(\calC)^{[\rho]}= \scrF^{(\rho)}(\Sigma^{-1} \overline{\calC}^{[1]})
		~~\mbox{ and }~~ \Cobar ^{(r)}(\calC)^{[\rho]}= 0 \mbox{ for } r>\rho \right).
	\]
\end{lem}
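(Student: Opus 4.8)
The statement concerns the bigrading on the bar construction of a connected weight-graded protoperad $\calP$, where the bigrading is by $r$ (the "syntactic" weight counting the number of generators-from-$\overline\calP$) and by $\rho$ (the total internal weight). The key structural input is that $\calP$ is \emph{connected}, meaning $\calP^{[0]}=\Ibox$, so that $\overline\calP = \bigoplus_{n\geqslant 1}\calP^{[n]}$. The plan is to analyze, for a basis element of $\Bar_{(r)}(\calP)^{[\rho]}=\scrF^{c,(r)}(\Sigma\overline\calP)$, the relationship between $r$ and $\rho$ forced by this connectedness, and then to read off both claims as the two extreme cases.

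First I would recall, via \Cref{prop::freeproto} (applied to the cofree coprotoperad, which has the same underlying $\mathfrak{S}$-module combinatorics as the free one, indexed by connected walls $\Wall$), that a homogeneous element of $\scrF^{c,(r)}(\Sigma\overline\calP)(S)$ is a tensor $\bigotimes_{\alpha\in A}(s.p_\alpha)$ over the bricks of a wall $W=\{K_\alpha\}_{\alpha\in A}\in\Wall_r(S)$, with each $p_\alpha\in\overline\calP(K_\alpha)$. Since each $p_\alpha$ lies in $\overline\calP=\bigoplus_{n\geqslant 1}\calP^{[n]}$, each contributes weight $w_\alpha\geqslant 1$, and the total internal weight is $\rho=\sum_{\alpha\in A}w_\alpha$. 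With $|A|=r$ bricks and each $w_\alpha\geqslant 1$, we get immediately $\rho\geqslant r$; equivalently, $\Bar_{(r)}(\calP)^{[\rho]}=0$ whenever $r>\rho$. This is the second assertion, and it requires only the connectedness hypothesis (the vanishing of $\calP^{[0]}$) plus the weight-additivity of the partial product, which holds because $\mu$ respects the weight grading (as noted just above the lemma, $d_{s\mu_2}$ respects the induced grading).

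Second, for the equality $\Bar_{(\rho)}(\calP)^{[\rho]}=\scrF^{c,(\rho)}(\Sigma\overline\calP^{[1]})$, I would observe that in the case $r=\rho$ the inequality $\sum w_\alpha=\rho=r=|A|$ together with each $w_\alpha\geqslant 1$ forces $w_\alpha=1$ for every brick $\alpha$. Hence every tensor factor $p_\alpha$ must lie in $\calP^{[1]}=\overline\calP^{[1]}$, which means precisely that the $r=\rho$ component of $\Bar_{(r)}(\calP)$ coincides with $\scrF^{c,(\rho)}(\Sigma\overline\calP^{[1]})$ as $\mathfrak{S}$-modules. The parenthetical statement for a connected coprotoperad $\calC$ is formally dual: one replaces the free coprotoperad by the free protoperad $\scrF$ and the cofree combinatorics by the same wall combinatorics, and runs the identical counting argument on $\Cobar^{(r)}(\calC)^{[\rho]}=\scrF^{(r)}(\Sigma^{-1}\overline\calC)$, using $\overline\calC=\bigoplus_{n\geqslant 1}\calC^{[n]}$.

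**Main obstacle.** There is no deep obstacle here; the content is a one-line counting argument ($r$ summands each $\geqslant 1$ summing to $\rho$). The only point demanding care is bookkeeping: one must be sure that the "syntactic weight" $r$ really is the number of bricks $|A|$ in the wall combinatorics of \Cref{prop::freeproto}, and that the "internal weight" $\rho$ really is the \emph{sum} of the per-brick weights — i.e. that the weight grading on $\calP$ is additive under the connected composition product, which is exactly why the bigrading $\Bar_{(r)}(\calP)^{[\rho]}$ is well defined. Once that identification is in hand, both displayed formulas follow immediately, so I would keep the proof to a sentence or two, perhaps just citing the properadic analogue (this is the protoperadic version of the corresponding statement in \cite[Chap. 7]{Val03}) and noting it transfers via the exactness and monoidality of $\Ind$.
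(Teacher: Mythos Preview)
Your proposal is correct and is precisely the standard counting argument underlying the result; the paper itself simply cites \cite[Sect.~7.1]{Val03} for the properadic analogue, which is exactly the argument you have written out. Your closing remark that one could just invoke the properadic case (via $\Ind$) is in fact all the paper does.
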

\begin{proof}
	cf. \cite[Sect. 7.1]{Val03}.
\end{proof}
\begin{defi}[Koszul dual]
	Let $\calP $ (respectively $\calC$) be a  weight-graded, connected protoperad (resp. coprotoperad). We define the \emph{Koszul dual} of $\calP $ (resp. of $\calC$), denoted by $\calP ^\antish$ (resp. $\calC^\antish$) by the weight-graded $\mathfrak{S}$-module:
	\[
		\calP ^{\antish [\rho]}:= H_{(\rho)}\big(\Bar _*(\calP )^{[\rho]},d_{s\mu_2}\big) 
	\]
	\[
		\Big(\mathrm{resp.}~\calC^{\antish[\rho]}:= H_{(\rho)}\big(\Cobar _*(\calC)^{[\rho]},d_{s^{-1}\Delta_2}\big)\Big).
	\]
\end{defi}
By \Cref{lem::description_bar}, we have the equalities:
\[
	\calP ^{\antish[\rho]}=\mathrm{Ker}\Big(d_{s\mu_2}:\Bar _{(\rho)}(\calP )^{(\rho)} \rightarrow \Bar _{(\rho-1)}(\calP )^{(\rho)}\Big)
\]
and
\[
	\calC^{\antish [\rho]}=\mathrm{Coker}\Big(d_{s^{-1}\Delta_2}:\Cobar _{(\rho-1)}(\calC)^{(\rho)} \rightarrow \Cobar _{(\rho)}(\calC)^{(\rho)}\Big).
\]
Moreover, if the protoperad $\calP $ is concentrated in homological degree $0$, then we have
\[
	\big(\Bar _{(r)}(\calP )^{[\rho]}\big)_m =
	\left\{ \begin{array}{cl}
		\Bar _{(r)}(\calP )^{[\rho]} & \mbox{if } m=r,\\
		0 & \mbox{otherwise.}
	\end{array}\right. 
\]
The dual coprotoperad $\calP ^\antish$ is not concentrated in $0$ degree, but satisfies:
\[
	\big(\calP ^{\antish[\rho]}\big)_m =
	\left\{ \begin{array}{cl}
		\calP ^{\antish[\rho]} & \mbox{if } m=\rho,\\
		0 & \mbox{otherwise.}
	\end{array}\right. 
\]
\begin{prop}
	The functor $(-)^\antish : \mathsf{(co)protoperades}_k\aug \rightarrow \Smod_k^{gr}$ commutes with the functor $\Ind $.
\end{prop}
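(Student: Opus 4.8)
The strategy is to reduce the statement to the structural properties of the induction functor $\Ind$ established above: it is additive, exact (\Cref{def::functor induction}), compatible with the weight-bigradings, commutes with the suspension (\Cref{def::suspension_Smod}), and commutes with the bar and cobar constructions. Being additive, $\Ind$ sends a weight-graded $\mathfrak{S}$-module $V=\bigoplus_{n}V^{[n]}$ to $\bigoplus_{n}\Ind(V^{[n]})$, hence lifts to a functor $\Smod_k^{gr}\to\Sbimod_k^{gr}$; so one must produce, naturally in $\calP$, an isomorphism $\Ind(\calP^\antish)\cong(\Ind\calP)^\antish$ of weight-graded $\mathfrak{S}$-bimodules, where the right-hand side denotes the properadic Koszul dual of \cite{Val03,Val07}, defined by the same recipe as the left-hand side with $\Bar$ replaced by $\Bar^{\mathrm{Val}}$. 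The coprotoperadic case is strictly dual.

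First I would rewrite the Koszul dual by means of \Cref{lem::description_bar}: for a weight-graded connected augmented protoperad $\calP$,
\[
	\calP^{\antish[\rho]}=H_{(\rho)}\big(\Bar_*(\calP)^{[\rho]},d_{s\mu_2}\big)=\Ker\big(d_{s\mu_2}\colon \scrF^{c,(\rho)}(\Sigma\overline{\calP}^{[1]})\longrightarrow \Bar_{(\rho-1)}(\calP)^{[\rho]}\big).
\]
Applying $\Ind$ and using the isomorphism $\Ind(\Bar(-))\cong\Bar^{\mathrm{Val}}(\Ind(-))$, together with the facts that $\Ind$ is additive, respects both the "number of factors" grading $r$ and the internal weight grading $\rho$ on $\Bar\calP$, commutes with the suspension, and (being exact) commutes with $\overline{(-)}$, one obtains natural isomorphisms of bigraded $\mathfrak{S}$-bimodules $\Ind\big(\Bar_{(r)}(\calP)^{[\rho]}\big)\cong\Bar^{\mathrm{Val}}_{(r)}(\Ind\calP)^{[\rho]}$. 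Moreover, as already observed during the construction of $\Bar$, $\Ind(d_{s\mu_2^\calP})=d_{s\mu_2^{\Ind\calP}}$, the partial-coproduct differential of the properadic bar construction. Since $\Ind$ is exact it commutes with kernels, hence with homology, so
\[
	\Ind\big(\calP^{\antish[\rho]}\big)\cong H_{(\rho)}\big(\Ind\,\Bar_*(\calP)^{[\rho]},\Ind\,d_{s\mu_2}\big)\cong H_{(\rho)}\big(\Bar^{\mathrm{Val}}_*(\Ind\calP)^{[\rho]},d_{s\mu_2^{\Ind\calP}}\big)=(\Ind\calP)^{\antish[\rho]},
\]
and naturality of all the isomorphisms involved yields the claimed commutation. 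For coprotoperads one argues dually: replace $\Bar$ by $\Cobar$, use $\Ind(\Cobar(-))\cong\Cobar^{\mathrm{Val}}(\Ind(-))$ and $\Ind(d_{s^{-1}\Delta_2})=d_{s^{-1}\Delta_2^{\Ind\calC}}$, and use that the exact functor $\Ind$ commutes with cokernels, \Cref{lem::description_bar} identifying $\calC^{\antish[\rho]}$ with $\Coker\big(d_{s^{-1}\Delta_2}\colon \Cobar_{(\rho-1)}(\calC)^{[\rho]}\to\Cobar_{(\rho)}(\calC)^{[\rho]}\big)$.

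The one point requiring care — and the step I expect to be the main obstacle — is the bookkeeping of the two gradings: one must check that the isomorphism $\Ind(\Bar(-))\cong\Bar^{\mathrm{Val}}(\Ind(-))$ is \emph{bigraded}, not merely graded, i.e. that it respects $r$ and $\rho$ simultaneously, and that under it the weight-homogeneous splitting $\partial=\partial_\calP+d_{s\mu_2}$ of the bar differential matches the analogous splitting on the properadic side. This is precisely where one invokes that $\Ind$ is compatible with the weight-bigrading of $\calP\boxtimes_c\calP$ (\cite[\Cref{ProtoI-thm::Ind_monoidal_connexe}]{Ler18i}) and commutes with the suspension; granting this, the homology computation above is purely formal.
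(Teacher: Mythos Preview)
Your proof is correct and follows exactly the same approach as the paper's own proof, which is the one-liner ``By the exactness and the preservation of the weight grading of the functor $\Ind$.'' You have simply unpacked in full detail what the paper leaves to the reader, invoking the already-established isomorphism $\Ind(\Bar(-))\cong\Bar^{\mathrm{Val}}(\Ind(-))$ and the identification $\Ind(d_{s\mu_2})=d_{s\mu_2^{\Ind\calP}}$ from the discussion preceding the bar construction.
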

\begin{proof}
	By the exactness and the preservation of the weight grading of the functor $\Ind $ (see \Cref{def::functor induction}).
\end{proof}
We have a protoperadic equivalent of the proposition \cite[Prop. 136]{Val03}.
\begin{prop}
	Let $\calP = \bigoplus_n \calP ^{[n]}$  (resp. $\calC= \bigoplus_n \calC^{[n]}$) be a weight-graded, connected protoperad (resp. coprotoperad). Then the Koszul dual of $\calP $ is a sub weight-graded, connected, coaugmented coprotoperad of $\scrF^c(\Sigma \calP ^{[1]})$ (respectively, the Koszul dual of $\calC$ is a connected, weight-graded, augmented protoperad quotient of $\scrF(\Sigma^{-1}\calC^{[1]})$).
\end{prop}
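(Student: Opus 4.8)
The plan is to mimic the proof of the properadic analogue \cite[Prop.~136]{Val03}, the only protoperad-specific ingredient being the description of the top syllable--weight part of the bar (resp. cobar) construction furnished by \Cref{lem::description_bar}. I treat the protoperad case; the coprotoperad case will be strictly dual. First, by \Cref{lem::description_bar} and the identifications recorded just after the definition of the Koszul dual, for every $\rho$ one has
\[
\calP^{\antish[\rho]} = \Ker\!\left( d_{s\mu_2} \colon \scrF^{c,(\rho)}(\Sigma\overline{\calP}^{[1]}) \longrightarrow \Bar_{(\rho-1)}(\calP)^{[\rho]} \right),
\]
because $\Bar_{(\rho+1)}(\calP)^{[\rho]}=0$ leaves nothing to quotient by. Summing over $\rho$ exhibits $\calP^\antish$ as a weight-graded sub-$\mathfrak{S}$-module of $\bigoplus_\rho \scrF^{c,(\rho)}(\Sigma\calP^{[1]})=\scrF^c(\Sigma\calP^{[1]})$; and since $\calP^{[0]}=\Ibox$, one gets $\calP^{\antish[0]}=\Ibox$, so $\calP^\antish$ is connected and $\Ibox=\calP^{\antish[0]}\hookrightarrow\calP^\antish$ provides a coaugmentation.

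The substantial point is that this sub-$\mathfrak{S}$-module is stable under the cofree coproduct $\Delta$ of $\scrF^c(\Sigma\calP^{[1]})$. The bigrading of $\Bar\calP$ by syllable number $r$ and total weight $\rho$ is preserved, additively in both, by $\Delta$, and $\Bar_{(r)}(\calP)^{[\rho']}=0$ whenever $r>\rho'$; hence $\Delta$ sends the diagonal piece $\Bar_{(\rho)}(\calP)^{[\rho]}$ into $\bigoplus_{\rho_1+\rho_2=\rho}\Bar_{(\rho_1)}(\calP)^{[\rho_1]}\boxtimes_c\Bar_{(\rho_2)}(\calP)^{[\rho_2]}$. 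Given $x\in\calP^{\antish[\rho]}$, write $\Delta(x)=\sum_i x'_i\boxtimes_c x''_i$ with $x'_i$ of syllable degree and weight $\rho_{1,i}$ and $x''_i$ of syllable degree and weight $\rho_{2,i}$, where $\rho_{1,i}+\rho_{2,i}=\rho$. Since $d_{s\mu_2}$ is a coderivation (\Cref{lem::coderivation}) and $d_{s\mu_2}(x)=0$,
\[
0 = \Delta(d_{s\mu_2}x) = \sum_i\Big((d_{s\mu_2}x'_i)\boxtimes_c x''_i \ \pm\ x'_i\boxtimes_c(d_{s\mu_2}x''_i)\Big).
\]
A term $(d_{s\mu_2}x'_i)\boxtimes_c x''_i$ has a left factor whose syllable degree is strictly below its weight, while every term $x'_j\boxtimes_c(d_{s\mu_2}x''_j)$ has a left factor on the diagonal; the two families therefore live in disjoint summands of the (syllable-bidegree, weight-bidegree) decomposition of $\scrF^c(\Sigma\calP^{[1]})\boxtimes_c\scrF^c(\Sigma\calP^{[1]})$, so each vanishes separately. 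Decomposing $\boxtimes_c$ over $\mathcal{X}^{\mathrm{conn}}$, where it becomes an ordinary tensor product, a standard argument (pass to a basis of the right-hand, resp. left-hand, factors) then forces $d_{s\mu_2}x'_i=0$ and $d_{s\mu_2}x''_i=0$, i.e. $x'_i\in\calP^{\antish[\rho_{1,i}]}$ and $x''_i\in\calP^{\antish[\rho_{2,i}]}$. Hence $\Delta(x)\in\bigoplus_{\rho_1+\rho_2=\rho}\calP^{\antish[\rho_1]}\boxtimes_c\calP^{\antish[\rho_2]}\subset\calP^\antish\boxtimes_c\calP^\antish$, so $\calP^\antish$ is a subcoprotoperad, evidently compatible with the weight-grading and the coaugmentation. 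I expect this disjointness argument to be the only delicate step, and it is precisely where connectedness of $\calP$ — equivalently the vanishing $\Bar_{(r)}(\calP)^{[\rho]}=0$ for $r>\rho$ — is essential.

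For the coprotoperad case I would dualize each step: by the dual half of \Cref{lem::description_bar}, $\calC^{\antish[\rho]}=\Coker\big(d_{s^{-1}\Delta_2}\colon\Cobar_{(\rho-1)}(\calC)^{[\rho]}\to\scrF^{(\rho)}(\Sigma^{-1}\calC^{[1]})\big)$, so $\calC^\antish$ is a weight-graded quotient $\mathfrak{S}$-module of $\scrF(\Sigma^{-1}\calC^{[1]})$; the vanishing $\Cobar_{(r)}(\calC)^{[\rho']}=0$ for $r>\rho'$ makes $d_{s^{-1}\Delta_2}$ annihilate the diagonal part, so in the derivation identity for $d_{s^{-1}\Delta_2}$ the mixed term drops and the image of $d_{s^{-1}\Delta_2}$ on the diagonal is an ideal of $\scrF(\Sigma^{-1}\calC^{[1]})$; the quotient protoperad is connected ($\calC^{[0]}=\Ibox$), with augmentation $\calC^\antish\twoheadrightarrow\calC^{\antish[0]}=\Ibox$. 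Alternatively — and perhaps more in the spirit of this paper — since $(-)^\antish$, the (co)bar constructions and the free (co)monoid functors all commute with the exact faithful induction functor $\Ind$ (\Cref{def::functor induction} and the results just above), one may simply apply $\Ind$, invoke the properadic statement \cite[Prop.~136]{Val03}, and use exactness of $\Ind$ to reflect the sub- (resp. quotient-) (co)protoperad structure from $\mathfrak{S}$-bimodules back to $\mathfrak{S}$-modules.
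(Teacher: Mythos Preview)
Your proposal is correct. The paper itself gives no proof for this proposition at all: it simply introduces it as ``a protoperadic equivalent of the proposition \cite[Prop.~136]{Val03}'' and moves on. Your second route --- apply $\Ind$, use that it commutes with $(-)^\antish$, $\Bar$, $\Cobar$ and the (co)free functors, invoke \cite[Prop.~136]{Val03}, and reflect back via exactness and faithfulness of $\Ind$ --- is precisely the argument the paper's surrounding material is set up for (the immediately preceding proposition establishes $\Ind\circ(-)^\antish\cong(-)^\antish\circ\Ind$), so this is the intended justification even though the paper leaves it implicit.

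Your first, direct, argument is also fine and is essentially Vallette's own proof transported to the protoperadic setting. Two small points of phrasing are worth tightening. First, from $\sum_i (d_{s\mu_2}x'_i)\boxtimes_c x''_i=0$ you cannot literally conclude $d_{s\mu_2}x'_i=0$ for each $i$ unless the $x''_i$ have been chosen linearly independent; the clean statement is rather that $\Delta(x)$ lies in $\ker(d\boxtimes_c 1)\cap\ker(1\boxtimes_c d)$, and then that this intersection equals $(\ker d)\boxtimes_c(\ker d)$ over a field. Second, in the protoperadic setting $\boxtimes_c$ involves several copies of each factor, so ``$d\boxtimes_c 1$'' here is the derivation-type map (apply $d$ to one left factor and sum), and the identification $\ker(d\boxtimes_c 1)=(\ker d)\boxtimes_c(-)$ uses that the target of the coderivation distinguishes the slot hit by $d$; this is standard but worth a word. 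Neither point affects the validity of your conclusion.
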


\subsubsection{Koszul resolution}

\begin{defi}[Koszul protoperad, coprotoperad]
	Let $\calP $ and $\calC$ be respectively a protoperad and a coprotoperad, each weight-graded and connected. The protoperad $\calP $ is \emph{Koszul} if the inclusion $\calP ^\antish\hookrightarrow \Bar \calP $ is a quasi-isomorphism. Dually, the coprotoperad $\calC$ is \emph{Koszul} if the projection  $\Cobar \calC \twoheadrightarrow \calC^\antish$ is a quasi-isomorphism.
\end{defi}

\begin{prop}
	If $\calP $ is a  weight-graded, connected protoperad which is Koszul, then its dual $\calP ^\antish$ is a Koszul coprotoperad, and  $\calP ^{\antish\antish}=\calP $.
\end{prop}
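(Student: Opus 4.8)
The plan is to reduce to the corresponding statement for properads, due to Vallette \cite[Chap.~7]{Val03},\cite{Val07}, by transport along the induction functor $\Ind$, in the spirit of the rest of \Cref{sect::Koszul duality}. Two properties of $\Ind$ do the work. First, $\Ind$ is exact (see \Cref{def::functor induction}) and conservative: from the explicit formula $(\Ind V)(S,E) \cong k[\mathrm{Aut}(S)]\otimes V(S)$ for $S\cong E$ one sees that $\Ind V = 0$ forces $V = 0$, so $\Ind$ reflects isomorphisms (apply exactness to kernels and cokernels) and hence also reflects quasi-isomorphisms (apply this to $H_\bullet$, using $H_\bullet\circ\Ind \cong \Ind\circ H_\bullet$). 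Second, $\Ind$ intertwines $\Bar$ with $\Bar\Val$, $\Cobar$ with $\Cobar\Val$, and $(-)^\antish$ with the properadic Koszul dual, by the natural isomorphisms established above; and, since all the relevant comparison morphisms ($\calP^\antish\hookrightarrow\Bar\calP$, $\Cobar\calC\twoheadrightarrow\calC^\antish$, and the canonical morphism between $\calP$ and $\calP^{\antish\antish}$) are built from the (co)free (co)protoperad functors, the partial (co)products and the (de)suspension, all of which $\Ind$ respects, these natural isomorphisms carry the protoperadic comparison morphisms onto their properadic counterparts.

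Granting this, here is the argument. Since $\Ind$ preserves the weight grading and sends $\Ibox$ to the properadic unit, $\Ind\calP$ is a weight-graded, connected properad. As $\calP$ is Koszul, $\calP^\antish\hookrightarrow\Bar\calP$ is a quasi-isomorphism; applying the exact functor $\Ind$ and the compatibilities above identifies $\Ind$ of this morphism with the canonical inclusion $(\Ind\calP)^\antish\hookrightarrow\Bar\Val(\Ind\calP)$, which is therefore a quasi-isomorphism. Thus $\Ind\calP$ is a Koszul properad, so by \cite[Chap.~7]{Val03} the coproperad $(\Ind\calP)^\antish$ is Koszul and the canonical morphism between $\Ind\calP$ and $(\Ind\calP)^{\antish\antish}$ is an isomorphism.

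It remains to read these two facts back down along $\Ind$, using $\Ind((-)^\antish)\cong(\Ind(-))^\antish$. Koszulness of $(\Ind\calP)^\antish$ means that the projection $\Cobar\Val\big((\Ind\calP)^\antish\big)\twoheadrightarrow\big((\Ind\calP)^\antish\big)^\antish$ is a quasi-isomorphism; by the compatibilities above this morphism is $\Ind$ applied to the canonical projection $\Cobar(\calP^\antish)\twoheadrightarrow(\calP^\antish)^\antish$, so, as $\Ind$ reflects quasi-isomorphisms, the latter is a quasi-isomorphism too, i.e.\ $\calP^\antish$ is a Koszul coprotoperad. Likewise, the isomorphism between $\Ind\calP$ and $(\Ind\calP)^{\antish\antish}$ is $\Ind$ of the canonical morphism between $\calP$ and $\calP^{\antish\antish}$, and since $\Ind$ reflects isomorphisms this morphism is an isomorphism of protoperads, that is, $\calP^{\antish\antish}=\calP$.

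The genuinely substantive point is the second property of $\Ind$ above: checking that the natural isomorphisms $\Ind\circ\Bar\cong\Bar\Val\circ\Ind$, $\Ind\circ\Cobar\cong\Cobar\Val\circ\Ind$ and $\Ind((-)^\antish)\cong(\Ind(-))^\antish$ are mutually coherent enough to transport the protoperadic comparison morphisms onto the properadic ones. I expect this coherence check---a routine but not entirely automatic diagram chase---to be the main obstacle; everything else is formal once the properadic statement is available. A self-contained alternative would be to mimic Vallette's direct argument via the acyclicity of the two-sided Koszul complex $\calP^\antish\boxtimes_c\calP$, but transporting along $\Ind$ is considerably shorter here.
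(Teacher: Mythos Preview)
Your proposal is correct and follows essentially the same approach as the paper: the paper's proof is the one-liner ``By the properties of the functor $\Ind$ (see \Cref{def::functor induction}) and by \cite[Prop.~141]{Val03}'', and you have simply unpacked what this means---transport the statement along $\Ind$ to the properadic setting, invoke Vallette's result, and reflect back using exactness and faithfulness of $\Ind$. Your explicit discussion of conservativity and of the coherence of the comparison morphisms is a useful amplification of what the paper leaves implicit.
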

\begin{proof}
	By the properties of the functor $\Ind $  (see \Cref{def::functor induction}) and by \cite[Prop.141]{Val03}.
\end{proof}

\begin{defi}[Koszul complex]
	Let $\calP $ be a weight-graded protoperad. The (right and left) \emph{Koszul complexes} of $\calP $ are the following complexes:
	\begin{enumerate}
	\item the complex $\big( \calP ^\antish\boxtimes_c \calP , \partial=\partial_P+d_{\Delta}^r\big)$, where the differential $d_{\Delta}^r$ is induced by the homogemeous morphism of homological degree $-1$:
	\[
		\xymatrix{
			\calP ^\antish \ar[r]^(0.4)\Delta & \calP ^\antish\boxtimes_c \calP ^\antish \ar@{->>}[r] & (\calP ^\antish\boxtimes_c\big (\Ibox\oplus \overline{\calP }^\antish)^{[1]}\big) \ar[r] & \calP ^\antish\boxtimes_c (\Ibox\oplus \calP ^{[1]})
		} ,
	\]
	where the right morphism is induced by the isomorphism  $(\overline{\calP }^\antish)^{[1]} \cong \overline{\calP }^{[1]}$ ;
	\item the complex $\big( \calP \boxtimes_c \calP ^\antish, \partial=\partial_P+d_{\Delta}^l\big)$, where the differential $d_{\Delta}^l$ is induced by the homogeneous morphism of degree $-1$:
	\[
		\xymatrix{
			\calP ^\antish \ar[r]^(0.4)\Delta & \calP ^\antish\boxtimes_c \calP ^\antish \ar@{->>}[r] & \big( (\Ibox\oplus \overline{\calP }^\antish)^{[1]}\boxtimes_c \calP ^\antish\big) \ar[r] &  (\Ibox\oplus \calP ^{[1]})\boxtimes_c \calP ^\antish
		}.
	\]
	\end{enumerate}
\end{defi}
As in the properadic case, we have the following Koszul criterion:
\begin{thm}[Koszul criterion]\label{thm::critere_Koszul}
	Let $\calP $ be a connected weight-graded protoperad. The following are equivalent:
	\begin{enumerate}
		\item the protoperad $\calP $ is Koszul;
		\item the inclusion $\calP ^\antish \hookrightarrow \Bar \calP $ is a quasi-isomorphism;
		\item the Koszul complex $\big( \calP ^\antish\boxtimes_c \calP , \partial=\partial_P+d_{\Delta}^r\big)$ is acyclic;
		\item the Koszul complex $\big( \calP \boxtimes_c \calP ^\antish, \partial=\partial_P+d_{\Delta}^l\big)$ is acyclic;
		\item the morphism of protoperads $\Cobar \calP ^\antish \rightarrow \calP $ is a quasi-isomorphism.
	\end{enumerate}
\end{thm}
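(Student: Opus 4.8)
The plan is to reduce the whole statement to the Koszul criterion for properads (\cite[Th.~149]{Val03}, \cite{Val07}) by means of the induction functor $\Ind$, exploiting all of its properties gathered in \Cref{def::functor induction} and in \Cref{prop::Ind_deriv_coderiv} and the discussion around the bar and cobar constructions. First I note that $(1)\Leftrightarrow(2)$ holds by the very definition of a Koszul protoperad, so only the equivalences among $(2)$--$(5)$ remain to be established.

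The crucial preliminary observation is that $\Ind$ \emph{detects} quasi-isomorphisms and acyclicity. Since $\Ind$ is exact and additive, for any morphism $f$ of chain complexes of reduced $\mathfrak{S}$-modules one has $\mathrm{cone}(\Ind f)\cong\Ind(\mathrm{cone}\,f)$, hence $H_\bullet(\Ind f)\cong\Ind(H_\bullet f)$; moreover $\Ind$ is faithful, so $\Ind M=0$ forces $M=0$ (this is in any case immediate from the formula $(\Ind M)(S,S)\cong k[\mathrm{Aut}(S)]\otimes M(S)$). Therefore $f$ is a quasi-isomorphism if and only if $\Ind f$ is one, and a complex is acyclic if and only if its image under $\Ind$ is acyclic.

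Next I would transport each of the statements $(2)$--$(5)$ through $\Ind$. Using the compatibility isomorphisms $\Ind(\Bar\calP)\cong\Bar^{\mathrm{Val}}(\Ind\calP)$, $\Ind(\calP^\antish)\cong(\Ind\calP)^\antish$ and $\Ind(\Cobar\calP^\antish)\cong\Cobar^{\mathrm{Val}}\big((\Ind\calP)^\antish\big)$, together with the monoidality of $\Ind$ for $\boxtimes_c$ (which yields $\Ind(\calP^\antish\boxtimes_c\calP)\cong(\Ind\calP)^\antish\boxtimes_c^{\mathrm{Val}}\Ind\calP$ and its left analogue) and the identification of the relevant differentials — $\Ind(d_{s\mu_2})=d_{s\mu_2^{\Ind\calP}}$ is already proved before the bar construction, and the analogous identities $\Ind(d_\Delta^r)=d_\Delta^{r}$, $\Ind(d_\Delta^l)=d_\Delta^{l}$ for the properad $\Ind\calP$ follow by the same formal argument — one checks that each statement $(i)$ for $\calP$ becomes, after applying $\Ind$, exactly the properadic statement $(i)$ for $\Ind\calP$. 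The latter is a connected, weight-graded properad, because $\Ind$ preserves the weight grading and, being monoidal, sends $\Ibox$ to the monoidal unit of $\boxtimes_c^{\mathrm{Val}}$. Combined with the previous paragraph, this shows that statement $(i)$ holds for $\calP$ if and only if the properadic statement $(i)$ holds for $\Ind\calP$; the properadic Koszul criterion \cite[Th.~149]{Val03}, \cite{Val07} then gives the desired equivalence.

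I expect the main obstacle to be the bookkeeping verification that $\Ind$ carries each Koszul differential to its properadic counterpart. For $d_{s\mu_2}$ this is already in hand; for $d_\Delta^r$ and $d_\Delta^l$ the argument is formally the same, but one must check that $\Ind$ commutes with the partial coproduct of $\calP^\antish$, with the canonical projection onto the weight $(1,1)$-component of $\calP^\antish\boxtimes_c\calP^\antish$, and with the canonical isomorphism $(\overline{\calP}^\antish)^{[1]}\cong\overline{\calP}^{[1]}$ — each of which follows from the exactness of $\Ind$, its compatibility with the weight bigrading on $\calP\boxtimes_c\calP$, and the fact that it commutes with $(-)^\antish$. (An alternative would be to imitate Vallette's original proof for properads directly, comparing the associated spectral sequences, but the $\Ind$-reduction is considerably shorter given the machinery already set up in \Cref{sect::Koszul duality}.)
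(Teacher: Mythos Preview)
Your proposal is correct and follows exactly the paper's approach: the paper's proof is a one-line reference to the exactness of $\Ind$ (\Cref{def::functor induction}) and to \cite[Th.~144, Th.~149]{Val03}, which is precisely the reduction-to-properads argument you spell out. Your version is simply a more explicit unfolding of that sentence, including the key observation that $\Ind$ reflects quasi-isomorphisms (which the paper elsewhere phrases via $\Res\circ\Ind=\id$).
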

\begin{proof}
By the exactness of the functor $\Ind $  (see \Cref{def::functor induction}) and theorems \cite[Th. 144, Th. 149]{Val03}.
\end{proof}
\begin{rem}
	By \Cref{cor::shuffle}, we have a bar-cobar adjunction and a Koszul duality for shuffle protoperads. Also, a properad $\calP $ is Koszul if and only if $\calP \sh$ is Koszul.
\end{rem}

\subsubsection{The case of quadratic protoperads}\label{sect::proto_quadratiques}
This subsection is strongly inspired by \cite[Sect. 2]{Val08} which described the notion of a quadratic properad. We adapt the notion to the protoperadic framework. Let $V$ be a $\mathfrak{S}$-module and $R\subset \scrF^{(2)}(V)$: such a pair $(V,R)$ is called a \emph{quadratic datum}.

As the underlying $\mathfrak{S}$-modules of the free protoperad $\scrF(V)$ and the cofree coprotoperad $\scrF^c(V)$  are isomorphic, we consider the following morphisms of $\mathfrak{S}$-modules:
\[
	R \hookrightarrow \scrF^{(2)}(V) \hookrightarrow \scrF(V)\overset{(*)}{\cong} \scrF^c(V) \twoheadrightarrow \scrF^{c,(2)}(V) \twoheadrightarrow \scrF^{c,(2)}(V) /R=:\overline{R},
\]
where the isomorphism $(*)$ is an isomorphism of $\mathfrak{S}$-modules. Using this, we naturally define a quotient protoperad of $\scrF(V)$ or a sub-coprotoperad of$\scrF^c(V)$.
\begin{defi}[Quadratic (co)protoperad]
	The \emph{(homogeneous) quadratic protoperad generated by $V$ and $R$} is the quotient protoperad of $\scrF(V)$ by the ideal generated by $R \subset \scrF^{(2)}(V)$. We denote this protoperad by $\mathscr{P}(V,R):=\scrF(V)/\langle R \rangle$. Dually, the \emph{(homogeneous) quadratic  coprotoperad generated by $V$ and $\overline{R}$} is the sub-coprotoperad  of $\scrF^c(V)$ generated by $\scrF^{c,(2)}(V)\twoheadrightarrow \overline{R}$. We denote this coprotoperad by $\mathscr{C}(V,\overline{R})$.
\end{defi}

\begin{rem}
	All quadratic protoperads $\mathscr{P}(V,R)$ and all quadratic coprotoperads  $\mathscr{C}(V,\overline{R})$ have a weight-grading by $V$, as for properads (see \cite[Prop. 55]{Val03}).
\end{rem}
\begin{thm}[Kosul dual $(-)^\antish$]\label{thm::duale_de_koszul}
	Let $(V,R)$ be a quadratic datum. We denote by  $\Sigma^2R$, the image of $R$ in $\scrF^{(2)}(\Sigma V)$ and $\Sigma^{-2}\overline{R}$, the quotient of  $\scrF^{c,(2)}(\Sigma^{-1} V)$ by $\Sigma^{-2}R$. The \emph{Koszul dual} of the protoperad $\mathscr{P}(V,R)$, denoted by $\mathscr{P}(V,R)^\antish$, is the coprotoperad given by
	\[
		\mathscr{P}(V,R)^\antish = \mathscr{C}(\Sigma V,\Sigma^2\overline{R}).
	\]
	Dually, the \emph{Koszul dual} of the coprotoperad $\mathscr{C}(V,\overline{R})$, denoted by $\mathscr{C}(V,\overline{R})^\antish$, is the protoperad given by
	\[
		\mathscr{C}(V,\overline{R})^\antish = \mathscr{P}(\Sigma^{-1}V,\Sigma^{-2}R).
	\]
	Also, we have $\mathscr{P}(V,R)^{\antish\antish} =\mathscr{P}(V,R)$ and $\mathscr{C}(V,\overline{R})^{\antish\antish} =\mathscr{C}(V,\overline{R})$.
\end{thm}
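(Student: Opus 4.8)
The plan is to reduce the computation of $\mathscr{P}(V,R)^\antish$ to an explicit description of a single map, the \emph{contracting} part $d_{s\mu_2}$ of the bar construction restricted to its top weight. Set $\calP=\mathscr{P}(V,R)$; it is connected and weight-graded with $\calP^{[1]}=V$ and $\calP^{[2]}=\scrF^{(2)}(V)/R$, so \Cref{lem::description_bar} gives $\Bar_{(\rho)}(\calP)^{[\rho]}=\scrF^{c,(\rho)}(\Sigma V)$, and by the formula displayed after the definition of the Koszul dual, $\calP^{\antish[\rho]}=\Ker\big(d_{s\mu_2}\colon\scrF^{c,(\rho)}(\Sigma V)\to\Bar_{(\rho-1)}(\calP)^{[\rho]}\big)$, sitting naturally inside $\scrF^{c}(\Sigma V)$. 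Thus the theorem amounts to identifying this kernel, as a sub-coprotoperad of $\scrF^{c}(\Sigma V)$, with $\mathscr{C}(\Sigma V,\Sigma^2\overline{R})$, and the statement for coprotoperads is obtained in the same way from $\Cobar$ and the coprotoperadic half of \Cref{lem::description_bar}.

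First I would unwind $d_{s\mu_2}$ on the diagonal part. A wall contributing to $\Bar_{(\rho-1)}(\calP)^{[\rho]}$ carries exactly one brick of weight $2$ (labelled in $\calP^{[2]}$) and $\rho-2$ bricks of weight $1$ (labelled in $V$), so cofreeness of $\scrF^{c}$ presents $\Bar_{(\rho-1)}(\calP)^{[\rho]}$ as a direct sum indexed by such (wall, distinguished brick) data; the component of $d_{s\mu_2}$ landing in a fixed summand is, up to the Koszul sign produced by $s\mu_2$, the composite which extracts (via the cofree comultiplication of $\scrF^{c}(\Sigma V)$) the weight-$2$ local sub-brick at that position and applies the quotient $\scrF^{(2)}(V)\twoheadrightarrow\scrF^{(2)}(V)/R$ there, leaving everything else fixed. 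Since the kernel of a map into a direct sum is the intersection of the kernels of its components, $\Ker d_{s\mu_2}$ consists of those elements all of whose weight-$2$ local sub-bricks lie in $\Sigma^2 R$; by the very definition of $\mathscr{C}(\Sigma V,\Sigma^2\overline{R})$ as the largest sub-coprotoperad of $\scrF^{c}(\Sigma V)$ whose weight-$2$ component maps to zero in $\Sigma^2\overline{R}=\scrF^{c,(2)}(\Sigma V)/\Sigma^2 R$ (as for properads), this intersection is exactly $\mathscr{C}(\Sigma V,\Sigma^2\overline{R})^{[\rho]}$, and the coproduct it inherits from $\Bar\calP$ coincides with the one cofreely induced on $\mathscr{C}$. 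The dual computation with $d_{s^{-1}\Delta_2}$ yields $\mathscr{C}(V,\overline{R})^\antish=\mathscr{P}(\Sigma^{-1}V,\Sigma^{-2}R)$.

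Alternatively, and in keeping with the rest of this section, one can shortcut the bookkeeping through the induction functor: $\Ind$ is exact and faithful, it reflects equality of sub-$\mathfrak{S}$-modules (project onto the identity component of $k[\mathrm{Aut}(S)]$), it commutes with $\scrF$, $\scrF^{c}$, the suspension, $\Bar$ and $(-)^\antish$, and it carries the quadratic presentation $(V,R)$ to $(\Ind V,\Ind R)$ (see \Cref{def::functor induction}); combined with the properadic statement \cite[Sect. 2]{Val08}, this shows that $\Ind(\mathscr{P}(V,R)^\antish)$ and $\Ind(\mathscr{C}(\Sigma V,\Sigma^2\overline{R}))$ agree inside $\Ind\scrF^{c}(\Sigma V)$, hence so do the two sub-coprotoperads before applying $\Ind$. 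Finally the double-dual identities follow formally: $\mathscr{P}(V,R)^{\antish\antish}=\mathscr{C}(\Sigma V,\Sigma^2\overline{R})^\antish=\mathscr{P}(\Sigma^{-1}\Sigma V,\Sigma^{-2}\Sigma^2 R)=\mathscr{P}(V,R)$, using $\Sigma^{-1}\Sigma=\mathrm{id}$ on $\mathfrak{S}$-modules and $\Sigma^{-2}\Sigma^2 R=R$, and symmetrically $\mathscr{C}(V,\overline{R})^{\antish\antish}=\mathscr{C}(V,\overline{R})$. I expect the main obstacle to be the first route's key step: pinning down $d_{s\mu_2}$ on the top-weight part of $\Bar\calP$ precisely enough to see that its kernel is exactly the position-wise intersection defining $\mathscr{C}$, with the suspension signs and the coproduct identification handled correctly; if one argues through $\Ind$ instead, the delicate point is checking that $\Ind$ reflects this submodule identity and that all the identifications are natural with respect to the common ambient $\scrF^{c}(\Sigma V)$.
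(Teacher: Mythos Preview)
Your proposal is correct and in fact more detailed than the paper's own proof, which consists of a single sentence: ``It is a similar proof as \cite[Th. 8]{Val08}.'' Your first route (identifying $d_{s\mu_2}$ on the top weight and showing its kernel is the position-wise intersection defining $\mathscr{C}(\Sigma V,\Sigma^2\overline{R})$) is precisely what that reference unpacks, so in content you are doing the same thing as the paper, just spelled out explicitly; your second route via $\Ind$ is a legitimate alternative in the spirit of the rest of \Cref{subsect::dualite_de_koszul}, though the paper does not invoke it here.
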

\begin{proof} 
It is a similar proof as \cite[Th. 8]{Val08}.
\end{proof} 
\begin{prop}\label{prop::dual_lineaire_koszul}
	Let $(V,R)$ be a locally finite quadratic datum, i.e. for all finite sets $S$, $V(S)$ has a finite dimension. The linear dual of the coprotoperad $\mathscr{C}(V,\overline{R})$ is the quadratic protoperad
	\[
		\big(\mathscr{C}(V,\overline{R})\big)^*=\scrF(V^*)/\langle R^\bot \rangle
	\]
	with $R^\bot \subset \scrF^{(2)}(V)^* \cong \scrF^{(2)}(V^*)$. In particular, we have
	\[
		\big(\mathscr{P}(V,R)^\antish\big)^* =\mathscr{P}(\Sigma^{-1}V^*,\Sigma^{-2}R^\bot).
	\]
\end{prop}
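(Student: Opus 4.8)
The plan is to transpose the argument of \cite[Th.~8]{Val08} (the properadic case, itself modelled on the operadic \cite[Sect.~2]{LV12}) to protoperads. The key point is that local finiteness turns the graded $k$-linear dual $(-)^*$ into an exact, involutive, (anti)monoidal equivalence on the relevant full subcategories of weight-graded $\mathfrak{S}$-modules, so that it exchanges the free protoperad with the cofree coprotoperad and reverses inclusions.

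\emph{Free/cofree duality.} By \Cref{prop::freeproto} one has, for a finite set $S$, $\scrF^{(\rho)}(V)(S)\cong\bigoplus_{W\in\Wall_\rho(S)}\bigotimes_\alpha V(W_\alpha)$, and the cofree coprotoperad $\scrF^c(V)$ has the same underlying $\mathfrak{S}$-module. When $V$ is locally finite the index set $\Wall_\rho(S)$ is finite and each summand is finite-dimensional, so dualizing weight-part by weight-part yields a natural isomorphism of weight-graded $\mathfrak{S}$-modules $\big(\scrF^c(V)\big)^*\cong\scrF(V^*)$; one checks that the decomposition coproduct of $\scrF^c(V)$, which splits a connected wall into a sub-wall and a quotient-wall (in the spirit of the projection $\mathcal{K}$), dualizes precisely to the composition product of $\scrF(V^*)$, which grafts walls. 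Since, in each arity, $\boxtimes_c$ is a finite sum of finite tensor products of finite-dimensional spaces, the same local finiteness gives $(\mathcal{C}\boxtimes_c\mathcal{C})^*\cong\mathcal{C}^*\boxtimes_c\mathcal{C}^*$ for any locally finite coprotoperad $\mathcal{C}$; hence $(-)^*$ sends locally finite coprotoperads to protoperads (and back), involutively, and $\big(\scrF^c(V)\big)^*\cong\scrF(V^*)$ as protoperads.

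\emph{Passage to the quadratic objects.} Recall (as for quadratic properads, \cite[Sect.~2]{Val08}) that $\mathscr{C}(V,\overline R)$ is the maximal weight-graded connected sub-coprotoperad $\mathcal{D}\subseteq\scrF^c(V)$ with $\mathcal{D}^{[2]}\subseteq R$, and that its weight-$\rho$ component is the intersection, inside $\scrF^{c,(\rho)}(V)(S)$, of the preimages of $R$ under all the projections obtained by contracting a chosen connected weight-$2$ sub-wall of a connected weight-$\rho$ wall; dually $\mathscr{P}(V^*,R^\bot)=\scrF(V^*)/\langle R^\bot\rangle$ is the minimal quotient protoperad and its weight-$\rho$ component is the quotient of $\scrF^{(\rho)}(V^*)(S)$ by the \emph{sum} of the images of $R^\bot$ under the maps adjoint to those projections. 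These maps are the iterated partial (co)products, which correspond under the isomorphism of the previous paragraph; and under the perfect pairing $\scrF^{(\rho)}(V)(S)\otimes\scrF^{(\rho)}(V^*)(S)\to k$, the preimage of $R$ in a given slot is the orthogonal of the image of $R^\bot$ in that slot, while $\big(\bigcap_i U_i\big)^\bot=\sum_i U_i^\bot$ in finite dimension. Therefore the isomorphism $\big(\scrF^c(V)\big)^*\cong\scrF(V^*)$ restricts to a natural isomorphism of protoperads $\big(\mathscr{C}(V,\overline R)\big)^*\cong\scrF(V^*)/\langle R^\bot\rangle$, which is the first assertion. For the second, apply this to the quadratic datum $(\Sigma V,\Sigma^2 R)$ and use \Cref{thm::duale_de_koszul}, which identifies $\mathscr{P}(V,R)^\antish$ with $\mathscr{C}(\Sigma V,\Sigma^2\overline R)$:
\[
\big(\mathscr{P}(V,R)^\antish\big)^* \cong \scrF\big((\Sigma V)^*\big)\big/\big\langle(\Sigma^2 R)^\bot\big\rangle \cong \scrF(\Sigma^{-1}V^*)\big/\big\langle\Sigma^{-2}R^\bot\big\rangle = \mathscr{P}(\Sigma^{-1}V^*,\Sigma^{-2}R^\bot),
\]
using $(\Sigma V)^*\cong\Sigma^{-1}V^*$, the identification $\scrF^{(2)}(\Sigma^{-1}V^*)\cong\Sigma^{-2}\scrF^{(2)}(V^*)$, and the resulting $(\Sigma^2 R)^\bot\cong\Sigma^{-2}R^\bot$ (graded duality shifts homological degree by $-1$).

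The main obstacle is the combinatorial bookkeeping of the preceding paragraph: one has to pin down the description of $\mathscr{C}(V,\overline R)^{[\rho]}$ and $\mathscr{P}(V,R)^{[\rho]}$ in terms of weight-$2$ sub-walls, verify that the relevant maps are exactly the iterated partial (co)products transported by the free/cofree duality, and check compatibility of the single isomorphism $\big(\scrF^c(V)\big)^*\cong\scrF(V^*)$ with all this structure — all routine once the free/cofree duality is in place, and literally the content of \cite[Sect.~2]{Val08} on the properadic side. Alternatively, the whole statement can be obtained from its properadic counterpart \cite[Th.~8]{Val08}: the induction functor $\Ind$ is fully faithful, exact, symmetric monoidal, commutes with $\scrF$, $\scrF^c$ and the (de)suspension (\Cref{def::functor induction}, \Cref{prop::Ind_deriv_coderiv}) and, on locally finite objects, commutes with the graded $k$-linear dual (a finite group algebra being self-dual as a bimodule), so it suffices to apply $\Ind$ and use that $\Ind$ reflects isomorphisms.
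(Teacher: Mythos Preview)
Your proposal is correct and follows essentially the same route as the paper, which simply writes ``It is the same proof as \cite[Cor.~154]{Val03} or \cite[Prop.~9]{Val08}'' and leaves all details to those references. You have unpacked precisely what that transposition amounts to: the free/cofree duality under local finiteness, the intersection/sum orthogonality for the weight-graded pieces, and the application of \Cref{thm::duale_de_koszul} for the second assertion. Your alternative via $\Ind$ (using that $\Res\circ\Ind=\id$ makes $\Ind$ fully faithful, hence isomorphism-reflecting) is a pleasant bonus that the paper does not mention, and is in the spirit of how most of the other Koszul-duality statements in this section are proved.
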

\begin{proof}
	It is the same proof as \cite[Cor.154]{Val03} or \cite[Prop. 9]{Val08}.
\end{proof}

\section{Simplicial bar construction for protoperads}\label{sect::simplicial}
We construct the simplicial bar complex for protoperads, as in the properadic case (see \cite[Sect 6]{Val07}). Recall that $\Sigma^n$ denotes the homological suspension of degree $n$ (see \Cref{def::suspension_Smod}).

\begin{defi}[(Reduced) Simplicial bar construction]
	Let $(\calP ,\mu,\eta)$ be a protoperad. We denote
	\[
		\mathbf{C}_n(\calP ):= \Sigma^n \Ibox\boxtimes_c\calP ^{\boxtimes_c n}\boxtimes_c \Ibox.
	\]
	The face maps $d_i:\mathbf{C}_n(\calP )\rightarrow 	\mathbf{C}_{n-1}(\calP )$ are induced by:
	\begin{itemize}
		\item the unit $\calP \boxtimes_c \Ibox \rightarrow \calP $ for $i=0$;
		\item the composition $\mu$ of the $i$-th and the $(i+1)$-th row,
		\item the unit $\Ibox\boxtimes_c \calP  \rightarrow \calP $ for $i=n$.
	\end{itemize}
The degeneracy maps $s_i:\mathbf{C}_n(\calP )\rightarrow 	\mathbf{C}_{n+1}(\calP )$ are given by the insertion of the unit $\eta:\Ibox \rightarrow \calP$ of the protoperad $s_i:=\Sigma^n\Ibox\boxtimes_c\calP ^{\boxtimes_c i}\boxtimes_c\eta\boxtimes_c \calP ^{\boxtimes_c n-i}\boxtimes_c \Ibox$. The differential $\partial_{\mathbf{C}}$ is defined  by
\[
\partial_{\mathbf{C}(\calP )}:= \partial_{\calP } + \sum_{i=0}^{n} d_i.
\]
One can check that $\partial_{\mathbf{C}(\calP )}^2=0$. This chain complex is called the (reduced)  \emph{simplicial bar construction} of $\calP $.
\end{defi}
\begin{defi}[Normalized bar construction]
The \emph{normalized bar construction} is given by the quotient of the simplicial bar construction by the image of the degeneracy maps. We denote by $\mathbf{N}(\calP)$ the following graded $\mathfrak{S}$-module, given in grading $n$, by:
\[
	\mathbf{N}_n(\calP):=
	\Sigma^n\mathrm{coker}\left(\sum_{i=0}^n\Ibox\boxtimes_c\calP ^{\boxtimes_c i}\boxtimes_c\eta\boxtimes_c \calP ^{\boxtimes_c n-i}\boxtimes_c \Ibox \right).
\]
\end{defi}
We define the functor $\mathcal{W}^{\mathrm{conn,lev}}_{n\uparrow}:\Fin\op\rightarrow \Fin\op$ of $n$-level connected wall given, for all finite set $S$, by
\[
\mathcal{W}^{\mathrm{conn,lev}}_{n\uparrow}(S)=
\left\{
W=(W^1,\ldots,W^n) \left| 
\begin{array}{l}
\forall i\in [\![1,n]\!], \exists I\in \mathcal{Y}(S) \\
\varnothing \ne W^i =\{W^i_\alpha\}_{\alpha\in A_i}\subseteq I\\
\cup_{i=1}^n \cup_{\alpha\in A_i}  W^i_\alpha =S; \mathcal{K}_S(W) =\{S\}
\end{array} 
\right.\right\}
\]
where $\mathcal{K}_S$ is the natural projection defined in \Cref{def::projection_K} (see also \cite[\Cref{ProtoI-def::projection_K}]{Ler18i}).

We denote the label of the number of levels by ${n\uparrow}$, because $\mathcal{W}^{\mathrm{conn,lev}}_{n\uparrow}(S)$ is also weight-graded by the number of bricks: an element $(W^1,\ldots,W^n)$ lives in $\mathcal{W}^{\mathrm{conn,lev}}_{n\uparrow,b}(S)$ with $b=|W^1|+\ldots +|W^n|$. 
The graded functor of level connected wall is denoted by $\mathcal{W}^{\mathrm{conn,lev}}=\amalg_n \mathcal{W}^{\mathrm{conn,lev}}_{n\uparrow}:\Fin\op \rightarrow \Set\op$. We have also the natural projection of \emph{unlevelization} 
\[
\mathrm{unl}:\mathcal{W}^{\mathrm{conn,lev}} \twoheadrightarrow \Wall 
\]
which sends an element $(\{W^1_\alpha\}_{\alpha \in A_1},\ldots,\{W^n_\alpha\}_{\alpha \in A_n})$ in $\mathcal{W}^{\mathrm{conn,lev}}(S)$  to the connected wall $W$ over $S$ which contains $W^i_\alpha$, for all $i$ in $[\![1,n]\!]$ and all $\alpha$ in $A_i$ and such that, for all $s$ in $S$,  the total order of $\Gamma_s=\{W^i_\alpha|s\in W^i_\alpha\}$ is defined by levels: for $W^i_\alpha$ and $W^j_\beta$ in $\Gamma_s$,  $W^i_\alpha <_s W^j_\beta$ if $i<j$.

Remark that the unlevelization morphism projects the functor of $n$-leveled connected wall with $n$ bricks  to $\Wall_n$. We denote by $\pi_{n\uparrow}$ the restriction of the unlevelization morphism to $\mathcal{W}^{\mathrm{conn,lev}}_{n\uparrow, n}$
\[
	\pi_{n\uparrow} : \mathcal{W}^{\mathrm{conn,lev}}_{n\uparrow, n} \twoheadrightarrow \Wall_n  
\]

\begin{prop}\label{prop::norm_simp_bar}
	Let $\calP$ be an augmented protoperad, and its augmentation ideal $\overline{\calP}$, and $S$ be a finite set. We have the following isomorphism
	\[
	\mathbf{N}_n(\calP)(S)\cong 
	\Sigma^n
	\bigoplus_{{\substack{
				(W^1,\ldots,W^n)\\
				\in \mathcal{W}^{\mathrm{conn,lev}}_{n\uparrow}(S)
			}}}  
	\bigotimes_{i=1}^n 	\bigotimes_{\alpha \in A} \overline{\calP}(W^i_\alpha).
	\]

\end{prop}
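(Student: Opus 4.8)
The plan is to compute the underlying $\mathfrak{S}$-module of $\mathbf{N}_n(\calP)$ directly from its definition as the cokernel of the degeneracies, by unwinding the iterated connected composition product combinatorially. First I would expand $\mathbf{C}_n(\calP)(S)=\Sigma^n\big(\Ibox\boxtimes_c\calP^{\boxtimes_c n}\boxtimes_c\Ibox\big)(S)$: since $\Ibox$ is the unit of $\boxtimes_c$ the two outer copies contribute nothing, and by associativity of $\boxtimes_c$ together with the definition of the connected composition product (see \Cref{def::prod_connexe_Smod}), an induction on $n$ identifies $\calP^{\boxtimes_c n}(S)$ with $\bigoplus_{W}\bigotimes_{i,\alpha}\calP(W^i_\alpha)$, where $W$ runs over the connected $n$-level walls on $S$ in which every level is a \emph{full} partition of $S$ (equivalently, each $s\in S$ lies in exactly one brick of each level); this is the $n$-fold iterate of the two-term description $P\boxtimes_c Q(S)=\bigoplus_{(I,J)\in\mathcal{X}^{\mathrm{conn}}(S)}(\dots)$.

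Next I would feed in the augmentation. Writing $\calP\cong\Ibox\oplus\overline{\calP}$ as $\mathfrak{S}$-modules and expanding each tensor factor $\calP(W^i_\alpha)=\Ibox(W^i_\alpha)\oplus\overline{\calP}(W^i_\alpha)$ — crucially brick by brick, because $\boxtimes_c$ is polynomial, not additive, in each variable — one uses that $\Ibox$ is concentrated on one-element sets: a brick may contribute its $\Ibox$-summand (which is just $k$) only if it is a singleton. Collecting terms, $\mathbf{C}_n(\calP)(S)\cong\Sigma^n\bigoplus_{(W,C)}\bigotimes_{W^i_\alpha\notin C}\overline{\calP}(W^i_\alpha)$, the sum being over pairs of a connected full-level $n$-wall $W$ and a subset $C$ of its singleton bricks (the bricks carrying the unit). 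Erasing the bricks of $C$ turns $W$ into an $n$-tuple $W'=(W'^1,\dots,W'^n)$ of partial partitions of $S$ whose "singleton completion" — reinsert, at each level, a singleton brick for every element not yet covered there — is $W$; this sets up a bijection $(W,C)\leftrightarrow W'$ between such pairs and $n$-tuples of partial partitions of $S$ whose singleton completion is a connected wall covering $S$.

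Then I would run the normalization. The degeneracy $s_i$ inserts $\eta\colon\Ibox\to\calP$ as the $(i{+}1)$-st factor, so combinatorially its image consists precisely of the summands in which level $i+1$ is the all-singletons partition with every brick carrying the unit — equivalently, the summands for which level $i+1$ of $W'$ is empty; since each $s_i$ is a split monomorphism this description is exact on the nose. Hence $\sum_i\mathrm{im}(s_i)$ is the direct summand indexed by the $W'$ having at least one empty level, and $\mathbf{N}_n(\calP)(S)$ is the complementary summand, indexed by the $W'$ all of whose $n$ levels are nonempty. It then remains to check that, among $n$-tuples of partial partitions, the condition "all levels nonempty, singleton completion connected, singleton completion covers $S$" is exactly the one defining $\mathcal{W}^{\mathrm{conn,lev}}_{n\uparrow}(S)$. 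For this the key observation is that a singleton brick is transparent for the connectedness relation $\overset{conn.}{\sim}$: it only links the bricks containing its element at the two neighbouring levels, which are already $\overset{conn.}{\sim}$-related through that element as soon as it is covered — so $W'$ and its completion have the same connected components and cover the same subset of $S$, provided no element gets uncovered; and an element uncovered in $W'$ would lie in singleton bricks at every level of $W$, which in a connected wall forces $|S|=1$, a situation in which $W'$ already has an empty level and has been discarded. Under the bijection the summand $\bigotimes_{W^i_\alpha\notin C}\overline{\calP}(W^i_\alpha)$ becomes $\bigotimes_{i=1}^n\bigotimes_{\alpha\in A}\overline{\calP}(W'^i_\alpha)$, and together with the $\Sigma^n$ this is the asserted isomorphism.

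The step I expect to be the main obstacle is precisely this combinatorial identification: establishing that passing to and from the singleton completion of a level wall neither creates nor destroys connectedness or coverage — so that the index set is pinned down to $\mathcal{W}^{\mathrm{conn,lev}}_{n\uparrow}(S)$ and not to some larger class of level walls — and keeping track of the fact that the splitting $\calP\cong\Ibox\oplus\overline{\calP}$ must be propagated brick by brick, which is what forces \emph{partial} rather than full partitions as the levels of $W'$. The surrounding ingredients (associativity of $\boxtimes_c$, the splitting induced by the augmentation, and the Koszul-sign bookkeeping for permuting tensor factors within a level, which matches the convention built into $\boxtimes_c$) are routine and parallel the properadic treatment of \cite[Sect.~6]{Val07}.
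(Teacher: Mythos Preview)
Your proposal is correct and takes the same approach as the paper, whose own proof is only a two-sentence sketch pointing to the properadic analogue in \cite[Sect.~6.1.3]{Val07}. You have spelled out in detail what the paper leaves implicit: the expansion of $\calP^{\boxtimes_c n}$ over full level walls, the brick-by-brick splitting via $\calP\cong\Ibox\oplus\overline{\calP}$ yielding partial level walls, and the key combinatorial point that singleton bricks are transparent for the connectedness relation $\overset{conn.}{\sim}$, so that the index set after normalization is exactly $\mathcal{W}^{\mathrm{conn,lev}}_{n\uparrow}(S)$.
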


\begin{proof}[Proof of \Cref{prop::norm_simp_bar}]
	As in the properadic case (see \cite[Section 6.1.3, the first remark]{Val07}). An element $W=(W^1, \ldots,W^n)$ in $\mathcal{W}^{\mathrm{conn,lev}}_{n\uparrow}(S)$ describes the position of non-trivial elements in each level. In the definition of the normalized bar construction, the cokernel ensures that there is a non-trivial element in each level: this is the condition $W^i\ne \varnothing$. The conditions $	\cup_{i=1}^n \cup_{\alpha\in A_i}  W^i_\alpha =S$ and $\mathcal{K}_S(\mathrm{unl}(W)) =\{S\}$ ensure that we have the connectedness of the product.
\end{proof}
%
\begin{prop}
	The simplicial bar construction and the normalized simplicial bar construction commute with the induction functor $\Ind$:
	\[
	\Ind (\mathbf{C}(-)) =\mathbf{C}\Val(\Ind(-))~\mathrm{and}~\Ind (\mathbf{N}(-)) =\mathbf{N}\Val(\Ind(-)),
	\]
	where the functors $\mathbf{C}\Val$ and $\mathbf{N}\Val$ are respectively, the reduced simplicial bar construction and the normalized simplicial bar construction for properads (see \cite{Val07}).
\end{prop}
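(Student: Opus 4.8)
The plan is to deduce both identities formally from the properties of the induction functor recalled in \Cref{def::functor induction}: $\Ind$ is exact, additive, and strong monoidal from $(\Smodred_k,\boxtimes_c,\Ibox)$ to $(\Sbimodred_k,\boxtimes_c^{\mathrm{Val}},\Ind(\Ibox))$; it commutes with the suspension $\Sigma$ (as already used in \Cref{subsect::adjonction_barcobar}); and it lifts to a functor $\mathsf{protoperads}\to\mathsf{properads}$, carrying the structure maps $\mu\colon\calP\boxtimes_c\calP\to\calP$ and $\eta\colon\Ibox\to\calP$ of a protoperad to the structure maps of the properad $\Ind(\calP)$.

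First I would treat the level-$n$ components. Strong monoidality together with commutation with $\Sigma^n$ gives a natural isomorphism
\[
	\Ind\big(\Sigma^n\Ibox\boxtimes_c\calP^{\boxtimes_c n}\boxtimes_c\Ibox\big)\cong \Sigma^n\,\Ind(\Ibox)\boxtimes_c^{\mathrm{Val}}(\Ind\calP)^{\boxtimes_c^{\mathrm{Val}} n}\boxtimes_c^{\mathrm{Val}}\Ind(\Ibox),
\]
that is, $\Ind(\mathbf{C}_n(\calP))\cong\mathbf{C}_n\Val(\Ind\calP)$ for every $n$. Next I would check that this family of isomorphisms is compatible with the simplicial structure. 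The face maps $d_i$ are built from the unit constraints $\calP\boxtimes_c\Ibox\cong\calP$, $\Ibox\boxtimes_c\calP\cong\calP$ and from $\mu$, while the degeneracies $s_i$ are built from $\eta$; since $\Ind$ is strong monoidal it matches the coherence isomorphisms of $\boxtimes_c$ with those of $\boxtimes_c^{\mathrm{Val}}$, and since it is a functor on monoids it sends $(\mu,\eta)$ to the properad structure $(\Ind\mu,\Ind\eta)$ of $\Ind(\calP)$. Hence $\Ind$ carries the simplicial $\mathfrak{S}$-module $[n]\mapsto\mathbf{C}_n(\calP)$ to the simplicial $\mathfrak{S}$-bimodule $[n]\mapsto\mathbf{C}_n\Val(\Ind\calP)$, and, being additive with $\Ind(\partial_\calP)=\partial_{\Ind\calP}$, it intertwines the total differentials $\partial_\calP+\sum_i d_i$; this yields the first isomorphism $\Ind(\mathbf{C}(-))\cong\mathbf{C}\Val(\Ind(-))$. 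For the normalized construction, one applies $\Ind$ to $\mathbf{N}_n(\calP)=\Sigma^n\mathrm{coker}\big(\sum_{i}\Ibox\boxtimes_c\calP^{\boxtimes_c i}\boxtimes_c\eta\boxtimes_c\calP^{\boxtimes_c n-i}\boxtimes_c\Ibox\big)$ and uses exactness of $\Ind$, so that it commutes with this cokernel (cf.\ \cite[\Cref{ProtoI-prop::Ind_exact}]{Ler18i}), together with the identification of the maps $\Ind(s_i)$ with the degeneracies of $\mathbf{C}\Val(\Ind\calP)$ from the previous step; this gives $\Ind(\mathbf{N}_n(\calP))\cong\mathbf{N}_n\Val(\Ind\calP)$, again compatibly with the induced differential.

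There is no real obstacle here; the only point requiring care is the bookkeeping of the suspensions $\Sigma^n$ and of the Koszul signs entering $\partial_{\mathbf{C}}$ and the associativity and unit constraints of $\boxtimes_c$, so that the isomorphisms above are genuine morphisms of chain complexes rather than of graded $\mathfrak{S}$-(bi)modules. Since $\Ind$ is also monoidal for the concatenation product, through which $\Sigma$ is defined (\Cref{def::suspension_Smod}), this compatibility is automatic, and the argument is otherwise a line-by-line transcription of the properadic case in \cite[Sect.\ 6]{Val07}. Alternatively, one can read both isomorphisms off \Cref{prop::norm_simp_bar} and its properadic counterpart once one knows that $\Ind$ commutes with the direct sums indexed by (leveled) connected walls, which is the same computation in combinatorial form.
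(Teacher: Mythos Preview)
Your proposal is correct and follows exactly the approach of the paper, which simply records in one sentence that ``the functor $\Ind$ is monoidal and exact (see \Cref{def::functor induction}).'' You have just spelled out in detail what this sentence encodes: monoidality gives the level-wise isomorphisms and compatibility with the faces and degeneracies, while exactness handles the cokernel defining $\mathbf{N}$.
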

\begin{proof}
	The functor $\Ind$ is monoidal and exact (see \Cref{def::functor induction}).
\end{proof}
\begin{prop}
	\begin{enumerate}
	\item The simplicial bar construction and the normalized bar construction preserve quasi-isomorphisms.
	\item Let $\calP$ be a quasi-free protoperad on a weight-graded $\mathfrak{S}$-module $V$, i.e. $\calP$ has underlying $\mathfrak{S}$-module $\scrF(V)$, such that $V^{(0)}=0$ and concentrated in homological degree $0$. The natural projection $\mathbf{N}(\calP)\rightarrow \Sigma V$ is a quasi-isomorphism.
	\end{enumerate}
\end{prop}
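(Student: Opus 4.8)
The plan is to handle the two assertions separately, reducing (1) to the properadic case and (2) to a combinatorial acyclicity already available in \cite{Ler18i}.

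\emph{Part (1).} I would argue through the induction functor, as elsewhere in the paper. By the previous proposition $\Ind(\mathbf{C}(-))=\mathbf{C}\Val(\Ind(-))$ and $\Ind(\mathbf{N}(-))=\mathbf{N}\Val(\Ind(-))$, and the properadic (normalized) simplicial bar constructions preserve quasi-isomorphisms (see \cite{Val07}). The functor $\Ind$ is exact (\Cref{def::functor induction}) and detects the zero object, since $(\Ind M)(S,S)$ contains $M(S)$ for every finite set $S$; being additive it commutes with mapping cones, so $H_\bullet(\mathrm{cone}(\Ind f))\cong\Ind\,H_\bullet(\mathrm{cone}(f))$ for any morphism $f$ of chain complexes of reduced $\mathfrak{S}$-modules, whence $\Ind f$ is a quasi-isomorphism if and only if $f$ is. Applying this to $\mathbf{C}(f)$ and $\mathbf{N}(f)$ for a quasi-isomorphism $f$ of protoperads gives (1). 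One may also argue directly: $\boxtimes_c$ preserves quasi-isomorphisms in each variable, being computed over a field by tensor products and direct sums, so $\mathbf{C}_n$ and $\mathbf{N}_n$ preserve them for each $n$; the filtration of the total complex by the simplicial degree is bounded in each total homological degree thanks to the shift $\Sigma^n$, so its spectral sequence converges and one concludes by the comparison theorem.

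\emph{Part (2).} Since $V$ is concentrated in homological degree $0$, so is $\scrF(V)$, and the only derivation of homological degree $-1$ is the zero map; hence $\calP=\scrF(V)$ with $\partial_\calP=0$ and $\overline{\calP}(T)=\bigoplus_{L\in\Wall(T)}\bigotimes_\beta V(L_\beta)$ for every non-empty $T$. Applying \Cref{prop::norm_simp_bar} with this $\overline{\calP}$ and expanding, $\mathbf{N}_n(\scrF(V))(S)$ becomes a direct sum indexed by pairs consisting of an $n$-level connected wall $(W^1,\ldots,W^n)$ on $S$ and, for each brick $W^i_\alpha$, a connected wall $L^{i,\alpha}$ on it. The second step is to reorganize this sum according to the connected wall $K\in\Wall(S)$ made of all the $V$-bricks $L^{i,\alpha}_\beta$, ordered by refining first the level order of $(W^1,\ldots,W^n)$ and then, inside a level, the local orders of the $L^{i,\alpha}$. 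The face maps of the simplicial bar construction only merge consecutive levels or multiply in copies of the unit, so they leave $K$ fixed; together with $\partial_\calP=0$ this yields a decomposition of chain complexes
\[
	\mathbf{N}(\scrF(V))(S)\ \cong\ \bigoplus_{K\in\Wall(S)}\ \Lambda_\bullet(K)\otimes\bigotimes_{\alpha}V(K_\alpha),
\]
where $\Lambda_\bullet(K)$ is the \emph{leveling complex} of $K$: in degree $n$ it is spanned by the arrangements of the bricks of $K$ into $n$ non-empty ordered levels compatible with the order of $K$, with differential the signed sum of the maps merging two consecutive levels (carrying the Koszul signs from the $V$-factors). Because $V^{(0)}=0$, every $V$-brick has weight $\geqslant1$, so the component $\mathbf{N}_2(\scrF(V))\to\mathbf{N}_1(\scrF(V))=\Sigma\,\overline{\scrF(V)}$ of the differential has image in weight $\geqslant2$; hence the projection $\Sigma\,\overline{\scrF(V)}\twoheadrightarrow\Sigma V$ onto the weight-$1$ summand is a morphism of complexes, and it is the natural projection of the statement.

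It remains to compute $H_\bullet(\Lambda_\bullet(K))$. If $|K|=1$, then $K=\{S\}$, there is a unique leveling, $\Lambda_\bullet(\{S\})=\Sigma k$ concentrated in degree $1$, and summing over $S$ recovers $\Sigma V$ with the projection restricting to the identity. If $|K|\geqslant2$, a leveling of $K$ is exactly a colouring of $K$ in the sense of \cite[\Cref{ProtoI-sect::coloring}]{Ler18i}, and the level-merging differential is the colouring differential; so $\Lambda_\bullet(K)$ is a finite direct sum of copies of the colouring complex $C^{\Colo}_\bullet(K)$, which is acyclic by \cite[\Cref{ProtoI-prop::cpx_colo_acyclique}]{Ler18i}, exactly as in the earlier computation giving $\mathrm{H}_\bullet(\Bar\scrF(V))\cong\Sigma V$. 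Therefore $\mathbf{N}(\scrF(V))\cong\Sigma V\oplus(\text{acyclic complex})$ and the projection $\mathbf{N}(\scrF(V))\to\Sigma V$ is a quasi-isomorphism. The hard part is the reorganization step: one must check that the two nested layers of walls — the level structure $(W^1,\ldots,W^n)$ and the inner walls $L^{i,\alpha}$ — recombine into exactly the datum of a colouring of the single wall $K$, neither losing the constraints (connectedness of each piece, compatibility of the local orders) nor over-counting, and that the simplicial face maps translate into the colouring differential with the right Koszul signs. This is the levelized analogue of the bookkeeping done in \cite{Ler18i} for the bar construction $\Bar$, and it is where the hypotheses $V^{(0)}=0$ and ``$V$ concentrated in homological degree $0$'' are genuinely needed. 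Alternatively, one could route part (2) through the levelization morphism (cf. \Cref{def::levelizationmorphism}), comparing $\mathbf{N}(\scrF(V))$ directly with $\Bar\scrF(V)$.
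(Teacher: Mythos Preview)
Your Part~(1) is correct and is essentially the paper's argument: transfer to properads via $\Ind$, use \cite[Prop.~6.1]{Val07}, and come back. The paper does the last step more directly by applying the exact functor $\Res$ and using $\Res\circ\Ind=\id$, so that $\mathbf{C}(f)=\Res\bigl(\mathbf{C}\Val(\Ind f)\bigr)$ is immediately a quasi-isomorphism; your mapping-cone argument works too but is a detour.

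For Part~(2) the paper gives no details, only ``similar to \cite[Prop.~6.5]{Val07}'', so the intended route is again through $\Ind$ and $\Res$. Your direct combinatorial argument is more informative, and the decomposition
\[
\mathbf{N}(\scrF(V))(S)\ \cong\ \bigoplus_{K\in\Wall(S)}\Lambda_\bullet(K)\otimes\bigotimes_\alpha V(K_\alpha)
\]
is correct, as is the identification of $\Lambda_\bullet(K)$ with the complex of surjective weakly order-preserving maps $K\to[n]$ under the level-merging differential. The gap is the sentence ``a leveling of $K$ is exactly a colouring of $K$ in the sense of \cite{Ler18i}''. These are \emph{not} the same complex. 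A colouring of $K$ (as used in the computation of $\rmH_\bullet(\Bar\scrF(V))$) is a quotient of $K$ onto a connected \emph{wall} $W^{\mathrm{out}}$ with connected fibres, whereas a leveling is a surjection onto the totally ordered set $[n]$. Already for the three-brick wall $K=\{B_1,B_2,B_3\}$ with $B_1,B_2$ incomparable and $B_1,B_2<B_3$ the counts differ: there are three $2$-levelings (send any one of $B_1,B_2,B_3$ to the non-singleton level respecting the order) but only two $2$-colourings (grouping $\{B_1,B_2\}$ into one outer brick is forbidden since that fibre is disconnected). So the acyclicity you need is not literally \cite[\Cref{ProtoI-prop::cpx_colo_acyclique}]{Ler18i}.

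The fix is short: $\Lambda_{n}(K)$ is in bijection with chains $\varnothing=I_0\subsetneq I_1\subsetneq\cdots\subsetneq I_n=K$ in the distributive lattice $J(K)$ of order ideals of $K$, so $\Lambda_{\bullet}(K)$ is (up to a shift) the reduced chain complex of the order complex of the open interval $(\varnothing,K)\subset J(K)$. For a distributive lattice $J(P)$ this interval is contractible unless $P$ is an antichain, and a connected wall with $|K|\geqslant2$ is never an antichain (two bricks sharing an element of $S$ lie in a totally ordered $\Gamma_s$). Hence $\Lambda_\bullet(K)$ is acyclic for $|K|\geqslant2$, and your argument goes through. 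Your alternative via \Cref{thm::levelization} together with $\rmH_\bullet(\Bar\scrF(V))\cong\Sigma V$ is also valid and not circular, since \Cref{thm::levelization} is proved independently through $\Ind$.
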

\begin{proof}
	\begin{enumerate}
		\item As for the bar construction of protoperads. The functors $\mathrm{Res}$, $\Ind$ and  $\mathbf{C}\Val$, (cf. \cite[Prop 6.1]{Val07}) preserve quasi-isomorphims. Let $\phi$ be a quasi isomorphism of $\mathfrak{S}$-modules, then $\mathrm{Res}(\mathbf{C}\Val(\Ind(\phi)))=\mathrm{Res}\circ\Ind\mathbf{C}(\phi)=\mathbf{C}(\phi)$ is a quasi-isomorphism.
		\item Similar to \cite[Prop. 6.5]{Val07}
	\end{enumerate}
\end{proof}
We define the levelization morphism as in the operadic and the properadic case (see \cite[Section 6.2]{Val07}).
\begin{defiprop}\label{def::levelizationmorphism}
	Let $\calP$ be an augmented protoperad. The \emph{levelization morphism} is the injective morphism of $\mathfrak{S}$-modules 
	\[
		e: \Bar (\calP) \longrightarrow \mathbf{N}(\calP)
	\]
	which, for a finite set $S$, and a wall $W=\{W_\alpha\}_{\alpha \in A}$ in $\Wall(S)$, sends 
	\[
		\bigotimes_{\alpha \in A} \Sigma \overline{\calP}(W_\alpha)
		\overset{e}{\longmapsto}
		\bigoplus_{\widetilde{W}\in \pi_{n\uparrow}^{-1}(W)}
		\bigotimes_{i=1}^{|A|} \Sigma \overline{\calP}(\widetilde{W}_i)
		\subset \mathbf{N}(\calP);
	\]
	the map $e$ sends each element of $\bigotimes_{\alpha \in A} \Sigma \overline{\calP}(W_\alpha)$ to the sum of representatives (with signs induced by the  Koszul sign of the symmetry).
\end{defiprop}

\begin{thm}\label{thm::levelization}
	Let $\calP $ be a weight-graded augmented protoperad. The levelization morphism
	$e: \Bar (\calP ) \rightarrow \mathbf{N}(\calP ) $	is a quasi-isomorphism.
\end{thm}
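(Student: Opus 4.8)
The plan is to deduce this from the properadic levelization theorem of Vallette (\cite[Sect.~6.2]{Val07}) by means of the induction functor $\Ind$, exactly in the spirit of the proof that the (normalized) simplicial bar construction preserves quasi-isomorphisms.

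The key step is to identify $\Ind(e_\calP)$ with the properadic levelization morphism $e\Val_{\Ind(\calP)}\colon \Bar\Val(\Ind\calP)\to\mathbf{N}\Val(\Ind\calP)$, naturally in the augmented protoperad $\calP$. The two sides already have isomorphic source and target, via the commutations $\Ind(\Bar(-))\cong\Bar\Val(\Ind(-))$ and $\Ind(\mathbf{N}(-))\cong\mathbf{N}\Val(\Ind(-))$ established above; so one only has to check that these isomorphisms intertwine $\Ind(e_\calP)$ and $e\Val_{\Ind\calP}$. Using the explicit combinatorial descriptions of the source (connected walls, \Cref{prop::freeproto}) and of the target (leveled connected walls, \Cref{prop::norm_simp_bar}), together with the fact that $\Ind$ is additive, strong monoidal for $\boxtimes_c$ and commutes with the suspension (\Cref{def::functor induction}), this reduces to comparing, for each connected wall $W$, the fibre $\pi_{n\uparrow}^{-1}(W)$ of the unlevelization map with the corresponding fibre on the properadic side — which $\Ind$ matches — as well as the Koszul signs, which are dictated by the symmetry of $\boxtimes_c$ and are preserved by $\Ind$. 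Since $e_\calP$ is by construction (see \Cref{def::levelizationmorphism}) the level-wise sum over $\pi_{n\uparrow}^{-1}(W)$ weighted by these signs, the identification follows.

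Granting this, the argument concludes quickly. The $\mathfrak{S}$-bimodule $\Ind\calP$ is a weight-graded augmented properad, so Vallette's theorem applies and $e\Val_{\Ind\calP}$ is a quasi-isomorphism; hence so is $\Ind(e_\calP)$. Applying the restriction functor and using $\Res\circ\Ind\cong\id$ together with the fact that $\Res$ preserves quasi-isomorphisms (as already invoked in the proof for $\mathbf{C}$), we get that $e_\calP=\Res\big(\Ind(e_\calP)\big)$ is a quasi-isomorphism. Equivalently, one may observe that $\Ind$, being exact and admitting $\Res$ as a retraction, is conservative, hence reflects quasi-isomorphisms.

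The main obstacle is the first step: making the identification $\Ind(e_\calP)\cong e\Val_{\Ind\calP}$ genuinely precise and natural, i.e. matching the leveled-wall combinatorics of \Cref{def::levelizationmorphism} — including the bookkeeping of the Koszul signs produced by permuting tensor factors along the symmetry of $\boxtimes_c$ — with the leveled-graph combinatorics underlying the properadic levelization morphism. Everything downstream is formal, relying only on the exactness and monoidality of $\Ind$ recalled in \Cref{def::functor induction} and on the properadic result.
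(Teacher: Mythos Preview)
Your proposal is correct and follows essentially the same approach as the paper: transport the protoperadic levelization morphism through $\Ind$, identify it with Vallette's properadic levelization morphism via the commutations $\Ind\circ\Bar\cong\Bar\Val\circ\Ind$ and $\Ind\circ\mathbf{N}\cong\mathbf{N}\Val\circ\Ind$, invoke \cite[Theorem~6.7]{Val07}, and conclude by applying $\Res$ together with $\Res\circ\Ind=\id$. The paper's proof is terser on the identification $\Ind(e_\calP)=e\Val_{\Ind\calP}$, which you correctly flag as the only step requiring care.
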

\begin{proof}
	Let $\calP$ be a weight-graded, augmented protoperad, and consider the levelization morphism $e:\Bar (\calP)\rightarrow\mathbf{N}(\calP)$. The induction functor sends $e$ to $e\Val$, the levelization morphism for properads, defined by Vallette in \cite[Section 6]{Val07}, which is a quasi isomorphism (see \cite[Theorem 6.7]{Val07}):
	\[
		\begin{tikzcd}[column sep=large]
		\Ind(\Bar \calP) \cong \Bar \Val\Ind(\calP) 
		\ar[r,"\Ind(e)=e\Val"',"\sim"]&
		\mathbf{N}\Val\Ind(\calP) \cong \Ind(\mathbf{N}\calP)
		\end{tikzcd}
	\]
	We apply the functor $\Res$ to this map, which is an exact functor, and which satisfies $\Res\circ \Ind=\id$, then the map $e$ is a quasi-isomorphism.
	We just use the same arguments that for the properadic case (see \cite[Sect. 6]{Val07}).
\end{proof}

\section{Studying Koszulness of binary quadratic protoperad}\label{sect::criterion}
In this section, we describe a criterion to study the Koszulness of binary quadratic protoperad, which are protoperads given by a quadratic datum $(V,R)$ such that $V$ is concentrated in arity $2$, $V(S)=0$ for all finite sets $S$ with $|S|\ne 2$. 

\subsection{A useful criterion} We give an algebraic criterion for a binary quadratic protoperads concentrated in homological degree $0$ to be Koszul.

\begin{defi}[Binary protoperad]
	A protoperad $\calP $, given by generators ad relations, i.e. $\calP =\scrF(V)/\langle R \rangle$ is \emph{binary} if the $\mathfrak{S}$-module $V$ is concentrated in arity $2$, i.e. $V(S)=0$ for all finite sets $S$ with $|S|\ne 2$. 
\end{defi}

Let $\calP $ be a binary quadratic protoperad concentrated in homological degree $0$, given by the quadratic datum $(V,R)$, then $V=V_0(2)$. We associate to $\calP $, a family of quadratic algebras $\{\scrA(\calP,n)\}_{n\geqslant 2}$, defined by
\[
\scrA(\calP,n):= \mathbb{S}(\calP \sh)([\![1,n]\!]).
\]
We will see that the algebras $\scrA(\calP,n)$ are quadratic. Fix $n\geqslant 2$, we consider the decomposition of $V$ in irreducible representations:
\[
V=\bigoplus_{\nu=1}^m V_\nu
\] 
where $V_\nu=k\cdot v_\nu$ is the trivial representation or the signature representation of $\mathfrak{S}_2$ (recall that the characteristic of $k$ is different to $2$). To $V$, we associate the set $V(\calP ,n)$ of generators of $\{\scrA(\calP,n)\}_{n\geqslant 2}$:
\[
V(\calP ,n):=\{(v_\nu)_{ij}~|~1\leqslant i<j\leqslant n, 1\leqslant \nu \leqslant m \}
\]
Thus $V(\calP ,n)$ corresponds to the generators of $\mathbb{S}(\calP )([\![1,n]\!])$ as algebra for the product $\mu_\square$ (see \Cref{prop::S_permute_prod_connexe}), i.e.
\begin{equation}\label{eq::correspondance}
(v_\nu)_{ij}\rightsquigarrow 
\begin{tikzpicture}[scale=0.3,baseline=0.1pc]
\draw[densely dotted] (-2,-0.2)  -- (-2,0.75) node[above] {$\scriptscriptstyle{1}$};
\draw (-1.15,0) node {\tiny{$\cdots$}};
\draw[densely dotted] (-0.5,-0.2) node[below] {} -- (-0.5,0.75);
\draw[densely dotted] (0.5,-0.2)  -- (0.5,0.75) node[above] {$\scriptscriptstyle{i}$};
\draw (2,0) node {\tiny{$\cdots$}};
\draw[densely dotted] (3.5,-0.2)  -- (3.5,0.75) node[above] {$\scriptscriptstyle{j}$} ;
\draw[fill=white] (0,0) rectangle (1,0.5);
\draw (0.5,0.25) node {$\scriptscriptstyle \nu$};
\draw[fill=white] (3,0) rectangle (4,0.5);
\draw (3.5,0.25) node {$\scriptscriptstyle \nu$};
\draw[densely dotted] (4.5,-0.2) -- (4.5,0.75);
\draw (5.25,0) node {\tiny{$\cdots$}};
\draw[densely dotted] (6,-0.2)  -- (6,0.75)node[above] {$\scriptscriptstyle{n}$};
\end{tikzpicture}
~.
\end{equation}
As $\calP $ is binary and quadratic, the set of relations $R$ is concentrated in arity $2$ and $3$.
Each relation in $R(2)$ is given by a linear combination of terms as 
\[
\begin{tikzpicture}[scale=0.3,baseline=0.1pc]
\draw[densely dotted] (0.5,-0.2)  -- (0.5,1.45)node[above] {$\scriptscriptstyle{1}$};
\draw[densely dotted] (1.5,-0.2)  -- (1.5,1.45)node[above] {$\scriptscriptstyle{2}$};
\draw[fill=white] (0,0) rectangle (2,0.5);
\draw[fill=white] (0,0.75) rectangle (2,1.25);
\end{tikzpicture}
\]
where each brick is labelled by a generator $v_\nu$. To a such relation $r$ in $R(2)$, we associate a family of quadratic relations $\{r_{ij}\}_{1\leqslant i<j \leqslant n}$ in terms of $V(\calP ,n)$, where $r_{ij}$ is given by replacing a monomial indexed by $v_\alpha$ for the bottom brick and $v_\beta$ for the upper brick, with $v_\alpha$ and $v_\beta$ two generators, by the monomial $(v_\alpha)_{ij}(v_\beta)_{ij}$ in $V(\calP ,n)^{\otimes 2}$, as in \Cref{fig::labelled_procedure2}.
\begin{figure}[!h]
\[
\begin{tikzpicture}[scale=0.3,baseline=0.1pc]
\draw[densely dotted] (0.5,-0.2)  -- (0.5,1.45)node[above] {$\scriptscriptstyle{1}$};
\draw[densely dotted] (1.5,-0.2)  -- (1.5,1.45)node[above] {$\scriptscriptstyle{2}$};
\draw[fill=white] (0,0) rectangle (2,0.5);
\draw (1,0.25) node {$\scriptscriptstyle \alpha$};
\draw[fill=white] (0,0.75) rectangle (2,1.25);
\draw (1,1) node {$\scriptscriptstyle \beta$};
\end{tikzpicture}
\overset{ij}{\rightsquigarrow} (v_\alpha)_{ij}(v_\beta)_{ij}.
\]
\caption{labelled procedure for $R(2)$}
\label{fig::labelled_procedure2}
\end{figure}
We denote by $R(2)_{ij}$, the set of relations in $V(\calP ,n)^{\otimes 2}$ which are obtained by the labelled procedure $\overset{ij}{\rightsquigarrow}$ (see \Cref{fig::labelled_procedure2}). Similarly, by connectivity, each relation in $R(3)$ is given by a linear combination of terms as follow:
\[
\begin{tikzpicture}[scale=0.3,baseline=0.1pc]
\draw[densely dotted] (0.5,-0.2)  -- (0.5,1.45)node[above] {$\scriptscriptstyle{1}$};
\draw[densely dotted] (1.5,-0.2)  -- (1.5,1.45)node[above] {$\scriptscriptstyle{2}$};
\draw[densely dotted] (2.5,-0.2)  -- (2.5,1.45)node[above] {$\scriptscriptstyle{3}$};
\draw[fill=white] (0,0) rectangle (1,0.5);
\draw[fill=white] (2,0) rectangle (3,0.5);
\draw[fill=white] (1,0.75) rectangle (3,1.25);
\end{tikzpicture}
\quad;\quad 
\begin{tikzpicture}[scale=0.3,baseline=0.1pc]
\draw[densely dotted] (0.5,-0.2)  -- (0.5,1.45)node[above] {$\scriptscriptstyle{1}$};
\draw[densely dotted] (1.5,-0.2)  -- (1.5,1.45)node[above] {$\scriptscriptstyle{2}$};
\draw[densely dotted] (2.5,-0.2)  -- (2.5,1.45)node[above] {$\scriptscriptstyle{3}$};
\draw[fill=white] (0,0.75) rectangle (1,1.25);
\draw[fill=white] (2,0.75) rectangle (3,1.25);
\draw[fill=white] (1,0) rectangle (3,0.5);
\end{tikzpicture}
\quad;\quad 
\begin{tikzpicture}[scale=0.3,baseline=0.1pc]
\draw[densely dotted] (0.5,-0.2)  -- (0.5,1.45)node[above] {$\scriptscriptstyle{1}$};
\draw[densely dotted] (1.5,-0.2)  -- (1.5,1.45)node[above] {$\scriptscriptstyle{2}$};
\draw[densely dotted] (2.5,-0.2)  -- (2.5,1.45)node[above] {$\scriptscriptstyle{3}$};
\draw[fill=white] (0,0) rectangle (2,0.5);
\draw[fill=white] (1,0.75) rectangle (3,1.25);
\end{tikzpicture}
\quad;\quad 
\begin{tikzpicture}[scale=0.3,baseline=0.1pc]
\draw[densely dotted] (0.5,-0.2)  -- (0.5,1.45)node[above] {$\scriptscriptstyle{1}$};
\draw[densely dotted] (1.5,-0.2)  -- (1.5,1.45)node[above] {$\scriptscriptstyle{2}$};
\draw[densely dotted] (2.5,-0.2)  -- (2.5,1.45)node[above] {$\scriptscriptstyle{3}$};
\draw[fill=white] (0,0.75) rectangle (2,1.25);
\draw[fill=white] (1,0) rectangle (3,0.5);
\end{tikzpicture}
\quad;\quad 
\begin{tikzpicture}[scale=0.3,baseline=0.1pc]
\draw[densely dotted] (0.5,-0.2)  -- (0.5,1.45)node[above] {$\scriptscriptstyle{1}$};
\draw[densely dotted] (1.5,-0.2)  -- (1.5,1.45)node[above] {$\scriptscriptstyle{2}$};
\draw[densely dotted] (2.5,-0.2)  -- (2.5,1.45)node[above] {$\scriptscriptstyle{3}$};
\draw[fill=white] (0,0.75) rectangle (1,01.25);
\draw[fill=white] (2,0.75) rectangle (3,1.25);
\draw[fill=white] (0,0) rectangle (2,0.5);
\end{tikzpicture}
\quad;\quad 
\begin{tikzpicture}[scale=0.3,baseline=0.1pc]
\draw[densely dotted] (0.5,-0.2)  -- (0.5,1.45)node[above] {$\scriptscriptstyle{1}$};
\draw[densely dotted] (1.5,-0.2)  -- (1.5,1.45)node[above] {$\scriptscriptstyle{2}$};
\draw[densely dotted] (2.5,-0.2)  -- (2.5,1.45)node[above] {$\scriptscriptstyle{3}$};
\draw[fill=white] (0,0) rectangle (1,0.5);
\draw[fill=white] (2,0) rectangle (3,0.5);
\draw[fill=white] (0,0.75) rectangle (2,1.25);
\end{tikzpicture}~~,
\]
where each brick is labelled by a generator $v_\nu$. If $n \geqslant 3$, for all relation $r$ in $R(3)$, we associate a family of quadratic relations $\{r_{ijk}\}_{1\leqslant i<j<k\leqslant n}$ with $r_{ijk}\in V(\calP ,n)^{\otimes 2}$, where $r_{ijk}$ is given by replacing all monomial indexed by $v_\alpha$ for the bottom brick and $v_\beta$ for the upper brick, with $v_\alpha$ and $v_\beta$ two generators, as in \Cref{fig::labelled_procedure3}.
\begin{figure}[!h]
\begin{minipage}{0.45\linewidth}
\begin{align*}
\begin{tikzpicture}[scale=0.3,baseline=0.1pc]
\draw[densely dotted] (0.5,-0.2)  -- (0.5,1.45)node[above] {$\scriptscriptstyle{1}$};
\draw[densely dotted] (1.5,-0.2)  -- (1.5,1.45)node[above] {$\scriptscriptstyle{2}$};
\draw[densely dotted] (2.5,-0.2)  -- (2.5,1.45)node[above] {$\scriptscriptstyle{3}$};
\draw[fill=white] (0,0) rectangle (1,0.5);
\draw[fill=white] (2,0) rectangle (3,0.5);
\draw[fill=white] (1,0.75) rectangle (3,1.25);
\end{tikzpicture}
~~ \overset{ijk}{\rightsquigarrow} ~~& (v_\alpha)_{ik} (v_\beta)_{jk} ~~;\\
\begin{tikzpicture}[scale=0.3,baseline=0.1pc]
\draw[densely dotted] (0.5,-0.2)  -- (0.5,1.45)node[above] {$\scriptscriptstyle{1}$};
\draw[densely dotted] (1.5,-0.2)  -- (1.5,1.45)node[above] {$\scriptscriptstyle{2}$};
\draw[densely dotted] (2.5,-0.2)  -- (2.5,1.45)node[above] {$\scriptscriptstyle{3}$};
\draw[fill=white] (0,0.75) rectangle (1,1.25);
\draw[fill=white] (2,0.75) rectangle (3,1.25);
\draw[fill=white] (1,0) rectangle (3,0.5);
\end{tikzpicture}
~~ \overset{ijk}{\rightsquigarrow} ~~& (v_\alpha)_{jk} (v_\beta)_{ik} ~~;\\
\begin{tikzpicture}[scale=0.3,baseline=0.1pc]
\draw[densely dotted] (0.5,-0.2)  -- (0.5,1.45)node[above] {$\scriptscriptstyle{1}$};
\draw[densely dotted] (1.5,-0.2)  -- (1.5,1.45)node[above] {$\scriptscriptstyle{2}$};
\draw[densely dotted] (2.5,-0.2)  -- (2.5,1.45)node[above] {$\scriptscriptstyle{3}$};
\draw[fill=white] (0,0) rectangle (2,0.5);
\draw[fill=white] (1,0.75) rectangle (3,1.25);
\end{tikzpicture}
~~ \overset{ijk}{\rightsquigarrow} ~~& (v_\alpha)_{ij} (v_\beta)_{jk} ~~;
\end{align*}
\end{minipage}
\begin{minipage}{0.45\linewidth}
\begin{align*} 
\begin{tikzpicture}[scale=0.3,baseline=0.1pc]
\draw[densely dotted] (0.5,-0.2)  -- (0.5,1.45)node[above] {$\scriptscriptstyle{1}$};
\draw[densely dotted] (1.5,-0.2)  -- (1.5,1.45)node[above] {$\scriptscriptstyle{2}$};
\draw[densely dotted] (2.5,-0.2)  -- (2.5,1.45)node[above] {$\scriptscriptstyle{3}$};
\draw[fill=white] (0,0.75) rectangle (2,1.25);
\draw[fill=white] (1,0) rectangle (3,0.5);
\end{tikzpicture}
~~ \overset{ijk}{\rightsquigarrow} ~~& (v_\alpha)_{jk} (v_\beta)_{ij}~~; \\
\begin{tikzpicture}[scale=0.3,baseline=0.1pc]
\draw[densely dotted] (0.5,-0.2)  -- (0.5,1.45)node[above] {$\scriptscriptstyle{1}$};
\draw[densely dotted] (1.5,-0.2)  -- (1.5,1.45)node[above] {$\scriptscriptstyle{2}$};
\draw[densely dotted] (2.5,-0.2)  -- (2.5,1.45)node[above] {$\scriptscriptstyle{3}$};
\draw[fill=white] (0,0.75) rectangle (1,01.25);
\draw[fill=white] (2,0.75) rectangle (3,1.25);
\draw[fill=white] (0,0) rectangle (2,0.5);
\end{tikzpicture}
~~ \overset{ijk}{\rightsquigarrow} ~~& (v_\alpha)_{ij} (v_\beta)_{ik} ~~;\\
\begin{tikzpicture}[scale=0.3,baseline=0.1pc]
\draw[densely dotted] (0.5,-0.2)  -- (0.5,1.45)node[above] {$\scriptscriptstyle{1}$};
\draw[densely dotted] (1.5,-0.2)  -- (1.5,1.45)node[above] {$\scriptscriptstyle{2}$};
\draw[densely dotted] (2.5,-0.2)  -- (2.5,1.45)node[above] {$\scriptscriptstyle{3}$};
\draw[fill=white] (0,0) rectangle (1,0.5);
\draw[fill=white] (2,0) rectangle (3,0.5);
\draw[fill=white] (0,0.75) rectangle (2,1.25);
\end{tikzpicture}
~~ \overset{ijk}{\rightsquigarrow} ~~& (v_\alpha)_{ik} (v_\beta)_{ij}~~.
\end{align*}
\end{minipage}
\caption{labelled procedure for $R(3)$}
\label{fig::labelled_procedure3}
\end{figure}

We denote by $R(3)_{ijk}$, the set of relations in $V(\calP ,n)^{\otimes 2}$ which are obtained by the labelled procedure $\overset{ijk}{\rightsquigarrow}$. 
We consider the quadratic algebra 
\[
\dfrac{\rmT\left(V(\calP ,n)\right)}{
	\left\langle \left. R(2)_{ij}, R(3)_{ijk}, \big[(v_\alpha)_{ij},(v_\beta)_{ab}\big]~\right|~
	{\substack{ 
	\forall 1\leqslant i<j<k \leqslant n, \\
	\forall 1\leqslant a<b \leqslant n ,\{i,j\}\cap\{a,b\}=\varnothing
	}}
	\right\rangle 
}\ .
\]
The new relations given by the commutator $[(v_\alpha)_{ij},(v_\beta)_{ab}]$ correspond to the "parallelism commutativity" which is present in the protoperadic structure:
\[
\begin{tikzpicture}[scale=0.3,baseline=0.1pc]
\draw[densely dotted] (0.5,-0.2)  -- (0.5,1.45)node[above] {$\scriptscriptstyle{1}$};
\draw[densely dotted] (1.5,-0.2)  -- (1.5,1.45)node[above] {$\scriptscriptstyle{2}$};
\draw[densely dotted] (2.75,-0.2)  -- (2.75,1.45)node[above] {$\scriptscriptstyle{3}$};
\draw[densely dotted] (3.75,-0.2)  -- (3.75,1.45)node[above] {$\scriptscriptstyle{4}$};
\draw[fill=white] (0,0) rectangle (2,0.5);
\draw[fill=white] (2.25,0.75) rectangle (4.25,1.25);
\draw (1,0.25) node {$\scriptscriptstyle{\alpha}$};
\draw (3.25,1) node {$\scriptscriptstyle{\beta}$};
\end{tikzpicture}
~=~
\begin{tikzpicture}[scale=0.3,baseline=0.1pc]
\draw[densely dotted] (0.5,-0.2)  -- (0.5,1.45)node[above] {$\scriptscriptstyle{1}$};
\draw[densely dotted] (1.5,-0.2)  -- (1.5,1.45)node[above] {$\scriptscriptstyle{2}$};
\draw[densely dotted] (2.75,-0.2)  -- (2.75,1.45)node[above] {$\scriptscriptstyle{3}$};
\draw[densely dotted] (3.75,-0.2)  -- (3.75,1.45)node[above] {$\scriptscriptstyle{4}$};
\draw[fill=white] (0,0.75) rectangle (2,1.25);
\draw[fill=white] (2.25,0) rectangle (4.25,0.5);
\draw (1,1) node {$\scriptscriptstyle{\alpha}$};
\draw (3.25,0.25) node {$\scriptscriptstyle{\beta}$};
\end{tikzpicture}~,
\]
(see \cite{Val07} for the properadic case).
\begin{lem}\label{lem::iso_algebra_proto}
	Let $\calP $ be a binary quadratic protoperad. For all integer $n\geqslant 2$, we have the isomorphism of algebras
	\begin{equation}\label{eq::APn_iso_SPn}
		\scrA(\calP,n)\cong 
		\dfrac{\rmT\left(V(\calP ,n)\right)}{
			\left\langle \left. R(2)_{ij}, R(3)_{ijk}, \big[(v_\alpha)_{ij},(v_\beta)_{ab}\big]~\right|~
			{\substack{ 
					\forall 1\leqslant i<j<k \leqslant n, \\
					\forall 1\leqslant a<b \leqslant n ,\{i,j\}\cap\{a,b\}=\varnothing
			}}
			\right\rangle 
		}\ .
	\end{equation}
\end{lem}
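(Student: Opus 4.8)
The plan is to unwind the definition $\scrA(\calP,n)=\mathbb{S}(\calP\sh)([\![1,n]\!])$ combinatorially. Write $\calP=\scrF(V)/\langle R\rangle$ with $V=V_0(2)=\bigoplus_\nu k\cdot v_\nu$, each $v_\nu$ a trivial or sign representation of $\mathfrak{S}_2$. By \Cref{prop::S_permute_prod_connexe} the graded $\mathfrak{S}$-module $\mathbb{S}(\calP\sh)$ is a monoid for the product $\square$, so $\mathbb{S}(\calP\sh)([\![1,n]\!])$ is indeed an associative algebra for $\mu_\square$, and the weight-grading of $\mathbb{S}$ (the number of bricks) makes it a graded algebra generated in weight $1$. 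I would identify this algebra first in the free case and then pass to the quotient.

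\emph{The free case.} For $\calP=\scrF(V)$, I combine the brick-and-wall description of the free protoperad (\Cref{prop::freeproto}), the strong monoidality and exactness of $(-)\sh$ (\Cref{prop::oubli_monoidal_sym}), \Cref{cor::shuffle} and \Cref{prop::S_permute_prod_connexe} to see that $\mathbb{S}(\scrF(V)\sh)([\![1,n]\!])$ is the $k$-vector space freely spanned by the (shuffle representatives of) walls over $[\![1,n]\!]$ whose bricks are all binary, supported on pairs $\{i,j\}\subseteq[\![1,n]\!]$ and labelled by the generators $v_\nu$, the product $\mu_\square$ being the vertical stacking of walls. Under the dictionary \eqref{eq::correspondance}, a monomial $(v_{\nu_1})_{i_1j_1}\cdots(v_{\nu_r})_{i_rj_r}$ in $V(\calP,n)$ is exactly a choice of linear extension (one brick per level, read from the bottom) of the canonical poset of such a wall; every wall comes from at least one monomial, and two monomials represent the same wall if and only if they differ by a sequence of transpositions of two consecutive letters $(v_\alpha)_{ij}$, $(v_\beta)_{ab}$ with $\{i,j\}\cap\{a,b\}=\varnothing$ — because in a wall two bricks are incomparable precisely when their supports are disjoint, and linear extensions of a finite poset are connected under adjacent transpositions of incomparable elements (all Koszul signs are trivial since $\calP$ is concentrated in homological degree $0$). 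This is the trace-monoid (``heaps of pieces'') presentation, and it is the case $R=\varnothing$ of \eqref{eq::APn_iso_SPn}.

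\emph{The quotient.} The functor $\mathbb{S}$ is right exact (it is assembled from $\oplus$, $\otimes_k$ and coinvariants), hence preserves surjections and reflexive coequalizers; together with \Cref{cor::shuffle} this gives $\scrA(\calP,n)\cong \scrA(\scrF(V),n)/\widetilde J$, where $\widetilde J$ is the two-sided ideal of $\scrA(\scrF(V),n)$ equal to the image of $\langle R\rangle\sh$. A connected wall built from two binary bricks has either two leaves (the bricks share their support) or three leaves (they overlap in exactly one point), so $R$ splits by arity as $R=R(2)\oplus R(3)$, and the three-leaf shapes are precisely the six drawn in \Cref{fig::labelled_procedure3}. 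Plugging a copy of $r\in R$ into a wall over $[\![1,n]\!]$ and using the commutators of the free case to bring the two bricks of $r$ into consecutive levels shows that $\widetilde J$ is generated, as a two-sided ideal, by these weight-$2$ classes; reading off which pair $i<j$ (resp.\ triple $i<j<k$) of positions the bricks of a two-leaf (resp.\ three-leaf) shape can occupy inside $[\![1,n]\!]$ produces exactly $R(2)_{ij}$ of \Cref{fig::labelled_procedure2} and $R(3)_{ijk}$ of \Cref{fig::labelled_procedure3}. With the commutators already present, this is the presentation \eqref{eq::APn_iso_SPn}.

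\emph{Main difficulty.} The delicate step is the last one: proving that $\widetilde J$ is \emph{quadratically} generated (so that no relation of higher weight survives beyond the consequences of the displayed quadratic ones), and checking that the six three-leaf configurations match, one-to-one and with the correct index assignments, the six rules $\overset{ijk}{\rightsquigarrow}$ of \Cref{fig::labelled_procedure3}. Both rest on the same bookkeeping of shuffle-wall combinatorics (choice of representative for each wall, parallelism commutators, surjectivity of the free-protoperad product) that Vallette uses for properads; here the argument is strictly parallel to \cite[Sect.~6]{Val07}, with all Koszul signs trivial because $\calP$ lives in homological degree $0$.
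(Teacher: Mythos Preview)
Your proposal is correct and follows the same two-step strategy as the paper: first identify $\mathbb{S}(\scrF\sh(V))([\![1,n]\!])$ with the trace-monoid quotient of $\rmT(V(\calP,n))$, then pass to the quotient by $\langle R\rangle\sh$. The paper's own proof is extremely terse --- it simply asserts that $V(\calP,n)$ generates $\mathbb{S}(\scrF\sh(V))([\![1,n]\!])$, that the resulting surjection from $\rmT(V(\calP,n))$ factors through the commutator quotient as an isomorphism $\phi$, and that ``$\phi$ induces the isomorphism \eqref{eq::APn_iso_SPn}'' --- whereas you actually justify both steps: the heaps-of-pieces / linear-extensions argument explains \emph{why} $\phi$ is an isomorphism, and the right-exactness of $\mathbb{S}$ (indeed exactness here, since the parts of a partition of $[\![1,n]\!]$ are pairwise distinct subsets, so the symmetric-group action on ordered partitions is free and coinvariants introduce no torsion) handles the passage to the quotient. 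Your ``main difficulty'' paragraph correctly locates the only nontrivial bookkeeping, and your resolution via moving the two bricks of $r$ into consecutive levels using the parallelism commutators is exactly what is implicit in the paper's one-line claim.
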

\begin{proof}
	We recall that, for a protoperad $(\calP , \mu)$, the product on $\mathbb{S}(\calP )([\![1,n]\!])$ is given by
	\[
	\mathbb{S}(\calP )([\![1,n]\!])\:\square\: \mathbb{S}(\calP )([\![1,n]\!]) 
	\cong \mathbb{S}(\calP \boxtimes_c\calP )([\![1,n]\!])
	\overset{\mathbb{S}(\mu)}{\longrightarrow}
	\mathbb{S}(\calP )([\![1,n]\!]).
	\]
	As $V(\calP ,n)$ is, by construction, a set of generators of the algebra $\mathbb{S}(\scrF\sh(V))([\![1,n]\!])$, we have the following morphism of algebras $\rmT\left(V(\calP ,n)\right) \rightarrow \mathbb{S}(\scrF\sh(V))([\![1,n]\!])$, which factorizes as follows:
	\[
	\xymatrix{
	\rmT\left(V(\calP ,n)\right) \ar[d] \ar[r] & \mathbb{S}(\scrF\sh(V))([\![1,n]\!]) \\
	\dfrac{	\rmT\left(V(\calP ,n)\right)}{\left\langle \big[(v_\alpha)_{ij},(v_\beta)_{ab}\big]~|~{\substack{\forall v_\alpha, v_\beta \\ \forall 1\leqslant a<b \leqslant n,  \forall 1\leqslant i<j \leqslant n \\ \{i,j\}\cap\{a,b\}=\varnothing }}\right\rangle} \ar[ru]_\cong^\phi &
	}~;
	\] 
	the isomorphism $\phi$ induces the isomorphism \eqref{eq::APn_iso_SPn}.
\end{proof}
\begin{thm}[Criterion of Koszulness]\label{thm::critere_koszulite}
	Let $\calP $ be a binary quadratic protoperad. If, for all integer $n\geqslant 2$, the quadratic algebra $\scrA(\calP,n)$ is Koszul, then the protoperad $\calP $ is Koszul.
\end{thm}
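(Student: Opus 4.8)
The idea is to compare, arity by arity, the normalized simplicial bar complex of $\calP$ with the reduced bar complex of the associated algebra $\scrA(\calP,n)$, realizing the former as a direct summand of the latter. First I would pass to the shuffle setting and reformulate Koszulness homologically: by the remark following \Cref{thm::critere_Koszul}, $\calP$ is Koszul if and only if $\calP\sh$ is, and by \Cref{thm::critere_Koszul} together with the levelization quasi-isomorphism \Cref{thm::levelization} (and its shuffle analogue, which follows from \Cref{prop::oubli_monoidal_sym}), $\calP\sh$ is Koszul if and only if the normalized simplicial bar complex $\mathbf{N}(\calP\sh)$ has homology concentrated on the diagonal, i.e. $H_r\big(\mathbf{N}_\bullet(\calP\sh)^{[\rho]}(S)\big)=0$ for $r\neq\rho$, for every totally ordered finite set $S$. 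Since each such $S$ is isomorphic to some $[\![1,n]\!]$ and $\mathbf{N}(\calP\sh)$ is functorial, it suffices to treat $S=[\![1,n]\!]$ with $n\geqslant 2$, the cases $|S|\leqslant 1$ being trivial.

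The core step is the claim that $\mathbf{N}(\calP\sh)([\![1,n]\!])$ is a direct summand, as a chain complex, of the reduced bar complex of the algebra $\scrA(\calP,n)=\mathbb{S}(\calP\sh)([\![1,n]\!])$, whose degree-$r$ part is $\Sigma^r\,\overline{\scrA(\calP,n)}^{\otimes r}$. By \Cref{lem::iso_algebra_proto} and the correspondence \eqref{eq::correspondance}, an element of $\overline{\scrA(\calP,n)}^{\otimes r}$ is an $r$-level wall on $[\![1,n]\!]$ — not necessarily connected, and not necessarily covering all of $[\![1,n]\!]$ — each of whose levels carries at least one brick and each of whose bricks is decorated by $\overline{\calP}\sh$; and, as in the proof of \Cref{lem::iso_algebra_proto} (using \Cref{prop::S_permute_prod_connexe}), the bar differential is ``compose two consecutive levels'', since the product of $\scrA(\calP,n)$ is $\mathbb{S}$ applied to the connected composition of $\calP\sh$. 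This differential preserves the set of positions covered by some brick and preserves its partition into connected components; hence $\overline{\scrA(\calP,n)}^{\otimes\bullet}$ splits, as a complex, into the direct sum of the subcomplex spanned by the walls that are connected \emph{and} cover all of $[\![1,n]\!]$ and its complementary subcomplex. By the shuffle version of \Cref{prop::norm_simp_bar}, the first summand is exactly $\mathbf{N}(\calP\sh)([\![1,n]\!])$, and the bigradings match: on both sides the homological degree is the number of levels, and the weight is the total number of generators $(v_\nu)_{ij}$, which equals the number of bricks because each brick has weight one.

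Granting this, the theorem follows immediately. If $\scrA(\calP,n)$ is a Koszul algebra, then $\mathrm{Tor}^{\scrA(\calP,n)}_r(k,k)$ — the $H_r$ of its reduced bar complex — is concentrated in weight $r$; restricting to the direct summand yields $H_r\big(\mathbf{N}_\bullet(\calP\sh)^{[\rho]}([\![1,n]\!])\big)=0$ for $\rho\neq r$. Since this holds for all $n\geqslant 2$ (and trivially in lower arities), $\mathbf{N}(\calP\sh)$ has diagonal homology, so $\calP\sh$, and therefore $\calP$, is Koszul.

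I expect the main obstacle to be making the direct-summand decomposition airtight, namely proving the combinatorial fact that composing two consecutive levels of a wall never enlarges the covered set beyond $[\![1,n]\!]$ and never merges two connected components — equivalently, that both the connected full-support part and its complement are stable under the bar differential — and keeping precise track of the interaction between the homological grading (number of levels) and the weight grading through the shuffle normalized bar construction, so that ``diagonal $\mathrm{Tor}$'' for the algebra corresponds exactly to the homological vanishing defining Koszulness of the protoperad. Setting up the shuffle analogues of \Cref{prop::norm_simp_bar} and \Cref{thm::levelization} via \Cref{prop::oubli_monoidal_sym} is a routine but necessary preliminary.
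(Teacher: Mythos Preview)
Your proposal is correct and follows essentially the same route as the paper's proof: both identify $\mathbf{N}(\calP\sh)([\![1,n]\!])$ with a direct summand of $\Bar^{\Alg}(\scrA(\calP,n))$ and deduce diagonal homology from Koszulness of the algebra, then conclude via the levelization quasi-isomorphism. Your ``connected and full-support'' summand is exactly the paper's summand indexed by the trivial partition $\mathfrak{p}_0$ in the splitting $\Bar^{\Alg}(\scrA(\calP,n))\cong\bigoplus_{\mathfrak{p}\in\mathcal{Y}([\![1,n]\!])}\Bar^{\Alg}_{\mathfrak{p}}(\scrA(\calP,n))$, and the technical point you flag as the main obstacle---stability of the connected full-support part under the bar differential---is precisely what the paper handles by observing that the relations of $\scrA(\calP,n)$ preserve the index set and hence the associated partition.
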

\begin{proof}
	Fix $n$ an integer such that $n \geqslant 2$. By \Cref{lem::iso_algebra_proto}, the bar constructions of the algebras $\scrA(\calP,n)$ are isomorphic, so we have the isomorphism of chain complexes
	\begin{equation}\label{eq::iso_bar}
		\Bar ^\Alg(\scrA(\calP,n)) \cong \Bar ^\Alg\big( \mathbb{S}(\calP \sh)\big)([\![1,n]\!]),
	\end{equation}
	where $\Bar ^\Alg$ is the bar construction for algebras (see \cite[Section 2.2]{LV12}). To a monomial $m$ of $\Bar ^\Alg(\scrA(\calP,n))$, we associate the partition which is induced by the set of pairs  $(i,j)$ of generator indices which appear in $m$, as explain below. We have the surjection $\Bar ^\Alg(\rmT(V(\calP ,n))) \rightarrow\Bar ^\Alg(\scrA(\calP,n))$, so choose a representative $\bar{m}$ of $m$ in $\Bar ^\Alg(\rmT(V(\calP ,n)))$ and consider the set of  pairs  $(i,j)$ of generator indices which appear in $\bar{m}$, completed by singletons $\{k\}$ if $k$ in $[\![1,n]\!]$ does not appear in any of the pairs. Such sets can be viewed as elements of $\Wall([\![1,n]\!])$, with the partial order induced by the lexicographic order. Then, by the natural transformation $\mathcal{K}:\mathcal{W} \rightarrow \mathcal{Y}$  (see \Cref{def::projection_K}), we associate $\bar{m}$, a partition $\mathfrak{p}$ of $[\![1,n]\!]$. 
	
	All relations in $\scrA(\calP,n)$ are given by $r_{ab}$ and $r'_{ijk}$ for $1\leqslant i<j<k \leqslant n$, $1\leqslant a<b \leqslant n$, $r$ in $R(2)$ and $r'$ in $R(3)$. So, as we see in \Cref{fig::labelled_procedure3}, any choice of representative $\bar{m}$ for $m$ gives us the same partition, then the partition $\mathfrak{p}$ does not depend of the choice of the representative $\bar{m}$. By the same argument, as the differential of $\Bar ^\Alg(\scrA(\calP,n))$ is induced by the product of $\scrA(\calP,n)$, the bar complex splits:
	\begin{equation}\label{eq::B_split}
	\Bar ^\Alg(\scrA(\calP,n)) \cong \bigoplus_{\mathfrak{p}\in \mathcal{Y}([\![1,n]\!])} \Bar ^\Alg_{\mathfrak{p}}(\scrA(\calP,n)).
	\end{equation}
	For convenience, we denote by $\mathfrak{p}_0$, the trivial partition with one element of $[\![1,n]\!]$. Through the isomorphism in \Cref{eq::iso_bar}, we identify the complex $\Bar ^\Alg_{\mathfrak{p}_0}(\scrA(\calP,n))$ with the normalized simplicial bar construction  $\mathbf{N}(\calP \sh)([\![1,n]\!])$.
	
	Let $p\geqslant 1$, a monomial $w$ in $\mathbf{N}^{(p)}(\calP \sh)([\![1,n]\!])$ is given by a leveled connected wall where bricks are labelled by monomials of $\calP \sh$.  To such a monomial $w
	$, we associate directly an element of $\Bar _{\mathfrak{p}_0}^{\Alg,(p)}(\scrA(\calP,n))$ where each level $w_i$ of $w$ is sent to a monomial $m_i$ in $\scrA(\calP,n)$, as in  \Cref{fig::example}.
	\begin{figure}[!h]
	\[
	\mathbf{N}^{(2)}(\calP \sh)([\![1,5]\!]) \ni 
	\begin{tikzpicture}[scale=0.3,baseline=1pc]
	\draw[densely dotted] (0.5,-0.2)  -- (0.5,3.25) node[above] {$\scriptscriptstyle{1}$};
	\draw[densely dotted] (1.5,-0.2)  -- (1.5,3.25) node[above] {$\scriptscriptstyle{2}$};
	\draw[densely dotted] (2.5,-0.2)  -- (2.5,3.25) node[above] {$\scriptscriptstyle{3}$};
	\draw[densely dotted] (3.5,-0.2)  -- (3.5,3.25) node[above] {$\scriptscriptstyle{4}$};
	\draw[densely dotted] (4.5,-0.2)  -- (4.5,3.25) node[above] {$\scriptscriptstyle{5}$};
	\draw[fill=white] (0,0) rectangle (1,0.5);
	\draw (0.5,0.25) node {$\scriptscriptstyle{\alpha}$} ;
	\draw[fill=white] (2,0) rectangle (3,0.5);
	\draw (2.5,0.25) node {$\scriptscriptstyle{\alpha}$} ;
	\draw[fill=white] (1,0.75) rectangle (3,1.25);
	\draw (2,1) node {$\scriptscriptstyle{\beta}$} ;
	\draw[dashed] (-0.5,1.5) -- (5.5,1.5);
	\draw[fill=white] (3,1.75) rectangle (5,2.25);
	\draw (4,2) node {$\scriptscriptstyle{\gamma}$} ;
	\draw[fill=white] (2,2.5) rectangle (4,3);
	\draw (3,2.75) node {$\scriptscriptstyle{\alpha}$} ;
	\end{tikzpicture}
	\longleftrightarrow
	(v_\alpha)_{13}(v_\beta)_{23}\otimes  (v_\gamma)_{45}(v_\alpha)_{34} \in \Bar _{\mathfrak{p}_0}^{\Alg,(2)}(A(\calP ,5))
	\]
	\caption{Example}
	\label{fig::example}
	\end{figure}
	It is clear that this application is an isomorphism of chain complexes:
	\begin{equation}\label{eq::N_iso_B}
		\mathbf{N}^{(p)}(\calP \sh)([\![1,n]\!])\cong \Bar _{\mathfrak{p}_0}^{\Alg,(p)}(\scrA(\calP,n)).
	\end{equation} 
	As the algebras $\scrA(\calP,n)$ are Koszul by hypothesis, for all $n\geqslant 2$, then the homology of $\Bar ^{\Alg,(p)}(\scrA(\calP,n))$ is concentrated in degree $p$. As this complex splits (see \Cref{eq::B_split}), then the homology of $\Bar _{\mathfrak{p}_0}^{\Alg,(p)}(\scrA(\calP,n))$ is also concentrated in degree $p$. Then, by the isomorphism in \Cref{eq::N_iso_B}, the homology of $\mathbf{N}^{(p)}(\calP \sh)([\![1,n]\!])$ is also concentrated in degree $p$. So, by  \Cref{thm::levelization}, the shuffle protoperad $\calP \sh$ is Koszul, then $\calP $ too, because $\Bar \calP$ and $\Bar _\shuffle \calP\sh$ are isomorphic as chain complexes, by \Cref{cor::shuffle}.
\end{proof}

\subsection{The main example: the protoperad \texorpdfstring{$\mathcal{DL}ie$}{DLie}}
In this section, we define the protoperad  $\mathcal{DL}ie$ and we show that it is Koszul by  \Cref{thm::critere_koszulite}.
\begin{defi}[The protoperad $\mathcal{DL}ie$]
	The protoperad $\mathcal{DL}ie$ is the quadratic protoperad 
	\[
	\mathcal{DL}ie:= \scrF\left(
	\begin{tikzpicture}[scale=0.3,baseline=-0.5ex]
	\draw[densely dotted] (0.2,0.75) node[above] {$\scriptscriptstyle{1}$} to (0.2,-0.75);
	\draw[densely dotted]  (1.8,0.75) node[above] {$\scriptscriptstyle{2}$} to (1.8,-0.75);
	\draw[fill=white] (0,-0.2) rectangle (2,0.3);
	\end{tikzpicture}
	= -
	\begin{tikzpicture}[scale=0.3,baseline=-0.5ex]
	\draw[densely dotted]  (0.2,0.75) node[above] {$\scriptscriptstyle{2}$} to (0.2,-0.75);
	\draw[densely dotted]  (1.8,0.75) node[above] {$\scriptscriptstyle{1}$} to (1.8,-0.75);
	\draw[fill=white] (0,-0.2) rectangle (2,0.3);
	\end{tikzpicture}
	\right) \Bigg/ 
	\Bigg\langle~
	\begin{tikzpicture}[scale=0.3,baseline=0.1pc]
	\draw[densely dotted] (0.5,-0.2)  -- (0.5,1.45)node[above] {$\scriptscriptstyle{1}$};
	\draw[densely dotted] (1.5,-0.2)  -- (1.5,1.45)node[above] {$\scriptscriptstyle{2}$};
	\draw[densely dotted] (2.5,-0.2)  -- (2.5,1.45)node[above] {$\scriptscriptstyle{3}$};
	\draw[fill=white] (0,0) rectangle (2,0.5);
	\draw[fill=white] (1,0.75) rectangle (3,1.25);
	\end{tikzpicture}
	+
	\begin{tikzpicture}[scale=0.3,baseline=0.1pc]
	\draw[densely dotted] (0.5,-0.2)  -- (0.5,1.45)node[above] {$\scriptscriptstyle{2}$};
	\draw[densely dotted] (1.5,-0.2)  -- (1.5,1.45)node[above] {$\scriptscriptstyle{3}$};
	\draw[densely dotted] (2.5,-0.2)  -- (2.5,1.45)node[above] {$\scriptscriptstyle{1}$};
	\draw[fill=white] (0,0) rectangle (2,0.5);
	\draw[fill=white] (1,0.75) rectangle (3,1.25);
	\end{tikzpicture}
	+
	\begin{tikzpicture}[scale=0.3,baseline=0.1pc]
	\draw[densely dotted] (0.5,-0.2)  -- (0.5,1.45)node[above] {$\scriptscriptstyle{3}$};
	\draw[densely dotted] (1.5,-0.2)  -- (1.5,1.45)node[above] {$\scriptscriptstyle{1}$};
	\draw[densely dotted] (2.5,-0.2)  -- (2.5,1.45)node[above] {$\scriptscriptstyle{2}$};
	\draw[fill=white] (0,0) rectangle (2,0.5);
	\draw[fill=white] (1,0.75) rectangle (3,1.25);
	\end{tikzpicture}~
	\Bigg\rangle~.
	\]
\end{defi}
\begin{rem}
	We associate to the protoperad $\DLie$, the shuffle protoperad 

\[
\DLie\sh=
\dfrac{\scrF\sh\left(
\begin{tikzpicture}[scale=0.3,baseline=-0.5ex]
\draw[densely dotted] (0.2,0.75) node[above] {$\scriptscriptstyle{1}$} to (0.2,-0.75);
\draw[densely dotted]  (1.8,0.75) node[above] {$\scriptscriptstyle{2}$} to (1.8,-0.75);
\draw[fill=white] (0,-0.2) rectangle (2,0.3);
\end{tikzpicture}
\right)
}{ 
\left\langle 
\begin{tikzpicture}[scale=0.3,baseline=0.1pc]
	\draw[densely dotted] (0.5,-0.2)  -- (0.5,1.45)node[above] {$\scriptscriptstyle{1}$};
\draw[densely dotted] (1.5,-0.2)  -- (1.5,1.45)node[above] {$\scriptscriptstyle{2}$};
\draw[densely dotted] (2.5,-0.2)  -- (2.5,1.45)node[above] {$\scriptscriptstyle{3}$};
\draw[fill=white](0,0) rectangle (1,0.5);
\draw[fill=white](2,0) rectangle (3,0.5);
\draw[fill=white](1,0.75) rectangle (3,1.25);
\end{tikzpicture}
\ - \
\begin{tikzpicture}[scale=0.3,baseline=0.1pc]
	\draw[densely dotted] (0.5,-0.2)  -- (0.5,1.45)node[above] {$\scriptscriptstyle{1}$};
\draw[densely dotted] (1.5,-0.2)  -- (1.5,1.45)node[above] {$\scriptscriptstyle{2}$};
\draw[densely dotted] (2.5,-0.2)  -- (2.5,1.45)node[above] {$\scriptscriptstyle{3}$};
\draw[fill=white](0,0.75) rectangle (2,1.25);
\draw[fill=white](1,0) rectangle (3,0.5);
\end{tikzpicture}
\ - \
\begin{tikzpicture}[scale=0.3,baseline=0.1pc]
	\draw[densely dotted] (0.5,-0.2)  -- (0.5,1.45)node[above] {$\scriptscriptstyle{1}$};
\draw[densely dotted] (1.5,-0.2)  -- (1.5,1.45)node[above] {$\scriptscriptstyle{2}$};
\draw[densely dotted] (2.5,-0.2)  -- (2.5,1.45)node[above] {$\scriptscriptstyle{3}$};
\draw[fill=white](0,0.75) rectangle (1,01.25);
\draw[fill=white](2,0.75) rectangle (3,1.25);
\draw[fill=white](0,0) rectangle (2,0.5);
\end{tikzpicture}
\ ; \
\begin{tikzpicture}[scale=0.3,baseline=0.1pc]
	\draw[densely dotted] (0.5,-0.2)  -- (0.5,1.45)node[above] {$\scriptscriptstyle{1}$};
\draw[densely dotted] (1.5,-0.2)  -- (1.5,1.45)node[above] {$\scriptscriptstyle{2}$};
\draw[densely dotted] (2.5,-0.2)  -- (2.5,1.45)node[above] {$\scriptscriptstyle{3}$};
\draw[fill=white](0,0) rectangle (2,0.5);
\draw[fill=white](1,0.75) rectangle (3,1.25);
\end{tikzpicture}
\ - \
\begin{tikzpicture}[scale=0.3,baseline=0.1pc]
\draw[densely dotted] (0.5,-0.2)  -- (0.5,1.45)node[above] {$\scriptscriptstyle{1}$};
\draw[densely dotted] (1.5,-0.2)  -- (1.5,1.45)node[above] {$\scriptscriptstyle{2}$};
\draw[densely dotted] (2.5,-0.2)  -- (2.5,1.45)node[above] {$\scriptscriptstyle{3}$};
\draw[fill=white](0,0.75) rectangle (1,1.25);
\draw[fill=white](2,0.75) rectangle (3,1.25);
\draw[fill=white](1,0) rectangle (3,0.5);
\end{tikzpicture}
\ - \ 
\begin{tikzpicture}[scale=0.3,baseline=0.1pc]
	\draw[densely dotted] (0.5,-0.2)  -- (0.5,1.45)node[above] {$\scriptscriptstyle{1}$};
\draw[densely dotted] (1.5,-0.2)  -- (1.5,1.45)node[above] {$\scriptscriptstyle{2}$};
\draw[densely dotted] (2.5,-0.2)  -- (2.5,1.45)node[above] {$\scriptscriptstyle{3}$};
\draw[fill=white](0,0) rectangle (1,0.5);
\draw[fill=white](2,0) rectangle (3,0.5);
\draw[fill=white](0,0.75) rectangle (2,1.25);
\end{tikzpicture} 
\ \right\rangle
}
\ ,
\]
by \Cref{cor::shuffle}. 
\end{rem}
To the protoperad $\DLie$, we associate the family of quadratic algebras, denoted by $\scrA(\DLie,n)$ for $n\geqslant 2$, given by the quadratic datum $\left(V(\DLie,n),R(\DLie,n)\right)$, with generators
\[
V(\DLie,n)=\{x_{ij}~|~1\leqslant i<j\leqslant n\} 
\]
and relations
\[
R(\DLie,2)=0
~;
~
R(\DLie,3)=\left\{
\begin{array}{c}
x_{12}x_{23}- x_{23}x_{13} -x_{13}x_{12} \\
x_{23}x_{12}- x_{13}x_{23} -x_{12}x_{13} \\
\end{array}
\right\}
~;
\]
and, for $n\geqslant 4$,
\[
R(\DLie,n)=\left\{
\begin{array}{c}
\left.
\begin{array}{c}
x_{ij}x_{jk}- x_{jk}x_{ik} -x_{ik}x_{ij} \\
x_{jk}x_{ij}- x_{ik}x_{jk} -x_{ij}x_{ik} \\
x_{ab}x_{uv}-x_{uv}x_{ab}
\end{array}
\right|
\begin{array}{c}
1\leqslant i<j<k \leqslant n\\
1\leqslant u<v \leqslant n \\
1\leqslant a<b \leqslant n\\
\{a,b\}\cap\{u,v\}=\varnothing
\end{array}
\end{array}
\right\}
~.
\]

\begin{prop}\label{prop::ADLie_Koszul}
	For all $n\geqslant 2$, the quadratic algebra $\scrA(\DLie,n)$ is Koszul.
\end{prop}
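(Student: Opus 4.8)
The plan is to prove that the quadratic algebra $\scrA(\DLie,n)$ is Koszul by the rewriting method for quadratic algebras (see \cite[Chap.~4]{LV12}): one orients the defining relations of $\scrA(\DLie,n)$ into a rewriting system on the free associative algebra on the generators $x_{ij}$, $1\leq i<j\leq n$, and checks that this system is confluent. Since a quadratic algebra whose presentation gives rise to a confluent quadratic rewriting system — equivalently, a quadratic Gröbner basis, equivalently a PBW basis — is Koszul, this is enough.

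First I would fix an admissible order on the free monoid in the $x_{ij}$, induced by a total order on the generators obtained by ordering the pairs $\{i,j\}$ (for instance lexicographically, or first by the maximal and then by the minimal element, with a refinement if need be) and then extended degree-lexicographically. With respect to such an order each of the two Jacobi-type relations attached to a triple $i<j<k$ acquires one of its three monomials as leading term, while the commutation relation $x_{ab}x_{uv}-x_{uv}x_{ab}$ of two disjoint pairs has the monomial with the larger generator on the left as leading term. The normal monomials — the words in the $x_{ij}$ containing none of these quadratic leading monomials as a factor — are the expected PBW basis of $\scrA(\DLie,n)$. The small cases are then immediate: $\scrA(\DLie,2)=k[x_{12}]$ is a polynomial ring, and for $n=3$ there are no overlaps at all, because the largest of the three generators heads both leading monomials of the unique triple, so no generator can be at once the middle letter of one leading monomial and the first letter of another; hence for $n\leq 3$ the presentation is automatically a Gröbner basis and $\scrA(\DLie,n)$ is PBW.

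The heart of the argument, and the step I expect to be the main obstacle, is the confluence check for $n\geq 4$. For every overlap word $x_Px_Qx_R$ — meaning that $x_Px_Q$ and $x_Qx_R$ are both leading monomials — one must verify that the two reductions of $x_Px_Qx_R$ obtained by rewriting each of the two overlapping factors reach a common normal form. These overlaps fall into finitely many combinatorial types, classified by which of the three families each of the two leading monomials belongs to and by the incidence pattern of the three pairs $P,Q,R$ (which vertices they share), and each type is a bounded but delicate computation, in which one must keep careful track of the Koszul signs coming from the symmetry of the connected composition product and choose the monomial order so that the Jacobi-type relations and the commutations reconcile (a careless order produces non-confluent critical pairs, typically when a commutation moves a disjoint generator into a position where the relation of its new neighbour is no longer triggered). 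Once confluence has been established for every overlap type, the defining relations of $\scrA(\DLie,n)$ form a quadratic Gröbner basis, the normal monomials form a PBW basis, and $\scrA(\DLie,n)$ is Koszul.
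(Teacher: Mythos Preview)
Your overall strategy---apply the rewriting method / quadratic Gr\"obner basis criterion from \cite[Chap.~4]{LV12}---is exactly the right one, and is also what the paper does. But there are two points worth flagging.

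First, what you have written is a plan, not a proof. You correctly identify the confluence check as ``the heart of the argument'' and describe the combinatorial types of critical pairs that arise, but you do not carry out a single one of them. You even note that ``a careless order produces non-confluent critical pairs''---precisely because of this, the existence of \emph{some} admissible order for which all overlaps resolve is the entire content of the proposition, and cannot be left as an exercise. The paper's appendix devotes several pages to writing out every overlap type explicitly; that computation is the proof.

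Second, and more substantively: the paper does \emph{not} apply the rewriting method to $\scrA(\DLie,n)$ directly, but to its Koszul dual $W^n=(\scrA(\DLie,n))^{!}$. This is not a cosmetic choice. The relations of $\scrA(\DLie,n)$ are trinomial (e.g.\ $x_{ij}x_{jk}-x_{jk}x_{ik}-x_{ik}x_{ij}$), so your rewriting rules send each leading monomial to a \emph{sum} of two monomials. Every reduction step doubles the number of terms, and each confluence diagram becomes a branching tree whose leaves must be collected and compared. By contrast, the relations of the Koszul dual are all binomial (the orthogonal of a codimension-one subspace in a two-dimensional span of monomials is one-dimensional): $x_{ij}^2$, $x_{ij}x_{jk}+x_{jk}x_{ik}$, $x_{ab}x_{uv}+x_{uv}x_{ab}$, etc. Hence every rewriting rule is of the form $m\rightsquigarrow \pm m'$, and each confluence diagram is a genuine diamond of single monomials with signs. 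This is what makes the full case analysis tractable. Since an algebra is PBW iff its Koszul dual is, passing to $W^n$ loses nothing and gains a great deal of computational control. If you insist on working with $\scrA(\DLie,n)$ directly you will have to manage the branching carefully, and you should expect the check to be substantially longer.
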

\begin{proof} 
See \Cref{proof::ADLie_Koszul} for the proof.
\end{proof}
\begin{thm}\label{thm::proto_DLie_Koszul}
	The protoperad $\DLie$ is Koszul.
\end{thm}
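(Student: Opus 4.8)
The plan is to deduce the theorem directly from the Koszulness criterion for binary quadratic protoperads. First I would check that $\DLie$ satisfies the hypotheses of \Cref{thm::critere_koszulite}: its generator is concentrated in arity $2$, so $\DLie$ is binary; the double Jacobi relation lies in $\scrF^{(2)}$ of the generator (arity $3$), so $\DLie$ is quadratic; and it is concentrated in homological degree $0$. By \Cref{lem::iso_algebra_proto} the associated family of quadratic algebras is the family $\scrA(\DLie,n)=\rmT(V(\DLie,n))/\langle R(\DLie,n)\rangle$ with the explicit generators $\{x_{ij}\mid 1\le i<j\le n\}$ and relations recalled just before the theorem. Granting \Cref{prop::ADLie_Koszul}, which says that each $\scrA(\DLie,n)$ is a Koszul algebra, \Cref{thm::critere_koszulite} immediately yields that the protoperad $\DLie$ is Koszul. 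So the only substantive input is \Cref{prop::ADLie_Koszul}, and the proof of the theorem proper is a one-line invocation of these two results.

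For \Cref{prop::ADLie_Koszul} I would argue by the rewriting / Gröbner basis method for quadratic algebras (see \cite[Chap.~4]{LV12}). Fix a total order on the generators $x_{ij}$, say the lexicographic order on the pairs $(i,j)$, and the induced deg-lex order on monomials. Orient each triangle relation $x_{ij}x_{jk}=x_{jk}x_{ik}+x_{ik}x_{ij}$ and $x_{jk}x_{ij}=x_{ik}x_{jk}+x_{ij}x_{ik}$ by its leading monomial, and orient each commutation relation $x_{ab}x_{uv}=x_{uv}x_{ab}$ (with $\{a,b\}\cap\{u,v\}=\varnothing$) by moving the larger generator to the right; the candidate normal forms are then a combinatorially described family of ``sorted'' words. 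The case $n=2$ is the free algebra $k[x_{12}]$, hence Koszul. For $n=3$ one checks by hand that the two triangle relations together with their self-overlap form a quadratic Gröbner basis. For $n\ge 4$ the disjointness-commutativity relations confine any length-three overlap ambiguity to a bounded number of the indices $\{1,\dots,n\}$, so confluence of the whole system reduces to finitely many small computations; from a quadratic Gröbner basis (equivalently, a PBW basis) Koszulness follows by the criterion of \cite[Chap.~4]{LV12}.

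The main obstacle is precisely this confluence verification: one must confirm that the listed relations form a \emph{quadratic} Gröbner basis rather than merely a generating set, i.e. that every S-polynomial coming from an overlap of two triangle relations, or of a triangle relation with a commutation relation, rewrites to zero. The overlaps built from three nested pairs, of the form $x_{ij}x_{jk}x_{k\ell}$, are the delicate ones and are what dictates the choice of monomial order. Once \Cref{prop::ADLie_Koszul} is established, nothing further is needed for the present statement: \Cref{thm::critere_koszulite}, which itself rests on the levelization quasi-isomorphism \Cref{thm::levelization} and the exactness of the induction functor $\Ind$, repackages the algebra-level Koszulness as the protoperadic conclusion; the subsequent passages to the properad $\widetilde{\DLie}$ and then, by a distributive law, to $\DPois$ are handled in the later sections.
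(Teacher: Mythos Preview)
Your proof of the theorem proper is correct and matches the paper exactly: it is the one-line invocation of \Cref{prop::ADLie_Koszul} together with \Cref{thm::critere_koszulite}.

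Where you diverge from the paper is in your sketch of \Cref{prop::ADLie_Koszul}. The paper does \emph{not} apply the rewriting method to $\scrA(\DLie,n)$ directly; it passes first to the Koszul dual $W^n=\scrA(\DLie,n)^!$, whose relations are monomial-plus-monomial (e.g.\ $x_{ij}^2$, $x_{ij}x_{jk}+x_{jk}x_{ik}$, $x_{ab}x_{uv}+x_{uv}x_{ab}$), and then runs the confluence check there with the order $x_{ij}<x_{kl}$ iff $j<l$, or $j=l$ and $i<k$. This choice is not incidental: in the dual every rewriting rule has a single term on the right, so each confluence diagram is a short diamond, and the appendix lists them all explicitly. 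By contrast, your proposed orientation of the triangle relations in $\scrA(\DLie,n)$ (with $x_{ij}x_{jk}$ as leading term under the lex order on pairs) does not obviously make those monomials the leading ones, and the two-term right-hand sides produce branching overlaps (notably the $x_{ij}x_{jk}x_{k\ell}$ type you flag) whose resolution is substantially messier. Your route is plausible in principle, but the paper's passage to the dual is what makes the verification tractable; if you want to follow your own approach you would need to specify the monomial order more carefully and actually carry out the confluence check.
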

\begin{proof}
	By \Cref{prop::ADLie_Koszul} and \Cref{thm::critere_koszulite}
\end{proof}
\begin{cor}\label{thm::prop_DLie_Koszul}
 The properad $\Ind(\DLie)$ is Koszul.
\end{cor}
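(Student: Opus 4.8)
The plan is to deduce the Koszulness of $\Ind(\DLie)$ from that of $\DLie$ by transporting the relevant quasi-isomorphism along the (exact, monoidal) induction functor. First I would recall, from \Cref{thm::proto_DLie_Koszul}, that the protoperad $\DLie$ is Koszul, i.e. that the canonical inclusion $\DLie^\antish \hookrightarrow \Bar \DLie$ is a quasi-isomorphism of (connected, weight-graded) coprotoperads.

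Next I would apply the functor $\Ind : \mathsf{protoperads}_k \to \mathsf{properads}_k$ to this map. By \Cref{def::functor induction}, $\Ind$ is exact, hence preserves quasi-isomorphisms; it commutes with the bar construction, $\Ind(\Bar(-)) \cong \Bar\Val(\Ind(-))$; and, by the proposition asserting that $(-)^\antish$ commutes with $\Ind$ (which itself follows from exactness together with the preservation of the weight grading), it also commutes with the Koszul dual, $\Ind((-)^\antish) \cong (\Ind(-))^\antish$. Since all these identifications are natural, the image under $\Ind$ of the inclusion $\DLie^\antish \hookrightarrow \Bar\DLie$ is, up to these isomorphisms, the canonical inclusion $(\Ind\DLie)^\antish \hookrightarrow \Bar\Val(\Ind\DLie)$, and it is a quasi-isomorphism because $\Ind$ is exact. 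Finally, invoking Vallette's Koszul criterion for properads (\cite[Th.~144, Th.~149]{Val03}, \cite{Val07}) --- a connected weight-graded properad is Koszul precisely when the inclusion of its Koszul dual into its bar construction is a quasi-isomorphism --- and noting that $\Ind(\DLie) \cong \scrF\Val(\Ind V)/\langle \Ind R\rangle$ is connected and weight-graded, I would conclude that $\Ind(\DLie) = \widetilde{\DLie}$ is Koszul.

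The only point requiring a little care, and it is not really an obstacle, is verifying that $\Ind$ sends the canonical inclusion $\calP^\antish \hookrightarrow \Bar\calP$ to the corresponding canonical map for $\Ind\calP$; this is immediate from the naturality of the weight gradings on $\Bar(-)$ and $\Bar\Val(-)$ and the fact that $\Ind$ preserves them, so no computation is involved. One could equally run the argument through the third equivalent condition of \Cref{thm::critere_Koszul}, applying the exact functor $\Ind$ (which commutes with $\Cobar$ via $\Cobar\Val$) to the quasi-isomorphism $\Cobar\DLie^\antish \to \DLie$; the two routes are interchangeable.
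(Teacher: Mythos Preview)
Your proof is correct and follows the same approach as the paper: the paper's proof is the single sentence ``The monoidal functor $\Ind$ is exact by \Cref{def::functor induction},'' and you have simply unpacked what this means by spelling out that $\Ind$ commutes with $\Bar$, with $(-)^\antish$, and preserves quasi-isomorphisms, so that Koszulness transfers from $\DLie$ to $\Ind(\DLie)$.
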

\begin{proof}
	The monoidal functor $\Ind$ is exact by \Cref{def::functor induction}.
\end{proof}
This corollary is very important: it is the first example of a Koszul properad with a generator not in arity $(1,2)$ or $(2,1)$.

\section{\texorpdfstring{$\DPois$}{DPois} is Koszul}\label{sect::DPoisKoszul}
In this section, we study the Koszul dual of the protoperad $\DLie$, which is called $\DCom$, by analogy of the case of operads $\Lie$ and $\Com$.

\subsection{The Koszul dual of $\DLie$} \label{subsect::DCom}
To the protoperad $\DLie$, we associate its Koszul dual, which we will called $\DCom$:
\[
	\DCom :=\scrF(V_{\DLie}^*)\big/\big\langle R_{\mathcal{DJ}}^\bot\big\rangle,
\]
where $V^*$ is the linear dual of $V$ and, for all $R \subset \scrF^{(2)}(V)$, $R^\bot$ is the orthogonal of $R$ in  $\scrF^{(2)}(V^*)$. The $\mathfrak{S}$-module $V_{\DLie}^*$ is identified to 
\[
	V_{\DLie}^*([\![1,m]\!])=
		\left\{\begin{array}{cc}
		\mathrm{sgn}(\mathfrak{S}_2) &~\mbox{ if }m=2 \\
		0 & \mbox{ otherwise}.
		\end{array}\right\}
\]
Then, as in the case of the protoperad $\DLie$, we can diagrammatically interpret $V_{\DLie}^*([\![1,2]\!])$ as follow 

\[
	V_{\DLie}^*([\![1,2]\!]) ~~\cong~~
	\left\langle \
	\begin{tikzpicture}[baseline=0.4ex,scale=0.2]
	\draw[thin] (2,1.5) node[above] {$\scriptscriptstyle{1}$} -- (2,0);
	\draw[thin] (4,1.5) node[above] {$\scriptscriptstyle{2}$} -- (4,0);
	\draw[fill=white] (1.7,0.5) rectangle (4.3,1);
	\end{tikzpicture}
	\ = \ -
	\begin{tikzpicture}[baseline=0.4ex,scale=0.2]
	\draw[thin] (2,1.5) node[above] {$\scriptscriptstyle{2}$} -- (2,0);
	\draw[thin] (4,1.5) node[above] {$\scriptscriptstyle{1}$} -- (4,0);
	\draw[fill=white] (1.7,0.5) rectangle (4.3,1);
	\end{tikzpicture}
	\ \right\rangle .
\]
We also have the following relations:
\[
	R_{\mathcal{DJ}}^\bot : ~~ 
	\begin{tikzpicture}[scale=0.2,baseline=0]
		\draw (0,3) node[below] {$\scriptscriptstyle{1}$};
		\draw (2,3) node[below] {$\scriptscriptstyle{2}$};
		\draw (4,3) node[below] {$\scriptscriptstyle{3}$};
		\draw[thin] (2,1.5) -- (2,1);
		\draw[thin] (4,1.5) -- (4,1);
		\draw (1.7,0.5) rectangle (4.3,1);
		\draw[thin] (2,0.5) -- (2,0);
		\draw[thin] (4,0.5) -- (4,-1);
		\draw[thin] (0,1.5) -- (0,0);
		\draw[thin] (2,1.5) -- (2,1);
		\draw (-0.3,-0.5) rectangle (2.3,0);
		\draw[thin] (0,-1) -- (0,-0.5);
		\draw[thin] (2,-1) -- (2,-0.5);
	\end{tikzpicture}
	~~-~~
	\begin{tikzpicture}[scale=0.2,baseline=0]
		\draw (0,3) node[below] {$\scriptscriptstyle{2}$};
		\draw (2,3) node[below] {$\scriptscriptstyle{3}$};
		\draw (4,3) node[below] {$\scriptscriptstyle{1}$};
		\draw[thin] (2,1.5) -- (2,1);
		\draw[thin] (4,1.5) -- (4,1);
		\draw (1.7,0.5) rectangle (4.3,1);
		\draw[thin] (2,0.5) -- (2,0);
		\draw[thin] (4,0.5) -- (4,-1);
		\draw[thin] (0,1.5) -- (0,0);
		\draw[thin] (2,1.5) -- (2,1);
		\draw (-0.3,-0.5) rectangle (2.3,0);
		\draw[thin] (0,-1) -- (0,-0.5);
		\draw[thin] (2,-1) -- (2,-0.5);
	\end{tikzpicture}
	~~;~~
	\begin{tikzpicture}[scale=0.2,baseline=4]
		\draw[thin] (2,2) -- (2,2.5);
		\draw[thin] (4,2) -- (4,2.5);
		\draw (1.7,1.5) rectangle (4.3,2);
		\draw[thin] (2,1.5) -- (2,1);
		\draw[thin] (4,1.5) -- (4,1);
		\draw (1.7,0.5) rectangle (4.3,1);
		\draw[thin] (2,0.5) -- (2,0);
		\draw[thin] (4,0.5) -- (4,0);
	\end{tikzpicture} \ .
\]
By the second relation in  $\mathcal{R}^\bot_{\DLie}$, we directly have that 
\[
	\DCom([\![1,2]\!])=\DCom_{(1)}([\![1,2]\!])=V_{\DLie}^*.
\]
For the $\mathfrak{S}$-module $\DCom([\![1,3]\!])$, we have:
\[
	\begin{tikzpicture}[scale=0.2,baseline=3]
		\draw (2,2.5) node[above] {$\scriptscriptstyle{1}$};
		\draw (4,2.5) node[above] {$\scriptscriptstyle{2}$};
		\draw (6,2.5) node[above] {$\scriptscriptstyle{3}$};
		\draw[thin] (4,2) -- (4,2.5);
		\draw[thin] (6,2) -- (6,2.5);
		\draw (3.7,1.5) rectangle (6.3,2);
		\draw[thin] (2,2.5) -- (2,1);
		\draw[thin] (4,1.5) -- (4,1);
		\draw (1.7,0.5) rectangle (4.3,1);
		\draw[thin] (2,0.5) -- (2,0);
		\draw[thin] (4,0.5) -- (4,0);
		\draw[thin] (6,1.5) -- (6,0);
	\end{tikzpicture}
	~~=~~
	\begin{tikzpicture}[scale=0.2,baseline=3]
		\draw (2,2.5) node[above] {$\scriptscriptstyle{3}$};
		\draw (4,2.5) node[above] {$\scriptscriptstyle{2}$};
		\draw (6,2.5) node[above] {$\scriptscriptstyle{1}$};
		\draw[thin] (2,0) -- (2,2.5);
		\draw[thin] (4,0) -- (4,2.5);
		\draw[thin] (6,0) -- (6,2.5);
		\draw[fill=white] (1.7,1.5) rectangle (4.3,2);
		\draw[fill=white] (3.7,0.5) rectangle (6.3,1);
	\end{tikzpicture}.
\]
If we consider the elements of weight $3$ in $\DCom([\![1,3]\!])$, we have, by the first relation in $R_{\DLie}^\bot$, the two following equality:
\[
	\begin{tikzpicture}[scale=0.2,baseline=6]
		\draw (2,3.5) node[above] {$\scriptscriptstyle{1}$};
		\draw (4,3.5) node[above] {$\scriptscriptstyle{2}$};
		\draw (6,3.5) node[above] {$\scriptscriptstyle{3}$};
		\draw[thin] (2,3.5) -- (2,0);
		\draw[thin] (4,3.5) -- (4,0);
		\draw[thin] (6,3.5) -- (6,0);
		\draw[fill=white] (1.7,0.5) rectangle (4.3,1);
		\draw[fill=white] (3.7,1.5) rectangle (6.3,2);
		\draw[fill=white] (1.7,2.5) rectangle (4.3,3);
	\end{tikzpicture}
	~~=~~
	\begin{tikzpicture}[scale=0.2,baseline=6]
		\draw (2,3.5) node[above] {$\scriptscriptstyle{3}$};
		\draw (4,3.5) node[above] {$\scriptscriptstyle{1}$};
		\draw (6,3.5) node[above] {$\scriptscriptstyle{2}$};
		\draw[thin] (2,3.5) -- (2,0);
		\draw[thin] (4,3.5) -- (4,0);
		\draw[thin] (6,3.5) -- (6,0);
		\draw[fill=white] (1.7,0.5) rectangle (4.3,1);
		\draw[fill=white] (3.7,1.5) rectangle (6.3,2);
		\draw[fill=white] (3.7,2.5) rectangle (6.3,3);
	\end{tikzpicture}
	~~=0~~;~~
	\begin{tikzpicture}[scale=0.2,baseline=10]
		\draw (2,4.5) node[above] {$\scriptscriptstyle{1}$};
		\draw (4,4.5) node[above] {$\scriptscriptstyle{3}$};
		\draw (6,4.5) node[above] {$\scriptscriptstyle{2}$};
		\draw[thin] (2,4.5) -- (2,0);
		\draw[thin] (4,2) -- (4,0);
		\draw[thin] (6,2) -- (6,0);
		\draw[thin] (4,2) to[out=90,in=270] (6,3.5);
		\draw[draw=white,double=black,double distance=\pgflinewidth,thick] (6,2) to[out=90,in=270] (4,3.5);
		\draw[thin] (4,3.5) -- (4,4.5);
		\draw[thin] (6,3.5) -- (6,4.5);
		\draw[fill=white] (1.7,0.5) rectangle (4.3,1);
		\draw[fill=white] (3.7,1.5) rectangle (6.3,2);
		\draw[fill=white] (1.7,3.5) rectangle (4.3,4);
	\end{tikzpicture}
	~~=~~-
	\begin{tikzpicture}[scale=0.2,baseline=6]
		\draw (2,3.5) node[above] {$\scriptscriptstyle{3}$};
		\draw (4,3.5) node[above] {$\scriptscriptstyle{2}$};
		\draw (6,3.5) node[above] {$\scriptscriptstyle{1}$};
		\draw[thin] (2,3.5) -- (2,0);
		\draw[thin] (4,3.5) -- (4,0);
		\draw[thin] (6,3.5) -- (6,0);
		\draw[fill=white] (1.7,0.5) rectangle (4.3,1);
		\draw[fill=white] (3.7,1.5) rectangle (6.3,2);
		\draw[fill=white] (3.7,2.5) rectangle (6.3,3);
	\end{tikzpicture}
	~~=0.
\]
That implies that the  $\mathfrak{S}$-module $\DCom([\![1,3]\!])$ is reduced to its component of weight $2$, i.e. $\DCom([\![1,3]\!])=\DCom_{(2)}([\![1,3]\!])$. This equality is a more general thing, as we will see, i.e. we will prove that, for all  $n\geqslant 2$, we have $\DCom([\![1,n]\!])=\DCom_{(n-1)}([\![1,n]\!])$.

\begin{lem}\label{invariance_cyclique}
	Every stairway of arity $n$ is invariant up to the sign by the diagonal action of  $\Z/n\Z$, that is, for all $n\geqslant 2$
	\[
		\begin{tikzpicture}[scale=0.2,baseline=1.2pc]
		\draw (0,0.5) node[above] {$\scriptscriptstyle{1}$};
		\draw (2,1.5) node[above] {$\scriptscriptstyle{2}$};
		\draw[thin] (2,0) -- (2,1.5);
		\draw[thin] (4,1.5) -- (4,2);
		\draw[fill=white] (-0.3,0) rectangle (2.3,0.5);
		\draw[fill=white] (1.7,1) rectangle (4.3,1.5);
		\draw [dotted] (4,2) -- (8,3.5);
		\draw (8,4.5) node[above] {$\scriptscriptstyle{n-2}$};
		\draw (10,5.5) node[above] {$\scriptscriptstyle{n-1}$};
		\draw (12,5.5) node[above] {$\scriptscriptstyle{n}$};
		\draw (10,4.5) -- (10,5);
		\draw (8,3.5) -- (8,4);
		\draw[fill=white] (7.7,4) rectangle (10.3,4.5);
		\draw[fill=white] (9.7,5) rectangle (12.3,5.5);
		\end{tikzpicture}
		~~=~~(-1)^{n-1}~~
		\begin{tikzpicture}[scale=0.2,baseline=1.2pc]
		\draw (0,0.5) node[above] {$\scriptscriptstyle{2}$};
		\draw (2,1.5) node[above] {$\scriptscriptstyle{3}$};
		\draw[thin] (2,0) -- (2,1.5);
		\draw[thin] (4,1.5) -- (4,2);
		\draw[fill=white] (-0.3,0) rectangle (2.3,0.5);
		\draw[fill=white] (1.7,1) rectangle (4.3,1.5);
		\draw [dotted] (4,2) -- (8,3.5);
		\draw (8,4.5) node[above] {$\scriptscriptstyle{n-1}$};
		\draw (10,5.5) node[above] {$\scriptscriptstyle{n}$};
		\draw (12,5.5) node[above] {$\scriptscriptstyle{1}$};
		\draw (10,4.5) -- (10,5);
		\draw (8,3.5) -- (8,4);
		\draw[fill=white] (7.7,4) rectangle (10.3,4.5);
		\draw[fill=white] (9.7,5) rectangle (12.3,5.5);
		\end{tikzpicture} ~.
	\]
\end{lem}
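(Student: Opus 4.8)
The plan is to prove the identity by induction on the arity $n$, using only the defining data of $\DCom$: the antisymmetry of the generator — which is the sign representation of $\mathfrak{S}_2$, so that exchanging its two strands costs a sign — and the relations $R_{\mathcal{DJ}}^\bot$, that is, the vanishing of the "tower" and the cyclic symmetry of the two-step stairway. Write $\mathcal{E}_n$ for the stairway of arity $n$ with its $n$ strands labelled $1,\dots,n$ from left to right, and $\mathcal{E}_n(\sigma(1),\dots,\sigma(n))$ for the same diagram with the labels permuted by $\sigma$. For $n=2$ the stairway is a single generating brick and the claimed identity is exactly the antisymmetry of the generator; for $n=3$ it is, up to the sign $(-1)^{2}=+1$, literally the first relation in $R_{\mathcal{DJ}}^\bot$, so there is nothing more to prove.

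For the inductive step I would decompose $\mathcal{E}_n$ as the connected composition of the stairway $\mathcal{E}_{n-1}$ carried by the strands $1,\dots,n-1$ with one further brick glued on top along the strands $n-1,n$. Applying the induction hypothesis to $\mathcal{E}_{n-1}$ rotates its labels, $1,2,\dots,n-1\rightsquigarrow 2,3,\dots,n-1,1$, at the cost of the sign $(-1)^{n-2}$, and — crucially — relabels the top output of $\mathcal{E}_{n-1}$, the strand onto which the extra brick is glued, from $n-1$ to $1$. What remains is an $\mathcal{E}_n$-shaped diagram whose top three strands carry the labels $n-1,1,n$, and one must rewrite it as $\mathcal{E}_n(2,3,\dots,n,1)$, whose top three strands carry $n-1,n,1$. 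I would do this exactly as in the case $n=3$: apply the cyclic symmetry relation of $R_{\mathcal{DJ}}^\bot$ to the top two bricks, together with one application of the antisymmetry of the generator to the lower of those two bricks. This last move supplies the remaining factor $-1$, so that collecting the signs gives $\mathcal{E}_n(1,\dots,n)=(-1)^{n-2}\cdot(-1)\cdot\mathcal{E}_n(2,3,\dots,n,1)=(-1)^{n-1}\mathcal{E}_n(2,3,\dots,n,1)$, which is the assertion.

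The difficulty is organizational rather than conceptual: one must keep precise track, at each stage, of which geometric strand carries which label — the positions of the bricks in a stairway are rigid data that interacts nontrivially with permutations of the inputs and outputs — and one must carefully distinguish the genuinely local $\mathfrak{S}_2$-moves on a single brick (which cost a sign only because the generator is the sign representation) from relabellings that propagate along a strand threading several bricks (which genuinely require the cyclic symmetry relation and cannot come from the $\mathfrak{S}$-action alone). It is therefore cleanest to run the whole induction inside the shuffle protoperad $\DCom\sh$ (legitimate by \Cref{cor::shuffle}), where a canonical representative of every stairway is fixed once and for all, so that each of the relabellings above becomes an honest identity of basis diagrams rather than an identity modulo the $\mathfrak{S}$-action.
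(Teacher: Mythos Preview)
Your overall plan---induction on $n$, decompose $\mathcal{E}_n$ as $\mathcal{E}_{n-1}$ plus a top brick, apply the induction hypothesis, then finish with the cyclic relation together with one sign from antisymmetry---is exactly the paper's argument. But there is a concrete error in your description of the intermediate diagram, and because of it your final step does not work as written.

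The induction hypothesis is an identity in $\DCom(\{1,\dots,n-1\})$ between two specific elements; it does not ``relabel strands''. The extra brick in $\mathcal{E}_n$ is $\{n-1,n\}$, attached to strand $n-1$; after replacing the sub-wall $\mathcal{E}_{n-1}(1,\dots,n-1)$ by $(-1)^{n-2}\,\mathcal{E}_{n-1}(2,\dots,n-1,1)$, that brick is \emph{still} $\{n-1,n\}$, still attached to strand $n-1$. What changes is that strand $n-1$ now threads three consecutive bricks, $\{n-2,n-1\}<\{n-1,1\}<\{n-1,n\}$. So the intermediate is not $\mathcal{E}_n$-shaped; its top two bricks are $\{n-1,1\}$ and $\{n-1,n\}$, sharing strand $n-1$, and not $\{n-1,1\},\{1,n\}$ as you write. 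From your (incorrect) top pair the cyclic relation plus one antisymmetry cannot reach the target: $\mathcal{E}_3(n-1,1,n)$ and $\mathcal{E}_3(n-1,n,1)$ lie in different $\Z/3\Z$-orbits and are linearly independent in $\DCom_{(2)}$ of arity~$3$.

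With the correct intermediate the finish is exactly what you intend, just on the other pair of bricks: the top pair $\{n-1,1\}<\{n-1,n\}$, labelled $v_{n-1,1}\otimes v_{n-1,n}$, equals $-\,\mathcal{E}_3(1,n-1,n)$ by one antisymmetry on the lower brick, and then the cyclic relation $\mathcal{E}_3(1,n-1,n)=\mathcal{E}_3(n-1,n,1)$ produces the target top pair $\{n-1,n\}<\{n,1\}$ with the required extra factor $(-1)$. This is the paper's proof verbatim (up to your index shift $n\mapsto n+1$).
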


\begin{proof}
	We prove this result by induction on the arity $n$.  By the definition of the protoperad $\DCom$, we have:
	\[
		\begin{tikzpicture}[scale=0.2,baseline=2]
		\draw (2,1.5) node[above] {$\scriptscriptstyle{1}$};
		\draw (4,1.5) node[above] {$\scriptscriptstyle{2}$};
		\draw[thin] (2,1.5) -- (2,0);
		\draw[thin] (4,1.5) -- (4,0);
		\draw[fill=white] (1.7,0.5) rectangle (4.3,1);
		\end{tikzpicture}
		~~=~-~~
		\begin{tikzpicture}[scale=0.2,baseline=2]
		\draw (2,1.5) node[above] {$\scriptscriptstyle{2}$};
		\draw (4,1.5) node[above] {$\scriptscriptstyle{1}$};
		\draw[thin] (2,1.5) -- (2,0);
		\draw[thin] (4,1.5) -- (4,0);
		\draw[fill=white] (1.7,0.5) rectangle (4.3,1);
		\end{tikzpicture}
		~~\mbox{ and }~~
		\begin{tikzpicture}[scale=0.2,baseline=2]
		\draw (2,2.5) node[above] {$\scriptscriptstyle{1}$};
		\draw (4,2.5) node[above] {$\scriptscriptstyle{2}$};
		\draw (6,2.5) node[above] {$\scriptscriptstyle{3}$};
		\draw[thin] (4,2) -- (4,2.5);
		\draw[thin] (6,2) -- (6,2.5);
		\draw (3.7,1.5) rectangle (6.3,2);
		\draw[thin] (2,2.5) -- (2,1);
		\draw[thin] (4,1.5) -- (4,1);
		\draw (1.7,0.5) rectangle (4.3,1);
		\draw[thin] (2,0.5) -- (2,0);
		\draw[thin] (4,0.5) -- (4,0);
		\draw[thin] (6,1.5) -- (6,0);
		\end{tikzpicture}
		~~=~~
		\begin{tikzpicture}[scale=0.2,baseline=2]
		\draw (2,2.5) node[above] {$\scriptscriptstyle{2}$};
		\draw (4,2.5) node[above] {$\scriptscriptstyle{3}$};
		\draw (6,2.5) node[above] {$\scriptscriptstyle{1}$};
		\draw[thin] (2,0) -- (2,2.5);
		\draw[thin] (4,0) -- (4,2.5);
		\draw[thin] (6,0) -- (6,2.5);
		\draw[fill=white] (1.7,0.5) rectangle (4.3,1);
		\draw[fill=white] (3.7,1.5) rectangle (6.3,2);
		\end{tikzpicture}~.
	\]
	Suppose that, for a fixed integer $n$, we have the following equality:
	\[
		\begin{tikzpicture}[scale=0.2,baseline=1.2pc]
		\draw (0,0.5) node[above] {$\scriptscriptstyle{1}$};
		\draw (2,1.5) node[above] {$\scriptscriptstyle{2}$};
		\draw[thin] (2,0) -- (2,1.5);
		\draw[thin] (4,1.5) -- (4,2);
		\draw[fill=white] (-0.3,0) rectangle (2.3,0.5);
		\draw[fill=white] (1.7,1) rectangle (4.3,1.5);
		\draw [dotted] (4,2) -- (8,3.5);
		\draw (8,4.5) node[above] {$\scriptscriptstyle{n-2}$};
		\draw (10,5.5) node[above] {$\scriptscriptstyle{n-1}$};
		\draw (12,5.5) node[above] {$\scriptscriptstyle{n}$};
		\draw (10,4.5) -- (10,5);
		\draw (8,3.5) -- (8,4);
		\draw[fill=white] (7.7,4) rectangle (10.3,4.5);
		\draw[fill=white] (9.7,5) rectangle (12.3,5.5);
		\end{tikzpicture}
		~~=~~(-1)^{n-1}
		\begin{tikzpicture}[scale=0.2,baseline=1.2pc]
		\draw (0,0.5) node[above] {$\scriptscriptstyle{2}$};
		\draw (2,1.5) node[above] {$\scriptscriptstyle{3}$};
		\draw[thin] (2,0) -- (2,1.5);
		\draw[thin] (4,1.5) -- (4,2);
		\draw[fill=white] (-0.3,0) rectangle (2.3,0.5);
		\draw[fill=white] (1.7,1) rectangle (4.3,1.5);
		\draw [dotted] (4,2) -- (8,3.5);
		\draw (8,4.5) node[above] {$\scriptscriptstyle{n-1}$};
		\draw (10,5.5) node[above] {$\scriptscriptstyle{n}$};
		\draw (12,5.5) node[above] {$\scriptscriptstyle{1}$};
		\draw (10,4.5) -- (10,5);
		\draw (8,3.5) -- (8,4);
		\draw[fill=white] (7.7,4) rectangle (10.3,4.5);
		\draw[fill=white] (9.7,5) rectangle (12.3,5.5);
		\end{tikzpicture}~.
	\]
	Then, we have
	\[
		\begin{tikzpicture}[scale=0.2,baseline=1.2pc]
		\draw (0,0.5) node[above] {$\scriptscriptstyle{1}$};
		\draw (2,1.5) node[above] {$\scriptscriptstyle{2}$};
		\draw[thin] (2,0) -- (2,1.5);
		\draw[thin] (4,1.5) -- (4,2);
		\draw[fill=white] (-0.3,0) rectangle (2.3,0.5);
		\draw[fill=white] (1.7,1) rectangle (4.3,1.5);
		\draw [dotted] (4,2) -- (8,3.5);
		\draw (8,4.5) node[above] {$\scriptscriptstyle{n-1}$};
		\draw (10,5.5) node[above] {$\scriptscriptstyle{n}$};
		\draw (12,5.5) node[above] {$\scriptscriptstyle{n+1}$};
		\draw (10,4.5) -- (10,5);
		\draw (8,3.5) -- (8,4);
		\draw[fill=white] (7.7,4) rectangle (10.3,4.5);
		\draw[fill=white] (9.7,5) rectangle (12.3,5.5);
		\end{tikzpicture}
		~~=~~(-1)^{n-1}~
		\begin{tikzpicture}[scale=0.2,baseline=1.6pc]
		\draw (0,0.5) node[above] {$\scriptscriptstyle{2}$};
		\draw (2,1.5) node[above] {$\scriptscriptstyle{3}$};
		\draw[thin] (2,0) -- (2,1.5);
		\draw[thin] (4,1.5) -- (4,2);
		\draw[fill=white] (-0.3,0) rectangle (2.3,0.5);
		\draw[fill=white] (1.7,1) rectangle (4.3,1.5);
		\draw [dotted] (4,2) -- (8,3.5);
		\draw (8,8.5) node[above] {$\scriptscriptstyle{n}$};
		\draw (10,4.5) node[above] {$\scriptscriptstyle{1}$};
		\draw (10,8.5) node[above] {$\scriptscriptstyle{n+1}$};
		\draw (8,3.5) -- (8,8.5);
		\draw[fill=white] (7.7,4) rectangle (10.3,4.5);
		\draw[fill=white] (7.7,8) rectangle (10.3,8.5);
		\end{tikzpicture}
		~~=~~(-1)^{n}~
		\begin{tikzpicture}[scale=0.2,baseline=1.6pc]
		\draw (0,0.5) node[above] {$\scriptscriptstyle{2}$};
		\draw (2,1.5) node[above] {$\scriptscriptstyle{3}$};
		\draw[thin] (2,0) -- (2,1.5);
		\draw[thin] (4,1.5) -- (4,2);
		\draw[fill=white] (-0.3,0) rectangle (2.3,0.5);
		\draw[fill=white] (1.7,1) rectangle (4.3,1.5);
		\draw [dotted] (4,2) -- (6,3);
		\draw (8,8.5) node[above] {$\scriptscriptstyle{n}$};
		\draw (6,7.5) node[above] {$\scriptscriptstyle{1}$};
		\draw (10,8.5) node[above] {$\scriptscriptstyle{n+1}$};
		\draw (8,3.5) -- (8,8.5);
		\draw (6,3) -- (6,3.5);
		\draw (6,4) node[above] {$\scriptscriptstyle{n-1}$};
		\draw[fill=white] (5.7,3.5) rectangle (8.3,4);
		\draw[fill=white] (5.7,7) rectangle (8.3,7.5);
		\draw[fill=white] (7.7,8) rectangle (10.3,8.5);
		\end{tikzpicture} \ .
	\]
	Then, we have
	\[	
		\begin{tikzpicture}[scale=0.2,baseline=1.2pc]
			\draw (0,0.5) node[above] {$\scriptscriptstyle{1}$};
			\draw (2,1.5) node[above] {$\scriptscriptstyle{2}$};
			\draw[thin] (2,0) -- (2,1.5);
			\draw[thin] (4,1.5) -- (4,2);
			\draw[fill=white] (-0.3,0) rectangle (2.3,0.5);
			\draw[fill=white] (1.7,1) rectangle (4.3,1.5);
			\draw [dotted] (4,2) -- (8,3.5);
			\draw (8,4.5) node[above] {$\scriptscriptstyle{n-1}$};
			\draw (10,5.5) node[above] {$\scriptscriptstyle{n}$};
			\draw (12,5.5) node[above] {$\scriptscriptstyle{n+1}$};
			\draw (10,4.5) -- (10,5);
			\draw (8,3.5) -- (8,4);
			\draw[fill=white] (7.7,4) rectangle (10.3,4.5);
			\draw[fill=white] (9.7,5) rectangle (12.3,5.5);
		\end{tikzpicture}
		~~=~~(-1)^{n}~
		\begin{tikzpicture}[scale=0.2,baseline=1.6pc]
			\draw (0,0.5) node[above] {$\scriptscriptstyle{2}$};
			\draw (2,1.5) node[above] {$\scriptscriptstyle{3}$};
			\draw[thin] (2,0) -- (2,1.5);
			\draw[thin] (4,1.5) -- (4,2);
			\draw[fill=white] (-0.3,0) rectangle (2.3,0.5);
			\draw[fill=white] (1.7,1) rectangle (4.3,1.5);
			\draw [dotted] (4,2) -- (6,3);
			\draw (10,8.5) node[above] {$\scriptscriptstyle{n+1}$};
			\draw (8,7.5) node[above] {$\scriptscriptstyle{n}$};
			\draw (12,8.5) node[above] {$\scriptscriptstyle{1}$};
			\draw (8,3.5) -- (8,7);
			\draw (6,3) -- (6,3.5);
			\draw (6,4) node[above] {$\scriptscriptstyle{n-1}$};
			\draw[fill=white] (5.7,3.5) rectangle (8.3,4);
			\draw[fill=white] (7.7,7) rectangle (10.3,7.5);
			\draw (10,7.5) -- (10,8);
			\draw[fill=white] (9.7,8) rectangle (12.3,8.5);
		\end{tikzpicture}~.
	\]
\end{proof}

\begin{lem}\label{egal_symetrique}
	For all integer  $n$ greater than $2$, we have the following equality
	\[
	\begin{tikzpicture}[scale=0.2,baseline=1.2pc]
		\draw (0,0.5) node[above] {$\scriptscriptstyle{1}$};
		\draw (2,1.5) node[above] {$\scriptscriptstyle{2}$};
		\draw[thin] (2,0) -- (2,1.5);
		\draw[thin] (4,1.5) -- (4,2);
		\draw[fill=white] (-0.3,0) rectangle (2.3,0.5);
		\draw[fill=white] (1.7,1) rectangle (4.3,1.5);
		\draw [dotted] (4,2) -- (8,3.5);
		\draw (8,4.5) node[above] {$\scriptscriptstyle{n-2}$};
		\draw (10,5.5) node[above] {$\scriptscriptstyle{n-1}$};
		\draw (12,5.5) node[above] {$\scriptscriptstyle{n}$};
		\draw (10,4.5) -- (10,5);
		\draw (8,3.5) -- (8,4);
		\draw[fill=white] (7.7,4) rectangle (10.3,4.5);
		\draw[fill=white] (9.7,5) rectangle (12.3,5.5);
	\end{tikzpicture}
	~~=~~(-1)^{n+1}
	\begin{tikzpicture}[scale=0.2,baseline=1.2pc]
		\draw (0,5.5) node[above] {$\scriptscriptstyle{n}$};
		\draw (2,5.5) node[above] {$\scriptscriptstyle{n-1}$};
		\draw[thin] (2,4.5) -- (2,5);
		\draw[thin] (4,3.5) -- (4,4);
		\draw[fill=white] (-0.3,5) rectangle (2.3,5.5);
		\draw[fill=white] (1.7,4) rectangle (4.3,4.5);
		\draw (4,4.5) node[above] {$\scriptscriptstyle{n-2}$};
		\draw [dotted] (4,3.5) -- (8,2);
		\draw (10,1.5) node[above] {$\scriptscriptstyle{2}$};
		\draw (12,0.5) node[above] {$\scriptscriptstyle{1}$};
		\draw (10,1) -- (10,0.5);
		\draw (8,1.5) -- (8,2);
		\draw[fill=white] (7.7,1) rectangle (10.3,1.5);
		\draw[fill=white] (9.7,0) rectangle (12.3,0.5);
	\end{tikzpicture}~.
	\]
\end{lem}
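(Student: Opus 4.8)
The plan is to argue by induction on $n$, in the style of the proof of \Cref{invariance_cyclique}: one removes a single copy of the generator from a stairway of arity $n$, exhibiting it as a connected composite of that generator with a stairway of arity $n-1$, and then applies the induction hypothesis to the latter.

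First I would treat the base case $n=3$. Here a stairway of arity $3$ is the connected composite of two copies of the generator sharing the middle strand, and the two relations making up $R_{\DLie}^{\bot}$ suffice: the cyclic relation (equivalently \Cref{invariance_cyclique} in arity $3$) identifies the three cyclic relabellings of the standard stairway with one another, while the antisymmetry $V_{\DLie}^{\ast}\cong\mathrm{sgn}(\mathfrak{S}_2)$, applied to the generator sitting on an extremal pair of strands of a suitable such representative, produces the mirror stairway with the sign $(-1)^{3+1}=+1$. (The case $n=2$, needed or not depending on how one organizes the recursion, is just antisymmetry, and $(-1)^{2+1}=-1$.)

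For the inductive step from arity $n-1$ to arity $n$, I would write the standard stairway of arity $n$ as the connected composite of the generator placed on the outermost pair of strands with the stairway of arity $n-1$ carried by the remaining strands; up to relabelling this sub-stairway is again a standard stairway, and the induction hypothesis reverses it at the cost of the sign $(-1)^{(n-1)+1}=(-1)^{n}$. After this reversal, the strand along which the surviving generator was glued has migrated from the outermost to the innermost column of the reversed sub-stairway, so the picture is no longer a stairway; one then restores stairway form, with the fully reversed labelling, by a local manipulation on the three strands adjacent to that generator --- antisymmetry together with the arity-$3$ relation of $R_{\DLie}^{\bot}$, and \Cref{invariance_cyclique} if it is convenient to first slide the loose generator into a neighbouring column --- which contributes exactly one further sign $-1$. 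Multiplying gives $(-1)^{n}\cdot(-1)=(-1)^{n+1}$, which is the assertion.

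The main obstacle is the diagrammatic bookkeeping in the inductive step: identifying precisely which sub-stairway and which gluing column are involved before and after invoking the induction hypothesis, reconciling the various admissible drawings of a stairway (ascending versus descending, and the internal left/right choices for the generator) that occur along the way, and checking that the local rearrangement really produces one and only one extra sign. The remaining points --- legitimacy of these moves in the connected composition product, and the absence of Koszul signs since $\DCom$ sits in homological degree $0$ --- are routine.
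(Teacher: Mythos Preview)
Your approach is correct and essentially the same as the paper's: induction on $n$, peeling off one copy of the generator to expose a stairway of arity $n-1$, applying the induction hypothesis for a factor $(-1)^{n}$, and picking up one further $(-1)$ from antisymmetry.

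The only point worth noting is that you overcomplicate the ``local manipulation''. You will not need the arity-$3$ relation of $R_{\DLie}^{\bot}$ or \Cref{invariance_cyclique} at this step --- antisymmetry of the single loose generator is enough. The paper makes this transparent by reversing the order of your two moves: it first applies antisymmetry to the top brick of the arity-$n$ stairway (so the brick on strands $(n-1,n)$ becomes the brick on $(n,n-1)$, still glued along strand $n-1$, at the cost of a single $(-1)$), and \emph{then} applies the induction hypothesis to the remaining stairway on $1,\ldots,n-1$. After reversal, strand $n-1$ sits at the top of the reversed sub-stairway, and the folded-back brick on $(n,n-1)$ is already in exactly the right position to form the reversed stairway on $n,n-1,\ldots,1$ with no further rearrangement. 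This removes all of the bookkeeping you flag as the main obstacle.
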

\begin{proof}
	We prove this result by induction on the arity. We also have
	\[
	\begin{tikzpicture}[scale=0.2,baseline=2]
		\draw (2,2.5) node[above] {$\scriptscriptstyle{1}$};
		\draw (4,2.5) node[above] {$\scriptscriptstyle{2}$};
		\draw (6,2.5) node[above] {$\scriptscriptstyle{3}$};
		\draw[thin] (2,0) -- (2,2.5);
		\draw[thin] (4,0) -- (4,2.5);
		\draw[thin] (6,0) -- (6,2.5);
		\draw[fill=white] (3.7,1.5) rectangle (6.3,2);
		\draw[fill=white] (1.7,0.5) rectangle (4.3,1);
	\end{tikzpicture}
	~~=~~
	\begin{tikzpicture}[scale=0.2,baseline=2]
		\draw (2,2.5) node[above] {$\scriptscriptstyle{3}$};
		\draw (4,2.5) node[above] {$\scriptscriptstyle{2}$};
		\draw (6,2.5) node[above] {$\scriptscriptstyle{1}$};
		\draw[thin] (2,0) -- (2,2.5);
		\draw[thin] (4,0) -- (4,2.5);
		\draw[thin] (6,0) -- (6,2.5);
		\draw[fill=white] (1.7,1.5) rectangle (4.3,2);
		\draw[fill=white] (3.7,0.5) rectangle (6.3,1);
	\end{tikzpicture}.
	\]
	Suppose that, for a fixed integer $n\geqslant 2$, we have
	\[
	\begin{tikzpicture}[scale=0.2,baseline=1.2pc]
		\draw (0,0.5) node[above] {$\scriptscriptstyle{1}$};
		\draw (2,1.5) node[above] {$\scriptscriptstyle{2}$};
		\draw[thin] (2,0) -- (2,1.5);
		\draw[thin] (4,1.5) -- (4,2);
		\draw[fill=white] (-0.3,0) rectangle (2.3,0.5);
		\draw[fill=white] (1.7,1) rectangle (4.3,1.5);
		\draw [dotted] (4,2) -- (8,3.5);
		\draw (8,4.5) node[above] {$\scriptscriptstyle{n-3}$};
		\draw (10,5.7) node[above] {$\scriptscriptstyle{n-2}$};
		\draw (12,5) node[above] {$\scriptscriptstyle{n-1}$};
		\draw (10,4.5) -- (10,5);
		\draw (8,3.5) -- (8,4);
		\draw[fill=white] (7.7,4) rectangle (10.3,4.5);
		\draw[fill=white] (9.7,5) rectangle (12.3,5.5);
	\end{tikzpicture}
	~~=~~(-1)^n~
	\begin{tikzpicture}[scale=0.2,baseline=1.2pc]
		\draw (0,5) node[above] {$\scriptscriptstyle{n-1}$};
		\draw (2,5.7) node[above] {$\scriptscriptstyle{n-2}$};
		\draw[thin] (2,4.5) -- (2,5);
		\draw[thin] (4,3.5) -- (4,4);
		\draw[fill=white] (-0.3,5) rectangle (2.3,5.5);
		\draw[fill=white] (1.7,4) rectangle (4.3,4.5);
		\draw (4,4.5) node[above] {$\scriptscriptstyle{n-3}$};
		\draw [dotted] (4,3.5) -- (8,2);
		\draw (10,1.5) node[above] {$\scriptscriptstyle{2}$};
		\draw (12,0.5) node[above] {$\scriptscriptstyle{1}$};
		\draw (10,1) -- (10,0.5);
		\draw (8,1.5) -- (8,2);
		\draw[fill=white] (7.7,1) rectangle (10.3,1.5);
		\draw[fill=white] (9.7,0) rectangle (12.3,0.5);
	\end{tikzpicture}.
	\]
	Then, we have
	\[
	\begin{tikzpicture}[scale=0.2,baseline=1.2pc]
		\draw (0,0.5) node[above] {$\scriptscriptstyle{1}$};
		\draw (2,1.5) node[above] {$\scriptscriptstyle{2}$};
		\draw[thin] (2,0) -- (2,1.5);
		\draw[thin] (4,1.5) -- (4,2);
		\draw[fill=white] (-0.3,0) rectangle (2.3,0.5);
		\draw[fill=white] (1.7,1) rectangle (4.3,1.5);
		\draw [dotted] (4,2) -- (8,3.5);
		\draw (8,4.5) node[above] {$\scriptscriptstyle{n-2}$};
		\draw (10,5.5) node[above] {$\scriptscriptstyle{n-1}$};
		\draw (12,5.5) node[above] {$\scriptscriptstyle{n}$};
		\draw (10,4.5) -- (10,5);
		\draw (8,3.5) -- (8,4);
		\draw[fill=white] (7.7,4) rectangle (10.3,4.5);
		\draw[fill=white] (9.7,5) rectangle (12.3,5.5);
	\end{tikzpicture}
	~~=~~-~
	\begin{tikzpicture}[scale=0.2,baseline=1.2pc]
		\draw (0,0.5) node[above] {$\scriptscriptstyle{1}$};
		\draw (2,1.5) node[above] {$\scriptscriptstyle{2}$};
		\draw[thin] (2,0) -- (2,1.5);
		\draw[thin] (4,1.5) -- (4,2);
		\draw[fill=white] (-0.3,0) rectangle (2.3,0.5);
		\draw[fill=white] (1.7,1) rectangle (4.3,1.5);
		\draw [dotted] (4,2) -- (8,3.5);
		\draw (8,4.5) node[above] {$\scriptscriptstyle{n-2}$};
		\draw (10,7.5) node[above] {$\scriptscriptstyle{n-1}$};
		\draw (8,7.5) node[above] {$\scriptscriptstyle{n}$};
		\draw (10,4.5) -- (10,7);
		\draw (8,3.5) -- (8,4);
		\draw[fill=white] (7.7,4) rectangle (10.3,4.5);
		\draw[fill=white] (7.7,7) rectangle (10.3,7.5);
	\end{tikzpicture}
	~~=~~-(-1)^n
	\begin{tikzpicture}[scale=0.2,baseline=1.2pc]
		\draw (0,5.5) node[above] {$\scriptscriptstyle{n}$};
		\draw (2,5.5) node[above] {$\scriptscriptstyle{n-1}$};
		\draw[thin] (2,4.5) -- (2,5);
		\draw[thin] (4,3.5) -- (4,4);
		\draw[fill=white] (-0.3,5) rectangle (2.3,5.5);
		\draw[fill=white] (1.7,4) rectangle (4.3,4.5);
		\draw (4,4.5) node[above] {$\scriptscriptstyle{n-2}$};
		\draw [dotted] (4,3.5) -- (8,2);
		\draw (10,1.5) node[above] {$\scriptscriptstyle{2}$};
		\draw (12,0.5) node[above] {$\scriptscriptstyle{1}$};
		\draw (10,1) -- (10,0.5);
		\draw (8,1.5) -- (8,2);
		\draw[fill=white] (7.7,1) rectangle (10.3,1.5);
		\draw[fill=white] (9.7,0) rectangle (12.3,0.5);
	\end{tikzpicture}.
	\]
\end{proof}

\begin{lem}\label{ecriture_escalier}
	Every monomial of $\DCom$ such that the underlying non-oriented graph does not have cycles can be rewriten as a stairway.
\end{lem}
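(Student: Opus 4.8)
The plan is to argue by induction on the arity $n$, organised around peeling off a leaf of the underlying tree. For $n=2$ the underlying graph of a monomial of $\DCom$ is a single edge, and the monomial is, up to sign, the generator on $\{1,2\}$, i.e. the stairway of arity $2$; this settles the base case. So assume $n\geqslant 3$ and that the statement holds in all arities strictly smaller than $n$, and let $m$ be a monomial of $\DCom$ in arity $n$ whose underlying graph is acyclic. Since monomials of $\DCom$ come from connected walls, this graph is a tree $\mathcal{T}$ on $[\![1,n]\!]$, with $n-1$ bricks.

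First I would reduce, at the cost of a sign, to the case where the brick attached to some fixed leaf of $\mathcal{T}$ is a maximal brick of the underlying wall. Pick a leaf $\ell$ of $\mathcal{T}$ with unique incident brick $b=\{\ell,w\}$. If $b$ is not maximal then, as $b$ is the only brick on the strand $\ell$, there is a brick $c=\{w,z\}$ lying immediately above $b$ on the strand $w$; one checks that $(b,c)\in\mathrm{Succ}$ for the canonical partial order of the wall, so the defining relation $R_{\mathcal{DJ}}^\bot$ --- that is, the arity-$3$ instance of \Cref{invariance_cyclique} --- applies to this corner and, up to a sign, replaces $b$ by a brick $\{\ell,z\}$ lying above $c$. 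The underlying graph remains a tree (the bridge $\{\ell,w\}$ has been exchanged for the bridge $\{\ell,z\}$), $\ell$ remains a leaf, and the number of bricks strictly below the brick incident to $\ell$ strictly increases. As this number is at most $n-2$, iterating finitely often gives $m=\pm m_1$, with $m_1$ a monomial of acyclic underlying graph $\mathcal{T}_1$ in which the brick $\widetilde b=\{\ell,\widetilde w\}$ at $\ell$ is maximal.

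Since $\widetilde b$ is maximal and $\ell$ meets only $\widetilde b$, deleting $\ell$ and $\widetilde b$ yields an arity-$(n-1)$ monomial $m_2$ with underlying graph the tree $\mathcal{T}_1\setminus\{\ell\}$, and $m_1$ is recovered from $m_2$ by grafting $\widetilde b$ on top of the strand $\widetilde w$. By the induction hypothesis $m_2$ equals, up to sign, a stairway $s$ of arity $n-1$, and by \Cref{invariance_cyclique} we may cyclically rotate $s$ so that its top brick lies on the strand $\widetilde w$, i.e. $s$ is, up to sign, the stairway on a path $u_1-\cdots-u_{n-2}-\widetilde w$. Grafting $\widetilde b$ on top of the strand $\widetilde w$ then produces exactly the stairway on the path $u_1-\cdots-u_{n-2}-\widetilde w-\ell$, so $m$ is, up to sign, a stairway of arity $n$, which completes the induction. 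I expect the first step to be the main obstacle: one must verify at each stage that the corner being rewritten is a covering pair (an element of $\mathrm{Succ}$), that applying $R_{\mathcal{DJ}}^\bot$ preserves both connectedness and acyclicity of the underlying graph, and that the procedure terminates; the ``parallelism-commutativity'' relations of the protoperadic structure remain available to slide bricks past bricks supported on disjoint strands should they be needed, although in this acyclic situation the corner moves alone are enough.
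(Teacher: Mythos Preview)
Your argument is correct. Both your proof and the paper's run the same induction and invoke \Cref{invariance_cyclique} in the same way; the only real difference is in \emph{which} brick gets peeled off. You pick an arbitrary leaf strand $\ell$ and then spend the first paragraph ``bubbling up'' its brick to a maximal position via repeated use of the arity-$3$ relation, carefully checking that each corner is a covering pair, that acyclicity is preserved, and that the process terminates. The paper instead removes a vertex $v_\alpha$ of the \emph{brick} graph (the Hasse diagram of the wall) whose removal keeps it connected; since for an acyclic strand-graph this Hasse diagram is itself a tree on $n-1$ vertices with $n-2$ edges, such a $v_\alpha$ is a leaf of the Hasse diagram, which forces it to sit on a leaf strand and already be extremal (top or bottom) on its other strand. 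That single observation replaces your entire bubble-up procedure: the paper gets the two pictured configurations for free and finishes by one cyclic rotation.

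So your route is sound but longer. What it buys you is that you never need to know that the Hasse diagram of an acyclic wall is a tree (equivalently, that some leaf strand always has its brick extremal); you pay for this with the explicit rewriting argument. Conversely, the paper's shortcut is cleaner but leans on that structural fact about the Hasse diagram, which it leaves implicit. Either way the induction, the use of \Cref{invariance_cyclique}, and the final grafting are the same.
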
 
\begin{proof}
	We prove this result by induction on the weight of monomials, i.e. the number of vertices of the underlying graph. By \Cref{egal_symetrique}, we have that this lemma holds for a monomial of weight $2$. Let $n$ be an integer strictly greater than $2$. Suppose the lemma holds for every monomial of weight $w<n$. We consider $\Phi$, a monomial of weight $n$ and we denote by $\bar{\Phi}$, its underlying non-oriented graph. As $\DCom$ is a properad, the graph $\bar{\Phi}$ is connected:  we label its $n$ vertices by $v_1, \ldots, v_n$. There exists $\alpha$ in $[\![1,n]\!]$ such that the subgraph $\bar{\Phi}^*:=\bar{\Phi} \backslash v_\alpha$ is connected. By the induction hypothesis,  we can rewrite $\Phi^*$ as a stairway, then we can rewrite $\Phi$ as a one of these two following monomials:
	\[
		\begin{tikzpicture}[scale=0.2,baseline=1.2pc]
			\draw[thin] (2,0) -- (2,1.5);
			\draw[thin] (4,1.5) -- (4,2);
			\draw[fill=white] (-0.3,0) rectangle (2.3,0.5);
			\draw[fill=white] (1.7,1) rectangle (4.3,1.5);
			\draw (12,5.5) -- (12,6);
			\draw (10,4.5) -- (10,5);
			\draw[fill=white] (9.7,5) rectangle (12.3,5.5);
			\draw[fill=white] (11.7,6) rectangle (14.3,6.5);
			\draw (8,3.5) -- (8,4);
			\draw (6,2.5) -- (6,3);
			\draw[dotted] (4,2) -- (6,2.5);
			\draw[dotted] (8,4) -- (10,4.5);
			\draw[fill=white] (5.7,3) rectangle (8.3,3.5); 
			\draw (6,3.5) to[out=90,in=270] (5,6);
			\draw[fill=white] (4.7,6) rectangle (7.3,6.5);
		\end{tikzpicture}
		~~\quad\mbox{ or }~~\quad
		\begin{tikzpicture}[scale=0.2,baseline=1.2pc]
			\draw[thin] (2,0) -- (2,1.5);
			\draw[thin] (4,1.5) -- (4,2);
			\draw[fill=white] (-0.3,0) rectangle (2.3,0.5);
			\draw[fill=white] (1.7,1) rectangle (4.3,1.5);
			\draw (12,5.5) -- (12,6);
			\draw (10,4.5) -- (10,5);
			\draw[fill=white] (9.7,5) rectangle (12.3,5.5);
			\draw[fill=white] (11.7,6) rectangle (14.3,6.5);
			\draw (8,3.5) -- (8,4);
			\draw (6,2.5) -- (6,3);
			\draw[dotted] (4,2) -- (6,2.5);
			\draw[dotted] (8,4) -- (10,4.5);
			\draw[fill=white] (5.7,3) rectangle (8.3,3.5); 
			\draw (13,0.5) to[out=90,in=270] (8,3);
			\draw[fill=white] (10.7,0) rectangle (13.3,0.5);
		\end{tikzpicture}
	\]
	and, by invariance of stairways under the cyclic group action, we have our result.
\end{proof}

\begin{lem}\label{cycle_nul}
	Every monomial of $\DCom$ such that the underlying non-oriented graph has a cycle is null. 
\end{lem}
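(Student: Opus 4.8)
The plan is to prove the statement by strong induction on the weight $n$ (the number of bricks) of the monomial $\Phi$, together with a combinatorial analysis of its underlying non-oriented graph $\bar\Phi$. The base case $n\leqslant 2$ is settled directly by a defining relation: if $\Phi$ has weight $2$ and $\bar\Phi$ carries a cycle, then its two bricks share both strands, hence are directly stacked, so $\Phi$ coincides (up to sign and relabelling) with the left-hand side of the relation in $R_{\mathcal{DJ}}^\bot$ that forces two bricks stacked on the same pair of strands to vanish; thus $\Phi=0$.

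\textbf{Easy cases of the inductive step.} Assume $n\geqslant 3$. First I would deal with the degenerate graph configurations. If $\bar\Phi$ has a \emph{leaf} (a brick $w$ adjacent to a single other brick), then $\Phi$ is a connected composition $\boxtimes_c$ of $w$ with $\Phi':=\Phi\setminus w$, which has weight $n-1$; deleting a leaf destroys no cycle, so $\bar{\Phi'}$ still carries one, hence $\Phi'=0$ by induction and $\Phi=0$ by bilinearity of $\boxtimes_c$. (Iterating this also disposes of any ``buried'' double edge.) So I may assume $\bar\Phi$ has minimal degree $\geqslant 2$. Now choose a maximal brick $v$ of the wall $\Phi$ for the canonical partial order (it exists since the wall is finite); since the two upper legs of $v$ are free, $v$ is joined precisely to the at most two bricks lying immediately below it on its two strands. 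If these coincide in a single brick $a$, then $a$ and $v$ are directly stacked on a common pair of strands, $a\boxtimes_c v$ is the forbidden pattern above and hence $0$, so $\Phi=(\cdots)\boxtimes_c(a\boxtimes_c v)=0$. Otherwise $v$ has degree exactly $2$, joined to distinct bricks $a,b$; if $\Phi\setminus v$ (connected or not) still contains a cycle, I recurse on the component carrying it — a monomial of strictly smaller weight — and re-graft $v$ by composing back along the one or two relevant strands, which again yields $\Phi=0$.

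\textbf{The main case.} The remaining situation is when $\bar\Phi$ has minimal degree $\geqslant 2$ but deleting the maximal brick $v$ makes it acyclic; then $\bar\Phi$ has a single independent cycle and no leaves, so $\bar\Phi$ is itself a simple cycle on $\ell=n$ bricks. Removing $v$ leaves a monomial $P$ whose graph is a path; by \Cref{ecriture_escalier} it can be rewritten as a stairway of arity $\ell$, say on strands $1,\dots,\ell$ with bricks on $(1,2),(2,3),\dots,(\ell-1,\ell)$, and then $\Phi=P\boxtimes_c v$, where $v$ is the brick on $(\ell,1)$ grafted on top along strands $1$ and $\ell$. At this point I would apply \Cref{invariance_cyclique}: the stairway $P$ equals $(-1)^{\ell-1}$ times the stairway on $(2,3),(3,4),\dots,(\ell,1)$, whose \emph{topmost} brick now lies on exactly the pair of strands $(\ell,1)$ carrying $v$. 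Hence, after this rewriting, $v$ sits directly on top of that brick, their composite along both legs is again the directly-stacked pattern forbidden by $R_{\mathcal{DJ}}^\bot$, hence $0$, and so $\Phi=(-1)^{\ell-1}\,(\cdots)\boxtimes_c 0=0$.

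\textbf{Expected obstacle.} The conceptual heart, and the part needing care, is the last paragraph: one must track the canonical order on the wall closely enough to be sure that the cyclic rotation of the stairway provided by \Cref{invariance_cyclique} really brings the grafted brick $v$ into a position \emph{directly} above a single brick on both of its strands (so that the stacked-bricks relation applies with nothing in between), and one must check at every splitting that the pieces peeled off are genuine $\boxtimes_c$-factors, so that the vanishing of one factor propagates to $\Phi$. The graph-theoretic scaffolding (leaf-peeling and the cyclomatic-number argument isolating a simple cycle) is routine once organised as above; note that only \Cref{invariance_cyclique} and \Cref{ecriture_escalier}, together with the directly-stacked relation of $\DCom$, are needed — \Cref{egal_symetrique} is not used here.
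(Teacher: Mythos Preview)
Your reduction to the case where $\bar\Phi$ is itself a simple cycle is more careful than the paper's (which simply declares ``we limit ourselves to considering only monomials whose underlying non-oriented graph is a cycle''), and your leaf-peeling together with the cyclomatic argument is a clean way to justify that restriction.

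There is, however, a genuine gap in the ``main case''. You assert that after rewriting $P=\Phi\setminus v$ as a stairway on strands $1,\dots,\ell$, the brick $v$ sits on the extremal strands $(\ell,1)$. This is not automatic, and it can fail. The rewriting of \Cref{ecriture_escalier} uses the relation $R_{\mathcal{DJ}}^\bot$, which changes the set of bricks and hence the strand multiplicities; so the two strands that were ``singly occupied'' in the original path $P$ (namely $v$'s strands) need not become the endpoint strands of the resulting stairway. Concretely, take $n=4$ with bricks $b_i$ on $\{t_i,t_{i+1}\}$ and wall order $b_2<b_1$, $b_2<b_3$, $b_1<b_4$, $b_3<b_4$; then $v=b_4$ is maximal, and rewriting $\Phi^*=\{b_1,b_2,b_3\}$ (apply the arity-$3$ relation to $\{b_2,b_1\}$) yields $\Phi^*=\mathrm{stairway}(t_2,t_1,t_3,t_4)$. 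Now $v$ is on $\{t_1,t_4\}$, which sit at positions $2$ and $4$ --- not the endpoints --- and no dihedral rotation of this stairway puts $t_1,t_4$ at adjacent positions. Your single cyclic shift therefore does \emph{not} bring $v$ directly onto the top brick.

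This is exactly why the paper distinguishes two configurations after the rewriting (its Figures~\ref{fig::monome_1} and~\ref{fig::monome_2}): either $v$ lands on the extremal strands (your case, the paper's ``form 2''), which one then rotates into ``form 1''; or $v$ lands at interior positions (``form 1''), in which case $v$ together with the bricks of the stairway between those positions forms a cycle of strictly smaller weight, and one concludes by the induction hypothesis. Your argument omits this second branch; once you add it, the proof is complete and matches the paper's.
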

\begin{proof}
	We prove the result by induction on the weight of monomials. We limit ourselves to considering only monomials whose underlying non-oriented graph is a cycle, i.e. monomials whose each elementary block is linked by two edges to another block. We have the relation
	\[
		\begin{tikzpicture}[scale=0.2,baseline=2]
			\draw[thin] (2,2) -- (2,2.5);
			\draw[thin] (4,2) -- (4,2.5);
			\draw (1.7,1.5) rectangle (4.3,2);
			\draw[thin] (2,1.5) -- (2,1);
			\draw[thin] (4,1.5) -- (4,1);
			\draw (1.7,0.5) rectangle (4.3,1);
			\draw[thin] (2,0.5) -- (2,0);
			\draw[thin] (4,0.5) -- (4,0);
		\end{tikzpicture}
		~~=~~0
	\]
	which initialize our induction. Suppose that every cycle of weight $\leqslant n-1$ is null. We consider a cycle $\Phi$ of weight $n$ and, we isole one of the blocs $v_\alpha$ in the cycle (i.e. one of the vertex of the underlying graph) such that its two outputs are linked with an other bloc. In a cycle, such a bloc already exists. We denote by $\Phi^*$, the monomial obtained by the forgetfulness of the bloc $v_\alpha$ in the initial cycle. The monomial $\Phi^*$ does not contain a cycle, then, by \Cref{ecriture_escalier}, $\Phi^*$ can be rewriting in a stairway. Finally, the monomial $\Phi$ can be rewrite as one of the two following monomials in \Cref{fig::monome_1} and \Cref{fig::monome_2}.
	\begin{figure}[!ht]
		\begin{minipage}[b]{.48\linewidth}
			\centering
			\begin{tikzpicture}[scale=0.2]
			\draw[thin] (2,0.5) -- (2,1);
			\draw[fill=white] (-0.3,0) rectangle (2.3,0.5);
			\draw (12,5.5) -- (12,6);
			\draw (10,4.5) -- (10,5);
			\draw[fill=white] (7.7,4) rectangle (10.3,4.5);
			\draw[fill=white] (11.7,6) rectangle (14.3,6.5);
			\draw (8,3.5) -- (8,4);
			\draw (4,1.5) -- (4,2);
			\draw[dotted] (2,1) -- (4,1.5);
			\draw[dotted] (6,3) -- (8,3.5);
			\draw[dotted] (10,5) -- (12,5.5);
			\draw (6,2.5) -- (6,3);
			\draw[fill=white] (3.7,2) rectangle (6.3,2.5);
			\draw (4,2.5) to[out=90,in=270] (3,8);
			\draw (4,2.5) node[above,left] {$\scriptscriptstyle{i}$};
			\draw (8,4.5) to[out=90,in=270] (5,8);
			\draw (8,4.5) node[left] {$\scriptscriptstyle{j}$};
			\draw[fill=white] (2.7,8) rectangle (5.3,8.5);
			\end{tikzpicture}
			\caption{Monomial of form 1 \label{fig::monome_1}}
		\end{minipage} \hfill
		\begin{minipage}[b]{.48\linewidth}
			\centering
			\begin{tikzpicture}[scale=0.2]
			\draw[thin] (2,0.5) -- (2,1);
			\draw[fill=white] (-0.3,0) rectangle (2.3,0.5);
			\draw (10,4.5) -- (10,5);
			\draw[fill=white] (7.7,4) rectangle (10.3,4.5);
			\draw (8,3.5) -- (8,4);
			\draw (4,1.5) -- (4,2);
			\draw[dotted] (2,1) -- (4,1.5);
			\draw[dotted] (6,3) -- (8,3.5);
			\draw (6,2.5) -- (6,3);
			\draw[fill=white] (3.7,2) rectangle (6.3,2.5);
			\draw (0,0.5) to[out=90,in=270] (3,8);
			\draw (10,5) to[out=90,in=270] (5,8);
			\draw[fill=white] (2.7,8) rectangle (5.3,8.5);
			\end{tikzpicture}
			\caption{Monomial of form 2 \label{fig::monome_2}}
		\end{minipage}
	\end{figure}
	By the invariance of staiways under the diagonal action of the cyclic group (cf. \Cref{invariance_cyclique}), a monomial with the form $2$ (see \Cref{fig::monome_2}) can be rewrite as a monomial with the form $1$ (see \Cref{fig::monome_1}). Then, $\Phi$ can be rewrite as a monomial which contains a smaller cycle, then $\Phi$ is null.
\end{proof}
\begin{prop}\label{prop::generateurs_DCom}
	For all $n\geqslant 2$, we have
	\[
		\DCom([\![1,n]\!])=\DCom_{(n-1)}([\![1,n]\!]).
	\]
	with $\DCom_{(n-1)}([\![1,n]\!])$ generated by $\phi_n$, the stairway with $n$ inputs, which is stable under the diagonal action of the cyclic group. In terms of group representations, $\DCom(n,n)$ is given by
	\[
	\DCom([\![1,n]\!])=
	\left\{
	\begin{array}{cc}
		\mathrm{triv}(\Z/n\Z) \uparrow_{\Z/n\Z}^{\mathfrak{S}_n } & \mbox{if} \ n \ \mbox{is even,} \smallskip \\ 
		\mathrm{sgn}(\Z/n\Z) \uparrow_{\Z/n\Z}^{\mathfrak{S}_n} & \mbox{if} \ n \ \mbox{is odd.} 
	\end{array}
	\right. 
	\]
\end{prop}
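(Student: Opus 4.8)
The plan is to cut down the possible monomials of $\DCom([\![1,n]\!])$ with the two vanishing/rewriting lemmas, and then to read off the $\mathfrak{S}_n$-module from the two symmetry lemmas. First I would prove the weight statement. A monomial of $\DCom([\![1,n]\!])$ is a connected wall over $[\![1,n]\!]$ built from $w$ binary bricks, each labelled by a generator; its underlying non-oriented graph has the bricks as vertices and one edge for each pair of bricks sharing a leg. By \Cref{cycle_nul} any monomial whose underlying graph has a cycle is zero in $\DCom$, so $\DCom([\![1,n]\!])$ is spanned by the tree-shaped monomials. For such a monomial a leg lying in $\mu$ bricks contributes $\mu-1$ edges, and $\sum_s\mu(s)=2w$, so the graph has $2w-n$ edges; since a tree on $w$ vertices has $w-1$ edges this forces $w=n-1$, whence $\DCom([\![1,n]\!])=\DCom_{(n-1)}([\![1,n]\!])$ (recovering $\DCom([\![1,2]\!])=V_{\DLie}^*$ for $n=2$).

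Next I would identify the module. By \Cref{ecriture_escalier} every surviving (tree-shaped) monomial equals, up to sign and modulo the relations, a stairway, and every stairway on $n$ legs is $\phi_n\cdot\sigma$ for some $\sigma\in\mathfrak{S}_n$; hence $\DCom_{(n-1)}([\![1,n]\!])$ is generated by $\phi_n$ over $\mathfrak{S}_n$. By \Cref{invariance_cyclique} the diagonal action of the cyclic subgroup $\Z/n\Z=\langle c\rangle$ (with $c$ an $n$-cycle) fixes the line $k\cdot\phi_n$, the generator acting by the sign $(-1)^{n-1}$; writing $\varepsilon_n$ for this character of $\Z/n\Z$ — which is $\mathrm{triv}$ for $n$ even and $\mathrm{sgn}$ for $n$ odd in the notation of the statement — one obtains a surjection of $\mathfrak{S}_n$-modules
\[
	\varepsilon_n\uparrow_{\Z/n\Z}^{\mathfrak{S}_n}\ \twoheadrightarrow\ \DCom_{(n-1)}([\![1,n]\!]),\qquad 1\otimes 1\longmapsto\phi_n .
\]

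It then remains to check this surjection is an isomorphism, equivalently that $\dim_k\DCom([\![1,n]\!])=(n-1)!=[\mathfrak{S}_n:\Z/n\Z]$, i.e. that $\phi_n$ satisfies no relation beyond the cyclic invariance of \Cref{invariance_cyclique}. Since the source already has dimension $(n-1)!$, it suffices to exhibit $(n-1)!$ linearly independent elements, the natural choice being the stairways $\phi_n\cdot\sigma$ for $\sigma$ running over a transversal of $\Z/n\Z$ in $\mathfrak{S}_n$ (which span, by the previous paragraph). I would establish their independence via \Cref{prop::ADLie_Koszul}: the algebra $\scrA(\DCom,n)$ is the Koszul dual of the Koszul algebra $\scrA(\DLie,n)$, so its Hilbert series is determined; splitting $\scrA(\DCom,n)=\bigoplus_{\mathfrak{p}}\scrA(\DCom,n)_{\mathfrak{p}}$ over the set partitions of $[\![1,n]\!]$ as in \Cref{eq::B_split} identifies the component at the trivial partition with $\DCom_{(n-1)}([\![1,n]\!])\sh$, and matching the degree-$(n-1)$ coefficient gives $\dim\DCom_{(n-1)}([\![1,n]\!])=(n-1)!$ by induction on $n$. (A more hands-on alternative is to argue that the stairways starting at leg $1$ form a normal-form basis, by checking that the defining relations of $\DCom$ generate only \Cref{invariance_cyclique} once expressed in stairway form.)

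The main obstacle is this last step: the reductions of the first two paragraphs are essentially bookkeeping around the four preceding lemmas, whereas the injectivity of the displayed surjection is the real content, and it seems to need the extra arithmetic input coming from Koszulness of the algebras $\scrA(\DLie,n)$.
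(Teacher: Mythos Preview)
Your first two paragraphs essentially reproduce the paper's argument: the paper also obtains the weight concentration from \Cref{cycle_nul} (monomials of weight $\geqslant n$ contain a cycle, hence vanish) together with connectedness (weight $<n-1$ cannot connect $n$ legs), and then invokes \Cref{ecriture_escalier} and \Cref{invariance_cyclique} to conclude that $\DCom_{(n-1)}([\![1,n]\!])$ is generated over $\mathfrak{S}_n$ by the stairway $\phi_n$, stable under $\Z/n\Z$ with sign $(-1)^{n-1}$. Your edge-count packaging of the two vanishing statements is a clean variant of the same reasoning.

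Where you diverge is the last step. You correctly observe that the lemmas only produce a \emph{surjection}
\[
\varepsilon_n\!\uparrow_{\Z/n\Z}^{\mathfrak{S}_n}\;\twoheadrightarrow\;\DCom_{(n-1)}([\![1,n]\!]),
\]
and that turning this into an isomorphism needs a dimension count showing no further relations hold among the stairways. The paper's proof does not address this point at all: it stops at ``generated by the stairway'' and simply asserts the induced-representation description. So you are being more careful than the paper here, not less.

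Your proposed fix through $W_n=\scrA(\DLie,n)^!$ is legitimate and not circular, since the confluent rewriting in the appendix establishes a PBW basis for $W_n$ independently of any Koszulness statement for $\DLie$ itself. From that basis one can indeed read off that the degree-$(n-1)$ piece supported on the full index set has dimension $(n-1)!$, which closes the gap. This is heavier than what the paper writes down, but it is what is actually needed to justify the stated isomorphism; the ``hands-on'' normal-form alternative you mention at the end amounts to the same thing, since that normal form is precisely the one coming from the rewriting system of the appendix.
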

\begin{proof}
	We already  have that $\DCom_{(k)}([\![1,n]\!])=0$ for all $k$ in $[\![0,n-2]\!]$. We also have that the $\mathfrak{S}$-module $\DCom_{(n-1)}([\![1,n]\!])$ is generated by the stairway with $n$ inputs. Finally, monomials in  $\DLie^!_{(k)}([\![1,n]\!])$ for $k\geqslant n$ have a cycle, thus they are null, by \Cref{cycle_nul}.
\end{proof}
\begin{nota}
	For all integers $n\geqslant 2$, we denote:
	\[
	\sgn(\Z/n\Z)\overset{\mathrm{not.}}{:=} \mathrm{sgn}(\Z/n\Z)^{\otimes n} =
	\left\{
	\begin{array}{cc}
		\mathrm{triv}(\Z/n\Z)  & \mbox{if} \ n \ \mbox{is even;} \smallskip \\ 
		\mathrm{sgn}(\Z/n\Z) & \mbox{if} \ n \ \mbox{is odd.} 
	\end{array}
	\right. 
	\]
\end{nota}
By \Cref{thm::duale_de_koszul}, the dual coprotoperad of $\DLie$ is given by
\[
	\DLie^\antish= \big(\mathscr{P}(V_{\DLie},R_{DJ})\big)^\antish =\mathscr{C}(\Sigma V_{\DLie},\Sigma^2\overline{R_{DJ}}).
\]
We have seen, in \Cref{prop::generateurs_DCom}, that the protoperad $\DCom=\mathscr{P}(V_{\DLie}^*,R_{DJ}^\bot)$ satisfies
\[
	\DCom([\![1,n]\!]) = \DCom_{(n-1)}([\![1,n]\!]) = \sgn(\Z/n\Z)\uparrow_{\Z/n\Z}^{\mathfrak{S}_n\op \times \mathfrak{S}_n}
\]
By \Cref{prop::dual_lineaire_koszul}, we have the following isomorphism $\mathscr{C}(\Sigma V_{\DLie},\Sigma^2\overline{R_{DJ}})^*\cong \mathscr{P}(\Sigma^{-1}V_{\DLie}^*,\Sigma^{-2}R_{DJ}^\bot)$, so, for all integer $n>0$, we have
\[
	\DLie^\antish([\![1,n]\!])=\Sigma^{n-1} \sgn(\Z/n\Z)\big\uparrow_{\Z/n\Z}^{\mathfrak{S}_n}.
\]
So we have the following proposition.

\begin{prop}
	The properad $\Ind(\DLie)_\infty:=\Cobar (\Ind(\DLie)^\antish)$, which is a cofibrant resolution of the properad $\Ind(\DLie)$, is the free properad $(\scrF\Val(\Sigma^{-1}W),\partial_\Delta)$ with $\Sigma^{-1}W$, the $\mathfrak{S}$-bimodule defined by $\Sigma^{-1}W([\![1,m]\!],[\![1,n]\!])=0$ for $m\ne n$ in $\N$ and
	\renewcommand{\arraystretch}{2.5}
	\begin{align*}
	\Sigma^{-1}W([\![1,n]\!]&,[\![1,n]\!]):=~ (\Sigma^{-1}W)_{n-2}([\![1,n]\!],[\![1,n]\!])  =\Sigma^{n-2}\Ind \big( \sgn(\Z/n\Z)\uparrow_{\Z/n\Z}^{\mathfrak{S}_n} \big) \\
	=~& \left\langle \Sigma^{-1}\phi_n:=~\Sigma^{-1}
	\begin{tikzpicture}[scale=0.2,baseline=1]
	\draw[thin] (0,0) -- (0,1.5);
	\draw (0,0) node[below] {\tiny{$1$}};
	\draw (0,1.5) node[above] {\tiny{$1$}};
	\draw[thin] (2,0) -- (2,1.5);
	\draw (2,0) node[below] {\tiny{$2$}};
	\draw (2,1.5) node[above] {\tiny{$2$}};
	\draw (5,0) node[below] {\tiny{$\cdots$}};
	\draw (5,1.5) node[above] {\tiny{$\cdots$}};
	\draw[thin]	(8,0) -- (8,1.5);
	\draw (8,0) node[below] {\tiny{$n$}};
	\draw (8,1.5) node[above] {\tiny{$n$}};
	\draw[fill=white] (-0.3,0.5) rectangle (8.3,1);
	\end{tikzpicture}
	~~=~(-1)^{n+1}\Sigma^{-1}~
	\begin{tikzpicture}[scale=0.2,baseline=1]
	\draw[thin] (0,0) -- (0,1.5);
	\draw (0,0) node[below] {\tiny{$2$}};
	\draw (0,1.5) node[above] {\tiny{$2$}};
	\draw[thin] (2,0) -- (2,1.5);
	\draw (2,0) node[below] {\tiny{$3$}};
	\draw (2,1.5) node[above] {\tiny{$3$}};
	\draw (5,0) node[below] {\tiny{$\cdots$}};
	\draw (5,1.5) node[above] {\tiny{$\cdots$}};
	\draw[thin]	(8,0) -- (8,1.5);
	\draw (8,0) node[below] {\tiny{$1$}};
	\draw (8,1.5) node[above] {\tiny{$1$}};
	\draw[fill=white] (-0.3,0.5) rectangle (8.3,1);
	\end{tikzpicture} 
	\right\rangle
	\end{align*}
	\renewcommand{\arraystretch}{1}
	with $\Sigma^{-1}W([\![1,n]\!],[\![1,n]\!])$ concentrated in homological degree $n-2$ and with the differential $\partial_\Delta$ induced by the coproduct $\Delta$ of the coproperad $\Ind(\DLie)^\antish$, which sends
	\[
	\begin{tikzpicture}[scale=0.2,baseline=1]
	\draw[thin] (0,0) -- (0,1.5);
	\draw (0,0) node[below] {\tiny{$1$}};
	\draw (0,1.5) node[above] {\tiny{$1$}};
	\draw[thin] (1.5,0) node[below] {\tiny{$2$}} -- (1.5,1.5) node[above] {\tiny{$2$}};
	\draw (5,0) node[below] {\tiny{$\cdots$}};
	\draw (5,1.5) node[above] {\tiny{$\cdots$}};
	\draw[thin]	(8,0) -- (8,1.5);
	\draw (8,0) node[below] {\tiny{$n$}};
	\draw (8,1.5) node[above] {\tiny{$n$}};
	\draw[fill=white] (-0.3,0.5) rectangle (8.3,1);
	\end{tikzpicture}
	\ \overset{\Delta}{\longmapsto} \
	\sum_{{\substack{2\leqslant i \leqslant n-1 \\ \sigma \in \Z/n\Z}}}
	\begin{tikzpicture}[scale=0.2,baseline=(n.base)]
	\node (n) at (0,1) {};
	\draw[thin] (0,0) node[below] {\tiny{$\sigma(1)$}} -- (0,2) node[above] {\tiny{$\sigma(1)$}};
	\draw[thin] (1.5,-0.8) node[below] {\tiny{$\sigma(2)$}} -- (1.5,3) node[above] {\tiny{$\sigma(2)$}};
	\draw (2.75,0.5) node[below] {\tiny{$\cdots$}};
	\draw (2.75,1.5) node[above] {\tiny{$\cdots$}};
	\draw[thin]	(4,0) node[below] {\tiny{$\sigma(i)$}} -- (4,2.5) node[above] {\tiny{$\sigma(i)$}};
	\draw (6,0.5) node[below] {\tiny{$\cdots$}};
	\draw (6,1.5) node[above] {\tiny{$\cdots$}};
	\draw[thin]	(8,0) node[below] {\tiny{$\sigma(n)$}} -- (8,2.5) node[above] {\tiny{$\sigma(n)$}};
	\draw[fill=white] (-0.3,0.5) rectangle (4.3,1);
	\draw[fill=white] (3.7,1.5) rectangle (8.3,2);
	\end{tikzpicture}
	\]
\end{prop}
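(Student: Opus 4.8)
The plan is to derive the statement from the Koszulness of $\Ind(\DLie)$, proved in \Cref{thm::prop_DLie_Koszul}, together with the explicit description of its Koszul dual obtained in \Cref{prop::generateurs_DCom}. Since $\Ind(\DLie)$ is Koszul, the properadic analogue of \Cref{thm::critere_Koszul} (cf. \cite[Th. 149]{Val03}) shows that the counit $\Cobar\big(\Ind(\DLie)^\antish\big)\to\Ind(\DLie)$ is a quasi-isomorphism, so $\Ind(\DLie)_\infty$ is a resolution; it is quasi-free by construction of the cobar functor, hence cofibrant for the model structure of \cite{MV09I,MV09II}, and since the cobar differential raises the number-of-generators grading while preserving the total weight (the properadic counterpart of \Cref{lem::description_bar}) it is decomposable, so the model is in fact minimal. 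It then remains to identify the underlying $\mathfrak{S}$-bimodule and the differential.

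For the underlying $\mathfrak{S}$-bimodule, recall that $\Cobar(\calC)=\big(\scrF\Val(\Sigma^{-1}\overline{\calC}),\,\partial_\calC+d_{s^{-1}\Delta_2}\big)$, which for $\calC=\Ind(\DLie)^\antish$ (carrying no internal differential) reduces to $\scrF\Val\big(\Sigma^{-1}\overline{\Ind(\DLie)^\antish}\big)$ with differential $\partial_\Delta:=d_{s^{-1}\Delta_2}$. Now $(-)^\antish$ commutes with $\Ind$, and $\Ind$ sends a $\mathfrak{S}$-module concentrated in arity $n$ to a $\mathfrak{S}$-bimodule concentrated in biarity $(n,n)$ (\Cref{def::functor induction}); combining \Cref{thm::duale_de_koszul}, \Cref{prop::dual_lineaire_koszul} and \Cref{prop::generateurs_DCom} gives $\DLie^\antish([\![1,n]\!])=\Sigma^{n-1}\,\sgn(\Z/n\Z)\uparrow_{\Z/n\Z}^{\mathfrak{S}_n}$, concentrated in homological degree $n-1$ and generated, up to the sign of the cyclic action, by the stairway $\phi_n$. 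Applying $\Ind$, desuspending once and passing to the augmentation coideal — which removes only the weight-$0$ summand $\Ibox$ — yields exactly the $\mathfrak{S}$-bimodule $\Sigma^{-1}W$ of the statement, with $\Sigma^{-1}\phi_n$ sitting in homological degree $n-2$.

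For the differential, one uses that $\partial_\Delta$ is the unique derivation of the free properad $\scrF\Val(\Sigma^{-1}W)$ extending, on generators, the desuspension of the binary part $\Delta_2$ of the coproduct of $\Ind(\DLie)^\antish$; it is therefore enough to compute $\Delta_2(\phi_n)$, i.e. to list the ways the weight-$(n-1)$ stairway $\phi_n$ splits under the coproperad coproduct into one generator and a weight-$(n-2)$ stairway. Using the rewriting of cycle-free monomials as stairways (\Cref{ecriture_escalier}) and the cyclic invariance of stairways (\Cref{invariance_cyclique}), these decompositions are parametrised exactly by a cut level $i$ with $2\leqslant i\leqslant n-1$ and a cyclic permutation $\sigma\in\Z/n\Z$, and reading off the corresponding two-vertex graphs together with the Koszul signs produced by the (de)suspensions yields the displayed formula. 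I expect the main obstacle to be precisely this last combinatorial step: checking that the admissible binary decompositions of $\phi_n$ are in bijection with $\{2,\dots,n-1\}\times\Z/n\Z$, that no relation of \Cref{cycle_nul} further collapses or duplicates terms, and that the cyclic-action signs are exactly those recorded in the formula.
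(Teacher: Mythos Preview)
Your proposal is correct and follows the paper's own route: the paper does not give a standalone proof of this proposition but presents it as a direct consequence of the preceding computation of $\DLie^\antish([\![1,n]\!])=\Sigma^{n-1}\sgn(\Z/n\Z)\uparrow_{\Z/n\Z}^{\mathfrak{S}_n}$, together with the definition of the cobar construction and the compatibility of $\Ind$ with $(-)^\antish$. Your extra remarks on cofibrancy and minimality, and your careful justification of the differential formula via \Cref{ecriture_escalier} and \Cref{invariance_cyclique}, are more detailed than what the paper spells out, but the underlying argument is the same.
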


We exhibit the action of the differential on generators of degree $2$, $3$ and $4$.
\begin{itemize}
	\item The element $\phi_2$ in $ \Ind(\DLie)^\antish$ is primitive, i.e. $\Delta(\phi_2)=(1\otimes 1)\phi_2+\phi_2(1\otimes 1)$ then 
	\[
	\partial_\Delta(s^{-1}\phi_2)=-\big( (1\otimes 1)s^{-1}\phi_2+(-1)^{|\phi_2|}s^{-1}\phi_2(1\otimes 1)\big)=0.
	\]
	\item We have seen that the $\mathfrak{S}$-bimodule $\Ind(\DLie)^\antish([\![1,3]\!],[\![1,3]\!])$ is generated by $\phi_3$, the stairway of arity $3$ which is stable under the diagonal action of the cyclic group, so
	\[
		\Delta(\phi_3) = \sum_{i=0}^{2}\sigma^{-i}_{(123)}(\phi_2,1)(1,\phi_2)\sigma_{(123)}^i~;
	\]
	then 
	\begin{align*}
		\partial_\Delta(s^{-1}\phi_3)=&-\big(  \sum_{i=0}^{2}(-1)^{|\phi_2|}\sigma^{-i}_{(123)}(s^{-1}\phi_2,1)(1,s^{-1}\phi_2)\sigma_{(123)}^i \big) \\
		=& \sum_{i=0}^{2}\sigma^{-i}_{(123)}(s^{-1}\phi_2,1)(1,s^{-1}\phi_2)\sigma_{(123)}^i,
	\end{align*}
	which is exactly the double Jacobi relation. 
	\item $\Ind(\DLie)^\antish([\![1,4]\!],[\![1,4]\!])$ is generated by $\phi_4$, with
	\[
		\Delta(\phi_4)=\sum_{i=0}^{3}\sigma^{-i}_{(1234)}\Big( (\phi_2,1)(1,\phi_3)+(\phi_3,1)(1,\phi_2)\Big)\sigma_{(1234)}^i~;
	\]
	then 
	\begin{align*}
	\partial_\Delta(s^{-1}\phi_4) = &~ -\sum_{i=0}^{3}\sigma^{-i}_{(1234)}\Big( (-1)^{|\phi_2|}(s^{-1}\phi_2,1)(1,s^{-1}\phi_3) \\
	& \qquad \qquad +(-1)^{|\phi_3|}(s^{-1}\phi_3,1)(1,s^{-1}\phi_2)\Big)\sigma_{(1234)}^i \\
	= & ~ \sum_{i=0}^{3}\sigma^{-i}_{(1234)}\Big( (s^{-1}\phi_2,1)(1,s^{-1}\phi_3)-(s^{-1}\phi_3,1)(1,s^{-1}\phi_2)\Big)\sigma_{(1234)}^i.
	\end{align*}
\end{itemize}

\subsection{The properad $\DPois$}
We define the properad $\DPois$ which encodes the structure of double Poisson algebra.  $\DPois$ is the quadratic properad gives as follows:
\[
	\DPois:=\scrF\Val\big(V\oplus W\big)\big/ \big\langle R_{As} \oplus D_\lambda \oplus R_{DJ} \big\rangle
\]
with generators concentrated in homological degree $0$: 
\[
	V:=V_{As}= \mu.k\otimes k[\mathfrak{S}_2] =
	\begin{tikzpicture}[baseline=1.8ex,scale=0.15]
		\draw (0,4) -- (2,2);
		\draw (4,4) -- (2,2);
		\draw[fill=black] (2,2) circle (6pt);
		\draw (2,2) -- (2,0);
		\draw (0,4) node[above] {$\scriptscriptstyle{1}$};
		\draw (4,4) node[above] {$\scriptscriptstyle{2}$};
		\draw (2,0) node[below] {$\scriptscriptstyle{1}$};
		\draw (2,2) node[below left] {$\scriptscriptstyle{\mu}$};
	\end{tikzpicture}\otimes k[\mathfrak{S}_2]
\]
and
\[
	W:=V_{\DLie} = 
	\begin{tikzpicture}[baseline=0ex,scale=0.2]
		\draw (0,0) node[below] {$\scriptscriptstyle{1}$};
		\draw (2,0) node[below] {$\scriptscriptstyle{2}$};
		\draw (0,1.5) node[above] {$\scriptscriptstyle{1}$};
		\draw (2,1.5) node[above] {$\scriptscriptstyle{2}$};
		\draw (0,0) -- (0,1.5);
		\draw (2,0) -- (2,1.5);
		\draw[fill=black] (-0.3,0.5) rectangle (2.3,1);
	\end{tikzpicture}\otimes \  \mathrm{sgn}(\mathfrak{S}_2)\uparrow_{\mathfrak{S}_2}^{\mathfrak{S}_2\times\mathfrak{S}_2\op }
\]
and the relations
\begin{itemize}
	\item of associativity for the  product $\mu$~:
	\[
		R_{As}:=\quad
		\begin{tikzpicture}[baseline=0.5ex,scale=0.15]
			\draw (0,4) --(4,0);
			\draw (4,4) -- (2,2);
			\draw (4,0) -- (4,-1);
			\draw (8,4) -- (4,0);
			\draw (0,4) node[above] {$\scriptscriptstyle{1}$};
			\draw (4,4) node[above] {$\scriptscriptstyle{2}$};
			\draw (8,4) node[above] {$\scriptscriptstyle{3}$};
			\draw (4,-1) node[below] {$\scriptscriptstyle{1}$};
			\draw[fill=black] (2,2) circle (6pt);
			\draw[fill=black] (4,0) circle (6pt);
		\end{tikzpicture}
		\quad - \quad
		\begin{tikzpicture}[baseline=0.5ex,scale=0.15]
			\draw (0,4) --(4,0);
			\draw (4,4) -- (6,2);
			\draw (4,0) -- (4,-1);
			\draw (8,4) -- (4,0);
			\draw (0,4) node[above] {$\scriptscriptstyle{1}$};
			\draw (4,4) node[above] {$\scriptscriptstyle{2}$};
			\draw (8,4) node[above] {$\scriptscriptstyle{3}$};
			\draw (4,-1) node[below] {$\scriptscriptstyle{1}$};
			\draw[fill=black] (6,2) circle (6pt);
			\draw[fill=black] (4,0) circle (6pt);
		\end{tikzpicture}~~;
	\]
	\item double Jacobi for the double bracket:
	\[
		R_{DJ}:=\quad
		\begin{tikzpicture}[scale=0.2,baseline=-1]
			\draw (0,3) node[below] {$\scriptscriptstyle{1}$};
			\draw (2,3) node[below] {$\scriptscriptstyle{2}$};
			\draw (4,3) node[below] {$\scriptscriptstyle{3}$};
			\draw[thin] (0,-1) -- (0,1.5);
			\draw[thin] (2,-1) -- (2,1.5);
			\draw[thin] (4,-1) -- (4,1.5);
			\draw (0,-1) node[below] {$\scriptscriptstyle{1}$};
			\draw (2,-1) node[below] {$\scriptscriptstyle{2}$};
			\draw (4,-1) node[below] {$\scriptscriptstyle{3}$};
			\draw[fill=black] (1.7,0.5) rectangle (4.3,1);
			\draw[fill=black] (-0.3,-0.5) rectangle (2.3,0);
		\end{tikzpicture}
		~~+~~
		\begin{tikzpicture}[scale=0.2,baseline=-1]
			\draw (0,3) node[below] {$\scriptscriptstyle{2}$};
			\draw (2,3) node[below] {$\scriptscriptstyle{3}$};
			\draw (4,3) node[below] {$\scriptscriptstyle{1}$};
			\draw[thin] (0,-1) -- (0,1.5);
			\draw[thin] (2,-1) -- (2,1.5);
			\draw[thin] (4,-1) -- (4,1.5);
			\draw (0,-1) node[below] {$\scriptscriptstyle{2}$};
			\draw (2,-1) node[below] {$\scriptscriptstyle{3}$};
			\draw (4,-1) node[below] {$\scriptscriptstyle{1}$};
			\draw[fill=black] (1.7,0.5) rectangle (4.3,1);
			\draw[fill=black] (-0.3,-0.5) rectangle (2.3,0);
		\end{tikzpicture}
		~~+~~
		\begin{tikzpicture}[scale=0.2,baseline=-1]
			\draw (0,3) node[below] {$\scriptscriptstyle{3}$};
			\draw (2,3) node[below] {$\scriptscriptstyle{1}$};
			\draw (4,3) node[below] {$\scriptscriptstyle{2}$};
			\draw[thin] (0,-1) -- (0,1.5);
			\draw[thin] (2,-1) -- (2,1.5);
			\draw[thin] (4,-1) -- (4,1.5);
			\draw (0,-1) node[below] {$\scriptscriptstyle{3}$};
			\draw (2,-1) node[below] {$\scriptscriptstyle{1}$};
			\draw (4,-1) node[below] {$\scriptscriptstyle{2}$};
			\draw[fill=black] (1.7,0.5) rectangle (4.3,1);
			\draw[fill=black] (-0.3,-0.5) rectangle (2.3,0);
		\end{tikzpicture}
		~~;
	\]
	\item of derivation:
	\[
		D_\lambda := \quad
		\begin{tikzpicture}[scale=0.2,baseline=-1]
			\draw (0,0) -- (0,1);
			\draw (2,0) -- (2,1.5);
			\draw (2,1.5) -- (1,2.5);
			\draw (2,1.5) -- (3,2.5);
			\draw (0,1) to[out=90,in=270] (-1,2.5);
			\draw (-1,2.5) node[above] {$\scriptscriptstyle{1}$};
			\draw (1,2.5) node[above] {$\scriptscriptstyle{2}$};
			\draw (3,2.5) node[above] {$\scriptscriptstyle{3}$};
			\draw (0,0) node[below] {$\scriptscriptstyle{1}$};
			\draw (2,0) node[below] {$\scriptscriptstyle{2}$};
			\draw[fill=black] (2,1.5) circle (4.5pt);
			\draw[fill=black] (-0.3,0.5) rectangle (2.3,1);
		\end{tikzpicture}
		\quad - \quad
		\begin{tikzpicture}[scale=0.2,baseline=-1]
			\draw (1,0.5) -- (2,1.5);
			\draw (1,0.5) -- (0,1.5);
			\draw (1,0.5) -- (1,-0.5);
			\draw (0,2.5) -- (0,1.5);
			\draw (2,2.5) -- (2,1.5);
			\draw (4,2) -- (4,2.5);
			\draw (3,-0.5) to[out=90,in=270] (4,1.5) ;
			\draw (0,2.5) node[above] {$\scriptscriptstyle{2}$};
			\draw (2,2.5) node[above] {$\scriptscriptstyle{1}$};
			\draw (4,2.5) node[above] {$\scriptscriptstyle{3}$};
			\draw (1,-0.5) node[below] {$\scriptscriptstyle{1}$};
			\draw (3,-0.5) node[below] {$\scriptscriptstyle{2}$};
			\draw[fill=black] (1,0.5) circle (4.5pt);
			\draw[fill=black] (1.7,1.5) rectangle (4.3,2);
		\end{tikzpicture}
		\quad - \quad
		\begin{tikzpicture}[scale=0.2,baseline=-1]
			\draw (1,0.5) -- (2,1.5);
			\draw (1,0.5) -- (0,1.5);
			\draw (1,0.5) -- (1,-0.5);
			\draw (0,2.5) -- (0,1.5);
			\draw (2,2.5) -- (2,1.5);
			\draw (-2,2) -- (-2,2.5);
			\draw (-1,-0.5) to[out=90,in=270] (-2,1.5);
			\draw (-2,2.5) node[above] {$\scriptscriptstyle{1}$};
			\draw (0,2.5) node[above] {$\scriptscriptstyle{2}$};
			\draw (2,2.5) node[above] {$\scriptscriptstyle{3}$};
			\draw (-1,-0.5) node[below] {$\scriptscriptstyle{1}$};
			\draw (1,-0.5) node[below] {$\scriptscriptstyle{2}$};
			\draw[fill=black] (1,0.5) circle (4.5pt);
			\draw[fill=black] (-2.3,1.5) rectangle (0.3,2);
		\end{tikzpicture}~~.
	\]
\end{itemize}
We recall the following result of Vallette

\begin{prop}[see {\cite[lem. 155 prop. 156 and 158]{Val03}}]\label{prop::loi_remplacement}
	Let $\calP $ be a properad of the form $\calP  := \mathscr{P}(V\oplus W, R\oplus D_\lambda \oplus S)$, with $\lambda$, a compatible distributive law. Then we have the following isomorphism of $\mathfrak{S}$-bimodules
	\[
		\calP \cong \mathcal{A}\boxtimes_c\Val \mathcal{B}
	\]
	with $\mathcal{A}:=\mathscr{P}(V,R)$ and $\mathcal{B}:=\mathscr{P}(W,S)$. Also, if the sum 
	\[
		\sum_{m,n}\mathrm{dim}_k\big((V\oplus W)([\![1,m]\!],[\![1,n]\!])\big)
	\]
	is finite and $W$ is concentrated in homological degree $0$, then we have the isomorphism of  $\mathfrak{S}$-bimodules $\calP ^\antish\cong \mathcal{B}^\antish\boxtimes_c\Val \mathcal{A}^\antish$ with $\mathcal{A}:=\mathscr{P}(V,R)$ and $\mathcal{B}:=\mathscr{P}(W,S)$. Moreover, if the properads $\mathcal{A}$ and $\mathcal{B}$ are Koszul, then the properad $\calP $ is also a Koszul properad. 
\end{prop}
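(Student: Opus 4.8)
The plan is to deduce this proposition directly from Vallette's theory of distributive laws for properads; indeed it is exactly \cite[Lem. 155, Prop. 156 and Prop. 158]{Val03}, so the proof consists in quoting those results. For the reader's convenience I would also recall the shape of Vallette's argument, organised in the three steps matching the three assertions.

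\emph{Step 1: the underlying $\mathfrak{S}$-bimodule.} A (compatible) distributive law is a morphism $\lambda$ between the two ``mixed'' summands of $\big(\scrF\Val(V\oplus W)\big)^{(2)}$ --- rewriting one composition order of a $V$-generator and a $W$-generator into the other --- whose graph is the relation space $D_\lambda=\{x-\lambda(x)\}$; compatibility means that $\lambda$ satisfies the coherence (pentagon-type) axioms guaranteeing that it extends to a natural transformation $\mathcal{B}\boxtimes_c\Val\mathcal{A}\to\mathcal{A}\boxtimes_c\Val\mathcal{B}$ turning $\mathcal{A}\boxtimes_c\Val\mathcal{B}$ into a properad. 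First I would exhibit the canonical epimorphism of properads $\calP\twoheadrightarrow\mathcal{A}\boxtimes_c\Val\mathcal{B}$ sending generators to generators (well defined since $R$, $S$ and $D_\lambda$ all hold in the target), then prove it is an isomorphism by a normal-form argument: reading $D_\lambda$ as a terminating, confluent rewriting rule that moves every $W$-generator past every $V$-generator, one shows that each class of $\calP$ has a representative in the image of $\mathcal{A}\boxtimes_c\Val\mathcal{B}$, compatibility being exactly what ensures confluence on the underlying graphs, and a linear-independence (diamond-lemma / PBW) count then gives bijectivity. This is \cite[Lem. 155]{Val03}.

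\emph{Step 2: the Koszul dual and the transfer of Koszulness.} Under the finiteness hypothesis and with $W$ concentrated in homological degree $0$, $\calP$ is quadratic, so $\calP^\antish=\mathscr{C}\big(\Sigma(V\oplus W),\Sigma^2\overline{R_\calP}\big)$ with $R_\calP=R\oplus D_\lambda\oplus S$; the orthogonal $R_\calP^\bot$ inside $\big(\scrF\Val(V^*\oplus W^*)\big)^{(2)}$ decomposes as $S^\bot\oplus D_{\lambda^\vee}\oplus R^\bot$, where $\lambda^\vee$ is the transposed distributive law with the roles of $V$ and $W$ exchanged (and again compatible). Applying Step 1 to this presentation of $(\calP^\antish)^*$ and dualising back --- legitimate by finiteness --- yields $\calP^\antish\cong\mathcal{B}^\antish\boxtimes_c\Val\mathcal{A}^\antish$. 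Finally, to obtain Koszulness I would invoke the Koszul criterion for properads \cite[Th. 149]{Val03}: it suffices that the right Koszul complex $\big(\calP^\antish\boxtimes_c\Val\calP,\partial\big)$ be acyclic. By Steps 1 and 2 this complex is
\[
	\big(\mathcal{B}^\antish\boxtimes_c\Val\mathcal{A}^\antish\boxtimes_c\Val\mathcal{A}\boxtimes_c\Val\mathcal{B},\ \partial\big);
\]
filtering by the weight in $V$, the differential on the associated graded is carried entirely by the central factor $\mathcal{A}^\antish\boxtimes_c\Val\mathcal{A}$, which is acyclic because $\mathcal{A}$ is Koszul, so the resulting spectral sequence collapses onto $\mathcal{B}^\antish\boxtimes_c\Val\Ibox\boxtimes_c\Val\mathcal{B}\cong\mathcal{B}^\antish\boxtimes_c\Val\mathcal{B}$ equipped with the Koszul differential of $\mathcal{B}$, itself acyclic because $\mathcal{B}$ is Koszul; the weight-grading ensures convergence. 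Hence $\calP^\antish\boxtimes_c\Val\calP$ is acyclic and $\calP$ is Koszul. This is \cite[Prop. 156 and Prop. 158]{Val03}.

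\emph{Main obstacle.} The only genuinely technical point --- and it is already settled in \cite{Val03} --- is Step 1: turning the distributive law into a confluent, terminating rewriting system on the graph combinatorics underlying properadic composition, which is precisely why one needs the notion of a \emph{compatible} distributive law. Everything downstream is formal once the $\mathfrak{S}$-bimodule isomorphism of Step 1 is in hand, so my proof is simply to cite Vallette.
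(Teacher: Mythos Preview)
Your proposal is correct and matches the paper exactly: the paper gives no proof for this proposition at all --- it is stated purely as a quotation of Vallette's results \cite[Lem.~155, Prop.~156 and 158]{Val03}, with the citation placed in the proposition header itself. Your decision to simply cite Vallette is therefore precisely what the paper does; the sketch of Vallette's argument that you add (the PBW/rewriting isomorphism, the computation of the Koszul dual via the transposed law, and the filtration/spectral-sequence argument on the Koszul complex) is accurate and goes beyond what the paper provides.
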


For $\DPois$, the relation of derivation $D_\lambda$ is given by a compatible replacement law (see \cite{Val03,Val07}), with $\lambda$ the following morphism of $\mathfrak{S}$-bimodules:
\[
	\lambda : (\Ibox\oplus W)\boxtimes_c\Val (\Ibox\oplus V)^{(1)_W,(1)_V} \longrightarrow (\Ibox\oplus V)\boxtimes_c\Val (\Ibox\oplus W)^{(1)_V,(1)_W} 
\]
given by 
\[
	\lambda : \quad
	\begin{tikzpicture}[scale=0.2,baseline=1]
		\draw (0,0) -- (0,1);
		\draw (2,0) -- (2,1.5);
		\draw (2,1.5) -- (1,2.5);
		\draw (2,1.5) -- (3,2.5);
		\draw (0,1) to[out=90,in=270] (-1,2.5);
		\draw (-1,2.5) node[above] {$\scriptscriptstyle{1}$};
		\draw (1,2.5) node[above] {$\scriptscriptstyle{2}$};
		\draw (3,2.5) node[above] {$\scriptscriptstyle{3}$};
		\draw (0,0) node[below] {$\scriptscriptstyle{1}$};
		\draw (2,0) node[below] {$\scriptscriptstyle{2}$};
		\draw[fill=black] (2,1.5) circle (4.5pt);
		\draw[fill=white] (-0.3,0.5) rectangle (2.3,1);
	\end{tikzpicture}
	\quad \longmapsto \quad
	\begin{tikzpicture}[scale=0.2,baseline=0]
		\draw (1,0.5) -- (2,1.5);
		\draw (1,0.5) -- (0,1.5);
		\draw (1,0.5) -- (1,-0.5);
		\draw (0,2.5) -- (0,1.5);
		\draw (2,2.5) -- (2,1.5);
		\draw (4,2) -- (4,2.5);
		\draw (3,-0.5) to[out=90,in=270] (4,1.5) ;
		\draw (0,2.5) node[above] {$\scriptscriptstyle{2}$};
		\draw (2,2.5) node[above] {$\scriptscriptstyle{1}$};
		\draw (4,2.5) node[above] {$\scriptscriptstyle{3}$};
		\draw (1,-0.5) node[below] {$\scriptscriptstyle{1}$};
		\draw (3,-0.5) node[below] {$\scriptscriptstyle{2}$};
		\draw[fill=black] (1,0.5) circle (4.5pt);
		\draw[fill=white] (1.7,1.5) rectangle (4.3,2);
	\end{tikzpicture}
	\quad + \quad
	\begin{tikzpicture}[scale=0.2,baseline=0]
		\draw (1,0.5) -- (2,1.5);
		\draw (1,0.5) -- (0,1.5);
		\draw (1,0.5) -- (1,-0.5);
		\draw (0,2.5) -- (0,1.5);
		\draw (2,2.5) -- (2,1.5);
		\draw (-2,2) -- (-2,2.5);
		\draw (-1,-0.5) to[out=90,in=270] (-2,1.5);
		\draw (-2,2.5) node[above] {$\scriptscriptstyle{1}$};
		\draw (0,2.5) node[above] {$\scriptscriptstyle{2}$};
		\draw (2,2.5) node[above] {$\scriptscriptstyle{3}$};
		\draw (-1,-0.5) node[below] {$\scriptscriptstyle{1}$};
		\draw (1,-0.5) node[below] {$\scriptscriptstyle{2}$};
		\draw[fill=black] (1,0.5) circle (4.5pt);
		\draw[fill=white] (-2.3,1.5) rectangle (0.3,2);
	\end{tikzpicture}~~.
\]

\begin{lem}\label{lem::loi_remplacement_compatible}
	The morphisms of $\mathfrak{S}$-bimodules
	\[
	\As \boxtimes_c\Val \Ind(\DLie) ^{(1),(2)} \longrightarrow \DPois
	\quad \mbox{ and } \quad
	\As \boxtimes_c\Val  \Ind(\DLie) ^{(2),(1)} \longrightarrow \DPois
	\]
	are injectives.
\end{lem}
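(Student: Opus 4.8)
The morphism $D_\lambda$ is the graph of the replacement rule $\lambda$ displayed above, which moves an associative product $\mu$ from below a double bracket to above it. What must be established is that $\lambda$ is a \emph{compatible} distributive law; granting the criterion of \cite[Prop.~156]{Val03} (which underlies \Cref{prop::loi_remplacement}), this comes down precisely to the injectivity of the two maps in the statement, and the plan is to verify these two injectivities by hand. In each weight I would first describe the source $\mathfrak{S}$-bimodule explicitly — it is supported in only finitely many arities and is cut out of the free properad by $R_{As}$ (respectively $R_{DJ}$) together with the equivariance and parallelism relations built into $\boxtimes_c\Val$ — and then check that, once one is inside the given weight, the remaining defining relations of $\DPois$ identify no two normal-form graphs beyond what is already identified in the source.

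For $\As\boxtimes_c\Val\Ind(\DLie)^{(2),(1)}$ the elements are connected graphs made of two copies of $\mu$ above one double bracket, taken modulo $R_{As}$; besides $R_{As}$, the only defining relation of $\DPois$ that can act nontrivially here is an instance of $D_\lambda$, since $R_{DJ}$ involves two brackets. As $D_\lambda$ relates a graph with a $\mu$ \emph{below} the bracket to a combination of graphs with the $\mu$'s \emph{above} it, the only way it can produce a relation among normal-form graphs is through a critical overlap, and overlaps occur exactly in the weights $(2,1)$ and $(1,2)$. I would then check confluence of the relevant critical pair: for weight $(2,1)$ it is $\mu\circ(\mu\otimes\mathrm{id})$ plugged below the bracket, and reducing by $R_{As}$ first or by $D_\lambda$ first yields the same normal form because $D_\lambda$ is the double Leibniz rule and the two resulting applications to a $\mu(\mu(a,b),c)$-type input are symmetric in the two products; this confluence gives a well-defined retraction of the canonical surjection onto the source, hence injectivity. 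The case $\As\boxtimes_c\Val\Ind(\DLie)^{(1),(2)}$ is the mirror image: the source consists of graphs with one $\mu$ above two composable double brackets modulo $R_{DJ}$, and the critical pair is a $\mu$ sitting below one of the two brackets, where one may apply $D_\lambda$ then $R_{DJ}$ or $R_{DJ}$ then $D_\lambda$; confluence here is precisely the compatibility of the double Leibniz rule with the double Jacobi identity, that is, the consistency of Van den Bergh's axioms \cite{VdB08}, and I would verify it by writing out the finitely many monomials that occur. One can also sidestep the rewriting entirely by exhibiting a basis of each source $\mathfrak{S}$-bimodule and a splitting of the surjection onto it; the computational content is the same.

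The real obstacle, I expect, is the weight-$(1,2)$ confluence check: there one must keep track simultaneously of the anti-symmetry signs of the double bracket, its two Sweedler outputs, and the connectedness conditions of the properadic composition product, while running over all $\mathfrak{S}_n$-orbits of the graphs involved — in effect a properad-level reprise of Van den Bergh's verification that a double Poisson bracket is a consistent notion. The weight-$(2,1)$ check reduces to the classical fact that the Leibniz rule is ``associative'' and should be routine, as is the explicit identification of the two source $\mathfrak{S}$-bimodules.
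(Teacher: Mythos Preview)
Your proposal is correct and takes essentially the same approach as the paper. The paper is simply much terser: for the weight-$(2,1)$ check it writes out one explicit Leibniz computation showing that the two associativity isomers of $\{\!\{a_1,\mu(\mu(a_2,a_3),a_4)\}\!\}$ coincide in $\DPois$ (exactly your critical-pair confluence), and for weight $(1,2)$ it dispatches the entire verification in one line by citing \cite{VdB08} for the fact that the double Jacobiator is a multiderivation, which is precisely the $R_{DJ}$--$D_\lambda$ compatibility you identify as the hard step.
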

\begin{proof}
	We  start by considering the  morphism $\As\boxtimes_c\Val \Ind(\DLie) ^{(2),(1)} \rightarrow \DPois $: in $\DPois$, we consider the terms 
	\[
		\begin{tikzpicture}[scale=0.2,baseline=(n.base)]
			\node (n) at (2,1.5) {};
			\draw (0,0) -- (0,1) to[out=90,in=270] (-2,3) -- (-2,3.5);
			\draw (2,0) -- (2,1.5) -- (0,3.5);
			\draw (2,1.5) -- (4,3.5) -- (3,2.5) -- (2,3.5);
			\draw (-2,3.5) node[above] {$\scriptscriptstyle{1}$};
			\draw (0,3.5) node[above] {$\scriptscriptstyle{2}$};
			\draw (2,3.5) node[above] {$\scriptscriptstyle{3}$};
			\draw (4,3.5) node[above] {$\scriptscriptstyle{4}$};
			\draw (0,0) node[below] {$\scriptscriptstyle{1}$};
			\draw (2,0) node[below] {$\scriptscriptstyle{2}$};
			\draw[fill=black] (2,1.5) circle (4.5pt);
			\draw[fill=black] (3,2.5) circle (4.5pt);
			\draw[fill=white] (-0.3,0.5) rectangle (2.3,1);
		\end{tikzpicture}
		\qquad \mbox{ and } \qquad
		\begin{tikzpicture}[scale=0.2,baseline=(n.base)]
			\node (n) at (2,1.5) {};
			\draw (0,0) -- (0,1) to[out=90,in=270] (-2,3) -- (-2,3.5);
			\draw (2,0) -- (2,1.5) -- (0,3.5);
			\draw (1,2.5) -- (2,3.5);
			\draw (2,1.5) -- (4,3.5);
			\draw (-2,3.5) node[above] {$\scriptscriptstyle{1}$};
			\draw (0,3.5) node[above] {$\scriptscriptstyle{2}$};
			\draw (2,3.5) node[above] {$\scriptscriptstyle{3}$};
			\draw (4,3.5) node[above] {$\scriptscriptstyle{4}$};
			\draw (0,0) node[below] {$\scriptscriptstyle{1}$};
			\draw (2,0) node[below] {$\scriptscriptstyle{2}$};
			\draw[fill=black] (2,1.5) circle (4.5pt);
			\draw[fill=black] (1,2.5) circle (4.5pt);
			\draw[fill=white] (-0.3,0.5) rectangle (2.3,1);
		\end{tikzpicture}.
	\]
	In the properad $\DPois$, by the relation $D_\lambda$, we have the following equalities
	\[
		\begin{tikzpicture}[scale=0.2,baseline=(n.base)]
			\node (n) at (2,1.5) {};
			\draw (0,0) node[below] {$\scriptscriptstyle{1}$} -- (0,1) to[out=90,in=270] (-2,3) -- (-2,3.5) node[above] {$\scriptscriptstyle{1}$};
			\draw (2,0) node[below] {$\scriptscriptstyle{2}$} -- (2,1.5) -- (0,3.5) node[above] {$\scriptscriptstyle{2}$};
			\draw (2,1.5) -- (4,3.5) node[above] {$\scriptscriptstyle{4}$} -- (3,2.5) -- (2,3.5) node[above] {$\scriptscriptstyle{3}$};
			\draw[fill=black] (2,1.5) circle (4.5pt);
			\draw[fill=black] (3,2.5) circle (4.5pt);
			\draw[fill=white] (-0.3,0.5) rectangle (2.3,1);
		\end{tikzpicture}
		\quad = \quad
		\begin{tikzpicture}[scale=0.2,baseline=(n.base)]
			\node (n) at (2,1.5) {};
			\draw (1,-0.5) node[below] {$\scriptscriptstyle{1}$} -- (1,0.5) -- (2,1.5)-- (2,3) node[above] {$\scriptscriptstyle{1}$};
			\draw (-1.5,3) node[above] {$\scriptscriptstyle{2}$} -- (1,0.5)  -- (-0.5,2) -- (0.5,3) node[above] {$\scriptscriptstyle{3}$};
			\draw (3,-0.5) node[below] {$\scriptscriptstyle{2}$} to[out=90,in=270] (4,1.5) -- (4,2.5) -- (4,3) node[above] {$\scriptscriptstyle{4}$};
			\draw[fill=black] (1,0.5) circle (4.5pt);
			\draw[fill=white] (1.7,1.5) rectangle (4.3,2);
			\draw[fill=black] (-0.5,2) circle (4.5pt);
		\end{tikzpicture}
		\quad + \quad
		\begin{tikzpicture}[scale=0.2,baseline=(n.base)]
			\node (n) at (2,1.5) {};
			\draw (1,-0.5) node[below] {$\scriptscriptstyle{1}$} -- (1,0.5) -- (2,1.5)-- (2,2.5) node[above] {$\scriptscriptstyle{1}$};
			\draw (-0.5,2) node[above] {$\scriptscriptstyle{2}$}-- (1,0.5); 
			\draw (5,-0.5) node[below] {$\scriptscriptstyle{2}$} -- (5,0.5) -- (4,1.5) -- (4,2.5) node[above] {$\scriptscriptstyle{3}$} -- (4,1.5) -- (5,0.5) -- (6.5,2) node[above] {$\scriptscriptstyle{4}$};
			\draw[fill=black] (1,0.5) circle (4.5pt);
			\draw[fill=black] (5,0.5) circle (4.5pt);
			\draw[fill=white] (1.7,1.5) rectangle (4.3,2);
		\end{tikzpicture}
		\quad + \quad 
		\begin{tikzpicture}[scale=0.2,baseline=(n.base)]
			\node (n) at (2,1.5) {};
			\draw (0,3) node[above] {$\scriptscriptstyle{1}$} -- (0,1.5) to[out=270,in=90] (1,-0.5) node[below] {$\scriptscriptstyle{1}$} ;
			\draw (3,-0.5) node[below] {$\scriptscriptstyle{2}$} -- (3,0.5) -- (5.5,3)  node[above] {$\scriptscriptstyle{4}$} -- (3,0.5)  -- (2,1.5) -- (2,3) node[above] {$\scriptscriptstyle{2}$} ;
			\draw (4.5,2) -- (3.5,3) node[above] {$\scriptscriptstyle{3}$} ; 
			\draw[fill=white] (-0.3,1.5) rectangle (2.3,2);
			\draw[fill=black] (3,0.5) circle (4.5pt);
			\draw[fill=black] (4.5,2) circle (4.5pt);
		\end{tikzpicture}
		\quad = \quad 
		\begin{tikzpicture}[scale=0.2,baseline=(n.base)]
			\node (n) at (2,1.5) {};
			\draw (0,0) -- (0,1) to[out=90,in=270] (-2,3) -- (-2,3.5);
			\draw (2,0) -- (2,1.5) -- (0,3.5);
			\draw (1,2.5) -- (2,3.5);
			\draw (2,1.5) -- (4,3.5);
			\draw (-2,3.5) node[above] {$\scriptscriptstyle{1}$};
			\draw (0,3.5) node[above] {$\scriptscriptstyle{2}$};
			\draw (2,3.5) node[above] {$\scriptscriptstyle{3}$};
			\draw (4,3.5) node[above] {$\scriptscriptstyle{4}$};
			\draw (0,0) node[below] {$\scriptscriptstyle{1}$};
			\draw (2,0) node[below] {$\scriptscriptstyle{2}$};
			\draw[fill=black] (2,1.5) circle (4.5pt);
			\draw[fill=black] (1,2.5) circle (4.5pt);
			\draw[fill=white] (-0.3,0.5) rectangle (2.3,1);
		\end{tikzpicture},
	\]
	then $\As\boxtimes_c\Val \Ind(\DLie) ^{(2),(1)} \rightarrow \DPois$ is injective.
	As the double jacobiator is a multiderivation (see \cite{VdB08}), then the morphism $\As\boxtimes_c\Val \Ind(\DLie) ^{(1),(2)} \rightarrow \DPois$ is injective
\end{proof}
\begin{cor}
	We have the following isomorphism of properads:
	\[
		\DPois\cong \As \boxtimes_c\Val \Ind(\DLie).	
	\]
\end{cor}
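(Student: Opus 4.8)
The plan is to recognize $\DPois$ as the properad obtained from $\As$ and $\widetilde{\DLie}=\Ind(\DLie)$ by a compatible distributive (replacement) law, and then to invoke Vallette's distributive-law machinery recalled in \Cref{prop::loi_remplacement}. By construction $\DPois=\mathscr{P}\big(V\oplus W,\,R_{As}\oplus D_\lambda\oplus R_{DJ}\big)$ with $V=V_{As}$ and $W=V_{\DLie}$, and the relation $D_\lambda$ is precisely the graph of the morphism $\lambda$ displayed just before \Cref{lem::loi_remplacement_compatible}; so the corollary is exactly the special case $\mathcal{A}=\mathscr{P}(V_{As},R_{As})$, $\mathcal{B}=\mathscr{P}(V_{\DLie},R_{DJ})$ of Vallette's isomorphism $\calP\cong\mathcal{A}\boxtimes_c\Val\mathcal{B}$.

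First I would identify the two building blocks. On one hand $\mathcal{A}=\mathscr{P}(V_{As},R_{As})=\As$ by definition. On the other hand $\mathcal{B}=\mathscr{P}(V_{\DLie},R_{DJ})$ is the properad $\widetilde{\DLie}$, and by the compatibility of the induction functor with free constructions and quotients (\Cref{def::functor induction}, i.e. $\Ind\big(\scrF(-)/\langle-\rangle\big)\cong\scrF\Val(\Ind(-))/\langle\Ind(-)\rangle$) it coincides with $\Ind(\DLie)$, since $V_{\DLie}=\Ind$ of the protoperadic generator of $\DLie$ and $R_{DJ}=\Ind(R_{\mathcal{DJ}})$. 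Next I would argue that $\lambda$ is a compatible distributive law: by \Cref{lem::loi_remplacement_compatible} the two canonical morphisms $\As\boxtimes_c\Val\Ind(\DLie)^{(1),(2)}\to\DPois$ and $\As\boxtimes_c\Val\Ind(\DLie)^{(2),(1)}\to\DPois$ are injective, which is exactly the compatibility criterion of \cite{Val03,Val07} for the replacement law encoded by $D_\lambda$. Finally, \Cref{prop::loi_remplacement} applies verbatim and yields $\DPois\cong\As\boxtimes_c\Val\Ind(\DLie)$.

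The only substantive work is hidden in \Cref{lem::loi_remplacement_compatible}, so the main obstacle — already treated there — is to check that $\lambda$ genuinely satisfies the coherence (``diamond'') axiom of a distributive law relative to the associativity relation $R_{As}$ and the double Jacobi relation $R_{DJ}$, and that $D_\lambda$ is precisely the family of relations it produces. The key inputs are that $D_\lambda$ rewrites a composite ``double bracket after product'' as a sum of ``products after double bracket'' (the multiderivation property of the double bracket, as in \cite{VdB08}), and that both $\As$ and $\widetilde{\DLie}$ are quadratic with relations of weight $2$, so that the distributive law only needs to be verified on the weight-$3$ component. Once the injectivity statements of \Cref{lem::loi_remplacement_compatible} are in hand, the corollary is a formal consequence of \Cref{prop::loi_remplacement} requiring no further computation.
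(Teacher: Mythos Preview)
Your proposal is correct and follows exactly the approach the paper takes: the corollary is stated without an explicit proof precisely because it is the immediate application of \Cref{prop::loi_remplacement} once \Cref{lem::loi_remplacement_compatible} establishes that the replacement law $\lambda$ is compatible. Your added remarks identifying $\mathcal{B}=\Ind(\DLie)$ via \Cref{def::functor induction} are correct supporting detail, though note that in Vallette's framework the compatibility condition \emph{is} the injectivity of the two morphisms in \Cref{lem::loi_remplacement_compatible}, not a separate diamond check to be verified on top of it.
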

As the properads $\As$ and $\Ind(\DLie)$ are Koszul (see \cite[Chap. 9]{LV12} for the case of $\As$), we obtain the main theorem of this paper.
\begin{thm}\label{thm::DPois_Koszul}
	The properad $\DPois$ is Koszul.
\end{thm}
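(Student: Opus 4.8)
The statement to prove is that the properad $\DPois$ is Koszul. The plan is to reduce it, via the distributive law machinery of Vallette, to the Koszulness of the two constituent properads $\As$ and $\Ind(\DLie)$, both of which are already known to be Koszul (the former by \cite[Chap. 9]{LV12}, the latter by \Cref{thm::prop_DLie_Koszul}). Concretely, I would invoke \Cref{prop::loi_remplacement}: $\DPois = \mathscr{P}(V\oplus W, R_{As}\oplus D_\lambda\oplus R_{DJ})$ with $V=V_{As}$, $W=V_{\DLie}$, and $D_\lambda$ given by the morphism $\lambda$ exhibited just before \Cref{lem::loi_remplacement_compatible}; so it suffices to check that $\lambda$ is a \emph{compatible} distributive (replacement) law, i.e. that the canonical maps
\[
	\As\boxtimes_c^{\mathrm{Val}}\Ind(\DLie)\longrightarrow \DPois
\]
from both sides are isomorphisms onto $\DPois$.

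The key steps, in order, are: (1) recall the explicit presentation of $\DPois$ and identify $\mathcal{A}:=\mathscr{P}(V_{As},R_{As})=\As$ and $\mathcal{B}:=\mathscr{P}(V_{\DLie},R_{DJ})=\Ind(\DLie)$; (2) verify that the relation $D_\lambda$ is exactly the graph of a morphism $\lambda:(\Ibox\oplus W)\boxtimes_c^{\mathrm{Val}}(\Ibox\oplus V)^{(1)_W,(1)_V}\to (\Ibox\oplus V)\boxtimes_c^{\mathrm{Val}}(\Ibox\oplus W)^{(1)_V,(1)_W}$, which is the content of the displayed formula for $\lambda$; (3) prove \Cref{lem::loi_remplacement_compatible}, namely that both composites $\As\boxtimes_c^{\mathrm{Val}}\Ind(\DLie)^{(1),(2)}\to\DPois$ and $\As\boxtimes_c^{\mathrm{Val}}\Ind(\DLie)^{(2),(1)}\to\DPois$ are injective — the first because the double Jacobiator $\{\!\{-,-,-\}\!\}$ is a multiderivation in each slot (Van den Bergh \cite{VdB08}), so a bracket can always be pushed above a product, and the second by the explicit diagrammatic computation using $D_\lambda$ that rewrites a product-then-bracket configuration as a sum of bracket-then-product configurations; (4) conclude from \Cref{prop::loi_remplacement} that $\lambda$ is a compatible replacement law, whence $\DPois\cong \As\boxtimes_c^{\mathrm{Val}}\Ind(\DLie)$ as $\mathfrak{S}$-bimodules, that the finiteness and homological-concentration hypotheses of \Cref{prop::loi_remplacement} hold ($V\oplus W$ is finite-dimensional arity by arity and $W=V_{\DLie}$ is concentrated in homological degree $0$), and hence — since $\mathcal{A}=\As$ and $\mathcal{B}=\Ind(\DLie)$ are both Koszul — that $\DPois$ is Koszul.

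The main obstacle is step (3): checking injectivity of the two "levelization" maps $\As\boxtimes_c^{\mathrm{Val}}\Ind(\DLie)\to\DPois$. This is where the genuine combinatorics of double Poisson algebras enters — one must show that no nontrivial relations are created when one composes an associative product with a double bracket, in either order. Injectivity on the "product above bracket" side ($(1),(2)$) follows from the derivation property $D_\lambda$ itself (the Leibniz rule lets one normalize any such monomial), while injectivity on the "bracket above product" side ($(2),(1)$) is the subtler direction: it uses that the double Jacobiator is a multiderivation, so that even after applying the double Jacobi relation $R_{DJ}$ and the associativity relation $R_{As}$ the resulting rewriting does not collapse a basis element of $\As\boxtimes_c^{\mathrm{Val}}\Ind(\DLie)$ to zero. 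Once \Cref{lem::loi_remplacement_compatible} is in hand the rest is a direct appeal to the quoted propositions, with no further computation.

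\medskip

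\begin{proof}[Proof of \Cref{thm::DPois_Koszul}]
	We apply \Cref{prop::loi_remplacement} to the quadratic properad $\DPois=\scrF^{\mathrm{Val}}(V\oplus W)/\langle R_{As}\oplus D_\lambda\oplus R_{DJ}\rangle$, with $V=V_{As}$, $W=V_{\DLie}$, $\mathcal{A}:=\mathscr{P}(V_{As},R_{As})=\As$ and $\mathcal{B}:=\mathscr{P}(V_{\DLie},R_{DJ})=\Ind(\DLie)$. The relation $D_\lambda$ is the graph of the morphism of $\mathfrak{S}$-bimodules $\lambda:(\Ibox\oplus W)\boxtimes_c^{\mathrm{Val}}(\Ibox\oplus V)^{(1)_W,(1)_V}\to (\Ibox\oplus V)\boxtimes_c^{\mathrm{Val}}(\Ibox\oplus W)^{(1)_V,(1)_W}$ described above. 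By \Cref{lem::loi_remplacement_compatible}, the canonical morphisms
	\[
		\As\boxtimes_c^{\mathrm{Val}}\Ind(\DLie)^{(1),(2)}\longrightarrow\DPois
		\quad\text{and}\quad
		\As\boxtimes_c^{\mathrm{Val}}\Ind(\DLie)^{(2),(1)}\longrightarrow\DPois
	\]
	are injective, so $\lambda$ is a compatible distributive law in the sense of \cite{Val03,Val07}. Moreover, the $\mathfrak{S}$-bimodule $V\oplus W$ is finite-dimensional in each arity, so the sum $\sum_{m,n}\dim_k\big((V\oplus W)([\![1,m]\!],[\![1,n]\!])\big)$ is finite, and $W=V_{\DLie}$ is concentrated in homological degree $0$. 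Hence \Cref{prop::loi_remplacement} gives the isomorphism of $\mathfrak{S}$-bimodules
	\[
		\DPois\cong\As\boxtimes_c^{\mathrm{Val}}\Ind(\DLie).
	\]
	The properad $\As$ is Koszul (see \cite[Chap. 9]{LV12}) and the properad $\Ind(\DLie)$ is Koszul by \Cref{thm::prop_DLie_Koszul}. Therefore, by the last assertion of \Cref{prop::loi_remplacement}, the properad $\DPois$ is Koszul.
\end{proof}
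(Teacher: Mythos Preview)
Your proof is correct and follows exactly the same approach as the paper: apply \Cref{prop::loi_remplacement} to $\DPois=\mathscr{P}(V_{As}\oplus V_{\DLie},R_{As}\oplus D_\lambda\oplus R_{DJ})$, using \Cref{lem::loi_remplacement_compatible} for compatibility of $\lambda$, Koszulness of $\As$ from \cite{LV12}, and Koszulness of $\Ind(\DLie)$ from \Cref{thm::prop_DLie_Koszul}. The paper's proof is the one-liner ``Directly by \Cref{prop::loi_remplacement}''; you have simply made the verification of the finiteness and degree hypotheses explicit.
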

\begin{proof}
	Directly by \Cref{prop::loi_remplacement}.
\end{proof}

\appendix

\section{The algebras \texorpdfstring{$\scrA(\DLie,n)$}{A(DLie,n)} are Koszul}
In this section, $\DLie$ is the protoperad of double Lie algebras.\\
We consider the family of quadratic algebras $\scrA(\DLie,n)$, for $n\geqslant 2$, given by the quadratic datum $\left(V(\DLie,n),R(\DLie,n)\right)$, with
\[
	V(\DLie,n)=\{x_{ij}~|~1\leqslant i<j\leqslant n\}
\]
and, for $n$ in $\N$,
\[
R(\DLie,n)=\left\{
\begin{array}{c}
\left.
\begin{array}{c}
x_{ij}x_{jk}- x_{jk}x_{ik} -x_{ik}x_{ij} \\
x_{jk}x_{ij}- x_{ik}x_{jk} -x_{ij}x_{ik} \\
x_{ab}x_{uv}-x_{uv}x_{ab}
\end{array}
\right|
\begin{array}{c}
1\leqslant i<j<k \leqslant n\\
1\leqslant u<v \leqslant n \\
1\leqslant a<b \leqslant n\\
\{a,b\}\cap\{u,v\}=\varnothing
\end{array}
\end{array}
\right\}
~.
\]
\begin{prop}\label{proof::ADLie_Koszul}
	For all $n\geqslant 2$, the algebra $\scrA(\DLie,n)$ is Koszul.
\end{prop}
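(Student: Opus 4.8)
The plan is to prove this by the rewriting method for associative algebras (see \cite[Sect.~4.1]{LV12}): I will present $\scrA(\DLie,n)$ by an explicit quadratic rewriting system, show it is convergent, and conclude that $\scrA(\DLie,n)$ is Koszul, with the irreducible monomials forming a PBW basis. Concretely, order the generators by $x_{ij}<x_{k\ell}$ whenever $(i,j)<_{\mathrm{lex}}(k,\ell)$ and take the induced degree-lexicographic order on monomials. Orienting the three families of defining relations of $\scrA(\DLie,n)$ along their leading terms produces the rules
\begin{align*}
	x_{ab}x_{uv} &\rightsquigarrow x_{uv}x_{ab} \quad\text{for } (a,b)>_{\mathrm{lex}}(u,v),\ \{a,b\}\cap\{u,v\}=\varnothing,\\
	x_{jk}x_{ik} &\rightsquigarrow x_{ij}x_{jk}-x_{ik}x_{ij} \quad\text{for } i<j<k,\\
	x_{jk}x_{ij} &\rightsquigarrow x_{ik}x_{jk}+x_{ij}x_{ik} \quad\text{for } i<j<k.
\end{align*}
One checks directly that this system is reduced (each right-hand monomial is irreducible) and terminating (each right-hand monomial is strictly smaller than the corresponding leading monomial), so that by the rewriting method it suffices to verify that every degree-$3$ critical monomial is confluent.

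The confluence check is the heart of the argument, and I would organize it as follows. First, there is \emph{no} critical monomial supported on only three indices: for $i<j<k$ the only leading monomials supported on $\{i,j,k\}$ are $x_{jk}x_{ik}$ and $x_{jk}x_{ij}$, which both start with $x_{jk}$ and hence cannot overlap. Therefore every critical monomial $x_px_qx_r$ involves at least four distinct indices, and — classifying by whether each of $x_px_q$, $x_qx_r$ comes from the commutation rule (C), the first Jacobi rule (A1), or the second (A2) — there are nine cases, several of which are identified by the evident opposite-algebra isomorphism $x_{ij}\mapsto x_{ij}$ (which exchanges (A1) and (A2) and fixes (C)). The case (C;C) is the coherence of a symmetric-group action and is immediate; in the mixed cases (C;A1), (A1;C), (C;A2), (A2;C) a brief inspection of index inequalities shows which configurations actually occur (the commuting generator always has indices forcing it to slide freely past the other two), and both reductions then converge after a handful of steps; the four genuine cases (A1;A1), (A1;A2), (A2;A1), (A2;A2) are each supported on four indices (essentially $i'<i<j<k$) and are settled by fully reducing the word along both orders and matching the resulting linear combinations of normal monomials — this matching is exactly what the form of the double Jacobi relations makes work. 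For instance the critical monomial $x_{jk}x_{ik}x_{i'k}$ with $i'<i<j<k$ reduces along both orders to
\[
	x_{i'i}x_{ij}x_{jk}-x_{i'i}x_{ik}x_{ij}-x_{i'j}x_{i'i}x_{jk}+x_{i'k}x_{i'j}x_{i'i}.
\]

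Once all nine cases are checked, the rewriting system is confluent, hence convergent; by \cite[Sect.~4.1]{LV12} the algebra $\scrA(\DLie,n)$ is Koszul, which is the statement of \Cref{proof::ADLie_Koszul}, and this holds for every $n\geqslant 2$ (for small $n$ the corresponding critical monomials simply do not exist, e.g. $\scrA(\DLie,2)=k[x_{12}]$ and $\scrA(\DLie,3)$ has no critical monomials at all). The main obstacle is purely the bookkeeping inside the genuine (A$\alpha$;A$\beta$) cases: after each application of a Jacobi rule one must re-examine which two-letter submonomials of the result are again leading monomials (and of which rule) before continuing, and one must be careful with the index inequalities that decide whether the commutation rule applies; none of this is deep, but it must be carried out case by case to see that the two branches always close up.
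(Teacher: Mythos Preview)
Your overall strategy—applying the rewriting method directly to $\scrA(\DLie,n)$ with the deg-lex order—is sound and does give a confluent system, but your description of the mixed cases contains an actual error. You claim that in every $(\mathrm{C};\mathrm{A})$ or $(\mathrm{A};\mathrm{C})$ overlap ``the commuting generator always has indices forcing it to slide freely past the other two''. This is false. Take $n=4$, $i'=1$, $i=2$, $j=3$, $k=4$: the critical monomial $x_{34}x_{24}x_{13}$ is of type $(\mathrm{A}1;\mathrm{C})$ (since $\{2,4\}\cap\{1,3\}=\varnothing$ and $(2,4)>_{\mathrm{lex}}(1,3)$), yet $x_{13}$ shares the index $3$ with both $x_{34}$ and $x_{23}$, so after the first A1-step one is forced into further Jacobi rewrites, not commutations. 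Confluence \emph{does} hold here (both branches reduce to $x_{14}x_{23}x_{34}-x_{14}x_{24}x_{23}+x_{13}x_{14}x_{24}$), but only after several A-steps; the computation is of the same order of difficulty as the genuine $(\mathrm{A};\mathrm{A})$ cases you single out. So the case analysis is correct in outcome but substantially heavier than you indicate, and your ``brief inspection'' does not suffice.

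The paper takes a different and rather more efficient route: instead of rewriting in $\scrA(\DLie,n)$, it passes to the Koszul dual $W^n=(\scrA(\DLie,n))^{!}$ and runs the rewriting method there, with the order $x_{ij}<x_{k\ell}$ iff $j<\ell$, or $j=\ell$ and $i<k$. The point is that in $W^n$ every rewriting rule has a \emph{single monomial} on the right (up to sign): $x_{ij}^2\rightsquigarrow 0$, $x_{jk}x_{ik}\rightsquigarrow -x_{ij}x_{jk}$, $x_{ik}x_{ij}\rightsquigarrow -x_{ij}x_{jk}$, $x_{jk}x_{ij}\rightsquigarrow -x_{ij}x_{ik}$, $x_{ik}x_{jk}\rightsquigarrow x_{ij}x_{ik}$, and anticommutation for disjoint pairs. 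Hence each confluence diagram is literally a pair of paths of monomials to compare, with no branching; the $\sim 30$ critical monomials are then dispatched mechanically. Your direct approach buys nothing conceptually and costs a binary tree at every step because your right-hand sides have two terms; passing to the dual is what keeps the computation under control.
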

\begin{proof} 
The algebra $\scrA(\DLie,2)$ is isomorphic to $k[x]$, which is Koszul. We denote by $W^n$, the Koszul dual of  $\scrA(\DLie,n)$; this quadratic algebra is given by the quadratic datum $(V(\DLie,n)^\vee, R(\DLie,n)^\bot)$ :
\[
\begin{aligned}
V(\DLie,n)^\vee&=\{x_{ij}~|~1\leqslant i<j\leqslant n\} \\
R(\DLie,n)^\bot&=\left\{
\begin{array}{c}
\left.
\begin{array}{c}
x_{ij}^2\\
x_{ij}x_{jk} + x_{jk}x_{ik} \\
x_{ij}x_{jk} + x_{ik}x_{ij} \\
x_{ij}x_{ik} + x_{jk}x_{ij} \\
x_{ij}x_{ik} - x_{ik}x_{jk}\\
x_{ab}x_{uv} + x_{uv}x_{ab}
\end{array}
\right|
\begin{array}{c}
1\leqslant i<j<k \leqslant n\\
1\leqslant u<v \leqslant n \\
1\leqslant a<b \leqslant n\\
\{a,b\}\cap\{u,v\}=\varnothing
\end{array}
\end{array}
\right\}
\end{aligned}.
\]
We prove that the algebra $W_n$ is Koszul by the rewriting method; we will follow \cite[Chap. 4, Sect 4.1]{LV12}. 

\textbf{Step 1:} We totally order the set of generators of $W^n$by the right lexicographisc order on indices: 
\[
x_{ij}<x_{kl} \mbox{ if } j<l \mbox{ or } j=l \mbox{ and } i<k.
\]

\textbf{Step 2:} We extend this order to the set of monomials by the left lexicographic order.

\textbf{Step 3:} We obtain the following rewriting rules:
\[
\xymatrix@R=0.15cm{
x_{ij}^2 \ar@{~>}[r]^{\mbox{\textcircled{{\footnotesize $1$}}}} & 0 & 	x_{jk}x_{ik} \ar@{~>}[r]^{\mbox{\textcircled{{\footnotesize $2$}}}} & -x_{ij}x_{jk} \\
x_{ik}x_{ij} \ar@{~>}[r]^{\mbox{\textcircled{{\footnotesize $3$}}}} & -x_{ij}x_{jk} & 	x_{jk}x_{ij} \ar@{~>}[r]^{\mbox{\textcircled{{\footnotesize $4$}}}} & -x_{ij}x_{ik} \\
x_{ik}x_{jk} \ar@{~>}[r]^{\mbox{\textcircled{{\footnotesize $5$}}}} & x_{ij}x_{ik} & 	x_{uv}x_{ij} \ar@{~>}[r]^{\mbox{\textcircled{{\footnotesize $6$}}}} & - x_{ij}x_{uv}. \\
}
\]
Observe that the rewriting rule \textcircled{\footnotesize $5$} is the only one which does not change the sign.

\textbf{Step 4:} We test the confluence of rewriting rules for all critical monomials. Recall that a \emph{critical monomial} is a monomial $x_{ij}x_{kl}x_{uv}$ such that monomials $x_{ij}x_{kl}$ and $x_{kl}x_{uv}$ can be rewrite by rewriting rules. Any critical monomial gives an oriented graph under the rewriting rules which is \emph{confluent} if it has only one terminal vertex.

We denote by \textcircled{\footnotesize  $\alpha$}-\textcircled{\footnotesize $\beta$} the confluence diagram associated to the monomial $x_{ij}x_{kl}x_{uv}$ where $x_{ij}x_{kl}$ is the leading term (the term of the left side) of the rewriting rule \textcircled{\footnotesize  $\alpha$} and $x_{kl}x_{uv}$, the leading term of the rewriting rule \textcircled{\footnotesize  $\beta$}. We adopt the following notation:  for a monomial $x_{ij}x_{kl}x_{uv}$, when we use the rewriting rule \textcircled{\footnotesize  $\alpha$} on $x_{ij}x_{kl}$, we denote that by 
\[
\xymatrix{
	x_{ij}x_{kl}x_{uv} 
	\ar@{~>}[d]_{\mbox{\textcircled{{\footnotesize  $\alpha$}}}} \\
	\widetilde{x_{ab}x_{cd}}{x}_{uv}
}
\]
and when we use the rewriting rule \textcircled{\footnotesize  $\alpha$} on $x_{kl}x_{uv}$, we denote that by 
\[
	\xymatrix{
		x_{ij}x_{kl}x_{uv} 
		\ar@{~>}[d]^{\mbox{\textcircled{{\footnotesize  $\alpha$}}}} \\
		{x}_{ij}\widetilde{x_{ab}x_{cd}}
	}
\]
We start with the case of $1 \leqslant i<j<k \leqslant n$ and $x_{uv}<x_{ij}$ to study diagrams of the form \textcircled{\footnotesize  $1$}-\textcircled{\footnotesize $\beta$}

\noindent 
\begin{minipage}[b]{0.3\linewidth}
$\xymatrix@C=0.2cm@R=0.2cm{
& x_{jk}^2x_{ik} \ar@{~>}[dr]^{\mbox{\textcircled{{\footnotesize $2$}}}} \ar@{~>}[dl]_{\mbox{\mbox{\textcircled{{\footnotesize $1$}}}}} & \\
0 && -x_{jk}x_{ij}x_{jk} \ar@{~>}[dl]_{\mbox{\textcircled{{\footnotesize $4$}}}}\\
& x_{ij}x_{jk}^2 \ar@{~>}[ul]_{\mbox{\mbox{\textcircled{{\footnotesize $1$}}}}} &
}$
\end{minipage}\quad
\begin{minipage}[b]{0.3\linewidth}
$\xymatrix@C=0.2cm@R=0.2cm{
& x_{ik}^2x_{ij} \ar@{~>}[dr]^{\mbox{\textcircled{{\footnotesize $3$}}}} \ar@{~>}[dl]_{\mbox{\mbox{\textcircled{{\footnotesize $1$}}}}} & \\
0 && -x_{ik}x_{ij}x_{jk} \ar@{~>}[dl]_{\mbox{\textcircled{{\footnotesize $3$}}}}\\
& x_{ij}x_{jk}^2 \ar@{~>}[ul]_{\mbox{\mbox{\textcircled{{\footnotesize $1$}}}}} &
}$
\end{minipage}\quad
\begin{minipage}[b]{0.3\linewidth}
$\xymatrix@C=0.2cm@R=0.2cm{
	& x_{jk}^2x_{ij} \ar@{~>}[dr]^{\mbox{\textcircled{{\footnotesize $4$}}}} \ar@{~>}[dl]_{\mbox{\mbox{\textcircled{{\footnotesize $1$}}}}} & \\
	0 && -x_{jk}x_{ij}x_{ik} \ar@{~>}[dl]_{\mbox{\textcircled{{\footnotesize $4$}}}}\\
	& x_{ij}x_{jk}^2 \ar@{~>}[ul]_{\mbox{\mbox{\textcircled{{\footnotesize $1$}}}}} &
}$
\end{minipage}

\noindent 
\begin{minipage}[b]{0.45\linewidth}
	$\xymatrix@C=0.4cm@R=0.3cm{
		&x_{ik}x_{ij}x_{ik} \ar@{~>}[rd]_{\mbox{\textcircled{{\footnotesize $3$}}}} &  & \\
		x_{ik}^2x_{jk} \ar@{~>}[ur]^{\mbox{\textcircled{{\footnotesize $5$}}}} \ar@{~>}[d]_{\mbox{\mbox{\textcircled{{\footnotesize $1$}}}}} & &
		-x_{ij}x_{jk}x_{ik} \ar@{~>}[dl]^{\mbox{\textcircled{{\footnotesize $2$}}}} &  \\
		0& x_{ij}^2x_{jk}\ar@{~>}[l]^{\mbox{\mbox{\textcircled{{\footnotesize $1$}}}}}&&
	}$
\end{minipage}\quad
\begin{minipage}[b]{0.3\linewidth} 
	$\xymatrix@C=0.2cm@R=0.2cm{
		& x_{ij}^2x_{uv} \ar@{~>}[dr]^{\mbox{\textcircled{{\footnotesize $6$}}}} \ar@{~>}[dl]_{\mbox{\mbox{\textcircled{{\footnotesize $1$}}}}} & \\
		0 && -x_{ij}x_{uv}x_{ij} \ar@{~>}[dl]_{\mbox{\textcircled{{\footnotesize $6$}}}}\\
		& x_{uv}x_{ij}^2 \ar@{~>}[ul]_{\mbox{\mbox{\textcircled{{\footnotesize $1$}}}}} &
	}$
\end{minipage}

then, all diagrams for a critical monomial with the leading term of \mbox{\mbox{\textcircled{{\footnotesize $1$}}}} on the left are confluent. Similarly, all diagrams \textcircled{\footnotesize  $\alpha$}-\textcircled{\footnotesize $1$} are confluent. 

Now, we study the diagrams for a critical monomial with the leading term of \mbox{\textcircled{{\footnotesize $2$}}} on the left. We start with \textcircled{\footnotesize  $2$}-\textcircled{\footnotesize $2$}: let $u<i<j<k$:
\[
\xymatrix@C=0.5cm@R=0.3cm{
& -x_{jk}x_{ui}x_{ik}\ar@{~>}[r]_{\mbox{\textcircled{{\footnotesize $6$}}}}  & x_{ui}x_{jk}x_{ik} \ar@{~>}[dr]^{\mbox{\textcircled{{\footnotesize $2$}}}} & \\
x_{jk}x_{ik}x_{uk} \ar@{~>}[ur]^{\mbox{\textcircled{{\footnotesize $2$}}}} \ar@{~>}[dr]_{\mbox{\textcircled{{\footnotesize $2$}}}}&&& -x_{ui}x_{ij}x_{jk} .	\\
&-x_{ij}x_{jk}x_{uk}\ar@{~>}[r]^{\mbox{\textcircled{{\footnotesize $2$}}}} & x_{ij}x_{uj}x_{jk}\ar@{~>}[ur]_{\mbox{\textcircled{{\footnotesize $2$}}}} &
}
\]
For \textcircled{\footnotesize  $2$}-\textcircled{\footnotesize $3$}, there are three cases: we begin with $i<j<u<k$:
\[
\xymatrix@C=0.5cm@R=0.3cm{
	& -x_{jk}x_{iu}x_{uk}\ar@{~>}[r]_{\mbox{\textcircled{{\footnotesize $6$}}}}  & x_{iu}x_{jk}x_{uk} \ar@{~>}[dr]^{\mbox{\textcircled{{\footnotesize $5$}}}} &  \\
	x_{jk}x_{ik}x_{iu} \ar@{~>}[ur]^{\mbox{\textcircled{{\footnotesize $3$}}}} \ar@{~>}[dr]_{\mbox{\textcircled{{\footnotesize $2$}}}}&&& x_{iu}x_{ju}x_{jk}\ar@{~>}[dl]_{\mbox{\textcircled{{\footnotesize $5$}}}} &	;\\
	&-x_{ij}x_{jk}x_{iu}\ar@{~>}[r]^{\mbox{\textcircled{{\footnotesize $6$}}}} & x_{ij}x_{iu}x_{jk} &&
}
\]
for the case $i<j=u<k$, we have:
\[
\xymatrix@C=0.5cm@R=0.3cm{
	& -x_{jk}x_{ij}x_{jk}\ar@{~>}[r]_{\mbox{\textcircled{{\footnotesize $4$}}}}  & x_{ij}x_{ik}x_{jk} \ar@{~>}[dr]^{\mbox{\mbox{\textcircled{{\footnotesize $1$}}}}} &  \\
	x_{jk}x_{ik}x_{ij} \ar@{~>}[ur]^{\mbox{\textcircled{{\footnotesize $3$}}}} \ar@{~>}[dr]_{\mbox{\textcircled{{\footnotesize $2$}}}}&&& 0 &	;\\
	&-x_{ij}x_{jk}x_{ij}\ar@{~>}[r]^{\mbox{\textcircled{{\footnotesize $4$}}}} & x_{ij}x_{ij}x_{ik} \ar@{~>}[ur]_{\mbox{\mbox{\textcircled{{\footnotesize $1$}}}}} &&
}
\]
and, for $i<u<j<k$, we have:
\[
\xymatrix@C=0.5cm@R=0.3cm{
	& -x_{jk}x_{iu}x_{uk}\ar@{~>}[r]_{\mbox{\textcircled{{\footnotesize $6$}}}}  & x_{iu}x_{jk}x_{uk} \ar@{~>}[dr]^{\mbox{\textcircled{{\footnotesize $2$}}}} &  \\
	x_{jk}x_{ik}x_{iu} \ar@{~>}[ur]^{\mbox{\textcircled{{\footnotesize $3$}}}} \ar@{~>}[dr]_{\mbox{\textcircled{{\footnotesize $2$}}}}&&& -x_{iu}x_{uj}x_{jk} &	\\
	&-x_{ij}x_{jk}x_{iu}\ar@{~>}[r]^{\mbox{\textcircled{{\footnotesize $6$}}}} & x_{ij}x_{iu}x_{jk} \ar@{~>}[ur]^{\mbox{\textcircled{{\footnotesize $3$}}}} &&
} \ .
\]
For  \textcircled{\footnotesize  $2$}-\textcircled{\footnotesize $4$}, there are only one case: let $u<i<j<k$
\[
\xymatrix@C=0.5cm@R=0.3cm{
	& -x_{jk}x_{ui}x_{uk}\ar@{~>}[r]_{\mbox{\textcircled{{\footnotesize $6$}}}}  & x_{ui}x_{jk}x_{uk} \ar@{~>}[dr]^{\mbox{\textcircled{{\footnotesize $4$}}}} &  \\
	x_{jk}x_{ik}x_{ui} \ar@{~>}[ur]^{\mbox{\textcircled{{\footnotesize $4$}}}} \ar@{~>}[dr]_{\mbox{\textcircled{{\footnotesize $2$}}}}&&& -x_{ui}x_{uj}x_{uk} &	;\\
	&-x_{ij}x_{jk}x_{ui}\ar@{~>}[r]^{\mbox{\textcircled{{\footnotesize $4$}}}} & x_{ij}x_{ui}x_{uk} \ar@{~>}[ur]^{\mbox{\textcircled{{\footnotesize $4$}}}} &&
}
\]
For \textcircled{\footnotesize  $2$}-\textcircled{\footnotesize $5$}, there is three cases: we begin with $i<u<j<k$:
\[
\xymatrix@C=0.5cm@R=0.3cm{
	& x_{jk}x_{iu}x_{ik}\ar@{~>}[r]_{\mbox{\textcircled{{\footnotesize $6$}}}}  & -x_{iu}x_{jk}x_{ik} \ar@{~>}[dr]^{\mbox{\textcircled{{\footnotesize $2$}}}} &  \\
	x_{jk}x_{ik}x_{uk} \ar@{~>}[ur]^{\mbox{\textcircled{{\footnotesize $5$}}}} \ar@{~>}[dr]_{\mbox{\textcircled{{\footnotesize $2$}}}}&&& -x_{iu}x_{ij}x_{jk} &	;\\
	&-x_{ij}x_{jk}x_{uk}\ar@{~>}[r]^{\mbox{\textcircled{{\footnotesize $2$}}}} & x_{ij}x_{uj}x_{jk} \ar@{~>}[ur]_{\mbox{\textcircled{{\footnotesize $5$}}}}&&
}
\]
for the case $i<j=u<k$, we have:
\[
\xymatrix@C=0.5cm@R=0.3cm{
	& x_{jk}x_{ij}x_{ik}\ar@{~>}[r]_{\mbox{\textcircled{{\footnotesize $4$}}}}  & -x_{ij}x_{ik}x_{ik} \ar@{~>}[dd]^{\mbox{\mbox{\textcircled{{\footnotesize $1$}}}}} &  \\
	x_{jk}x_{ik}x_{jk} \ar@{~>}[ur]^{\mbox{\textcircled{{\footnotesize $5$}}}} \ar@{~>}[dr]_{\mbox{\textcircled{{\footnotesize $2$}}}}&&&  &	;\\
	&-x_{ij}x_{jk}x_{jk}\ar@{~>}[r]^{\mbox{\mbox{\textcircled{{\footnotesize $1$}}}}} & 0
	 &&
}
\]
and, for $i<u<j<k$, we have:
\[
\xymatrix@C=0.5cm@R=0.3cm{
	& x_{jk}x_{iu}x_{ik}\ar@{~>}[r]_{\mbox{\textcircled{{\footnotesize $6$}}}}  & x_{iu}x_{jk}x_{ik} \ar@{~>}[dr]^{\mbox{\textcircled{{\footnotesize $2$}}}} &  \\
	x_{jk}x_{ik}x_{uk} \ar@{~>}[ur]^{\mbox{\textcircled{{\footnotesize $5$}}}} \ar@{~>}[dr]_{\mbox{\textcircled{{\footnotesize $2$}}}}&&& 
	-x_{iu}x_{ij}x_{jk} \ar@{~>}[dl]_{\mbox{\textcircled{{\footnotesize $3$}}}} &	\\
	&-x_{ij}x_{jk}x_{uk}\ar@{~>}[r]^{\mbox{\textcircled{{\footnotesize $5$}}}} & x_{ij}x_{iu}x_{jk}  &&
} \ .
\]
For \textcircled{\footnotesize  $2$}-\textcircled{\footnotesize $6$}, there are three cases: we begin with $u<i<j<k$, $v<k$ and $v\ne j$:
\[
\xymatrix@C=0.5cm@R=0.3cm{
	& -x_{jk}x_{uv}x_{ik}\ar@{~>}[r]_{\mbox{\textcircled{{\footnotesize $6$}}}}  & x_{uv}x_{jk}x_{ik} \ar@{~>}[dr]^{\mbox{\textcircled{{\footnotesize $2$}}}} &  \\
	x_{jk}x_{ik}x_{uv} \ar@{~>}[ur]^{\mbox{\textcircled{{\footnotesize $6$}}}} \ar@{~>}[dr]_{\mbox{\textcircled{{\footnotesize $2$}}}}&&& -x_{uv}x_{ij}x_{jk} &	;\\
	&-x_{ij}x_{jk}x_{uv}\ar@{~>}[r]^{\mbox{\textcircled{{\footnotesize $6$}}}} & x_{ij}x_{uv}x_{jk} \ar@{~>}[ur]_{\mbox{\textcircled{{\footnotesize $6$}}}}&&
}
\]
and for $u<i<j<k$ and $v= j$:
\[
\xymatrix@C=0.5cm@R=0.3cm{
	& -x_{jk}x_{uj}x_{ik}\ar@{~>}[r]_{\mbox{\textcircled{{\footnotesize $4$}}}}  & x_{uj}x_{uk}x_{ik} \ar@{~>}[r]^{\mbox{\textcircled{{\footnotesize $5$}}}} &  x_{uj}x_{ui}x_{uk} \ar@{~>}[dr]_{\mbox{\textcircled{{\footnotesize $3$}}}} \\
	x_{jk}x_{ik}x_{uj} \ar@{~>}[ur]^{\mbox{\textcircled{{\footnotesize $6$}}}} \ar@{~>}[dr]_{\mbox{\textcircled{{\footnotesize $2$}}}}&&& &-x_{ui}x_{ij}x_{uk} \ar@{~>}[dl]^{\mbox{\textcircled{{\footnotesize $6$}}}}	\\
	&-x_{ij}x_{jk}x_{uj}\ar@{~>}[r]^{\mbox{\textcircled{{\footnotesize $4$}}}} & x_{ij}x_{uj}x_{uk} \ar@{~>}[r]_{\mbox{\textcircled{{\footnotesize $2$}}}}& - x_{ui}x_{uk}x_{ij}&
}\ .
\]
For \textcircled{\footnotesize  $3$}-\textcircled{\footnotesize $2$}, let $u<i<j<k$:
\[
\xymatrix@C=0.5cm@R=0.3cm{
	& -x_{ik}x_{ui}x_{ij}\ar@{~>}[r]_{\mbox{\textcircled{{\footnotesize $2$}}}}  & x_{ui}x_{uk}x_{ij}  & \\
	x_{ik}x_{ij}x_{uj} \ar@{~>}[ur]^{\mbox{\textcircled{{\footnotesize $2$}}}} \ar@{~>}[dr]_{\mbox{\textcircled{{\footnotesize $3$}}}}&&& -x_{ui}x_{ij}x_{uk}\ar@{~>}[ul]_{\mbox{\textcircled{{\footnotesize $6$}}}} 	\\
	&-x_{ij}x_{jk}x_{uj}\ar@{~>}[r]^{\mbox{\textcircled{{\footnotesize $3$}}}} & x_{ij}x_{uj}x_{uk}\ar@{~>}[ur]_{\mbox{\textcircled{{\footnotesize $2$}}}} &
}\ .
\]
Consider the case \textcircled{\footnotesize  $3$}-\textcircled{\footnotesize $3$}, let $i<v<j<k$:
\[
\xymatrix@C=0.5cm@R=0.3cm{
	& -x_{ik}x_{iv}x_{vj}\ar@{~>}[r]_{\mbox{\textcircled{{\footnotesize $3$}}}}  & x_{iv}x_{vk}x_{vj}\ar@{~>}[dr]_{\mbox{\textcircled{{\footnotesize $3$}}}}  & \\
	x_{ik}x_{ij}x_{iv} \ar@{~>}[ur]^{\mbox{\textcircled{{\footnotesize $3$}}}} \ar@{~>}[dr]_{\mbox{\textcircled{{\footnotesize $3$}}}}&&& -x_{iv}x_{vj}x_{jk} .	\\
	&-x_{ij}x_{jk}x_{iv}\ar@{~>}[r]^{\mbox{\textcircled{{\footnotesize $6$}}}} & x_{ij}x_{iv}x_{jk}\ar@{~>}[ur]_{\mbox{\textcircled{{\footnotesize $3$}}}} &
}
\]
For the case \textcircled{\footnotesize  $3$}-\textcircled{\footnotesize $4$}, let $u<i<j<k$:
\[
\xymatrix@C=0.5cm@R=0.3cm{
	& -x_{ik}x_{ui}x_{uj}\ar@{~>}[r]_{\mbox{\textcircled{{\footnotesize $4$}}}}  & x_{ui}x_{uk}x_{uj} \ar@{~>}[dr]_{\mbox{\textcircled{{\footnotesize $3$}}}} & \\
	x_{ik}x_{ij}x_{ui} \ar@{~>}[ur]^{\mbox{\textcircled{{\footnotesize $4$}}}} \ar@{~>}[dr]_{\mbox{\textcircled{{\footnotesize $3$}}}}&&& -x_{ui}x_{uj}x_{jk} .	\\
	&-x_{ij}x_{jk}x_{ui}\ar@{~>}[r]^{\mbox{\textcircled{{\footnotesize $6$}}}} & x_{ij}x_{ui}x_{jk}\ar@{~>}[ur]_{\mbox{\textcircled{{\footnotesize $4$}}}} &
}
\]
Consider the case \textcircled{\footnotesize  $3$}-\textcircled{\footnotesize $5$}, let $i<u<j<k$:
\[
\xymatrix@C=0.5cm@R=0.3cm{
	& x_{ik}x_{iu}x_{ij}\ar@{~>}[r]_{\mbox{\textcircled{{\footnotesize $3$}}}}  & -x_{iu}x_{uk}x_{ij} \ar@{~>}[dr]_{\mbox{\textcircled{{\footnotesize $6$}}}} & \\
	x_{ik}x_{ij}x_{uj} \ar@{~>}[ur]^{\mbox{\textcircled{{\footnotesize $5$}}}} \ar@{~>}[dr]_{\mbox{\textcircled{{\footnotesize $3$}}}}&&& x_{iu}x_{ij}x_{uk} .	\\
	&-x_{ij}x_{jk}x_{uj}\ar@{~>}[r]^{\mbox{\textcircled{{\footnotesize $4$}}}} & x_{ij}x_{uj}x_{uk}\ar@{~>}[ur]_{\mbox{\textcircled{{\footnotesize $5$}}}} &
}
\]
For \textcircled{\footnotesize  $3$}-\textcircled{\footnotesize $6$}, there is three cases: we begin with $u<i<j<k$, $v\ne j$ and $v\ne k$:
\[
\xymatrix@C=0.5cm@R=0.3cm{
	& -x_{ik}x_{uv}x_{ij}\ar@{~>}[r]_{\mbox{\textcircled{{\footnotesize $6$}}}}  & x_{uv}x_{ik}x_{ij} \ar@{~>}[dr]^{\mbox{\textcircled{{\footnotesize $3$}}}} &  \\
	x_{ik}x_{ij}x_{uv} \ar@{~>}[ur]^{\mbox{\textcircled{{\footnotesize $6$}}}} \ar@{~>}[dr]_{\mbox{\textcircled{{\footnotesize $3$}}}}&&& -x_{uv}x_{ij}x_{jk} &	;\\
	&-x_{ij}x_{jk}x_{uv}\ar@{~>}[r]^{\mbox{\textcircled{{\footnotesize $6$}}}} & x_{ij}x_{uv}x_{jk} \ar@{~>}[ur]_{\mbox{\textcircled{{\footnotesize $6$}}}}&&
}
\]
and for $u<i<j<k$, $v\ne j$ and $v = k$:
\[
\xymatrix@C=0.5cm@R=0.3cm{
	& -x_{ik}x_{uk}x_{ij}\ar@{~>}[r]_{\mbox{\textcircled{{\footnotesize $2$}}}}  & x_{ui}x_{ik}x_{ij} \ar@{~>}[dr]^{\mbox{\textcircled{{\footnotesize $3$}}}} &  \\
	x_{ik}x_{ij}x_{uk} \ar@{~>}[ur]^{\mbox{\textcircled{{\footnotesize $6$}}}} \ar@{~>}[dr]_{\mbox{\textcircled{{\footnotesize $3$}}}}&&& -x_{ui}x_{ij}x_{jk} &	\\
	&-x_{ij}x_{jk}x_{uv}\ar@{~>}[r]^{\mbox{\textcircled{{\footnotesize $2$}}}} & x_{ij}x_{uj}x_{jk} \ar@{~>}[ur]_{\mbox{\textcircled{{\footnotesize $2$}}}}&&
}\ .
\]
For \textcircled{\footnotesize  $4$}-\textcircled{\footnotesize $2$}, let $u<i<j<k$:
\[
\xymatrix@C=0.5cm@R=0.3cm{
	& -x_{jk}x_{ui}x_{ij}\ar@{~>}[r]_{\mbox{\textcircled{{\footnotesize $6$}}}}  & x_{ui}x_{jk}x_{ij} \ar@{~>}[dr]^{\mbox{\textcircled{{\footnotesize $4$}}}} & \\
	x_{jk}x_{ij}x_{uj} \ar@{~>}[ur]^{\mbox{\textcircled{{\footnotesize $2$}}}} \ar@{~>}[dr]_{\mbox{\textcircled{{\footnotesize $4$}}}}&&& -x_{ui}x_{ij}x_{ik} .	\\
	&-x_{ij}x_{ik}x_{uj}\ar@{~>}[r]^{\mbox{\textcircled{{\footnotesize $6$}}}} & x_{ij}x_{uj}x_{ik}\ar@{~>}[ur]_{\mbox{\textcircled{{\footnotesize $2$}}}} &
}
\]
Consider the case \textcircled{\footnotesize  $4$}-\textcircled{\footnotesize $3$}, let $i<u<j<k$:
\[
\xymatrix@C=0.5cm@R=0.3cm{
	& -x_{jk}x_{iu}x_{uj}\ar@{~>}[r]_{\mbox{\textcircled{{\footnotesize $6$}}}}  & x_{iu}x_{jk}x_{uj}\ar@{~>}[dr]^{\mbox{\textcircled{{\footnotesize $4$}}}}  & \\
	x_{jk}x_{ij}x_{iu} \ar@{~>}[ur]^{\mbox{\textcircled{{\footnotesize $3$}}}} \ar@{~>}[dr]_{\mbox{\textcircled{{\footnotesize $4$}}}}&&& -x_{iu}x_{uj}x_{uk} .	\\
	&-x_{ij}x_{jk}x_{iu}\ar@{~>}[r]^{\mbox{\textcircled{{\footnotesize $3$}}}} & x_{ij}x_{iu}x_{uk}\ar@{~>}[ur]_{\mbox{\textcircled{{\footnotesize $3$}}}} &
}
\]
Consider the case \textcircled{\footnotesize  $4$}-\textcircled{\footnotesize $4$}, let $u<i<j<k$:
\[
\xymatrix@C=0.5cm@R=0.3cm{
	& -x_{jk}x_{ui}x_{uj}\ar@{~>}[r]_{\mbox{\textcircled{{\footnotesize $6$}}}}  & x_{ui}x_{jk}x_{uj}\ar@{~>}[dr]^{\mbox{\textcircled{{\footnotesize $4$}}}}  & \\
	x_{jk}x_{ij}x_{ui} \ar@{~>}[ur]^{\mbox{\textcircled{{\footnotesize $4$}}}} \ar@{~>}[dr]_{\mbox{\textcircled{{\footnotesize $4$}}}}&&& -x_{ui}x_{uj}x_{uk} .	\\
	&-x_{ij}x_{ik}x_{ui}\ar@{~>}[r]^{\mbox{\textcircled{{\footnotesize $4$}}}} & x_{ij}x_{ui}x_{uk}\ar@{~>}[ur]_{\mbox{\textcircled{{\footnotesize $4$}}}} &
}
\]
Consider the case \textcircled{\footnotesize  $4$}-\textcircled{\footnotesize $5$}, let $i<u<j<k$:
\[
\xymatrix@C=0.5cm@R=0.3cm{
	& -x_{jk}x_{iu}x_{ij}\ar@{~>}[r]_{\mbox{\textcircled{{\footnotesize $6$}}}}  & x_{iu}x_{jk}x_{ij}\ar@{~>}[dr]^{\mbox{\textcircled{{\footnotesize $5$}}}}  & \\
	x_{jk}x_{ij}x_{uj} \ar@{~>}[ur]^{\mbox{\textcircled{{\footnotesize $5$}}}} \ar@{~>}[dr]_{\mbox{\textcircled{{\footnotesize $4$}}}}&&& -x_{iu}x_{ij}x_{ik} .	\\
	&-x_{ij}x_{ik}x_{uj}\ar@{~>}[r]^{\mbox{\textcircled{{\footnotesize $6$}}}} & x_{ij}x_{uj}x_{ik}\ar@{~>}[ur]_{\mbox{\textcircled{{\footnotesize $5$}}}} &
}
\]
For \textcircled{\footnotesize  $4$}-\textcircled{\footnotesize $6$}, there are three cases: we begin with $i<u<j<k$, $v\ne j$ and $v\ne k$:
\[
\xymatrix@C=0.5cm@R=0.3cm{
	& -x_{jk}x_{uv}x_{ij}\ar@{~>}[r]_{\mbox{\textcircled{{\footnotesize $6$}}}}  & x_{uv}x_{jk}x_{ij} \ar@{~>}[dr]^{\mbox{\textcircled{{\footnotesize $3$}}}} &  \\
	x_{jk}x_{ij}x_{uv} \ar@{~>}[ur]^{\mbox{\textcircled{{\footnotesize $6$}}}} \ar@{~>}[dr]_{\mbox{\textcircled{{\footnotesize $4$}}}}&
	&& -x_{uv}x_{ij}x_{ik} &	;\\
	&-x_{ij}x_{ik}x_{uv}\ar@{~>}[r]^{\mbox{\textcircled{{\footnotesize $6$}}}} & x_{ij}x_{uv}x_{ik} \ar@{~>}[ur]_{\mbox{\textcircled{{\footnotesize $6$}}}}&&
}
\]
and for $i<u<j<k$, $v\ne j$ and $v = k$:
\[
\xymatrix@C=0.5cm@R=0.3cm{
	& -x_{jk}x_{uk}x_{ij}\ar@{~>}[r]_{\mbox{\textcircled{{\footnotesize $2$}}}}  & x_{uj}x_{jk}x_{ij} \ar@{~>}[r]^{\mbox{\textcircled{{\footnotesize $4$}}}} & -x_{uj}x_{ij}x_{ik} \ar@{~>}[dd]_{\mbox{\textcircled{{\footnotesize $5$}}}}\\
	x_{jk}x_{ij}x_{uk} \ar@{~>}[ur]^{\mbox{\textcircled{{\footnotesize $6$}}}} \ar@{~>}[dr]_{\mbox{\textcircled{{\footnotesize $4$}}}}&&&  &	\\
	&-x_{ij}x_{ik}x_{uv}\ar@{~>}[r]^{\mbox{\textcircled{{\footnotesize $2$}}}} & x_{ij}x_{ui}x_{ik} \ar@{~>}[r]_{\mbox{\textcircled{{\footnotesize $4$}}}}& -x_{ui}x_{uj}x_{ik} &
} \ .
\]
For \textcircled{\footnotesize  $5$}-\textcircled{\footnotesize $2$}, we have three cases. We begin with the case where $u<i<j<k$:
\[
\xymatrix@C=0.5cm@R=0.3cm{
	& -x_{ik}x_{uj}x_{jk}\ar@{~>}[r]_{\mbox{\textcircled{{\footnotesize $6$}}}}  & x_{uj}x_{ik}x_{jk} \ar@{~>}[r]^{\mbox{\textcircled{{\footnotesize $5$}}}} & x_{uj}x_{ij}x_{ik} \ar@{~>}[dd]_{\mbox{\textcircled{{\footnotesize $5$}}}} \\
	x_{ik}x_{jk}x_{uk} \ar@{~>}[ur]^{\mbox{\textcircled{{\footnotesize $2$}}}} \ar@{~>}[dr]_{\mbox{\textcircled{{\footnotesize $5$}}}}&&&  ;	\\
	&x_{ij}x_{ik}x_{uk}\ar@{~>}[r]^{\mbox{\textcircled{{\footnotesize $2$}}}} & -x_{ij}x_{ui}x_{ik}\ar@{~>}[r]_{\mbox{\textcircled{{\footnotesize $4$}}}} & x_{ui}x_{ij}x_{ik}
}
\]
we continue with  $u=i<j<k$:
\[
\xymatrix@C=0.5cm@R=0.3cm{
	& -x_{ik}x_{ij}x_{jk}\ar@{~>}[r]_{\mbox{\textcircled{{\footnotesize $3$}}}}  & x_{ij}x_{jk}x_{jk} \ar@{~>}[dd]^{\mbox{\mbox{\textcircled{{\footnotesize $1$}}}}} &  \\
	x_{ik}x_{jk}x_{ik} \ar@{~>}[ur]^{\mbox{\textcircled{{\footnotesize $2$}}}} \ar@{~>}[dr]_{\mbox{\textcircled{{\footnotesize $5$}}}}&&&  ;	\\
	&x_{ij}x_{ik}x_{ik}\ar@{~>}[r]^{\mbox{\mbox{\textcircled{{\footnotesize $1$}}}}} & 0 & 
}
\]
and we finish by 
$i<u<j<k$:
\[
\xymatrix@C=0.5cm@R=0.3cm{
	& -x_{ik}x_{uj}x_{jk}\ar@{~>}[r]_{\mbox{\textcircled{{\footnotesize $6$}}}}  & x_{uj}x_{ik}x_{jk} \ar@{~>}[r]^{\mbox{\textcircled{{\footnotesize $5$}}}} & x_{uj}x_{ij}x_{ik} \ar@{~>}[dd]_{\mbox{\textcircled{{\footnotesize $2$}}}} \\
	x_{ik}x_{jk}x_{uk} \ar@{~>}[ur]^{\mbox{\textcircled{{\footnotesize $2$}}}} \ar@{~>}[dr]_{\mbox{\textcircled{{\footnotesize $5$}}}}&&&  	\\
	&x_{ij}x_{ik}x_{uk}\ar@{~>}[r]^{\mbox{\textcircled{{\footnotesize $5$}}}} & x_{ij}x_{iu}x_{ik}\ar@{~>}[r]_{\mbox{\textcircled{{\footnotesize $3$}}}} & -x_{iu}x_{uj}x_{ik}
}\ .
\]
For \textcircled{\footnotesize  $5$}-\textcircled{\footnotesize $3$}, let $i<j<u<k$:
\[
\xymatrix@C=0.5cm@R=0.3cm{
	& -x_{ik}x_{ju}x_{uk}\ar@{~>}[r]_{\mbox{\textcircled{{\footnotesize $6$}}}}  & x_{ju}x_{ik}x_{uk} \ar@{~>}[rd]^{\mbox{\textcircled{{\footnotesize $5$}}}} &  \\
	x_{ik}x_{jk}x_{ju} \ar@{~>}[ur]^{\mbox{\textcircled{{\footnotesize $3$}}}} \ar@{~>}[dr]_{\mbox{\textcircled{{\footnotesize $5$}}}}&&& x_{ju}x_{iu}x_{ik} \ar@{~>}[dl]_{\mbox{\textcircled{{\footnotesize $2$}}}} 	\\
	&x_{ij}x_{ik}x_{ju}\ar@{~>}[r]^{\mbox{\textcircled{{\footnotesize $6$}}}} & -x_{ij}x_{ju}x_{ik}&
}\ .
\]
For \textcircled{\footnotesize  $5$}-\textcircled{\footnotesize $4$}, we have three cases. We begin with the case where $u<i<j<k$:
\[
\xymatrix@C=0.5cm@R=0.3cm{
	& -x_{ik}x_{uj}x_{uk}\ar@{~>}[r]_{\mbox{\textcircled{{\footnotesize $6$}}}}  & x_{uj}x_{ik}x_{uk} \ar@{~>}[r]^{\mbox{\textcircled{{\footnotesize $2$}}}} & -x_{uj}x_{uj}x_{ik} \ar@{~>}[dd]_{\mbox{\textcircled{{\footnotesize $3$}}}} \\
	x_{ik}x_{jk}x_{uj} \ar@{~>}[ur]^{\mbox{\textcircled{{\footnotesize $4$}}}} \ar@{~>}[dr]_{\mbox{\textcircled{{\footnotesize $5$}}}}&&&  ;	\\
	&x_{ij}x_{ik}x_{uj}\ar@{~>}[r]^{\mbox{\textcircled{{\footnotesize $6$}}}} & -x_{ij}x_{uj}x_{ik}\ar@{~>}[r]_{\mbox{\textcircled{{\footnotesize $2$}}}} & x_{ui}x_{ij}x_{ik}
}\ ;
\]
we continue with  $u=i<j<k$:
\[
\xymatrix@C=0.5cm@R=0.3cm{
	& -x_{ik}x_{ij}x_{ik}\ar@{~>}[r]_{\mbox{\textcircled{{\footnotesize $3$}}}}  & x_{ij}x_{jk}x_{ik} \ar@{~>}[r]^{\mbox{\textcircled{{\footnotesize $2$}}}} & -x_{ij}x_{ij}x_{jk} \ar@{~>}[dd]_{\mbox{\mbox{\textcircled{{\footnotesize $1$}}}}}  \\
	x_{ik}x_{jk}x_{ij} \ar@{~>}[ru]^{\mbox{\textcircled{{\footnotesize $4$}}}} \ar@{~>}[dr]_{\mbox{\textcircled{{\footnotesize $5$}}}}&&&  ;	\\
	&x_{ij}x_{ik}x_{ij}\ar@{~>}[r]^{\mbox{\textcircled{{\footnotesize $3$}}}} & x_{ij}x_{ij}x_{jk}\ar@{~>}[r]_{\mbox{\mbox{\textcircled{{\footnotesize $1$}}}}}  & 0
} \ ;
\]
and we finish by 
$i<u<j<k$:
\[
\xymatrix@C=0.5cm@R=0.3cm{
	& -x_{ik}x_{uj}x_{uk}\ar@{~>}[r]_{\mbox{\textcircled{{\footnotesize $6$}}}}  & x_{uj}x_{ik}x_{uk} \ar@{~>}[r]^{\mbox{\textcircled{{\footnotesize $5$}}}} & x_{uj}x_{iu}x_{ik} \ar@{~>}[dd]_{\mbox{\textcircled{{\footnotesize $4$}}}} \\
	x_{ik}x_{jk}x_{uj} \ar@{~>}[ur]^{\mbox{\textcircled{{\footnotesize $4$}}}} \ar@{~>}[dr]_{\mbox{\textcircled{{\footnotesize $5$}}}}&&&  	\\
	&x_{ij}x_{ik}x_{uj}\ar@{~>}[r]^{\mbox{\textcircled{{\footnotesize $6$}}}} & -x_{ij}x_{uj}x_{ik}\ar@{~>}[r]_{\mbox{\textcircled{{\footnotesize $5$}}}} & -x_{iu}x_{ij}x_{ik}
}\ .
\]
For \textcircled{\footnotesize  $5$}-\textcircled{\footnotesize $5$}, we have three cases. We begin with the case where $i<j<u<k$:
\[
\xymatrix@C=0.5cm@R=0.3cm{
	& -x_{ik}x_{ju}x_{uk}\ar@{~>}[r]_{\mbox{\textcircled{{\footnotesize $6$}}}}  & x_{uj}x_{ik}x_{uk} \ar@{~>}[rd]^{\mbox{\textcircled{{\footnotesize $5$}}}} &  \\
	x_{ik}x_{jk}x_{uk} \ar@{~>}[ur]^{\mbox{\textcircled{{\footnotesize $5$}}}} \ar@{~>}[dr]_{\mbox{\textcircled{{\footnotesize $5$}}}}&&& x_{ju}x_{ij}x_{ik} \ar@{~>}[dl]_{\mbox{\textcircled{{\footnotesize $4$}}}} 	\\
	&x_{ij}x_{ik}x_{uk}\ar@{~>}[r]^{\mbox{\textcircled{{\footnotesize $5$}}}} & x_{ij}x_{iu}x_{ik}&
}\ .
\]
For \textcircled{\footnotesize  $5$}-\textcircled{\footnotesize $6$}, there are many  cases: we begin with $i<j<k$, $u\ne i$ and $j<v<k$:
\[
\xymatrix@C=0.5cm@R=0.3cm{
	& -x_{ik}x_{uv}x_{jk}\ar@{~>}[r]_{\mbox{\textcircled{{\footnotesize $6$}}}}  & x_{uv}x_{ik}x_{jk} \ar@{~>}[dr]^{\mbox{\textcircled{{\footnotesize $5$}}}} &  \\
	x_{ik}x_{jk}x_{uv} \ar@{~>}[ur]^{\mbox{\textcircled{{\footnotesize $6$}}}} \ar@{~>}[dr]_{\mbox{\textcircled{{\footnotesize $5$}}}}&
	&& -x_{uv}x_{ij}x_{ik} \ar@{~>}[dl]_{\mbox{\textcircled{{\footnotesize $6$}}}} &	\\
	&x_{ij}x_{ik}x_{uv}\ar@{~>}[r]^{\mbox{\textcircled{{\footnotesize $6$}}}} & -x_{ij}x_{uv}x_{ik} &&
}\ ;
\]
for $i<j<k$, $u,v\ne i$ and $v< j$:
\[
\xymatrix@C=0.5cm@R=0.3cm{
	& -x_{ik}x_{uv}x_{jk}\ar@{~>}[r]_{\mbox{\textcircled{{\footnotesize $6$}}}}  & x_{uv}x_{ik}x_{jk} \ar@{~>}[dr]^{\mbox{\textcircled{{\footnotesize $5$}}}} &  \\
	x_{ik}x_{jk}x_{uv} \ar@{~>}[ur]^{\mbox{\textcircled{{\footnotesize $6$}}}} \ar@{~>}[dr]_{\mbox{\textcircled{{\footnotesize $5$}}}}&
	&& -x_{uv}x_{ij}x_{ik}&	\\
	&x_{ij}x_{ik}x_{uv}\ar@{~>}[r]^{\mbox{\textcircled{{\footnotesize $6$}}}} & -x_{ij}x_{uv}x_{ik}  \ar@{~>}[ru]_{\mbox{\textcircled{{\footnotesize $6$}}}} &&
}\ ;
\]
for $u=i<j<v<k$: 
\[
\xymatrix@C=0.5cm@R=0.3cm{
	& -x_{ik}x_{iv}x_{jk}\ar@{~>}[r]_{\mbox{\textcircled{{\footnotesize $3$}}}}  & x_{iv}x_{vk}x_{jk} \ar@{~>}[dr]^{\mbox{\textcircled{{\footnotesize $2$}}}} &  \\
	x_{ik}x_{jk}x_{iv} \ar@{~>}[ur]^{\mbox{\textcircled{{\footnotesize $6$}}}} \ar@{~>}[dr]_{\mbox{\textcircled{{\footnotesize $5$}}}}&&& -x_{iv}x_{jv}x_{vk} \ar@{~>}[dl]_{\mbox{\textcircled{{\footnotesize $5$}}}} &	\\
	&x_{ij}x_{ik}x_{iv}\ar@{~>}[r]^{\mbox{\textcircled{{\footnotesize $3$}}}} & x_{ij}x_{iv}x_{vk}&  &
} \ ;
\]
for $u=i<v<j<k$:
\[
\xymatrix@C=0.5cm@R=0.3cm{
	& -x_{ik}x_{iv}x_{jk}\ar@{~>}[r]_{\mbox{\textcircled{{\footnotesize $3$}}}}  & x_{iv}x_{vk}x_{jk} \ar@{~>}[dr]^{\mbox{\textcircled{{\footnotesize $5$}}}} &  \\
	x_{ik}x_{jk}x_{iv} \ar@{~>}[ur]^{\mbox{\textcircled{{\footnotesize $6$}}}} \ar@{~>}[dr]_{\mbox{\textcircled{{\footnotesize $5$}}}}&&& x_{iv}x_{vj}x_{vk}  &	\\
	&x_{ij}x_{ik}x_{iv}\ar@{~>}[r]^{\mbox{\textcircled{{\footnotesize $3$}}}} & -x_{ij}x_{iv}x_{vk} \ar@{~>}[ru]_{\mbox{\textcircled{{\footnotesize $3$}}}}&  &
} \ ;
\]
for $u<i=v<j<k$:
\[
\xymatrix@C=0.5cm@R=0.3cm{
	& -x_{ik}x_{ui}x_{jk}\ar@{~>}[r]_{\mbox{\textcircled{{\footnotesize $4$}}}}  & x_{ui}x_{uk}x_{jk} \ar@{~>}[rd]^{\mbox{\textcircled{{\footnotesize $5$}}}} &  \\
	x_{ik}x_{jk}x_{ui} \ar@{~>}[ur]^{\mbox{\textcircled{{\footnotesize $6$}}}} \ar@{~>}[dr]_{\mbox{\textcircled{{\footnotesize $5$}}}}&&& x_{ui}x_{uj}x_{uk}  &	\\
	&x_{ij}x_{ik}x_{ui}\ar@{~>}[r]^{\mbox{\textcircled{{\footnotesize $4$}}}} & -x_{ij}x_{ui}x_{uk} \ar@{~>}[ru]_{\mbox{\textcircled{{\footnotesize $4$}}}}&  &
} \ .
\]

Finally, consider the diagrams \textcircled{\footnotesize  $6$}-\textcircled{\footnotesize $\beta$}. We start with the case \textcircled{\footnotesize  $6$}-\textcircled{\footnotesize $2$}; there are two cases:
$i<j<k<v$ and $u\ne i,j,k$:
\[
\xymatrix@C=0.5cm@R=0.3cm{
	& -x_{uv}x_{ij}x_{jk}\ar@{~>}[r]_{\mbox{\textcircled{{\footnotesize $6$}}}}  & x_{ij}x_{uv}x_{jk} \ar@{~>}[rd]^{\mbox{\textcircled{{\footnotesize $6$}}}} &   \\
	x_{uv}x_{jk}x_{ik} \ar@{~>}[ur]^{\mbox{\textcircled{{\footnotesize $2$}}}} \ar@{~>}[dr]_{\mbox{\textcircled{{\footnotesize $6$}}}}&&&  -x_{ij}x_{jk}x_{uv};	\\
	&-x_{jk}x_{uv}x_{ij}\ar@{~>}[r]^{\mbox{\textcircled{{\footnotesize $6$}}}} & x_{jk}x_{ij}x_{uv}\ar@{~>}[ru]_{\mbox{\textcircled{{\footnotesize $2$}}}} & 
}
\]
and the case $i<j<k<v$ and $u= i$:
\[
\xymatrix@C=0.5cm@R=0.3cm{
	& -x_{iv}x_{ij}x_{jk}\ar@{~>}[r]_{\mbox{\textcircled{{\footnotesize $3$}}}}  & x_{ij}x_{jv}x_{jk} \ar@{~>}[rd]^{\mbox{\textcircled{{\footnotesize $3$}}}} &   \\
	x_{iv}x_{jk}x_{ik} \ar@{~>}[ur]^{\mbox{\textcircled{{\footnotesize $2$}}}} \ar@{~>}[dr]_{\mbox{\textcircled{{\footnotesize $6$}}}}&&&  -x_{ij}x_{jk}x_{kv}.	\\
	&-x_{jk}x_{iv}x_{ik}\ar@{~>}[r]^{\mbox{\textcircled{{\footnotesize $3$}}}} & x_{jk}x_{ik}x_{kv}\ar@{~>}[ru]_{\mbox{\textcircled{{\footnotesize $2$}}}} & 
}
\]
For the case \textcircled{\footnotesize  $6$}-\textcircled{\footnotesize $3$}, we begin with the case where $i<j<k<v$ and $u\ne i,j,k$:
\[
\xymatrix@C=0.5cm@R=0.3cm{
	& -x_{uv}x_{ij}x_{jk}\ar@{~>}[r]_{\mbox{\textcircled{{\footnotesize $6$}}}}  & x_{ij}x_{uv}x_{jk} \ar@{~>}[rd]^{\mbox{\textcircled{{\footnotesize $6$}}}} &   \\
	x_{uv}x_{ik}x_{ij} \ar@{~>}[ur]^{\mbox{\textcircled{{\footnotesize $3$}}}} \ar@{~>}[dr]_{\mbox{\textcircled{{\footnotesize $6$}}}}&&&  -x_{ij}x_{jk}x_{uv};	\\
	&-x_{ik}x_{uv}x_{ij}\ar@{~>}[r]^{\mbox{\textcircled{{\footnotesize $6$}}}} & x_{ik}x_{ij}x_{uv}\ar@{~>}[ru]_{\mbox{\textcircled{{\footnotesize $3$}}}} & 
}
\]
and the case $i<j<k<v$ and $u= j$:
\[
\xymatrix@C=0.5cm@R=0.3cm{
	& -x_{iv}x_{ij}x_{jk}\ar@{~>}[r]_{\mbox{\textcircled{{\footnotesize $3$}}}}  & x_{ij}x_{jv}x_{jk} \ar@{~>}[rd]^{\mbox{\textcircled{{\footnotesize $3$}}}} &   \\
	x_{iv}x_{jk}x_{ik} \ar@{~>}[ur]^{\mbox{\textcircled{{\footnotesize $3$}}}} \ar@{~>}[dr]_{\mbox{\textcircled{{\footnotesize $6$}}}}&&&  -x_{ij}x_{jk}x_{kv}.	\\
	&-x_{jk}x_{iv}x_{ik}\ar@{~>}[r]^{\mbox{\textcircled{{\footnotesize $3$}}}} & x_{jk}x_{ik}x_{kv}\ar@{~>}[ru]_{\mbox{\textcircled{{\footnotesize $2$}}}} & 
}
\]
For the case \textcircled{\footnotesize  $6$}-\textcircled{\footnotesize $4$}, we begin with the case where $i<j<k<v$ and $u\ne i,j,k$:
\[
\xymatrix@C=0.5cm@R=0.3cm{
	& -x_{uv}x_{ij}x_{ik}\ar@{~>}[r]_{\mbox{\textcircled{{\footnotesize $6$}}}}  & x_{ij}x_{uv}x_{ik} \ar@{~>}[rd]^{\mbox{\textcircled{{\footnotesize $6$}}}} &   \\
	x_{uv}x_{jk}x_{ij} \ar@{~>}[ur]^{\mbox{\textcircled{{\footnotesize $4$}}}} \ar@{~>}[dr]_{\mbox{\textcircled{{\footnotesize $6$}}}}&&&  -x_{ij}x_{ik}x_{uv};	\\
	&-x_{jk}x_{uv}x_{ij}\ar@{~>}[r]^{\mbox{\textcircled{{\footnotesize $6$}}}} & x_{jk}x_{ij}x_{uv}\ar@{~>}[ru]_{\mbox{\textcircled{{\footnotesize $4$}}}} & 
}
\]
and the case $i<j<k<v$ and $u=i$ :
\[
\xymatrix@C=0.5cm@R=0.3cm{
	& -x_{iv}x_{ij}x_{ik}\ar@{~>}[r]_{\mbox{\textcircled{{\footnotesize $3$}}}}  & x_{ij}x_{jv}x_{ik} \ar@{~>}[rd]^{\mbox{\textcircled{{\footnotesize $6$}}}} &   \\
	x_{iv}x_{jk}x_{ij} \ar@{~>}[ur]^{\mbox{\textcircled{{\footnotesize $5$}}}} \ar@{~>}[dr]_{\mbox{\textcircled{{\footnotesize $6$}}}}&&&  -x_{ij}x_{ik}x_{jv}.	\\
	&-x_{jk}x_{iv}x_{ij}\ar@{~>}[r]^{\mbox{\textcircled{{\footnotesize $3$}}}} & x_{jk}x_{ij}x_{jv}\ar@{~>}[ru]_{\mbox{\textcircled{{\footnotesize $4$}}}} & 
}
\]
For the case \textcircled{\footnotesize  $6$}-\textcircled{\footnotesize $5$},
we begin with the sub-case where $i<j<k<v$ and $u\ne i,j,k$:
\[
\xymatrix@C=0.5cm@R=0.3cm{
	& x_{uv}x_{ij}x_{ik}\ar@{~>}[r]_{\mbox{\textcircled{{\footnotesize $6$}}}}  & -x_{ij}x_{uv}x_{ik} \ar@{~>}[rd]^{\mbox{\textcircled{{\footnotesize $6$}}}} &   \\
	x_{uv}x_{ik}x_{jk} \ar@{~>}[ur]^{\mbox{\textcircled{{\footnotesize $5$}}}} \ar@{~>}[dr]_{\mbox{\textcircled{{\footnotesize $6$}}}}&&&  x_{ij}x_{ik}x_{uv};	\\
	&-x_{ik}x_{uv}x_{jk}\ar@{~>}[r]^{\mbox{\textcircled{{\footnotesize $6$}}}} & x_{ik}x_{jk}x_{uv}\ar@{~>}[ru]_{\mbox{\textcircled{{\footnotesize $5$}}}} & 
}
\]
and the case $i<j<k<v$ and $u=j$ :
\[
\xymatrix@C=0.5cm@R=0.3cm{
	& x_{jv}x_{ij}x_{ik}\ar@{~>}[r]_{\mbox{\textcircled{{\footnotesize $4$}}}}  & -x_{ij}x_{iv}x_{ik} \ar@{~>}[rd]^{\mbox{\textcircled{{\footnotesize $3$}}}} &   \\
	x_{jv}x_{ik}x_{jk} \ar@{~>}[ur]^{\mbox{\textcircled{{\footnotesize $5$}}}} \ar@{~>}[dr]_{\mbox{\textcircled{{\footnotesize $6$}}}}&&&  x_{ij}x_{ik}x_{kv}.	\\
	&-x_{ik}x_{jv}x_{jk}\ar@{~>}[r]^{\mbox{\textcircled{{\footnotesize $3$}}}} & x_{ik}x_{jk}x_{kv}\ar@{~>}[ru]_{\mbox{\textcircled{{\footnotesize $5$}}}} & 
}
\]
For the case \textcircled{\footnotesize  $6$}-\textcircled{\footnotesize $6$}, there are two sub-cases: the first one is for $b<j<v$ and $u\ne a$:
\[
\xymatrix@C=0.5cm@R=0.3cm{
	& x_{uv}x_{ab}x_{ij}\ar@{~>}[r]_{\mbox{\textcircled{{\footnotesize $6$}}}}  & -x_{ab}x_{uv}x_{ij} \ar@{~>}[rd]^{\mbox{\textcircled{{\footnotesize $6$}}}} &   \\
	x_{uv}x_{ij}x_{ab} \ar@{~>}[ur]^{\mbox{\textcircled{{\footnotesize $6$}}}} \ar@{~>}[dr]_{\mbox{\textcircled{{\footnotesize $6$}}}}&&&  -x_{ab}x_{ij}x_{uv};	\\
	&-x_{ij}x_{uv}x_{ab}\ar@{~>}[r]^{\mbox{\textcircled{{\footnotesize $6$}}}} & x_{ij}x_{ab}x_{uv}\ar@{~>}[ru]_{\mbox{\textcircled{{\footnotesize $6$}}}} & 
}
\]
and the second is for $b<j<v$ and $u=a$:
\[
\xymatrix@C=0.5cm@R=0.3cm{
	& x_{av}x_{ab}x_{ij}\ar@{~>}[r]_{\mbox{\textcircled{{\footnotesize $3$}}}}  & -x_{ab}x_{bv}x_{ij} \ar@{~>}[rd]^{\mbox{\textcircled{{\footnotesize $6$}}}} &   \\
	x_{av}x_{ij}x_{ab} \ar@{~>}[ur]^{\mbox{\textcircled{{\footnotesize $6$}}}} \ar@{~>}[dr]_{\mbox{\textcircled{{\footnotesize $6$}}}}&&& x_{ab}x_{ij}x_{bv}.\\
	&-x_{ij}x_{av}x_{ab}\ar@{~>}[r]^{\mbox{\textcircled{{\footnotesize $3$}}}} & x_{ij}x_{ab}x_{bv}\ar@{~>}[ru]_{\mbox{\textcircled{{\footnotesize $6$}}}} & 
}
\]
Since all diagrams are confluent, the algebra $W_n$ is Koszul. Hence, for all integers $n \geqslant 2$, the algebra $\scrA(\DLie,n)$ is Koszul.
\end{proof}



\nocite{Ler17,Val03,CMW16}
\bibliographystyle{alpha}
\bibliography{biblio_these}

\end{document}